\DeclareMathOperator{\arccosh}{arccosh}
\DeclareMathOperator{\arcsinh}{arcsinh}
\DeclareMathOperator{\arctanh}{arctanh}
\DeclareMathOperator*{\esssup}{ess\,sup}
\newtheorem{theorem}{Theorem}[section]
\newtheorem*{theorem*}{Theorem}
\newtheorem{lemma}[theorem]{Lemma}
\newtheorem*{lemma*}{Lemma}
\newtheorem{prop}[theorem]{Proposition}
\newtheorem*{prop*}{Proposition}
\theoremstyle{definition}
\newtheorem{definition}[theorem]{Definition}
\newtheorem*{definition*}{Definition}
\newtheorem{example}[theorem]{Example}
\theoremstyle{remark}
\newtheorem{remark}[theorem]{Remark}
\numberwithin{equation}{section}
\title[Branch points in elastic pseudospherical surfaces]{Distributed branch points and the shape of elastic surfaces with constant negative curvature} 
\author{Toby L. Shearman}
\address{Program in Applied Mathematics, 617 N. Santa Rita Avenue, University of Arizona, Tucson, AZ 85721}
\email{toby.shearman@gmail.com}
\author{Shankar C. Venkataramani$^\dag$}
\address{Department of Mathematics, 617 N. Santa Rita Avenue, University of Arizona, Tucson, AZ 85721}
\thanks{$^\dag$Corresponding author.}
\curraddr{}
\email{shankar@math.arizona.edu}
\keywords{Pseudospherical immersions, discrete differential geometry, branch points, self-similar buckling patterns, extreme mechanics}
\date{\today}							
\begin{document}

\begin{abstract}
We develop a theory for distributed branch points and investigate their role in determining the shape and influencing the mechanics of thin hyperbolic objects. We show that branch points are the natural topological defects in hyperbolic sheets, they carry a topological index which gives them a degree of robustness, and they can influence the overall morphology of a hyperbolic surface without concentrating energy. We develop a discrete differential geometric (DDG) approach to study the deformations of hyperbolic objects with distributed branch points. We present evidence that the maximum curvature of surfaces with geodesic radius $R$ containing branch points grow sub-exponentially, $O(e^{c\sqrt{R}})$ in contrast to the exponential growth $O(e^{c' R})$ for surfaces without branch points. We argue that, to optimize norms of the curvature, i.e. the bending energy, distributed branch points are energetically preferred in sufficiently large pseudospherical surfaces. Further, they are distributed so that they lead to fractal-like recursive buckling patterns. 
\end{abstract}

\maketitle

{\scriptsize \keywords{MSC:  53C42 (Primary) 53A70, 53C80, 35Q74, 74K99 (Secondary)}}

\section{Introduction} \label{sec:intro}

Leaves, flowers, fins, wings and sails are examples of the ubiquity of thin sheets in natural and engineered structures. These objects often display intricate rippling and buckling patterns around their edges. 
\begin{figure}[htbp]
        \begin{subfigure}[t!]{0.3\textwidth}
                \centering
                {\includegraphics[height=4cm, keepaspectratio, trim={0cm, 0, 0cm, 0}, clip]{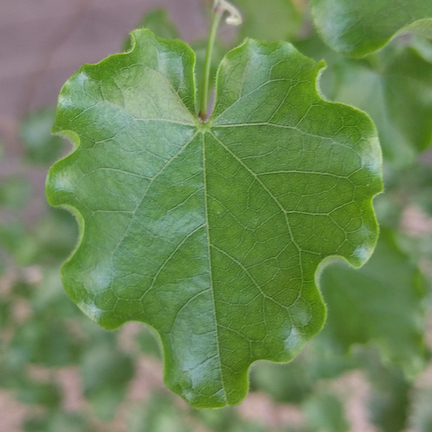}}
                \caption{}
                \label{fig:enr2leaves}
        \end{subfigure}%
        \begin{subfigure}[t!]{0.35\textwidth}
                \centering
                {\includegraphics[height=4cm, keepaspectratio, trim={0cm, 0, 0cm, 0}, clip]{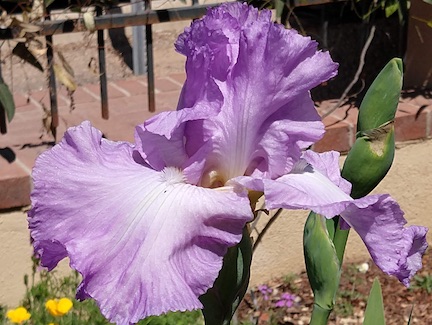}}
                \caption{}
                \label{fig:purpleiris}
         \end{subfigure}%
        \begin{subfigure}[t!]{0.25\textwidth}
                \centering
                {\includegraphics[angle=-90,width=3cm]{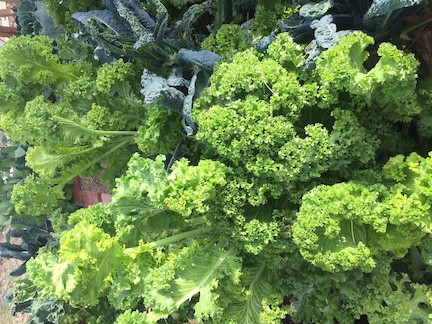}}
                \caption{}
                \label{fig:mustard}                       
                \end{subfigure}%
        \caption{(a) A leaf with regular undulations (photo by TS). (b) An Iris with 3 generations of undulations (photo by SV). (c) Curly mustard leaves with multiple generations of buckling (photo by J Watkins, U. Arizona).
       }
        \label{fig:examplesofnaturalobjects}
\end{figure}
Figure~\ref{fig:examplesofnaturalobjects} displays some of the complex shapes of leaves and flowers that result from such hierarchical, ``multiple-scale" buckling. In the physics literature, a relation between these buckling patterns and the growth of a leaf at its margins was first identified by Nechaev and Voituriez \cite{nechaev2001plant} (See also \cite{sharon2002buckling,eran2004leaves,sharon2007geometrically,LiangMaha2011,sharon2018mechanics}). This phenomenon is not restricted to living organisms, where it might be explained as a genetic trait selected for by evolution; it is seen in torn plastic sheets \cite{sharon2007geometrically}. Also, a wavy pattern can be induced in a naturally flat leaf; Sharon et al. show that application of the growth hormone auxin to the edge of an eggplant leaf, which is naturally flat,  induces growth at the margin, ultimately causing buckling out of plane \cite{eran2004leaves}. 

Qualitatively similar patterns are observed in torn plastic \cite{sharon2002buckling,sharon2007geometrically} and temperature sensitive hydrogels \cite{klein2007shaping,Kim2012Hydrogel}. These patterns, and their bifurcations, have been studied intensively over the last 20 years \cite{sharon2002buckling,marder2003shape,marder2003theory,audoly2003self,klein2007shaping,efrati2009elastic,Klein2011Experimental,Gemmer2013Shape}. 
The changes to the internal structure during the growth of a leaf, or through the stretching of a plastic sheet at a tear, result in surfaces whose intrinsic geometries, i.e. Riemannian metrics, are no longer ``compatible'' with a flat shape; significant external forces compressing the elastic sheet would need to be imposed for the surface to lay flat. The analogy between the localized stretching near the edge of a torn plastic sheet and the preferential growth of leaves along to their edge motivates the need for a purely mechanical explanation for the observed self-similar, fractal-like buckling patterns \cite{marder2003theory,audoly2003self,sharon2007geometrically,Liang2009shape,Efrati2013Metric,EPL_2016}.

Hydrogels have emerged as a useful system for exploring thin sheets with complex geometries in a controllable and reproducible manner \cite{klein2007shaping,efrati2007spontaneous,Kim2012Hydrogel}. Experimental techniques can prescribe a desired Riemannian metric in a hydrogel sheet that is initially flat, but acquires the programmed metric upon ``activation" \cite{klein2007shaping,Kim2012Hydrogel,kim2012thermally}. A variety of environmental stimuli, such as a shining light or temperature changes can activate the programmed metric. A gel sheet that swells more near the center leads to an ultimately spherical shape. Alternatively, if the differential swelling is larger near the margins and reproduces the effect seen in leaves, producing a wavy surface \cite{klein2007shaping,efrati2007spontaneous,Huang2018Differential}. Hydrogels which undergo such controlled shape transitions, due to a switch in the metric, 
have a variety of potential applications in medical devices, micro- and nano-scale robotics and flexible electronics. 

Another ``experimental'' system, less quantitative, but beautifully pairing art and mathematics is `hyperbolic crochet' \cite{henderson2001crocheting,bridges2013,wertheim2015crochet}. Through crochet, artists and mathematicians have rendered embeddings of (subsets of) the hyperbolic plane $\mathbb{H}^2$ in $\mathbb{R}^3$. Hyperbolic crochet is constructed by increasing the perimeter exponentially with the radius. Sprawling hyperbolic crochet provides striking resemblance to sea creatures and plant life and has been exhibited through `The Crochet Coral Reef project' \cite{wertheim2015crochet}. In `Floraform', a project inspired by the differential growth in plant structures and the ruffles of lettuce sea slugs, the authors simulate growth of a thin surface using techniques from differential geometry and physics, to uncover novel design principles and also to create art \cite{floraform2014}.

There is remarkable unity of form in leaves and hyperbolic hydrogels \cite{Huang2018Differential}, in corals and crochet \cite{wertheim2015crochet}, in sea slugs and jewellery made using simulated differential growth \cite{floraform2014}. Why is this so? This is the fundamental question we seek to address in this paper -- {\em Why do systems, with completely different physics, some directed by complex evolutionary processes and others generated by simple mathematical rules, end up with similar fractal-like buckling patterns}? 

A commonly held explanation is that hyperbolic surfaces, i.e. objects whose perimeter grows exponentially with the radius, develop complex buckling patterns because there are no smooth ways to embed them in $\mathbb{R}^3$ without stretching \cite{henderson2001crocheting}. Putative evidence for this picture includes
results that imply a dependence of the buckling wavelength on the thickness of the sheet \cite{audoly2003self,Klein2011Experimental,bella2014metric,vetter2013subdivision} suggesting a competition between localized stretching energy and regularization from bending energy. However, these scaling laws arise from (sometimes implicit) boundary or ``forcing" conditions. There are no proofs (yet) that these scaling laws also apply to free sheets.  Theorems on non-existence \cite{hilbert1901,holmgren1902surfaces} and singularities  \cite{amsler1955surfaces,efimov1964generation} for isometric immersions of complete surfaces with negative curvature are sometimes invoked in this context. This argument, however, is a misunderstanding of 
the results in \cite{hilbert1901,efimov1964generation} which 
apply to {\em complete} surfaces that are necessarily unbounded. Any finite piece of a smooth hyperbolic surface can always be smoothly and isometrically embedded in $\mathbb{R}^3$  \cite{han2006isometric}. 

As we argue in this paper, the answer is somewhat more subtle, and it is tied to the regularity of the allowed configurations of a hyperbolic sheet in $\mathbb{R}^3$. In particular, we demonstrate that the class of $C^{1,1}$ isometric immersions (no stretching, uniformly bounded curvatures that are not necessarily continuous) are ``flexible" while $C^2$ (continuous curvatures) isometric immersions are ``rigid". `Singular' $C^{1,1}$ isometries can have substantially smaller elastic energy than `smooth' $C^2$  isometries, which seems, on the surface, completely counter-intuitive. Further, the organizing principle for minimizing the energy of $C^{1,1}$ isometries is approximate ``local" balance between the principal curvatures \cite{EPL_2016}, and this naturally leads to fractal-like buckling patterns, as we illustrate in this work. The key to the flexibility of $C^{1,1}$ immersions is a novel topological defect in  pseudospherical surfaces-- {\em branch points} \cite{kirchheim2001Rigidity,gemmer2011shape} that are the principal objects of interest in this work.

After a review of non-Euclidean elasticity in \S \ref{sec:noneuclideanelasticity}, we present our main results in \S \ref{sec:branchedsurfaces}, \S \ref{sec:ddg} and \S \ref{sec:applications}. We conclude with a short discussion  of our results and their implications in \S\ref{sec:discussion}. We believe this work will be of interest to readers with diverse backgrounds, so we summarize our key results here to give readers an overview of the entire paper in broadly accessible language. This introduction is necessarily informal, and we refer the readers to the discussion in the body of the paper for the precise mathematical statements. 

We define branch points in Definition~\ref{def:m-star}. At  ``regular points", a surface negative Gauss curvature is saddle-shaped and has 4 `sectors', two above and two below the tangent plane. In contrast, at a branch point, the surface has $2m > 4$ sectors. We construct pseudospherical immersions containing branch points by assembling multiple  sectors together --

\noindent{\bf Prop. \ref{prop:assembly}.} Given $2m \geq 4$ smooth curves $\gamma_i$, originating at a point $p$, tangent to a common plane through $p$, and with alternating torsions $\pm 1$, there is a branched pseudospherical surface, with bounded principal curvatures, that contains (sufficiently small segments of) 
the curves $\gamma_i$.

Our next main result is that branched points are topological defects since they carry a topological charge that cannot be smoothed away. A key preliminary step is Definition~\ref{def:Jp}. that identifies the appropriate quantity which measures the topological charge.

\noindent{\bf Thm.~\ref{thm:branched}.} If a pseudospherical surface $S$ can be approximated in $W^{2,2}_{\mathrm{loc}}$, i.e. the local difference in curvatures as measured by the elastic bending energy can be made as small as desired, through surfaces with bounded curvature and no branch points, then the surface $S$ itself cannot have branch points.

In \S\ref{sec:introducingabranchpoint} we outline a procedure we call surgery, that allows us to add additional branch points to  surfaces (See Lemma~\ref{lem:surgery}). We then generalize the classical sine-Gordon equation for smooth pseudospherical surfaces, $\partial_{uv} \varphi = \sin \varphi$, to surfaces with branch points.

\noindent{\bf Thm. \ref{thm:sg-branched}.} With an appropriate definition of $\varphi(u,v)$, the angle between the asymptotic directions as a function of the asymptotic coordinates, we have 
$$
\oint_{\partial \Gamma} \frac{1}{2} (\partial_v \varphi dv - \partial_u \varphi du) = \iint_\Gamma \sin(\varphi) du dv - \pi \sum_{p_i \in \Gamma} (m_i-2) 
$$
where $\Gamma$ is any domain bounded by asymptotic curves and the correction is the $\pi$ times the sum of the topological charges, of all the branch points contained in $\Gamma$.

In \S \ref{sec:poincare-ddg} we introduce a new class of discrete nets that represent the extrinsic geometry of pseudospherical surfaces (i.e. the second fundamental form) in intrinsic coordinates, and allow for branch points. This is useful in applications to the elasticity of thin sheets, since it naturally discretizes the class of low-energy (isometric) deformations  of a pseudospherical surfaces. Using this discretization, we formulate {\bf Algorithm~\ref{alg:greedy-cuts}}, a greedy algorithm for finding (heuristically) the distribution of branch points that optimizes the elastic energy, i.e. solving the min-max problem of finding $\arg\min_{r} \esssup_{x \in \Omega} |H(x)|$ over immersions $r: \Omega \to \mathbb{R}^3$ with branch points where $H(x)$ is the mean curvature at $r(x)$.

In \S\ref{sec:applications} we present a `physics-style' back of the envelope calculation that allows us to estimate the energy and the number of wrinkles of nearly energy optimal immersions of disks with constant negative curvature, while allowing for branch points. Our arguments reveal the role of the branch points in significantly decreasing the elastic energy, from $\log \inf \mathcal{E} \sim R$ for smooth immersions to $\log \inf \mathcal{E} \sim \sqrt{R}$ for branched immersions of a disk of radius $R$, {\em cf.} Eqs.~\eqref{e2}~and~\eqref{ebs}. We compare our estimates with numerical simulations.

\section{Non-Euclidean elasticity}
\label{sec:noneuclideanelasticity}

We model our elastic bodies as hyperelastic materials, so that the observed configurations are minimizers of an elastic  energy functional. The functional quantifies the elastic energy due to strains in a particular deformed configuration of the body relative to the intrinsic (non-Euclidean) geometry  which can be represented as a Riemannian manifold $(\mathcal{B},\mathbf{G})$. This suggests a candidate for the resulting three-dimensional elastic energy 
\begin{equation}
\mathcal{I}[\tilde{y}] = \int_{\mathcal{B}} \| \partial_i \tilde{y}\cdot \partial_j \tilde{y} - G_{ij}\|^2\, dV,
\label{eqn:threedenergy}
\end{equation}
with $\tilde{y}:\mathcal{B}\to\mathbb{R}^3$ representing the deformation \cite{audoly2002ruban,marder2003theory,efrati2009elastic}. Though Eq.~\eqref{eqn:threedenergy} is arguably a prototypical model elastic energy, this functional is not  appropriate from variational perspective \cite{lewicka2011scaling} 
because of the possibility of fine-scale, orientation-reversing ``folded structures''. An appropriate elastic energy is defined using a polar decomposition of the deformation gradient $\nabla \tilde{y}$ to measure its deviation from an ``energy well" 
$\mathcal{F}(x) = \left\{RA(x): R\in SO(3) \right\}$, 
where $A = \sqrt{\mathbf{G}}$ is the symmetric, positive definite root of the 
Riemannian metric $\mathbf{G}$ \cite{lewicka2011scaling}.  $\mathcal{F}(x)$ contains all the {\em orientation preserving} isometric linear maps, from the   tangent space $T_x\mathcal{B}$ to $\mathbb{R}^3$ and this defines the elastic energy
\begin{equation}
I[\tilde{y}] = \int_{\mathcal{B}} \textrm{dist}^2\left(\nabla \tilde{y}(x), \mathcal{F}(x)\right)\, dx,
\label{eq:hyperelastic}
\end{equation}
The fully 3-dimensional variational problem for \eqref{eq:hyperelastic} is analytically intractable motivating the development of reduced models for shells, plates and rods \cite{love2013treatise,timoshenko1959theory}. For plates,
$$
\mathcal{B} = \Omega \times \left(-\frac{h}{2},\frac{h}{2}\right), \quad G_{ij} = \begin{pmatrix}g_{11} & g_{12} & 0 \\ g_{21} & g_{22} & 0 \\ 0 & 0 & 1 \end{pmatrix},
$$
the F\"oppl-von K\'arm\'an approximation \cite{ciarlet1980justification} is one such asymptotic reduction of the full 3-dimensional system to a 2-dimensional system on the center-surface $\Omega$ in the limit of vanishing thickness $h \to 0$. Here and henceforth $g$ will represent the 2d metric on $\Omega$. For a sheet of thickness $h$, scaling the in- and out-of-plane displacements to be $O(\sqrt{h})$ and $O(h)$ respectively gives an energy functional, called the FvK energy in the physics literature:
\begin{equation}
\mathcal{E}^h = h \, \mathcal{E}_{\textrm{stretching}} + h^3 \,\mathcal{E}_{\textrm{bending}},
\label{FvK}
\end{equation}
The resulting variational formulation, also known as the F\"oppl-von K\'arm\'an (FvK) equations, are coupled PDEs representing the equilibrium conditions associated with the reduced energy and have been used extensively to model thin elastic sheets. Efrati et al. extended the FvK theory to non-Euclidean plates, {\em i.e.} cases where the reference metric $g$ is not the Euclidean metric \cite{efrati2009elastic}. Using the formalism in \cite{efrati2009elastic}, the energy of a non-Euclidean plate  
with elastic modulus $Y$,  Poisson ratio $\nu = 0$, and setting $y = \tilde{y}|_\Omega$ is 
\begin{align}
\mathcal{E}^h 
& = \frac{Yh}{2}\int_\Omega \|dy\cdot dy - g\|^2 dA + \frac{Yh^3}{24}\int_\Omega (4H^2 - 2K) dA.
\label{eqn:finitethicknessenergy}
\end{align}
The first integral measures the stretching energy, quantifying the deviation of the induced metric from an assumed reference metric. The second integral, also known as the Willmore functional, describes the energy due to bending. $H=\frac{\kappa_1 + \kappa_2}{2}$ is the mean curvature and $K = \kappa_1\kappa_2$ is the Gauss curvature,
where $\kappa_1$ and $\kappa_2$ are the principal curvatures of the immersion $y:\Omega \to \mathbb{R}^3$. In this work $K=-1$ and we expect $\mathcal{E}^h \sim h^3$ if $y$ is an isometry.

The energy functional~\eqref{FvK} obtains from making an ansatz ``lifting" an immersion  $y: \Omega \to \mathbb{R}^3$ of the center surface to a deformation $\tilde{y}^h: \mathcal{B} \to \mathbb{R}^3$ given by the {\em Kirchhoff-Love extension} that maps fibers orthogonal to the center surface $\Omega$ in $\mathcal{B}$ to fibers orthogonal to the image $y(\Omega)$ in $\mathbb{R}^3$ (isometrically for $\nu = 0$). In contrast, rigorous derivations of the $h \to 0$ limit energy for plates are ansatz-free and are obtained through $\Gamma$--convergence \cite{friesecke2002foppl,friesecke2006hierarchy}. In the $\Gamma$--convergence approach, one assumes that, for a sequence of mappings $\tilde{y}^h: \Omega \times[-\frac{h}{2}, \frac{h}{2}] \to \mathbb{R}^3$, the elastic energy satisfies a uniform bound $h^{-\alpha} I[\tilde{y}^h] \leq C$, where $I[\cdot]$ is the ``bulk" elastic energy defined in~\eqref{eq:hyperelastic}. With no further assumptions, one shows that a subsequence of the immersions $\tilde{y}^h$ (appropriately rescaled) converges (in an appropriate sense). One then defines a space of limit configurations and a limit energy $\bar{E}$, so that for any allowed limit configuration $\bar{y}$, one can recover a sequence of configurations $\tilde{y}^h$ such that $\tilde{y}^h \to \bar{y}, h^{-\alpha} I[\tilde{y}^h] \to \bar{E}[\bar{y}]$. The limiting space and the limit energy can depend on $\alpha$, and, in general, one obtains  a hierarchy  of limiting elastic energy functionals, distinguished by the scaling of the energy with $h$ \cite{friesecke2006hierarchy,lewicka2014models}.

In our work, we are in the scaling regime $I[\tilde{y}^h] \leq C h^3$, and the corresponding limit theory is called the {\em Kirchhoff plate theory} in the literature on rigorous dimension reduction for slender elastic objects \cite{friesecke2006hierarchy,Schmidt2007Plate,lewicka2011scaling}. The scaled energy $h^{-3} I$ converges
\begin{equation}
\frac{24 h^{-3}}{Y} I[y] \stackrel{\Gamma}{\longrightarrow}   \mathcal{E}_2[y]= \begin{cases} \int (\kappa_1^2 + \kappa_2^2) \, dA & \textrm{if } y \in W^{2,2}, dy\cdot dy \equiv g,\\
+\infty &  \textrm{otherwise}.
\end{cases}
\label{eqn:gammalimit}
\end{equation}
to the isometry restricted Willmore energy, for various problems in incompatible elasticity of thin objects \cite{Schmidt2007Minimal,Schmidt2007Plate,lewicka2011scaling,kupferman2014riemannian,bhattacharya2016plates}. 
In this work, we will also consider an alternative bending energy, the isometry restricted {\em max curvature} $\mathcal{E}_\infty[y] = \max_\Omega (|\kappa_1|,|\kappa_2|)$ for $y \in W^{2,\infty}, dy\cdot dy \equiv g$ and $+\infty$ otherwise. For all bounded domains, the limit (Willmore) energy $\mathcal{E}_2$ is bounded by (the square of) the $\mathcal{E}_\infty$, so finding configurations with $\mathcal{E}_\infty$ finite is sufficient for showing the existence of finite Willmore energy isometries. We also note that $\kappa_1 \kappa_2 =-1$ a.e. for $C^{1,1}$ surfaces with $K=-1$. Consequently, 
$$
2 |H(x)| = |\kappa_1(x) + \kappa_2(x)| \leq \max(|\kappa_1(x)|,|\kappa_2(x)|) \leq 2|H(x)| + 1
$$
so that, for surfaces of constant curvature, so the max-curvature energy $\mathcal{E}_\infty$ is essentially the same as the max mean curvature $\max_{x \in \Omega} |H(x)|$.

A significant obstruction to finding these configurations is the \emph{singular edge} ; see Example~\ref{ex:bobbin} and Fig.~\ref{fig:bobbin}. The singular edge  is  an example of a {\em cuspidal edge singularity}, and is a generic feature of isometric immersions of $\mathbb{H}^2$ into $\mathbb{R}^3$ \cite{amsler1955surfaces,Ishikawa2006Singularities}.  One of the principal curvatures diverges at the singular edge so the $W^{2,\infty}$ energy is locally unbounded. 
As we show elsewhere, the Willmore energy also diverges in  any neighborhood of a point on the singular edge.
Our principal concern in this work will therefore be the question of how to evade or stave off the occurrence of a singular edge. 

The question of isometric embeddings and immersions of a Riemannian  $2$-manifold $(\Omega,g)$ as a surface in $\mathbb{R}^{3}$ has a long history, reviewed in \cite[Chaps.~2~\&~3,~\S 4.2]{han2006isometric}. We are specifically interested in the case of pseudospherical surfaces, i.e. when $g$ has constant negative curvature \cite[Chap. 4]{stoker}.  In 1901, Hilbert showed that there exists no geodesically complete, analytic immersion into $\mathbb{R}^3$ of a metric with constant negative curvature \cite{hilbert1901}. This result was later extended by Efimov to $C^2$ isometric immersions into $\mathbb{R}^3$  for any metric with negative curvature bounded away from zero \cite{efimov1963impossibility,milnor1972efimov}:
\begin{theorem*}[Efimov]
No surface with negative Gauss curvature bounded away from zero $K\leq - \delta <0$ can be $C^2$ immersed in Euclidean 3-space so as to be complete in the induced Riemannian metric.
\end{theorem*}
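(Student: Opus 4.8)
\emph{Proof strategy.}
The plan is to follow Efimov's argument \cite{efimov1963impossibility}, in the streamlined form due to Milnor \cite{milnor1972efimov}; I only sketch it here, since a complete proof is lengthy and is not needed in what follows. There are three moves: use the Gauss map to convert the statement into a two‑dimensional assertion about maps into the plane; extract from completeness and the curvature bound the precise analytic constraint that such a plane map would have to satisfy; and finally show that no $C^1$ map with everywhere‑negative Jacobian can meet that constraint.

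First I would reduce to the plane. Suppose $f:S\to\mathbb{R}^3$ is a $C^2$ immersion, complete in $I=\langle df,df\rangle$, with $K\le-\delta$. A closed surface cannot carry such a metric (Gauss--Bonnet), so $S$ is noncompact, and after passing to the universal cover we may take $S$ simply connected, hence topologically $\mathbb{R}^2$. Since $\det dN=K\neq0$ everywhere, the Gauss map $N:S\to S^2$ is a $C^1$ local diffeomorphism; composing with stereographic projection gives a $C^1$ map $\Phi:S\to\mathbb{R}^2$, still a local diffeomorphism, with everywhere‑negative Jacobian. The $C^2$ hypothesis on $f$ enters in an essential way precisely here: it is what makes $N$, and hence $\Phi$, of class $C^1$, so that $d\Phi$ is continuous and $\det d\Phi$ has pointwise meaning. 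Next I would transport the geometric data. The bound $|K|\ge\delta$ controls the area distortion of $\Phi$ (equivalently of $III=\Phi^*(g_{S^2})$ relative to $I$), while completeness of $(S,I)$ says that $\Phi$ is defined over arbitrarily large $I$‑balls; combining these with the Cayley--Hamilton relation $III=2H\,II-K\,I$ for the shape operator yields an intrinsic constraint on $\Phi$ — in Milnor's formulation, a Lipschitz‑type bound, uniform over the complete surface, on an auxiliary function built from $1/\sqrt{|K|}$.

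Finally I would invoke Efimov's lemma: there is no $C^1$ map from a complete surface into $\mathbb{R}^2$ with everywhere‑negative Jacobian satisfying that uniform constraint. This is the heart of the matter and the main obstacle; there is no short proof. One argues by contradiction: such a $\Phi$ pulls the two transverse coordinate‑line foliations of $\mathbb{R}^2$ back to two transverse $C^1$ foliations of $S$, orientation reversal together with the Jacobian bound constrains how these foliations may turn, and tracking the total turning along a large one‑parameter family of circles, while using the uniform bound from the previous step (via a Gr\"onwall‑type estimate) to keep the image from spreading too fast, produces a configuration of foliations on a plane incompatible with the Poincar\'e index theorem; see \cite{efimov1963impossibility,milnor1972efimov} for the details. (In Hilbert's constant‑curvature case the reduction is softer: asymptotic coordinates turn the problem into the non‑existence of a solution $\varphi$ of $\partial_{uv}\varphi=\sin\varphi$ on all of $\mathbb{R}^2$ with $0<\varphi<\pi$, but that route needs more than $C^2$ regularity to set up the Chebyshev net, whereas the Gauss‑map argument makes do with $C^2$.) I emphasize, for contrast with the rest of the paper, that the rigidity asserted here rests both on completeness — the surface must be unbounded — and on the $C^2$ hypothesis that powers the Gauss‑map reduction; neither is available for the finite, $C^{1,1}$ pseudospherical surfaces with branch points studied in the sequel.
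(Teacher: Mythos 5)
The paper does not prove this statement at all: Efimov's theorem is quoted as classical background, with the proof delegated entirely to \cite{efimov1963impossibility,milnor1972efimov}. Your proposal is in the same spirit --- it is an accurate outline of the Efimov--Milnor strategy (reduce via the Gauss map to a $C^1$ plane map with everywhere-negative Jacobian, transport completeness and the bound $|K|\ge\delta$ into a uniform analytic constraint, then invoke Efimov's non-existence lemma for such plane maps) --- but, as you say yourself, the decisive step, Efimov's lemma, is only cited, not proved. So what you have written is a correct summary of the literature rather than a proof; since the paper likewise offers no proof, there is nothing to compare beyond noting that your reduction and division into steps match the standard treatment in Milnor's survey.

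One genuine slip worth fixing in the write-up: the claim that ``a closed surface cannot carry such a metric (Gauss--Bonnet)'' is false --- every closed surface of genus $\ge 2$ carries metrics with $K\le-\delta<0$; Gauss--Bonnet only forces $\chi<0$. The noncompactness you need comes from the \emph{immersion}, not the abstract metric: a compact surface immersed in $\mathbb{R}^3$ has a point of positive Gauss curvature (take a point at maximal distance from the origin), which contradicts $K\le-\delta$. Alternatively the issue disappears if you pass to the universal cover first and apply Cartan--Hadamard (complete, $K\le 0$, simply connected $\Rightarrow$ diffeomorphic to $\mathbb{R}^2$), after which compactness never enters. Your closing remark on the role of the hypotheses is apt and consistent with how the paper uses the theorem: both completeness and the $C^2$ regularity are essential, and neither holds for the bounded $C^{1,1}$ branched isometries studied in the body of the paper.
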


Alternatively, Nash \cite{nash1954c1} and Kuiper \cite{kuiper1955c1}  showed that, for a general metric $g$, there exists a $C^1$ isometric immersion, indeed even an embedding:
\begin{theorem*}[Nash-Kuiper]
Let $(\mathcal{M}, g)$ be an $m$-dimensional Riemannian manifold and $f:\mathcal{M}\to\mathbb{R}^n$ a short immersion (resp. embedding), where $n\geq m+1$. Given an $\epsilon > 0$,  there exists an isometric immersion (resp. embedding) $f_{\varepsilon}$ of class $C^1$ satisfying
\begin{equation}
g(v, w) = \langle df_{\varepsilon}(v), df_{\varepsilon}(w) \rangle,
\end{equation}
which is uniformly $\varepsilon$-close to $f$ in the Euclidean norm on $\mathbb{R}^n$:
\begin{equation}
\|f(x) - f_{\varepsilon}(x)\| < \varepsilon\textrm{ for all } x\in\mathcal{M}.
\end{equation}
\end{theorem*}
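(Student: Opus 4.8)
\emph{Proof proposal (sketch of the convex-integration argument).}
The plan is to construct $f_\varepsilon$ as the $C^1$ limit of a sequence $f_0 = f,\, f_1,\, f_2,\,\dots$ obtained by iteratively superposing high-frequency \emph{corrugations} (one-dimensional spirals) that progressively absorb the \emph{metric deficit} $\rho_k := g - f_k^{\,*}\langle\cdot,\cdot\rangle$. One first reduces to the \emph{strictly short} case, replacing $f$ by $(1-\delta)f$ for a tiny $\delta>0$ (and mollifying if necessary) so that $\rho_0$ is smooth and positive definite; and one may work in a single coordinate chart, the global statement following from an exhaustion of $\mathcal{M}$ by compact pieces together with a partition of unity, at the cost of routine bookkeeping. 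The algebraic input is the classical fact that on a compact chart any positive-definite symmetric $2$-tensor decomposes as a finite sum $\sum_{j=1}^{N} a_j(x)^2\, d\ell_j\otimes d\ell_j$ of \emph{primitive metrics}, with $\ell_j$ linear coordinate functions and $a_j$ smooth and bounded below; hence it suffices to know how to remove one primitive metric at a time.

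The key step --- one \emph{stage} --- is as follows. Given an immersion $f$ whose deficit has leading primitive piece $a(x)^2\, d\ell^2$ and a frequency $\lambda \gg 1$, set
$$
f_\lambda(x) = f(x) + \frac{1}{\lambda}\Big( \Gamma(x)\,\zeta(x)\sin(\lambda\ell(x)) + \Gamma(x)\,\eta(x)\big(1-\cos(\lambda\ell(x))\big)\Big),
$$
where $\zeta,\eta$ are chosen --- using the immersion property, and in Kuiper's codimension-one case a combination of one tangential and one normal direction --- so that the quadratic form generated by differentiating the perturbation reproduces $a^2\, d\ell^2$, with amplitude $\Gamma\sim a$. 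A direct computation gives $f_\lambda^{\,*}\langle\cdot,\cdot\rangle = f^{\,*}\langle\cdot,\cdot\rangle + a^2\, d\ell^2 + O(1/\lambda)$ and $\|f_\lambda - f\|_{C^0} = O(1/\lambda)$, while $\|f_\lambda - f\|_{C^1} = O(a) = O(\sqrt{\|\rho\|})$ --- the amplitude is small but its derivative is not. Choosing $\lambda$ large enough absorbs the primitive piece up to arbitrarily small error and preserves the immersion property. A full \emph{step} from $f_k$ to $f_{k+1}$ consists of $N_k$ such stages applied in succession to remove most of $\rho_k$, arranged so that $\|\rho_{k+1}\| \le \tfrac{1}{2}\|\rho_k\|$.

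The final step is convergence. The geometric decay $\|\rho_k\| \le 2^{-k}\|\rho_0\|$ forces $\|f_{k+1}-f_k\|_{C^1} = O(\sqrt{\|\rho_k\|}) = O(2^{-k/2})$, which is summable, so $(f_k)$ is Cauchy in $C^1$ and converges to some $f_\varepsilon\in C^1$. Since $\rho_k\to 0$ uniformly, $f_\varepsilon^{\,*}\langle\cdot,\cdot\rangle = g$, so $f_\varepsilon$ is isometric; being isometric for a nondegenerate $g$ it is automatically an immersion, and if $f$ was an embedding of a compact manifold then $f_\varepsilon$ remains an embedding once the total $C^0$ displacement is kept below the injectivity scale of $f$. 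Choosing all frequencies large enough makes $\sum_k \|f_{k+1}-f_k\|_{C^0} < \varepsilon$, giving the claimed $\varepsilon$-closeness.

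\textbf{The main obstacle} is exactly the tension in these estimates: a corrugation removing a \emph{fixed} amount of metric deficit has $C^1$-size of order the \emph{square root} of that deficit, not small, so there is no slack --- one must force the residual deficit to decay geometrically and rely on the summability of $(2^{-k/2})$, while simultaneously keeping the $C^0$ errors summable and below $\varepsilon$. In codimension one (Kuiper's refinement of Nash) there is the further difficulty that only a single normal direction is available, which is why the corrugation above must spiral using a mixed tangential--normal frame rather than two normal directions; controlling the cross term $2\,df\cdot dw$ in the metric increment then requires the $(1-\cos,\sin)$ combination above rather than a single sinusoid.
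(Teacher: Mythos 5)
The paper does not prove this statement at all: it is quoted as a classical background theorem, with the proof deferred to the cited works of Nash \cite{nash1954c1} and Kuiper \cite{kuiper1955c1}. So there is no proof of record here to compare against, and your proposal should be judged on its own as a reconstruction of the classical argument. As such it is a faithful sketch of the standard Nash--Kuiper iteration: reduction to a strictly short map, Nash's decomposition of the positive-definite deficit into primitive metrics $\sum_j a_j^2\, d\ell_j\otimes d\ell_j$, stagewise corrugations with the characteristic estimates $\|f_\lambda - f\|_{C^0}=O(1/\lambda)$ versus $\|f_\lambda - f\|_{C^1}=O(\sqrt{\|\rho\|})$, geometric decay of the deficit, and $C^1$-convergence by summability of $2^{-k/2}$; you also correctly identify Kuiper's codimension-one refinement (one tangential plus one normal direction, the $(\sin,\,1-\cos)$ pair) as the point where $n=m+1$ becomes delicate. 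Two caveats if you wanted to turn the sketch into a proof: each stage must be arranged so that the residual deficit stays positive definite (otherwise the primitive decomposition is unavailable at the next step), which is why one removes only a fixed fraction of $\rho_k$ per step rather than ``most of it'' in an unquantified sense; and the theorem as stated is for an arbitrary (possibly non-compact) manifold, so the reduction to a single chart via exhaustion and partition of unity, and especially the preservation of embeddedness, require a locally finite scheme with a positive error \emph{function} $\varepsilon(x)$ rather than the compact-case injectivity-scale argument you give. Neither caveat is a wrong turn; they are the routine but non-trivial bookkeeping that the original papers supply.
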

The juxtaposition of these two results provides a strong motivation to explore isometric immersions with regularities between $C^1$ and $C^2$. There is a substantial body of work investigating the existence of isometric immersions of surfaces into $\mathbb{R}^3$ with H\"{o}lder regularity in the class $C^{1,\alpha}$ \cite{Borisov1959On,Borisov2004Irregular,Conti2012hPrinciple,DeLellis2018immersions,DeLellis2020Isometric}, with proofs of flexibility for $\alpha < 1/5$ and rigidity for $\alpha > 2/3$.  Our interest is in isometric immersions with $W^{2,2}$ Sobolev regularity \cite{Pakzad2004Sobolev}, motivated by the need to define a meaningful bending (i.e. Willmore) energy for the immersion, as is clear from the reduced energy \eqref{eqn:gammalimit}. Provided that the space of $W^{2,2}$-isometric immersions is nonempty, containing potentially many immersions, we use the elastic energy as a selection process: the observed surface is the isometric immersion which minimizes the bending energy.

\begin{remark}
Bella and Kohn prove that wrinkles do arise through a competition between stretching and bending energies, for $h > 0$, with additional ``forcing" conditions that restrict the class of allowed deformations~\cite[Thm.~1]{bella2014metric}. In this circumstance, the $W^{2,2}$ energy of minimizers does not stay bounded as $h \to 0$, i.e. the limiting isometries are not $W^{2,2}$.

We consider a different scenario in this work, namely {\em free sheets} with no imposed forces or boundary conditions. To analyze equilibrium states we have to impose boundary conditions that are appropriate for isometric immersions of free sheets, namely zero net forces and moments \cite{Guven2019Isometric}. In this work, we take a variational perspective for the  problem of minimizing~\eqref{eqn:gammalimit}, or the simpler problem of minimizing $\mathcal{E}_\infty = \kappa_{\max}$. Our candidate states are therefore ``test functions" for the energy and, unlike equilibria, they need neither satisfy the appropriate Euler-Lagrange equations nor the corresponding boundary conditions. 
\end{remark}

\section{Pseudospherical surfaces with branch points}\label{sec:branchedsurfaces}

The preceding discussion highlights the role of the {\em regularity} of isometries. Beyond the existence/non-existence of isometries, it is crucial whether a candidate isometry is in $W^{2,2}$. This motivates the following problem: $(\Omega,g)$ is a Riemannian 2-manifold.  
\begin{equation}
\mbox{Find } y: \Omega \to \mathbb{R}^3  \mbox{ such that } y \in W^{2,2}_{\text{loc}}(\Omega,\mathbb{R}^3), \quad dy\cdot dy = g \,\,\mathrm{a.e.} 
\label{W22isometry}
\end{equation}
If $y: \Omega \to \mathbb{R}^3$ is $C^1$, the Gauss normal map is given by $\displaystyle{N = \frac{\partial_1 y \times \partial_2 y}{\|\partial_1 y \times \partial_2 y\|}}$  with $\displaystyle{\partial_{i} = \frac{\partial}{\partial x^i}}$  for (arbitrary) coordinates $(x^1,x^2)$ on $\Omega$. If $y$ and $g$ are $C^2$, it follows that $N$ is $C^1$ and Gauss' {\em Theorema Egregium} implies that~\eqref{W22isometry} is equivalent to the Monge-Ampere Exterior differential system (EDS) \cite[\S6.4]{ivey2003cartan}:
\begin{equation}
N \cdot dy = 0, \qquad N^*(d\Omega) = \kappa \,dA, \qquad \kappa \equiv \kappa[g] \mbox{ is determined by } g,
\label{thmegrg}
\end{equation}
where $d \Omega$ is the area form on the sphere $S^2$ and $\kappa$ is the Gauss curvature.

Classical results in differential geometry imply that smooth solutions of~\eqref{thmegrg} with $\kappa < 0$ are hyperbolic surfaces and locally saddle shaped.
In contrast, the curly mustard leaf in Fig.~\ref{fig:mustard} is ``frilly", i.e. buckled on multiple scales with a wavelength that refines (``sub-wrinkles") near the edge \cite{eran2004leaves}.
This ``looks" very unlike smooth saddles  ({\em cf}. Fig.~\ref{fig:saddle}). 

 If $\Omega \subset \mathbb{R}^2$ is a bounded domain with a smooth boundary, and $g$ is a smooth metric on $\Omega$ with negative curvature, $g$ can be extended to a smooth metric $\bar{g}$ on $\mathbb{R}^2$ with Gauss curvature $\kappa[\bar{g}] < 0$ decaying (as rapidly as desired) at infinity. The existence of isometric immersions into $\mathbb{R}^3$, of smooth metrics with decaying negative curvature \cite{hong1993realization}, therefore implies that bounded smooth hyperbolic surfaces can be smoothly and isometrically embedded in $\mathbb{R}^3$. A smooth ($C^2$ is sufficient) hyperbolic surface cannot refine its buckling pattern and is thus ``non-frilly", as we show in \S\ref{sec:kminusonemonkeysaddle}. Why do we see frilly shapes in natural surfaces, as in Fig.~\ref{fig:mustard}, rather than a smooth saddle (see Fig.~\ref{fig:saddle})?

We have addressed aspects of this puzzle in recent work \cite{gemmer2011shape,gemmer2012defects,Gemmer2013Shape,EPL_2016,acharya2020continuum}  and find that frilly surfaces, somewhat counterintuitively, can have {\em smaller} bending energy than the smooth saddle, despite being (seemingly) rougher. It is true that $C^2$ hyperbolic surfaces are saddle-like near every point. A key result in this work is the identification of a topological invariant, the winding number (ramification index) of the normal map at a branch point, that distinguishes sub-wrinkled surfaces from saddles locally (See Lemma~\ref{lem:covering-number} and Fig.~\ref{fig:winding_number_normalfield}). With branch points, the surfaces are only $C^{1,1}$, like the monkey saddle in Fig.~\ref{fig:mkysaddle}, but the gain the additional flexibility to 
refine their buckling pattern and thus lower their energy \cite{EPL_2016}. This flexibility {\em is not available} to smooth saddles, and constitutes a key property of surfaces with branch points \cite{EPL_2016}. 

The additional flexibility for $C^{1,1}$ immersions of hyperbolic surfaces has been explored since the 1960s. Rozendorn discussed the branched hyperbolic paraboloid as an important example of a $C^{1,1}$ hyperbolic surfaces  \cite{Rozendorn-Chapter-92}, and constructed $C^{1,1}$ immersions of geodesically complete, uniformly negatively curved ($K \leq -\delta < 0$) surfaces that are smooth except at finitely many points \cite{Rozendorn1962Complete,Rozendorn1966Weakly,Rozendorn-Chapter-92}. In contrast to Rozendorn's construction \cite{Rozendorn1962Complete}, with a focus on  minimizing the ``singular set" of $C^{1,1}$ points and leaving the metric ``free", the constructions in \cite{EPL_2016,gemmer2011shape} exactly preserve a prescribed metric, but need ``larger" sets of singular $C^{1,1}$ points. The goals for this approach include enlarging the domain that can be immersed isometrically into $\mathbb{R}^3$ or optimizing the bending energy over isometries. In this work we follow the latter approach and seek $C^{1,1}$ isometric immersions of a prescribed metric, namely one with constant negative curvature $K=-1$. 

\begin{definition}[Hyperbolic plane] The hyperbolic plane $\mathbb{H}^2$ is the maximally symmetric, simply connected, 2-manifold with with constant negative curvature $-1$. An explicit model for this space is the Poincar\'e disk $x^2+y^2 <1$ with the metric $\displaystyle{g = \frac{4(dx^2+dy^2)}{(1-(x^2+y^2))^2}}$. 
\label{def:H2}
\end{definition}

\subsection{Pseudospherical surfaces} \label{sec:geometry}

Here and henceforth we will use the adjective pseudospherical to mean ``pertaining to subsets of the Hyperbolic plane". We will build branched $C^{1,1}$ pseudospherical surfaces in $\mathbb{R}^3$ by patching together $C^{2}$ immersions of subsets of $\mathbb{H}^2$, such that the pieces join with continuous tangent planes. To this end, we collect and also extend a few properties of $C^2$ pseudospherical surfaces (See \cite[Chaps. V \& VI]{eisenhart1909treatise} \cite[\S 1.1 \& \S 1.2]{rogers2002backlund} and \cite{dorfmeister2016pseudospherical}). 
\begin{enumerate}[leftmargin=*,label={(\Alph*)}]
\item Every $C^2$ immersion with $K=-1$ admits a pair of asymptotic coordinates $(u,v)$ (locally) so that parametrized surface $(u,v) \mapsto r(u,v)$ satisfies $r_u \times r_v \neq 0, N\cdot r_{uu} = N \cdot r_{vv} = 0$ where $N = \pm \, r_u \times r_v/\|r_u \times r_v\|$ \cite{hartman1951asymptotic}. 
The sign choice in the definition of $N$ is immaterial if $\|r_u \times r_v\|$ never vanishes. 
\item By the Beltrami-Enneper theorem \cite[Chap. V]{eisenhart1909treatise}, the unit-speed asymptotic curves $r(\cdot,v_0)$ and $r(u_0,\cdot)$ have constant torsions $\pm 1$. 
We choose the $u$ and $v$ coordinates so that the corresponding asymptotic curves have torsions -1 and +1 respectively. Since $r_{u} \perp N$ and $r_{v} \perp N$, $(r_u, N \times r_u, N)$ is an orthonormal Frenet frame for the $u$-asymptotic lines $r(\cdot,v_0)$ and $(r_v, N \times r_v, N)$ is a frame for the $v$-asymptotic lines. The Frenet-Serret formulae \cite[Chap. V]{eisenhart1909treatise} read
\begin{align}
\partial_u \begin{pmatrix}
r_u \\ N \times r_u  \\ N
\end{pmatrix} & = \begin{pmatrix}
0 & \kappa^u & 0 \\ -\kappa^u & 0 & -1 \\ 0 & 1 & 0
\end{pmatrix}\begin{pmatrix}
r_u \\  N \times r_u \\ N
\end{pmatrix}, 
\nonumber \\
\partial_v \begin{pmatrix}
r_v \\ N \times r_v \\ N
\end{pmatrix} & = \begin{pmatrix}
0 & \kappa^v & 0 \\ -\kappa^v & 0 & 1 \\ 0 & -1 & 0
\end{pmatrix} \begin{pmatrix}
r_v \\ N \times r_v  \\ N
\end{pmatrix}.
\label{eq:frames}
\end{align}
$\kappa^u$ and $\kappa^v$ are the geodesic curvatures of the $u$ and $v$ asymptotic lines.
\item The Frenet-Serret equations yield $N_u = N \times r_u$ so $(r_u,N_u,N)$ is a right-handed orthonormal frame. Similarly, $(r_v,-N_v,N)$ is a right-handed orthonormal frame. This gives the {\em Lelieuvre formulae} \cite[\S 1.6]{rogers2002backlund}
\begin{align}
r_u(u,v) & = N_u(u,v) \times N(u,v), \nonumber \\
r_v(u,v) & = -N_{v}(u,v) \times N(u,v).
\label{eq:lelieuvre}
\end{align}
\item The Lelieuvre equations are consistent if and only if $\partial_v(r_u) = \partial_u(r_v)$ which is equivalent to the condition that the normal field $(u,v) \mapsto N(u,v)$ is {\em Lorentz harmonic}\begin{equation}
N \times N_{uv} = 0.
\label{eq:lorentz_harmonic}
\end{equation}  
It immediately follows that $r_{uv} = N_u \times N_v$.
\item \label{item:symmetries} Note that Eqs.~\eqref{eq:lelieuvre}~and~\eqref{eq:lorentz_harmonic} and the signs of the torsions in~\eqref{eq:frames} are invariant under three separate symmetries, $N \to -N, u \to -u$ or $v \to -v$.  Also, the transformations $u \to -u, v\to -v$  or $N \to - N$,  respectively, reverse the sign of the geodesic curvature $\kappa^u$, reverse the sign of $\kappa^v$, and reverse the signs of both $\kappa^u$ and $\kappa^v$ in~\eqref{eq:frames}.
\item Note that $-u$ (resp. $-v$) is as much a valid asymptotic coordinate as is $u$ (resp. $v$). This is not an issue with global (smooth) asymptotic coordinates, but will be an issue for the branched surfaces that are our principal objects of interest.  

We will define $N$ so that it is continuous in situations where the underlying surface is $C^1$, {\em independent of the specific asymptotic parametrization}. Let $\omega$ be an orientation (a non-vanishing 2 form)  on this surface. If the surface is a graph $(x_1,x_2,w(x_1,x_2))$, a canonical choice is $\omega = dx_1 \wedge dx_2$. We define the normal $N$ so that the orientation $\omega$ on the surface 
is consistent with the cross product in the ambient space $\mathbb{R}^3$ 
i.e. $\omega(X,Y) =  \beta (X \times Y) \cdot N$ for all vector fields $X,Y$ tangential to the surface and a strictly positive function $\beta$. This is equivalent to defining
\begin{equation}
N \equiv N^\omega = \mathrm{sign}(\omega(r_u,r_v)) \frac{r_u \times r_v}{\|r_u \times r_v\|} = \sigma \frac{r_u \times r_v}{\|r_u \times r_v\|}
\label{eq:Nfront}
\end{equation}
where we have defined   $\sigma \equiv \mathrm{sign}(\omega(r_u,r_v))$ to keep the notation compact. It is easy to see that this definition of $N$ is insensitive to  ``flips" $u \to - u$ or $v \to -v$ in the asymptotic parametrization. A related issue is addressed in the definition of the normal $N_{\text{front}}$ for a {\em pseudospherical front} in Ref.~\cite{dorfmeister2016pseudospherical}, where the consideration was the potential vanishing of $\|r_u \times r_v\|$.

\item \label{item:ruv} If we define the angle between the asymptotic directions by $\cos \varphi = r_u \cdot r_v$, this definition is not invariant under the flips $u \to -u$ or $v \to -v$.  We therefore pick an ``invariant" definition for the angle between the asymptotic directions by
\begin{align}
\cos(\varphi) & = \sigma r_u \cdot r_v = 
- \sigma N_u \cdot N_v, \nonumber \\
\sin(\varphi) & =  
\sigma (r_u \times r_v) \cdot N 
\nonumber \\
& =  \|r_u \times r_v\|  \nonumber \\
r_{uv} & = N_u \times N_v = 
- \sigma \sin \varphi N
\label{eq:def_phi}
\end{align} 
For this definition, $\sin \varphi \geq 0$ so $0 \leq \varphi \leq \pi$. $r$ is an immersion only if $r_u$ and $r_v$ are linearly independent, so this precludes $\varphi$ from attaining the values $0$ or $\pi$ on a smooth pseudospherical surface. 
Initially, we work on open sets where 
$\omega(r_u,r_v)$ does not change sign and $\|r_u \times r_v\|$ is nonvanishing.
\item In terms of this angle $\varphi$ and the normal $N = N^\omega$, the first and second fundamental forms of the pseudospherical surface are given by
\begin{align}
g & = dr \cdot dr = du^2 + 2 \sigma \cos \varphi \,du dv + dv^2  \nonumber \\
h & = dN^\omega \cdot dr = -2 \sigma  \sin \varphi\, du dv
\label{eq:metrics}
\end{align}
\item \label{item:moutard} $N_u = N \times r_u$ and $N_v = -N \times r_v$ are in the plane perpendicular to $N$ that is spanned by $r_u,r_v$. Indeed $N_u$ is obtained by rotating $r_u$ by $\pi/2$ and $N_v$ is $r_v$ rotated by $-\pi/2$.  Differentiating, and using~\eqref{eq:def_phi}, we get 
\begin{align}
N_{uv} & = N_v \times r_u = - (N \times r_v) \times r_u \nonumber \\
& = N(r_u \cdot r_v) - r_v(r_u \cdot N) =  \sigma \cos \varphi N = - (N_u \cdot N_v) N
\label{eq:moutard} 
\end{align}
\item To extract all the compatibility conditions encoded in~\eqref{eq:frames}, we also need the derivatives of the Frenet frame for the $u$-lines with respect to $v$ and vice versa. Recognizing that $N \times r_u = N_u$ and combining the results in the previous items, we have
$$
\partial_v \begin{pmatrix}
r_u \\ N \times r_u  \\ N
\end{pmatrix}  = \sigma \begin{pmatrix}
0 & 0 &  -\sin \varphi \\ 0 & 0 & \cos \varphi \\ \sin \varphi & -\cos \varphi & 0
\end{pmatrix}\begin{pmatrix}
r_u \\  N \times r_u \\ N
\end{pmatrix}. 
$$
Writing these equations abstractly as $\partial_u F^u = A F^u, \partial_v F^u = B F^u$, where $F^u$ denotes the frame $(r_u,N_u,N)$, compatibility $\partial_v(\partial_u F^u) = \partial_u(\partial_v F^u)$ is equivalent to the {\em zero-curvature condition} $\partial_v A - \partial_u B + [A,B] = 0$ \cite[\S 1.2]{rogers2002backlund}. Computing the matrix entries for this system, and the corresponding 
system for the frame $F^v$, we get
\begin{align}
\kappa^u  = -\partial_u \varphi,  \quad \kappa^v & = \partial_v \varphi, \nonumber  \\
-\partial_v(\kappa^u) = \partial_u(\kappa^v) = \varphi_{uv} & = \sigma \sin \varphi, \label{eq:sg}
\end{align}
the {\em Sine-Gordon equation} for $\varphi$ and  relations between the geodesic curvatures $\kappa^u,\kappa^v$ of the asymptotic curves and the derivatives of $\varphi$. In obtaining this equation, we have assumed that $\sigma$ is a constant, so this only applies to open sets where $\omega(r_u,r_v)$ does not change sign.
In \S\ref{sec:sinegordon} 
we generalize the Sine-Gordon equation to situations where $\sigma$ can change sign (see Theorem~\ref{thm:sg-branched}).
\end{enumerate}

We are now in position to define the basic building block of a branched pseudospherical surface. We will follow the discussion 
 in Ref.~\cite{dorfmeister2016pseudospherical}:
\begin{definition} A function $(u,v) \mapsto f(u,v) \in \mathbb{R}^n$ is $C^{1M}$ if each component is $C^1$, and has continuous mixed partial derivatives $f_{uv} = f_{vu}$ on the domain of $f$.
\end{definition}
Note that $C^{1M}$ functions are not necessarily $C^2$ and neither  $f_{uu}$ nor $f_{vv}$ needs to exist. Also, a smooth reparametrization $(u,v) = g(r,s)$ of a $C^{1M}$ function $f$ can yield a function $h(r,z) = f\circ g(r,s)$ that is {\em not} $C^{1M}$ \cite{dorfmeister2016pseudospherical}.

\begin{definition} Let $D \subseteq \mathbb{R}^2$ be equipped with global coordinates $(u,v)$. A $C^{1M}$ mapping $N:D \to S^2$ is {\em weakly (Lorentz) harmonic} if 
\begin{enumerate}
\item $N_u \cdot N_u > 0$ and $N_v \cdot N_v > 0$ on $D$.
\item  $N$  is {\em Moutard}, i.e. there is a continuous function $\nu:D \to \mathbb{R}$ such that $N_{uv} = N_{vu} = \nu N$ (\cite[Thm. 1.12]{bobenko2008bdiscrete}). 
\end{enumerate}
\end{definition}

Weakly harmonic mappings $D \to S^2$ allows us to generalize the class of smooth pseudospherical surfaces \cite{dorfmeister2016pseudospherical}. In particular, if $D$ is simply connected and $N:D \to S^2$ is weakly harmonic, then there is a corresponding {\em pseudospherical front}, or PS-front for short \cite{dorfmeister2016pseudospherical}, i.e. a $C^{1M}$ solution $r:D \to \mathbb{R}^3$ to the Lelieuvre equations~\eqref{eq:lelieuvre}, that is weakly regular, i.e. $r_u \cdot r_u > 0, r_v \cdot r_v > 0$.  PS-fronts allow for the possibility of singularities, i.e. sets where $r$ is not an immersion, and classical examples include the pseudosphere (see \cite[\S 6]{dorfmeister2016pseudospherical}), and Minding's bobbin, as we discuss further in Ex.~\ref{ex:bobbin}. 

We also have a necessary and sufficient condition for ruling out such singularities -- $r$ is an immersion at every point where $N$ is an immersion, i.e. $N_u \times N_v \neq 0$ \cite{dorfmeister2016pseudospherical}. 

\begin{example}A Minding's bobbin, depicted in Fig.~\ref{fig:bobbin}, is a surface of revolution given in cylindrical polar coordinates $(\rho,\theta,z)$  by $\rho(s) = \kappa^{-1} \cosh(s), z(0) = 0, z'(s)^2 + \rho'(s)^2 = 1$, where $s$ is the arclength along a meridian and $\kappa$ is the curvature of the `throat' of the bobbin, the equatorial circle $s=0$. The induced metric is $g = ds^2 + \rho^2 d \theta^2$ and the corresponding Gauss curvature is $K = - \rho''(s)/\rho(s) = -1$. The maximal extension of a Minding's bobbin has a singular edge at a finite distance from the equator, since it cannot be extended smoothly beyond $s = \pm L, L= \arcsinh(\kappa)$ where $\rho'(s) = \pm 1, z'(s) = 0$.
Minding's bobbin has the topology of a cylinder $S^1 \times (-L,L)$ and its universal cover  is a `strip' $\mathbb{R} \times (-L,L)$. The diameter of any geodesic disk than can be smoothly and  isometrically embedded in the universal cover is therefore bounded by $2 \arcsinh(\kappa)$ \cite{gemmer2011shape}.
\begin{figure}[ht]
        \centering
      \includegraphics[width=9cm, trim={1cm, 2cm, 1.5cm, 1.5cm}, clip]{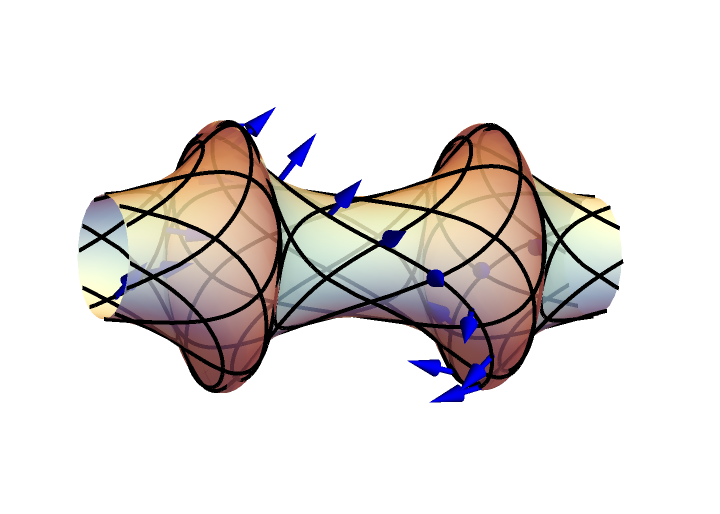}
         \caption{Minding's bobbin with smooth asymptotic curves and cuspidal singular edges.  The normal $N^\omega$ is also shown along an asymptotic curve.}
        \label{fig:bobbin}
\end{figure}
\label{ex:bobbin}
\end{example}
 {
In order to get a diameter $2R$, it follows that the max curvature $\mathcal{E}_\infty > \kappa > \sinh(R)$. Note that, this bound obtains from the throat, and not, as one might have  imagined, from the region near the singular edge.
The longitudinal curvature is given by $\frac{d}{ds} \arcsin \rho'(s) = \frac{\cosh(s)}{\sqrt{\kappa^2 - \sinh^2(s)}}$, and it diverges as the distance to the singular edge to the power $-\frac{1}{2}$ \cite{EPL_2016}. In particular, the Willmore energy also diverges, logarithmically, on any neighborhood of a point on the singular edge. For bobbins that can ``contain" a disk with radius $R$, we have,
\begin{equation}
\inf \mathcal{E}_\infty \geq \inf_{\kappa \geq \sinh(R)} \max\left(\kappa, \cosh(R)(\kappa^2 - \sinh^2(R))^{-1/2} \right),
\label{eq:c2energy}
\end{equation}
and optimization requires a ``global" balance between the `azimuthal' principal curvature at the throat and the `longitudinal' principal curvature near the edge.

Reflecting a pseudospherical surface of revolution about a plane through antipodal meridians preserves the arc length parameter $s \mapsto s$, inverts the torsion so $u$-asymptotic curves map to $v$-asymptotic curves and vice versa, and also inverts angular derivatives $\partial_\theta \mapsto -\partial_\theta$. It therefore follows that the vector $\partial_s \parallel (\partial_u + \partial_v)$ and $\partial_\theta \parallel (\partial_u - \partial_v)$. Indeed, more is true. The fact that the angular separation in $\theta$ between two $u$- (or $v$-) asymptotic curves is the same {\em at any `height'} $z(s)$ (equivalently independent of the arc-length coordinate $s$) implies that $\theta \propto u-v$ for any pseudospherical surface of revolution. Consequently, we can choose $u,v$ such that $r_u\cdot r_u = r_v \cdot r_v = 1, s = s(u+v), \theta = \alpha u- \alpha v$ for some constant $\alpha$.

 With these `normalizations' for the asymptotic coordinates $u$ and $v$, Minding's bobbin can be expressed as $s = s(u+v)$ in terms of elliptic functions \cite[\S 21]{gray1998modern},\cite{gemmer2011shape}. Rather than recapitulate the exact solutions, our goal here is to illustrate various features of PS-fronts using Minding's bobbin as an example.

$\partial_\theta = \frac{1}{2\alpha} (\partial_u - \partial_v)$ is the Killing vector generating the azimuthal symmetry. For scalar quantities $q \in \{s,\varphi,\sigma\}$, invariance under this symmetry implies $q = q(u+v)$. 
Comparing the metric $g=ds^2 + \kappa^{-2} \cosh^2(s) d \theta^2$ with the expression in asymptotic coordinates
$$
g = du^2 + 2 \sigma \cos \varphi \, du dv + dv^2 = \begin{cases} \cos^2 \frac{\varphi}{2} (du+dv)^2 + \sin^2 \frac{\varphi}{2} (du-dv)^2 & \sigma = +1, \\ \sin^2 \frac{\varphi}{2} (du+dv)^2 + \cos^2 \frac{\varphi}{2} (du-dv)^2 & \sigma = -1, \end{cases}
$$
we get, after setting $\xi = u+v$, 
\begin{align}
\frac{ds}{d\xi} & = \frac{1+\sigma}{2} \cos \frac{\varphi}{2} +\frac{1-\sigma}{2} \sin \frac{\varphi}{2}  \nonumber \\
 \frac{\alpha}{\kappa} \cosh(s(\xi)) & = \frac{1+\sigma}{2} \sin \frac{\varphi}{2} +\frac{1-\sigma}{2} \cos \frac{\varphi}{2} 
\label{eq:bobbin}
\end{align}
We can determine the constant $\alpha$ by imposing the requirement that, at the singular edge, whether approached from a region with $\sigma =1$ or from a region with $\sigma = -1$, we should get $ \frac{\alpha}{\kappa} \cosh(s(\xi)) \to 1$. This suggests setting $\alpha = \frac{\kappa}{\sqrt{\kappa^2+1}}$ in~\eqref{eq:bobbin} will yield a pseudospherical surface of revolution with a profile $\rho(s) = \kappa^{-1} \cosh(s)$. This is indeed true as we now prove:

\noindent{\bf Lemma:} Let $\kappa > 0, \sigma \in \{-1,1\}$ be given, and let $s(\xi)$ be a solution to the ODE
\begin{equation}
\left(\frac{ds}{d\xi}\right)^2 + \frac{1}{\kappa^2 + 1} \cosh^2(s) = 1.
\label{eq:econs}
\end{equation}
Then, on domains where $s'(u+v) \neq 0$,  
\begin{equation}
\varphi(u,v) =  (1+\sigma) \arcsin \left(\frac{\cosh(s(u+v))}{\sqrt{\kappa^2+1}}\right) +  (1-\sigma) \arccos \left(\frac{\cosh(s(u+v))}{\sqrt{\kappa^2+1}}\right)
\label{eq:sg-soln}
\end{equation}
solves the sine-Gordon equation $\partial_{uv} \varphi = \sigma \sin \varphi$.

\begin{proof}
It is straightforward to verify that any solution of~\eqref{eq:econs} followed by a definition of $\varphi$ through~\eqref{eq:sg-soln} will give $s(\xi),\varphi$ that satisfy~\eqref{eq:bobbin}. These solutions are smooth whenever $\sigma$ is smooth, i.e. constant. Multiplying the two equations in~\eqref{eq:bobbin} yields
$$
\frac{d}{d\xi} \left[\frac{\sinh(s(\xi))}{\sqrt{\kappa^2+1}}\right] = \frac{\cosh(s(\xi))}{\sqrt{\kappa^2+1}} \left(\frac{ds}{d\xi}\right) = \frac{1}{2} \sin \varphi.
$$
Differentiating the second equation in~\eqref{eq:bobbin} assuming $\sigma$ is locally constant and dividing by $s'(\xi) \neq 0$ from the first equation gives
$$
\frac{\sinh(s(\xi))}{\sqrt{\kappa^2+1}} = \frac{ \frac{1+\sigma}{2} \cos \frac{\varphi}{2} -\frac{1-\sigma}{2} \sin \frac{\varphi}{2} }{ \frac{1+\sigma}{2} \cos \frac{\varphi}{2} +\frac{1-\sigma}{2} \sin \frac{\varphi}{2}} \left(\frac{\varphi'(\xi)}{2}\right) = \sigma \frac{\varphi'(\xi)}{2} \quad \because \sigma \in \{-1,1\}
$$
Combining these two equations, we get $\partial_{uv} \varphi = \varphi''(u+v)  = \sigma \sin \varphi.$
\end{proof}
Note that~\eqref{eq:econs} is the statement of conservation for an energy for a unit mass particle moving in a potential $V(s) = \frac{1}{2(\kappa^2+1)} \cosh^2(s)$ if we interpret $\xi = u+v$ as time. The corresponding orbits are bounded periodic functions $s=s(\xi)$ and the turning points where $s'=0$ are when $s = \pm L$ as expected. This mechanical analogy shows that, at the turning points, $s'(\xi) = 0$ and $s''(\xi) = -V'(s) \neq 0$, and further, the solutions $s=s(u+v)$ are `global', i.e exist for all $(u,v) \in \mathbb{R}^2$.
Since $ds^2 = d(\kappa^{-1} \cosh(s))^2 + dz^2$, it follows from~\eqref{eq:econs} that 
\begin{equation}
\left(\frac{dz}{d\xi}\right)^2 = \left(1- \frac{\sinh^2(s)}{\kappa^2}\right)  \left(\frac{ds}{d\xi}\right)^2 = \frac{(\kappa^2 - \sinh^2(s))^2}{\kappa^2(\kappa^2+1)}
\label{eq:zprime}
\end{equation}
The right hand side vanishes quadratically in $(L^2-s^2)$, so it follows that $z'(\xi_c) =z''(\xi_c) =0$  at the turning points $\xi_c$ where $s(\xi_c) = \pm L$ and we can pick the square root so that $z'(\xi) \geq 0$ for all $\xi$.  Near a turning point, we therefore get 
\begin{align*}
\rho(\xi)  = \rho_c- c_1 (\xi-\xi_c)^2 + O((\xi-\xi_c)^3), &\quad
z(\xi)  = z_c + c_2 (\xi - \xi_c)^3 + O((\xi-\xi_c)^4),
\end{align*}
where $\rho_c = \rho(\xi_c) = \kappa^{-1}\sqrt{\kappa^2+1}, z_c = z(\xi_c)$ and  $c_1,c_2 > 0$.  
The mapping 
\begin{equation}
(u,v) \mapsto r(u,v) = \left(\rho(u+v) \cos\left(\frac{\kappa(u-v)}{\sqrt{\kappa^2+1}}\right), \rho(u+v) \sin\left(\frac{\kappa(u-v)}{\sqrt{\kappa^2+1}}\right), z(u+v)\right)
\label{eq:bobbin_psfront}
\end{equation}
 is not an immersion on the circles given by $u+v = \xi_c$  and exhibits cuspidal singularities at these points, as we illustrate in Fig.~\ref{fig:bobbin}. Nonetheless, the asymptotic curves $u \mapsto r(u,v_0)$ are smooth and satisfy $r_u \cdot r_u =1$, and likewise for the curves $v \mapsto r(u_0,v)$.
 
 Defining the normal $\displaystyle{N = \frac{r_u \times r_v}{\|r_u \times r_v\|}}$ yields, with a positive constant of proportionality, 
 $$
 N(u,v) \propto s'(u+v)\left(-\frac{dz}{ds}\cos\left(\frac{\kappa(u-v)}{\sqrt{\kappa^2+1}}\right), -\frac{dz}{ds}\sin\left(\frac{\kappa(u-v)}{\sqrt{\kappa^2+1}}\right), \frac{\sinh(s(u+v))}{\kappa}\right).
 $$
Since $\frac{dz}{ds} = 0$ at the turning points, this definition of the normal flips between $N = \pm \mathbf{e}_3$ at every turning point and is thus discontinuous. In contrast, the definition $\displaystyle{N^\omega = \sigma \frac{r_u \times r_v}{\|r_u \times r_v\|}}$, $\sigma = \mathrm{sgn}(s'(u+v))$ yields a continuous (even $C^{1M}$) definition of the normal. 
 
Using~\eqref{eq:econs}~with~\eqref{eq:zprime} and recognizing that $\frac{dz}{ds} = \sigma\left|\frac{dz}{ds}\right|$ we obtain
\begin{align}
N_z & =  \kappa^{-1} \sinh(s(u+v)), \quad \sigma = \mathrm{sgn}(s'(u+v)), \quad \theta = \kappa(\kappa^2+1)^{-1/2}(u-v)  \nonumber \\
N^\omega & = \left(-\sigma \sqrt{1-N_z^2}\cos\theta, - \sigma \sqrt{1-N_z^2}\sin \theta, N_z\right).
\label{eq:NFront}
\end{align}
$N^\omega$, in conjunction with the PS-front $r$ in~\eqref{eq:bobbin_psfront}, satisfies the Lelieuvre equations~\eqref{eq:lelieuvre}. }

\begin{definition}
An Amsler sector is a PS-front $r:[0,\infty) \times [0,\infty) \to \mathbb{R}^3$ such that the the bounding $u$- and $v$-asymptotic curves $r(\cdot,0)$ and $r(0,\cdot)$ are geodesics in $\mathbb{R}^3$. A pseudo-Amsler sector is a PS-front $r:[0,u_0) \times [0,v_0) \to \mathbb{R}^3$ such that the one of the bounding $u$- and $v$-asymptotic curves, either $r(\cdot,0)$ or $r(0,\cdot)$ is geodesic in $\mathbb{R}^3$.
\label{def:amsler-sector}
\end{definition}

 Amsler and pseudo-Amsler sectors will play a fundamental role in this work. Amsler sectors can be constructed by solving the sine-Gordon equation $\varphi_{uv} = \sin \varphi$ on the first quadrant $u \geq 0, v \geq 0$ with boundary data $\varphi(u,0) = \varphi(0,v) = \phi_0$ \cite{amsler1955surfaces}. These solutions admit a self-similar reduction of the form $\varphi(u, v) = \varphi(z),$ with $z=2\sqrt{uv}$. {This self-similar ansatz  gives
$ \partial_{uv} = \frac{1}{z}\partial_{z} + \frac{\partial^2}{\partial z^2}$ and the sine-Gordon equation reduces to
\begin{equation}
    \varphi''(z) + \frac{\varphi'(z)}{z} - \sin\varphi(z) = 0,
    \label{PIII}
\end{equation}
known as Painlev\'{e} III  in trigonometric form \cite[Chap. 2]{bobenko2000painleve}. The preimage of $z=0$ is the set $\{(u, v) : u=0 \textrm{ or } v=0\}$, and hence we see immediately that $\varphi(u, v)$ is a constant along the axes, and there is an (unbounded) open neighborhood of the axes on which the PS-front is actually an immersion since $\varphi$ is close to $\phi_0$ and away from $0$ and $\pi$. This is in stark contrast with Minding's bobbin where every $u$-asymptotic curve hits the cuspidal singular edge at a finite value of the parameter $u$ and likewise for $v$-asymptotic curves.

For an Amsler sector, along the asymptotic curves given by $u=0$, we have $\kappa^u = \partial_u \phi =0$ by~\eqref{eq:sg}, and it follows from Eq.~\eqref{eq:frames} that $\partial_u r_u = 0$ showing that this curve is geodesic in $\mathbb{R}^3$. A similar argument applies to the asymptotic curve given by $v=0$. }

\subsection{Assembling a pseudospherical surface with branch points} \label{sec:kminusonemonkeysaddle}

As a first illustration of the procedure to construct $C^{1,1}$ pseudospherical immersion we construct a monkey saddle with constant negative curvature, $K=-1$. Fix an even integer $2m\geq 4$. The number $2m$ determines the number of asymptotic rays extending from the origin and the resulting topological structure of the asymptotic coordinate system. 

\begin{definition}[$m$-star] 
Given angles $\alpha_i\in(0, \pi)$, $i\in\{1\dots 2m\}$ satisfying $\sum_i \alpha_i = 2\pi$ and lengths $l_i >0, i = 1,2,\ldots,2m$, set $\beta_0 = 0, \beta_i = \beta_{i-1} + \alpha_{i}$ for $i = 1,2,\ldots 2m$, 
and define the unit vectors $\mathbf{s}_i = \cos(\beta_i) \mathbf{e}_1 + \sin(\beta_i) \mathbf{e}_2$. Define the sectors $S_i \subset \mathbb{R}^2$ by
\begin{align}
S_{i}  & = \{c\, \mathbf{s}_{i-1} + d\, \mathbf{s}_{i} \, | 0 \leq c < l_{i-1}, 0 \leq d < l_i\}, \quad i = 1,2,\ldots,2m.
\label{def:sectors}
\end{align}
An $m$-star $T$  is a topological space with the set $T= T(\{\alpha_i\},\{l_i\}) =\bigcup_{i=1}^{2m} S_i$  constructed as above and equipped with the subspace topology given by the inclusion $T \subset \mathbb{R}^2$.
\label{def:m-star}
\end{definition}

We define coordinates $(\xi_i,\eta_i)$ so that $\mathbf{x} = \xi_i \mathbf{s}_i + \eta_i \mathbf{s}_{i+1}$ for $\eta_i \geq 0$ and $\mathbf{x} = \xi_i \mathbf{s}_i - \eta_i \mathbf{s}_{i-1}$ for $\eta_i < 0$. This gives a bi-Lipschitz mapping $(\xi_i,\eta_i) :(0 ,l_i)\times ( -l_{i-1} ,  l_{i+1}) \to (S_{i-1} \bigcup S_i)^0 \subset \mathbb{R}^2$, that is, in general, not smooth on any open set that intersects $\{\eta_i=0\}$.
\begin{remark} \label{smoothness}
 In order for all the coordinates $(\xi_i,\eta_i)$ to be smooth, we need $ \mathbf{s}_{i+1} = -  \mathbf{s}_{i-1}$ for all $i$, and this forces $m =2, \alpha_1+\alpha_2 = \pi, \alpha_1 = \alpha_3, \alpha_2 = \alpha_4$.
\end{remark}

The coordinate patches for $(\xi_i,\eta_i)$ and $(\xi_{i+1},\eta_{i+1})$ overlap on the interior of $S^i$, and the transition functions between the coordinates, given by $\eta_{i+1} = - \xi_i$ and  $\xi_{i+1} = \eta_i$, are Lipschitz (even smooth). On the sector $S_i$, we can compute the coordinate $(\xi_i,\eta_i)$ by 
$$
(\xi_i,\eta_i) =  \frac{1}{\mathbf{s}^*_{i+1} \cdot \mathbf{s}_{i}} (\mathbf{s}^*_{i+1} \cdot \mathbf{x}, - \mathbf{s}^*_{i} \cdot \mathbf{x})
$$
where the ``dual" vectors are given by $\mathbf{s}^*_j = \mathbf{e}_3 \times \mathbf{s}_j$.  Note that, $\mathbf{s}^*_{i+1} \cdot \mathbf{s}_{i} = \sin(\beta_{i+1}-\beta_i) = \sin(\alpha_i) > 0$, and these formulae extend the coordinates $\xi_i,\eta_i$ to the closure $\overline{S_i}$ as Lipschitz functions. The origin $\mathbf{x}=0$ is given by $(\xi_i,\eta_i) = (0,0)$. We define the {\em asymptotic coordinates} $(u_i,v_i)$ by 
\begin{equation}
(u_i,v_i) = \begin{cases} (\xi_i,\eta_i) & \mbox{if  $i$ is even} \\ (\eta_i,\xi_i) & \mbox{if  $i$ is odd} \end{cases}
\label{eq:sector-coords}
\end{equation}
The quantities $(u_i,v_i)$ are only defined on the sector $S_i$.  Also, for $i$ even (respectively $i$ odd),  $0 \leq u_i = u_{i+1} < l_i$ and $v_i = v_{i+1} = 0$ (resp. $u_i = u_{i+1} =0$ and $0 \leq v_i = v_{i+1} < l_i$) on $S_i \cap S_{i+1}$. We will fix the sector $S_i$ in the rest of this argument and henceforth drop the subscripts $i$ on $u_i$ and $v_i$. Given a point $\mathbf{z} \in \mathbb{R}^3$, a direction $\mathbf{n} \in S^2$ and unit vectors $\mathbf{e}_u$ and $\mathbf{e}_v$ that are pependicular to $\mathbf{n}$, we define the boundary conditions for an {\em Amsler sector} by 
\begin{align}
N(u,0) & = \cos(u) \mathbf{n} + \sin(u) \mathbf{n}\times \mathbf{e}_u \nonumber \\
N(0,v) & =  \cos(v) \mathbf{n} - \sin(v) \mathbf{n} \times \mathbf{e}_v \nonumber \\
r(u,0) & = \mathbf{z} + u \,\mathbf{e}_u, \quad r(v,0)  = \mathbf{z} + v \,\mathbf{e}_v
\label{amsler-frame}
\end{align}
 It is straightforward to verify that the definitions in~\eqref{amsler-frame} are solutions of~\eqref{eq:frames}. It follows that we can solve the Moutard equation~\eqref{eq:moutard}, a Goursat problem for for the normal $N(u,v)$ (see \cite[Thm 1.12]{bobenko2008bdiscrete} for the details),  to obtain smooth solutions in the interior of the sector $S_i$ that extend continuously to the boundary, and on the segment $u=0$ (respectively $v=0$), $N(0,v)$ (resp. $N(u,0)$) agrees with the definition in~\eqref{amsler-frame}.
 
We specialize by setting $\mathbf{z} = 0, \mathbf{n} = \mathbf{e}_3, \mathbf{e}_u = \mathbf{s}_{i},\mathbf{e}_v = \mathbf{s}_{i-1}$ if $i$ is even and $\mathbf{e}_u = \mathbf{s}_{i-1}$  and $\mathbf{e}_v = \mathbf{s}_{i}$ if $i$ is odd. Note that, for points that are in multiple sectors, i.e. points on the sector boundaries, either $u$ or $v$ is zero, $N$ and $r$ are defined consistently, i.e. they are same independent of which sector is taken in the definition. In particular, the point $u=v=0$, which belongs to all sectors, has $r_i(0,0)=\mathbf{z}=0, N_i(0,0) = \mathbf{n}=\mathbf{e}_3$ for all $i$.

In the interior of the sector $S_i$, the normal field $N_i$ which solves the Moutard equation~\eqref{eq:moutard} is weakly harmonic and thus determines a PS-front $r_i:S_i \to \mathbb{R}^3$ through the Lelieuvre equations~\eqref{eq:lelieuvre}. Since $\lim_{(u,v) \to (0,0)} N_u \times N_v = \pm \mathbf{s}^*_{i-1} \cdot \mathbf{s}_{i} \neq 0$, it follows that there exists $c_i > 0$ such that $N_u \times N_v$ does not vanish on the rectangular domain $J_i \equiv \{0 < u_i < c_i, 0 < v_i < d_i\} \subset S_i$. $r_i$ extends continuously to $\bar{J}_i$ and we have constructed a PS-front $r_i \in C^\infty(J_i) \cap C(\bar{J}_i)$ such that $r_i(0,0) = 0$ and the normal to the immersion is given by our choices for $N$ above, i.e. for points in $S_i \bigcap S_j$, $N$ is well defined since the two potential definitions of the normal, $N_i$ and $N_j$, agree. We can, after shrinking $c_i,d_i$ if needed, patch these  solutions to obtain an $m$-saddle, i.e. a  piecewise smooth PS-front $r: T \to \mathbb{R}^3$ where $T = \bigcup_i J_i$ is an $m$-star and $r(\mathbf{x}) = r_i(\mathbf{x})$ on $J_i$.

\begin{figure}[htbp]
\center
        \begin{subfigure}[b]{0.33\textwidth}
                \centering
                \includegraphics[width=.85\linewidth]{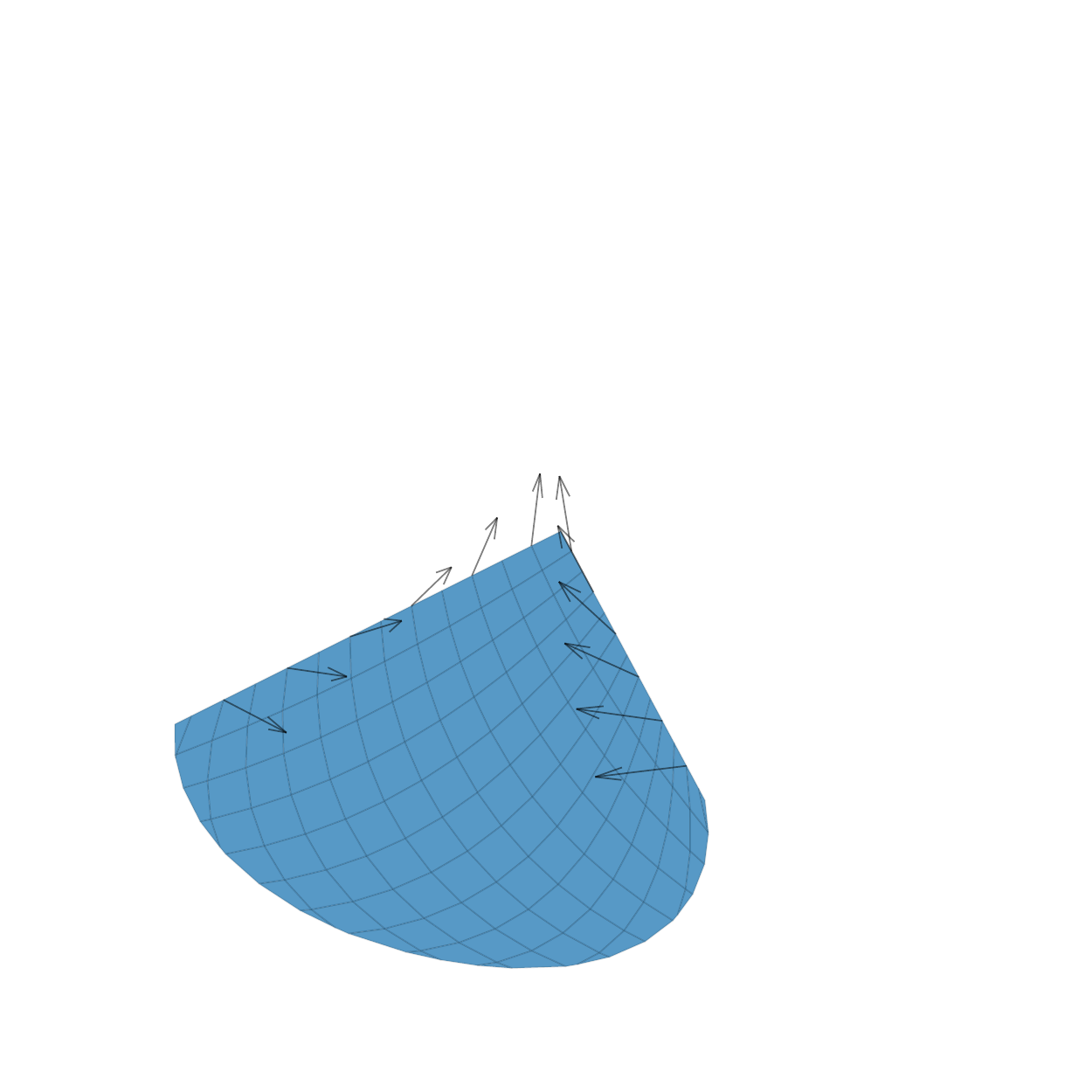}
                \caption{}
                \label{fig:monkey_saddle_construction_withnormal_stage1}
        \end{subfigure}%
        \begin{subfigure}[b]{0.33\textwidth}
                \centering
                \includegraphics[width=.85\linewidth]{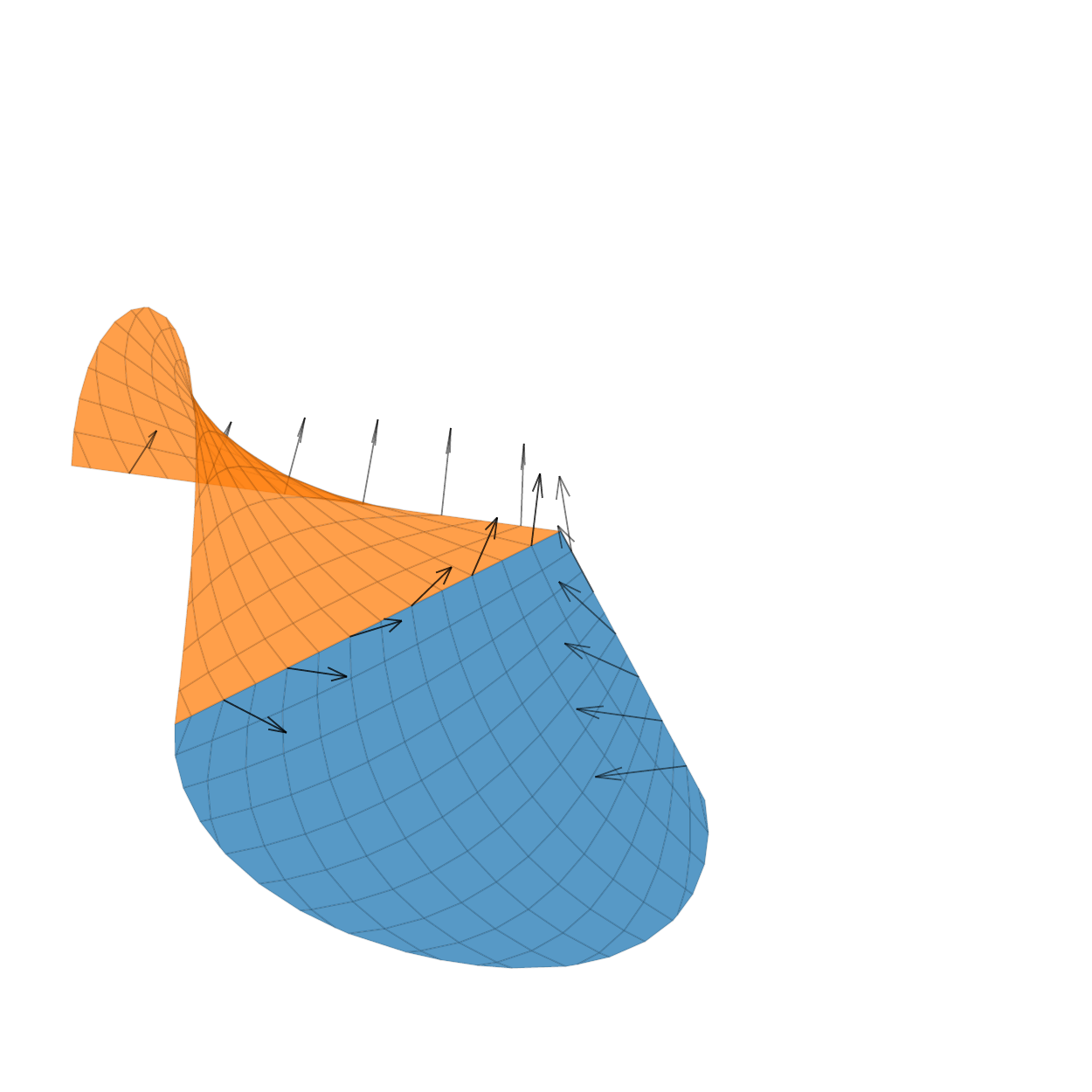}
                \caption{}
                \label{fig:monkey_saddle_construction_withnormal_stage2}
        \end{subfigure}%
        \begin{subfigure}[b]{0.33\textwidth}
                \centering
                \includegraphics[width=.85\linewidth]{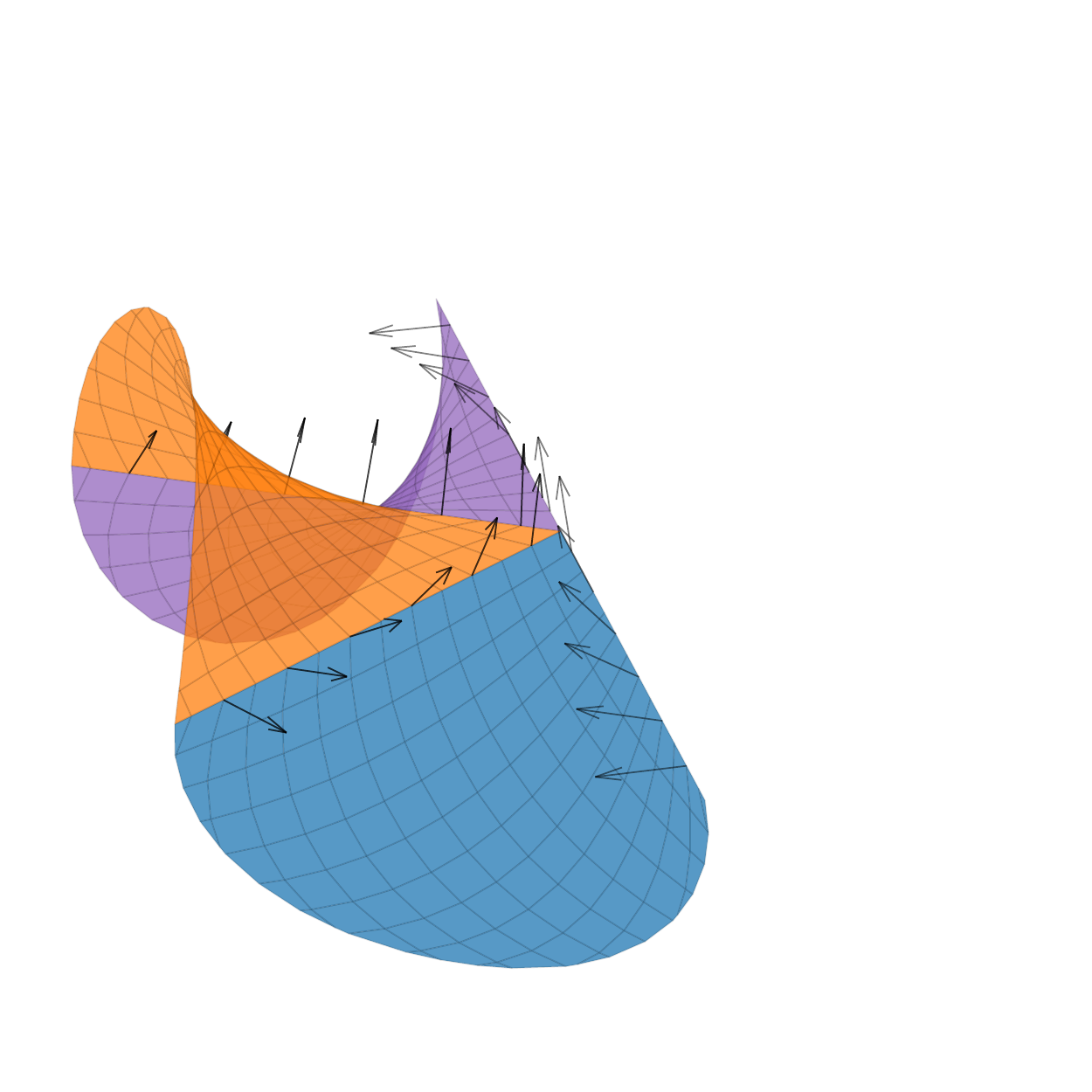}
                \caption{}
                \label{fig:monkey_saddle_construction_withnormal_stage3}
        \end{subfigure}

        \begin{subfigure}[b]{0.33\textwidth}
                \centering
                \includegraphics[width=.85\linewidth]{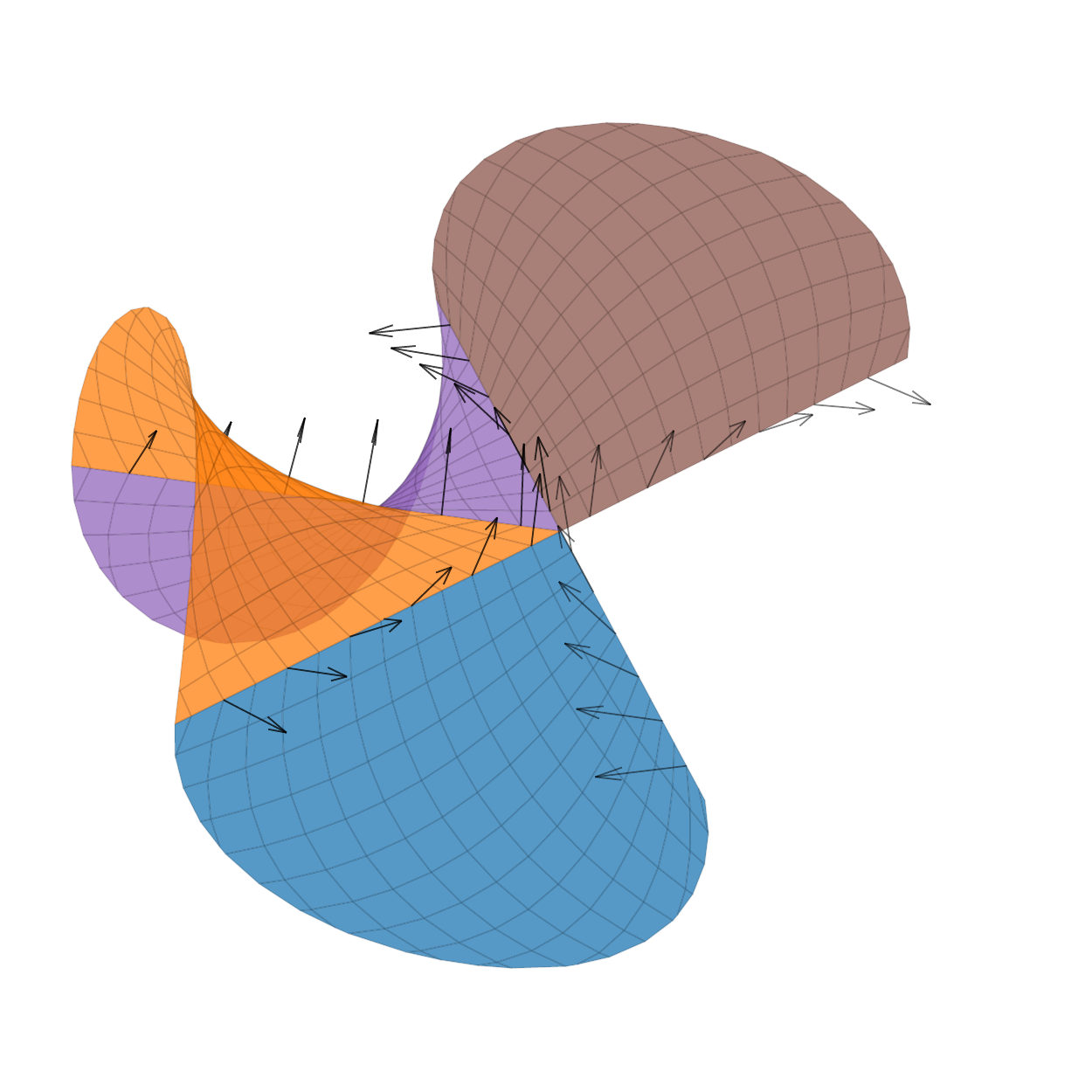}
                \caption{}
                \label{fig:monkey_saddle_construction_withnormal_stage4}
        \end{subfigure}%
        \begin{subfigure}[b]{0.33\textwidth}
                \centering
                \includegraphics[width=.85\linewidth]{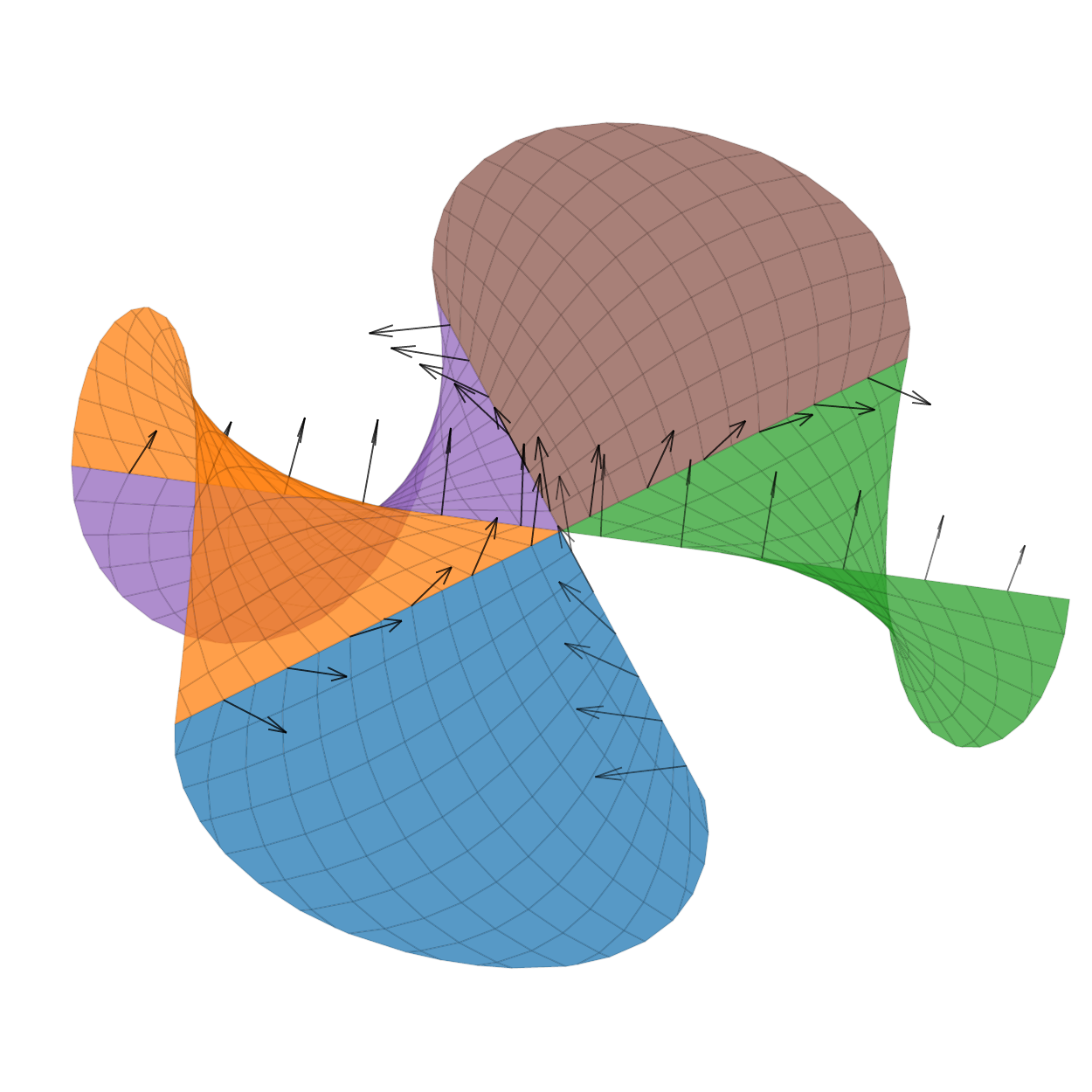}
                \caption{}
                \label{fig:monkey_saddle_construction_withnormal_stage5}
        \end{subfigure}%
        \begin{subfigure}[b]{0.33\textwidth}
                \centering
                \includegraphics[width=.85\linewidth]{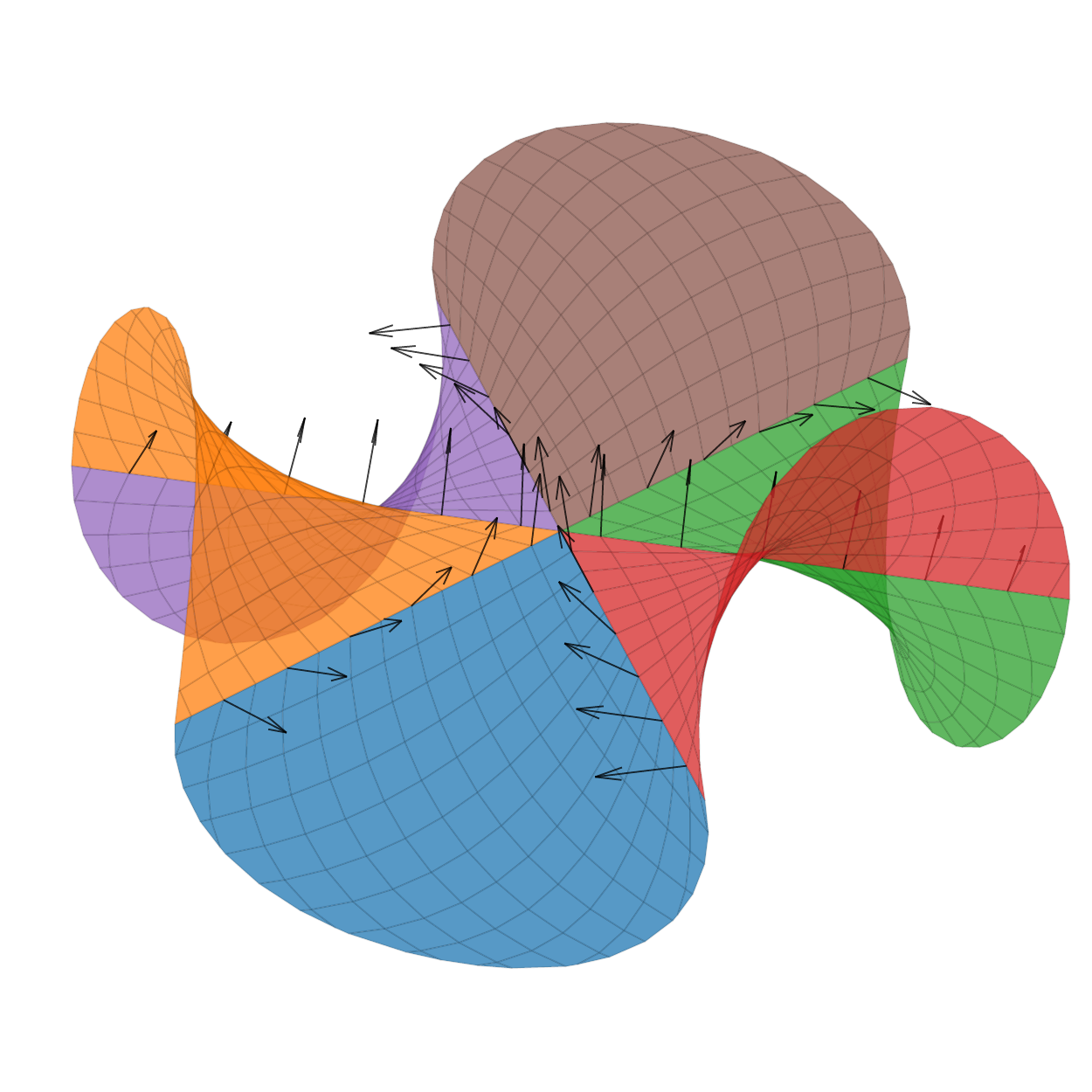}
                \caption{}
                \label{fig:monkey_saddle_construction_withnormal_stage6}
        \end{subfigure}
        \caption{Construction of a $K=-1$ 3-saddle (``monkey saddle") of geodesic radius 1. Each colored sector is smooth, and the gluing procedure maintains continuity of the normal field, shown by the arrows.}
        \label{fig:monkeysaddleconstruction}
\end{figure}

This procedure is illustrated in Fig.~\ref{fig:monkeysaddleconstruction} with $2m = 6, \alpha_k = \pi/3, k = 1,2,\ldots,6$. Since the resulting immersion is continuous and piecewise smooth, and has a continuous and piecewise smooth normal field, it follows that the normal field is (globally) Lipschitz, and the immersion is $C^{1,1}$. The immersion restricted to each sector is an example of an {\em Amsler sector} as in Definition~\ref{def:amsler-sector}, an object that will play a key role in our constructions below.

\begin{remark}
We will, for the most part, drop the subscript $i$ that indicates the domain of definition $S_i$, and refer to $u$ and $v$ simply as asymptotic coordinates. This has potential to cause confusion since $u$ and $v$ are {\em not coordinates} in the differential geometric sense and do not define a one-to-one map on any open set that intersects a boundary between sectors. This is mitigated somewhat since we usually work only of a single sector at a time, and on the intersection $S_i \bigcap S_j$ between two sectors, $u$ and $v$ have to agree. Indeed, this condition along with the requirement that $r(u,v)$ and $N(u,v)$ be well defined on the intersections of sectors $S_i \bigcap S_j$, {\em independent of whether} $(u,v)$ refer to the coordinates on $S_i$ or on $S_j$, allows up to patch  sectors together to obtain a continuous functions on the $m$-star $\bigcup_{i=1}^{2m} S_i$.
\end{remark}

We generalize the construction of patching Amsler sectors \cite{gemmer2011shape} by relaxing the requirements imposed in~\eqref{amsler-frame}.
\begin{definition}
An $m$-saddle is a $C^{1,1}$ mapping $r:T(\{\alpha_i\},\{l_i\}) \to \mathbb{R}^3$  from an $m$-star to $\mathbb{R}^3$ such that the restriction $r_i = r|_{S_i}$ is a PS-front, i.e. $r_i(u_i,v_i)$ and the corresponding normal $N_i(u_i,v_i)$ are $C^{1M}$ {\em in the coordinates} $(u_i,v_i)$, the normal is weakly regular and is Lorentz harmonic. $m$ is the {\em order of saddleness} at the point $u_i=v_i=0$
\label{def:m-saddle}
\end{definition}

We now define an algorithm for constructing $m$-saddles through {\em assembly}.

\begin{prop}[{\bf Assembly}] Let $2m \geq 4$ be an even number and let $L < \infty$. Assume that we are given  smooth functions $\kappa_i:[0,L) \to \mathbb{R}$ and angles $\alpha_i \in (0,\pi)$, for $i=1,2,\ldots,2m$, satisfying $\sum_{i=1}^{2m} \alpha_i = 2 \pi$. There exist $l_i \in (0,L), i=1,2,\ldots,2m$, sufficiently small,  $2m$ arc-length parameterized Frenet frames $F_i:[0,l_i) \to M_{3 \times 3}$ and an $m$-saddle $r:T(\{\alpha_i\},\{l_i\}) \to \mathbb{R}^3$ satisfying
\begin{enumerate}
\item $r(0,0) = 0$ and $N(0,0) = \mathbf{e}_3$,
\item For $i$ even (resp. $i$ odd) $F_i$ satisfies the first (resp. second) equation in~\eqref{eq:frames} with $\kappa^u = \kappa_i$ (resp. $\kappa^v = \kappa_i$) and the initial conditions $r_u(0) = \mathbf{s}_i$ (resp. $r_v(0) = \mathbf{s}_i$) and $N(0) =  \mathbf{e}_3$,
\end{enumerate}
where $\beta_i = \sum_{j=1}^i \alpha_j, \mathbf{s}_i = \cos(\beta_i) \mathbf{e}_1 + \sin(\beta_i)\mathbf{e}_2$ and $T$ is an $m$-star as in Definition~\ref{def:m-star}.
\label{prop:assembly}
\end{prop}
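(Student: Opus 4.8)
The plan is to construct the $2m$ Amsler (or pseudo-Amsler) sectors separately and then verify that they patch continuously along the common edges of the $m$-star, exactly as in the monkey-saddle construction in \S\ref{sec:kminusonemonkeysaddle}, but now with prescribed geodesic curvatures $\kappa_i$ along the bounding asymptotic curves rather than the constraint $\kappa_i \equiv 0$. First I would fix, for each $i$, the Frenet frame $F_i:[0,L) \to M_{3\times 3}$ of the $i$-th bounding curve: for $i$ even, integrate the first ODE system in \eqref{eq:frames} with $\kappa^u = \kappa_i$ and initial data $(r_u,N\times r_u,N)(0) = (\mathbf{s}_i, \mathbf{e}_3 \times \mathbf{s}_i, \mathbf{e}_3)$; for $i$ odd, integrate the second system in \eqref{eq:frames} with $\kappa^v = \kappa_i$ and the analogous initial data. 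Since the coefficient matrices are smooth in the independent variable and the systems are linear, global solutions on $[0,L)$ exist and stay orthonormal (the coefficient matrices are skew-symmetric), giving a smooth unit-speed curve $\gamma_i$ through the origin, tangent to the $\mathbf{e}_1\mathbf{e}_2$-plane, with $N=\mathbf{e}_3$ at the origin; the Beltrami--Enneper relations in \eqref{eq:frames} show these curves have torsion $\mp 1$ according to parity, consistent with requirement (2).

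Next, for each sector $S_i$ of the $m$-star, I would set up the Goursat problem for the Lorentz-harmonic (Moutard) normal field $N_i$ on $S_i$, prescribing $N_i$ along the two bounding asymptotic curves to be the third component of the frames $F_{i-1}$ and $F_i$ constructed above (with the convention \eqref{eq:sector-coords} identifying which of $u_i,v_i$ runs along which edge). Concretely, along $v_i=0$ one prescribes $N_i(u_i,0) = (F_{\text{bdy}})_3(u_i)$ and along $u_i=0$ similarly; at the corner $u_i=v_i=0$ both prescriptions agree and equal $\mathbf{e}_3$, and the first-order compatibility forced by \eqref{eq:frames} (namely $\partial_{u}N = N\times r_u$, $\partial_v N = -N\times r_v$ along the respective edges) is exactly what the frame ODEs encode. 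By the Moutard Goursat theorem \cite[Thm.~1.12]{bobenko2008bdiscrete}, there is a unique smooth solution $N_i$ of $N_{uv} = \nu N$ on a rectangle $[0,c_i)\times[0,d_i) \subset S_i$ (after shrinking $c_i,d_i$), extending continuously to the closed rectangle, with $N_i$ weakly regular: $N_u\cdot N_u>0$, $N_v\cdot N_v>0$ hold along the edges by construction (they equal $1$ there since the curves are unit-speed and the Frenet frames are orthonormal), hence on a neighborhood of the edges, and after further shrinking $c_i,d_i$, on all of the rectangle. Because $\lim_{(u,v)\to(0,0)} N_u\times N_v = \pm(\mathbf{s}^*_{i-1}\cdot\mathbf{s}_i)\,\mathbf{e}_3 = \pm\sin(\alpha_i)\,\mathbf{e}_3 \neq 0$, shrinking $c_i,d_i$ once more ensures $N_u\times N_v\neq 0$ on the whole rectangle $J_i$, so the PS-front $r_i:J_i\to\mathbb{R}^3$ obtained from the Lelieuvre formulae \eqref{eq:lelieuvre} (integrable since $N_i$ is Lorentz harmonic) is in fact an immersion, smooth on $J_i$ and continuous on $\bar J_i$, with $r_i(0,0)=0$ and with bounded principal curvatures.

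Finally I would check the patching. On the overlap $S_i\cap S_{i+1}$ (the ray spanned by $\mathbf{s}_i$), the two sectors share one asymptotic curve, and by construction the boundary data for $N_i$ and $N_{i+1}$ along that ray are \emph{the same} frame $F_i$; by uniqueness in the Goursat problem restricted to that edge, and since $r_i,r_{i+1}$ are both recovered from the same edge data via the same Lelieuvre ODEs along the edge, $r_i=r_{i+1}$ and $N_i=N_{i+1}$ on $S_i\cap S_{i+1}$. Thus, setting $l_i = \min(c_i,d_{i})$ (with indices handled cyclically so that each ray's length is consistent for the two sectors meeting along it), the maps $r_i$ glue to a single continuous, piecewise-smooth map $r:T(\{\alpha_i\},\{l_i\})\to\mathbb{R}^3$ with a continuous, piecewise-smooth — hence globally Lipschitz — normal field, so $r\in C^{1,1}$, and $r|_{S_i}$ is a PS-front; this is precisely an $m$-saddle in the sense of Definition~\ref{def:m-saddle}, and requirements (1)--(2) hold by construction.

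\emph{Main obstacle.} The delicate point is not the existence of each sector but the \emph{mutual consistency of the lengths}: each ray $\mathbf{s}_i$ bounds two sectors $S_i$ and $S_{i+1}$, and the domain of validity of the Goursat/weak-regularity/immersion estimates coming from $S_i$ need not a priori be the same as the one coming from $S_{i+1}$. One must argue that the common length $l_i$ can be chosen positive for all $i$ simultaneously — this is where the finiteness of $2m$ and the strict positivity $\sin(\alpha_i)>0$ (so the Jacobian of the sector coordinate change is bounded below) are used, together with a simple $\min$ over the finitely many sectors. A secondary technical care is that $(u_i,v_i)$ are genuine smooth coordinates only on the interior of a single sector (Remark~\ref{smoothness}), so all PDE arguments must be carried out sector-by-sector and the gluing verified only along the one-dimensional overlaps, where $u$ or $v$ vanishes and the $C^{1M}$ regularity is inherited from the edge frames.
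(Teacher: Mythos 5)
Your proposal is correct and follows essentially the same route as the paper's proof: integrate the Frenet systems \eqref{eq:frames} with the prescribed $\kappa_i$ to get the edge data, solve the Moutard/Goursat problem \eqref{eq:moutard} for a weakly harmonic normal on each sector, recover the PS-fronts via the Lelieuvre formulae \eqref{eq:lelieuvre}, patch along the shared rays where the data come from the same frame $F_i$, and shrink the $l_i$ using $\lim_{(u,v)\to(0,0)}\|N_u\times N_v\| = |\mathbf{s}_{i-1}^*\cdot\mathbf{s}_i|>0$ to get a $C^{1,1}$ $m$-saddle. The length-consistency point you flag is handled in the paper exactly as you suggest, by taking the finitely many $l_i$ sufficiently small simultaneously.
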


\begin{proof} The proof is by explicit construction. The existence and uniqueness for the Frenet frames follows from standard results for ODEs. The prescribed data therefore determines the normal field $N$ at the boundaries of the sectors $S_i$ where $T = \bigcup_{i=1}^{2m} S_i$, and we can solve~\eqref{eq:moutard} for $N_i(u,v)$ in the interiors of the sectors $S_i$. This normal field is weakly harmonic on each sector so we can construct the corresponding immersions using the Lelieuvre formulae. The solutions on the sectors $S_i$ can be patched on the intersections $S_i \cap S_{i+1}$ since both patches agree with the curve $t \mapsto r(t \mathbf{s}_i), 0 \leq t < l_i$ on this intersection, and the normals agree as well with the solution for the Frenet frame $F_i$. On the sector $S_i, \lim_{(u,v) \to (0,0)} \|N_u \times N_v\| = |\mathbf{s}_{i-1}^*\cdot\mathbf{s}_i| > 0$ so there is a $m$-star containing the origin, given by $\{l_i\}$ sufficiently small, such that patching the sectors gives a piecewise smooth, globally Lipschitz normal field $N$ and a $C^{1,1}$ immersion $r:T \to \mathbb{R}^3$.
\end{proof}

It follows from Definition~\ref{def:m-saddle} that the {\em order of saddleness} $m_p$ at any point $p$ is the number of times any sufficiently small deleted neighborhood of $p$ crosses from one side of (say ``below") the tangent plane  at $p$ to the other side (``above") \cite{Rozendorn1962Asymptotic}. $m_p$ thus measures the number of `undulations' at $p$. The $m_p-2$ ``excess" undulations, in comparison with a regular saddle, persist to the boundary. This mechanism allows hyperbolic surfaces to refine the buckling wavelength, isometrically, near the boundary \cite{EPL_2016}.

For the point $p$, that is common to all the sectors $S_k$ in~Prop.~\ref{prop:assembly}, the order of saddleness  $m_p = m$, corresponding to half the number of sectors at $p$. Since the asymptotic directions at $p$ are defined by the intersection between the surface and the tangent plane at $p$  ({\em cf. Dupin Indicatrix} \cite[\S 4.12]{stoker}), this relation, between the number of asymptotic directions at $p$ and $m_p$ holds more generally. This is illustrated in Figs.~\ref{fig:saddle}~and~\ref{fig:mkysaddle}. Every point in Fig.~\ref{fig:saddle} has $m=2$. In Fig.~\ref{fig:mkysaddle}, most points have $m=2$ but there is one point with $m=3$.

\begin{figure}[ht]
\centering
        \begin{subfigure}[t]{0.23\textwidth}
                \centering
                \includegraphics[trim={4.75cm 5.75cm 4cm 4.5cm}, clip, width=.95\linewidth]{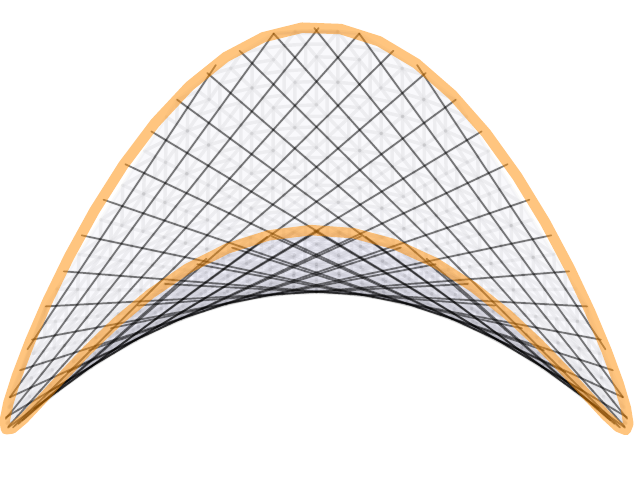}
                \caption{}
                \label{fig:saddle}
        \end{subfigure}%
        \begin{subfigure}[t]{0.23\textwidth}
                \centering
                \includegraphics[trim={0cm -1.25cm 0cm 0cm}, clip, width=.95\linewidth]{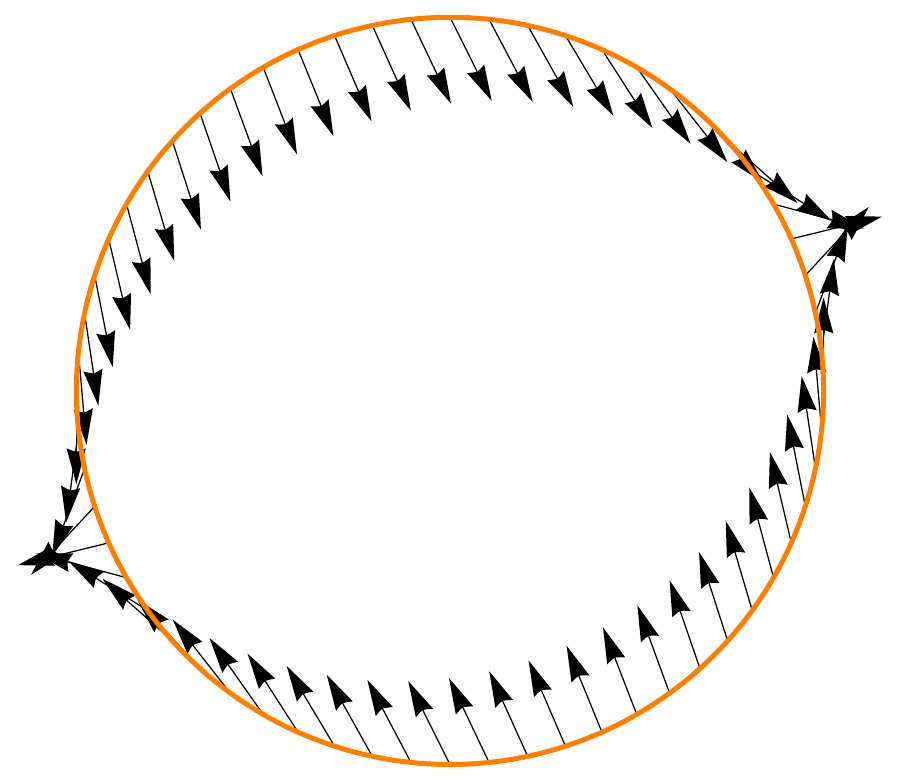}
                \caption{}
                \label{fig:nrmlsaddle}
        \end{subfigure}%
        \begin{subfigure}[t]{0.23\textwidth}
                \centering
                \includegraphics[trim={5cm 4.5cm 4.25cm 6.cm}, clip, width=.95\linewidth]{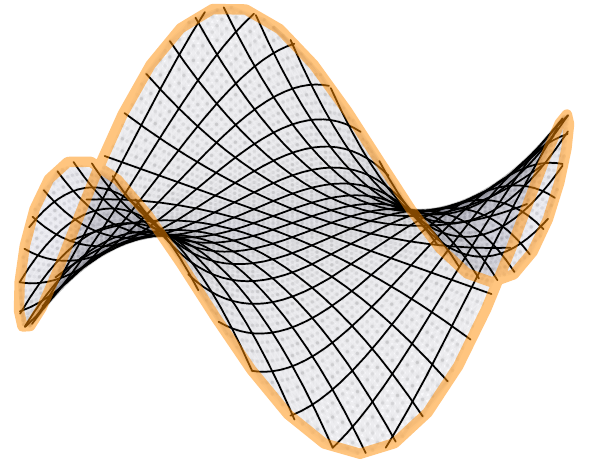}
                \caption{}
                \label{fig:mkysaddle}
        \end{subfigure}
        \begin{subfigure}[t]{0.23\textwidth}
                \centering
                \includegraphics[width=.95\linewidth]{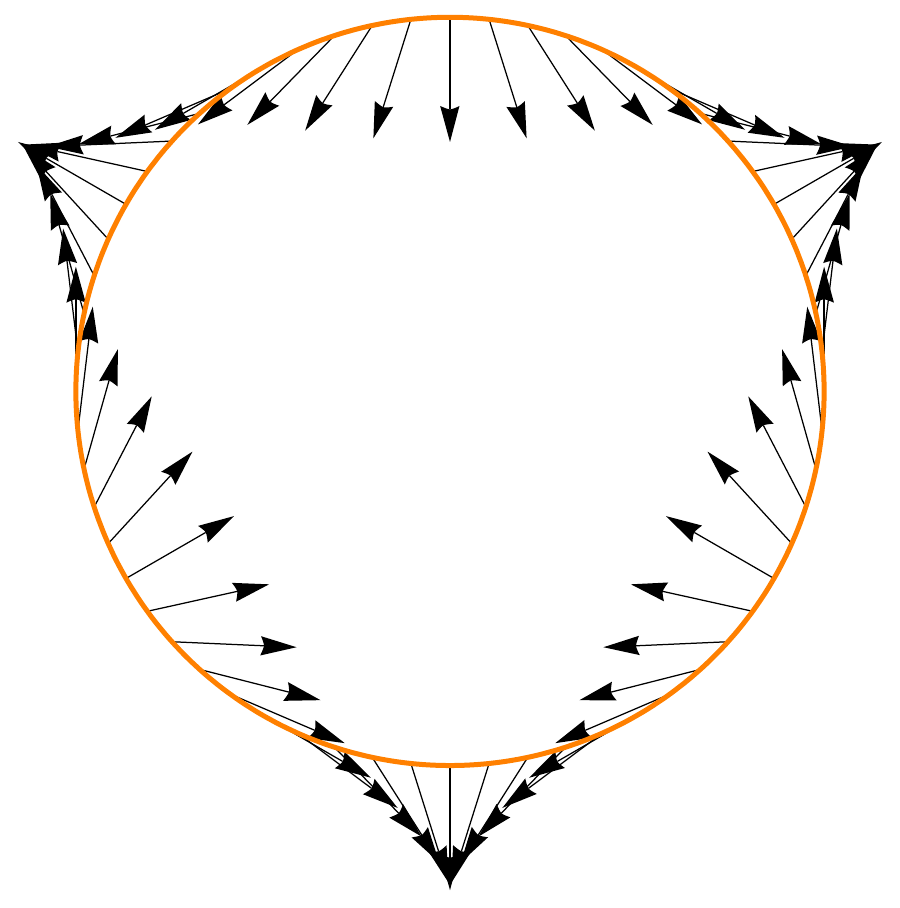}
                \caption{}
                \label{fig:nrmlmkysaddle}
        \end{subfigure}        
        \caption{The (local) winding number of the normal field about a point $p$ for two surfaces: (a) A smooth pseudospherical saddle and (c) A $C^{1,1}$ pseudospherical monkey saddle. (b),(d) Projections of the corresponding normal fields.  $p$ denotes the center of the disks. $m_p=2, J_p=-1$ for the saddle and $m_p = 3, J_p = -2$ for the monkey saddle.}\label{fig:winding_number_normalfield}
\end{figure}

\subsection{The topology of the normal map and obstructions to smoothing} \label{sec:approximation}

Our primary interest in this work is to immerse a geodesic disk $\Omega \equiv B_R \subset \mathbb{H}^2$ of radius $R$ and constant curvature $K=-1$ into $\mathbb{R}^3$ isometrically with essentially bounded principal curvatures. The local structure of this mapping near any point $p \in \Omega$ will be modeled by our construction of $m$-saddles and $m$-stars. This motivates the following definition.

\begin{definition} A {\em branched pseudospherical immersion} of a subset $\Omega$ of the Hyperbolic plane is a globally $C^{1,1}$ and piecewise $C^2$ isometric immersion $\psi: \Omega \to \mathbb{R}^3$ such that every $p \in \Omega$ has a neighborhood $O_p$, a homeomorphism $\tau_p: O_p \to T_p$, where $T_p$ is a $m_p$-star, and an associated $m_p$-saddle $r_p:T_p \to \mathbb{R}^3$  such that $\psi|_{O_p} = r_p \circ \tau_p$. 
\label{def:branched_immersion}
\end{definition}
In this work, we will consider branched pseudospherical immersions $\psi: \Omega \to \mathbb{R}^3$ where $m_p = 2$ except for finitely many points $p_1,p_2,\ldots,p_k \in \Omega$, the {\em branch points} of $\psi$. Note that our definition of branch points/immersions is local. For global considerations, we will use notions from the theory of cell complexes, and refer the reader to \cite[Chap. 0]{HatcherAlgTop} and \cite[\S 2.1]{KMM2004} for background material. We begin by stating the definition of a quadraph.
\begin{definition}[Quadgraph, {\em cf.} Def. 2, \cite{Huhnen-Venedey2014Discretization}] A quad-graph is a strongly regular polytopal cell decomposition of a surface, such that all faces are quadrilaterals (quads).
\end{definition}
A cell decomposition of a surface, given by vertices $\{V_i\}$, edges $\{E_j\}$ and faces $\{F_k\}$ is {\em strongly regular} if  (i) The edges and vertices of each face are pairwise distinct, (ii) the intersection of two faces is either empty, a single vertex, or the closure of an edge. For our purposes, the quadgraph is required to admit preferred `asymptotic coordinates'.
\begin{definition}[Asymptotic complex]An asymptotic complex $A$ is a quadgraph such that (i) each face $F_k$ is equipped with a bijection $\psi_k: F_k \to R_k$ where $R_k = [0,u_k] \times [0,v_k]$ is a rectangle, (ii) The collection of edges is partitioned into a family of $u$-edges $E^u$ and a family of $v$ edges $E^v$ such that adjacent edges on every face come from alternating families, and (iii) if a (closed) $u$ edge $E^u_j=F_k \cap F_l$, then $u_k=u_l$ and the attaching map is given by $(u,v_a) \in E^u_j \subset F_k  \mapsto (u,v_b) \in F_l$ or $(u,v_a) \in E^u_j \subset F_k  \mapsto (u_k-u,v_b) \in F_l$ where $v_a \in \{0,v_k\}, v_b \in \{0,v_l\}$. {\em Mutatis mutandis} a similar condition holds for the $v$-edges.
\label{def:A-complex}
\end{definition}
\begin{lemma} Let $A$ be an asymptotic complex. Then  A is checkerboard colorable, i.e. we can assign labels `red' and `black' to the faces in $F$ such that any pair of neighboring faces get different labels. Also, every interior vertex (a vertex not in $\partial A$) has even degree.
\end{lemma}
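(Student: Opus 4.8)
The plan is to establish the two assertions --- checkerboard colorability of the faces and even degree of interior vertices --- essentially simultaneously, by exploiting the defining structure of an asymptotic complex: the partition of edges into the $u$-family $E^u$ and the $v$-family $E^v$, together with the alternation condition that on each quad the four edges go $u,v,u,v$ around the boundary. First I would observe that this alternation immediately forces each face to have exactly two $u$-edges and two $v$-edges, and that opposite edges of a quad belong to the same family while adjacent edges belong to opposite families.

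For checkerboard colorability I would argue as follows. Consider the dual adjacency graph $G$ whose vertices are the faces $F_k$ and whose edges record pairs of faces sharing a (closed) edge of $A$. A $2$-coloring of the faces exists if and only if $G$ is bipartite, i.e. has no odd cycle. Given a cycle $F_{k_0}, F_{k_1}, \ldots, F_{k_n} = F_{k_0}$ in $G$, each consecutive pair shares an edge of $A$, which is either a $u$-edge or a $v$-edge; I claim that crossing from $F_{k_j}$ to $F_{k_{j+1}}$ over a $u$-edge (resp. $v$-edge) and then immediately out of $F_{k_{j+1}}$ must be over an edge of the opposite family if the two edges are adjacent on $F_{k_{j+1}}$ and the same family if they are opposite. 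The cleaner route is to assign to each face the "color" determined locally and check consistency across every shared edge: here I would use the fact that the attaching maps for $u$-edges in Definition~\ref{def:A-complex} identify a $u$-edge of one face with a $u$-edge of its neighbor, so the bipartition "faces whose boundary reads $u,v,u,v$ in the positive orientation starting from a fixed reference" versus its complement is well-defined globally --- equivalently, the orientation/type data glues into a consistent $\mathbb{Z}/2$-valued cocycle on $G$, whose vanishing is exactly the $2$-colorability. Since $A$ is a cell decomposition of a surface and is strongly regular, the only subtlety is the global topology; I would handle a non-simply-connected surface by noting the coloring obstruction lies in $H^1(A;\mathbb{Z}/2)$ and that the asymptotic-complex axioms (iii) kill it because the transition maps preserve the $u$/$v$ labels.

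For the even-degree statement, let $p$ be an interior vertex of degree $d$, so $d$ faces $F_{k_1}, \ldots, F_{k_d}$ and $d$ edges $e_1, \ldots, e_d$ cyclically surround $p$, with $e_j, e_{j+1}$ the two edges of $F_{k_j}$ incident to $p$. Because adjacent edges of a quad alternate families, $e_j$ and $e_{j+1}$ lie in opposite families for every $j$; going once around $p$ the family label therefore alternates $u,v,u,v,\ldots$ and returns to its starting value, which is possible only if $d$ is even. This is the heart of the argument and it is short once the alternation property of quads is in hand.

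The main obstacle I anticipate is not the local combinatorics --- which is straightforward --- but making the global $2$-coloring argument fully rigorous on an arbitrary (possibly with boundary, possibly non-orientable or non-simply-connected) surface without hand-waving. The temptation is to "choose a face and propagate," but on a surface with nontrivial $H^1(\,\cdot\,;\mathbb{Z}/2)$ one must verify that propagation is path-independent; the right framework is to define the would-be coloring as the parity of the number of $E^u$-crossings (equivalently $E^v$-crossings, by the even-degree fact just proved) along a path in the dual graph, and to check that this parity depends only on the endpoints. That consistency check reduces, via the even-degree-of-interior-vertices conclusion, to the statement that every small loop around an interior vertex crosses an even number of $u$-edges, which we have just verified; loops around boundary components and noncontractible loops are handled because a closed dual walk bounding a disk picks up even parity by an Euler-characteristic / face-counting argument on the enclosed region. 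So the two parts of the lemma are genuinely intertwined: the even-degree statement is the local input that makes the checkerboard coloring globally consistent.
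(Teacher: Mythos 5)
Your even-degree argument is correct and is in fact the reverse of the paper's logical order: you prove it directly from the alternation of edge families around a vertex, while the paper first establishes that the dual graph is bipartite and then reads off even degree from the fact that the link of an interior vertex is a cycle in a bipartite graph. That half is fine.

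The checkerboard half, however, has a genuine gap. You propose to color a face by the parity of the number of $E^u$-crossings along a dual path from a reference face, and to get well-definedness from the claim that ``every small loop around an interior vertex crosses an even number of $u$-edges,'' which you say was just verified. It was not, and it is false precisely in the situation this paper cares about: around an interior vertex of degree $2m$ the incident edges alternate $u,v,u,v,\dots$, so its link crosses exactly $m$ edges of each family; for a branch point with $m=3$ (degree $6$) that is $3$ $u$-edges, an odd number. What the even-degree statement gives you is only that the \emph{total} number of crossings around a link is even. Moreover, even if the $u$-parity were path-independent it would not produce a checkerboard coloring at all: two faces sharing a $v$-edge are joined by a dual edge crossing zero $u$-edges, so they would receive the same label, contradicting the requirement that neighboring faces be colored differently. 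The correct invariant is the parity of the dual path \emph{length} (total number of edges crossed, of either family); then adjacency flips the color, and well-definedness amounts to every dual cycle having even length, for which your even-degree fact supplies exactly the needed local input (each vertex link is an even cycle), with disk-bounding cycles handled by the mod-$2$ cycle-space/face-counting argument you sketch. This is essentially the paper's route (bipartiteness of the dual graph). On the global issue: your assertion that the axioms ``kill'' the obstruction in $H^1(A;\mathbb{Z}/2)$ is unjustified and false in general (an odd ring of quads glued along $u$-edges satisfies the axioms but its dual graph is an odd cycle); the paper sidesteps this by restricting, in the remark immediately following, to simply connected complexes embeddable in $\mathbb{R}^2$, and your argument should invoke the same restriction rather than a cohomological claim.
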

\begin{proof}
From out definition, there is a globally consistent assignment of the edges, i.e. elements of $X_1$, to $u$- and $v$- edges 
that alternate going around any vertex. This implies that every cycle in the dual graph, which crosses equal numbers of $u$ and $v$ edges in $X_1$ is even, and thus the dual graph is bipartite \cite[Chap. 2]{asratian1998bipartite}. In particular, the complex $A$ is checkerboard colorable, and every interior vertex has even degree, since the faces incident on an interior vertex constitute a cycle in the dual graph, the {\em link} of the vertex. 
These features are illustrated by the examples in Fig.~\ref{fig:skeleton}. The two grids are equivalent as graphs, although the grid in Fig.~\ref{fig:monkey-tcd} is naturally interpreted as the quadgraph for the surface obtained by assembly in \S\ref{sec:kminusonemonkeysaddle} while the grid in Fig.~\ref{fig:skew-tcd} is perhaps naturally interpreted as the result of surgery by excising a quadrant and replacing by 3 sectors, as in \S\ref{sec:introducingabranchpoint} below.
\end{proof}
\begin{remark}The bijection $\psi_k:F_k \to [0,u_k] \times [0,v_k]$ in Definition~\ref{def:A-complex} gives  asymptotic coordinates on the face $F_k \subseteq A$. We will henceforth assume that $A$ is simply connected and can be embedded into $\mathbb{R}^2$. The second condition actually follows from the first so  every simply connected asymptotic complex is homeomorphic to the disk \cite[Rmk. 7]{Huhnen-Venedey2014Discretization}.
\label{remark:simply_connected}
\end{remark}
\begin{figure}[ht]
\centering
        \begin{subfigure}[t]{0.45\textwidth}
                \centering
                {\includegraphics[width=0.85\linewidth]{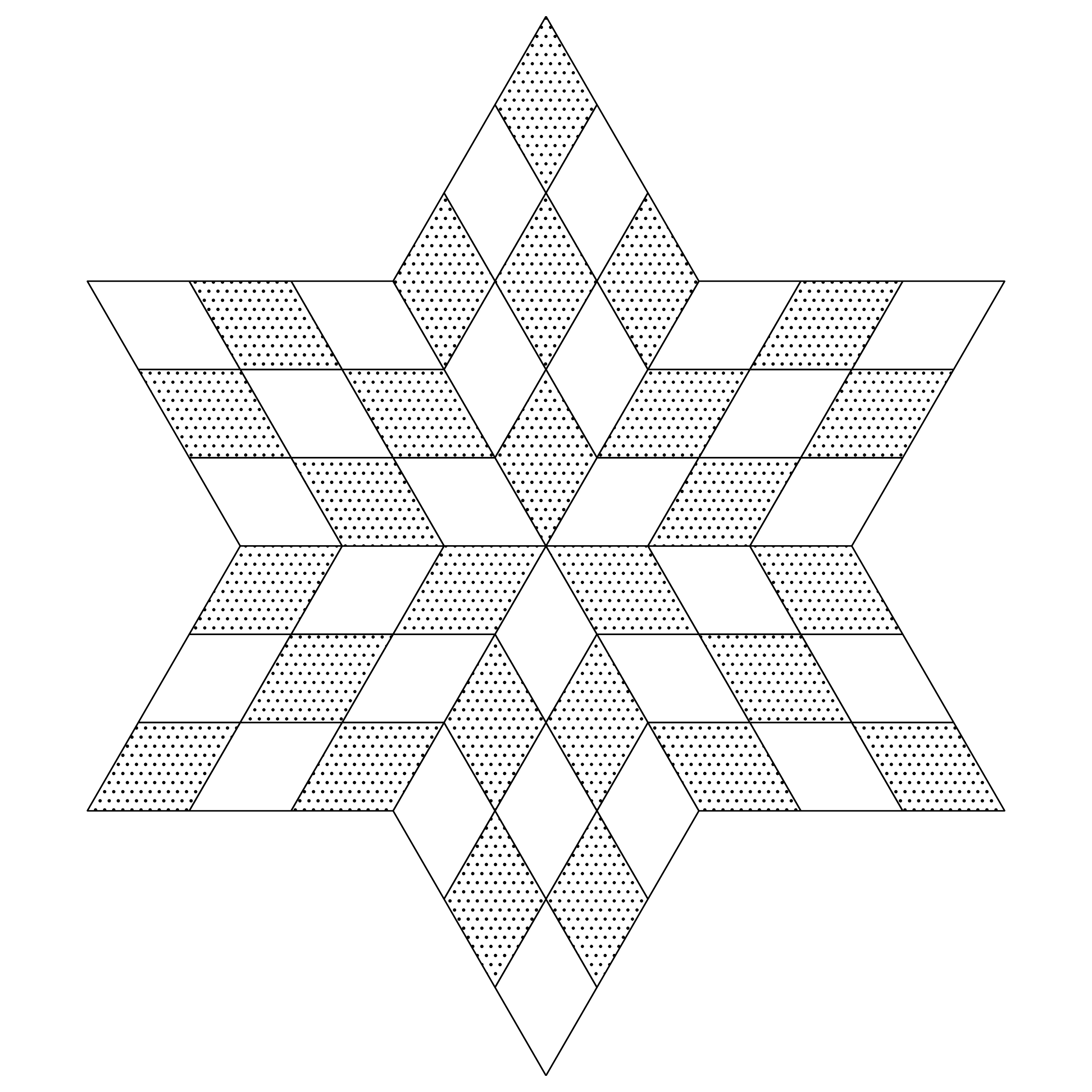}}
                \caption{}
                \label{fig:monkey-tcd}
        \end{subfigure}    
         \begin{subfigure}[t]{0.52\textwidth}
                \centering
                {\includegraphics[trim={1.5cm, 4cm, 1cm, 3cm}, clip, width=.9\linewidth]{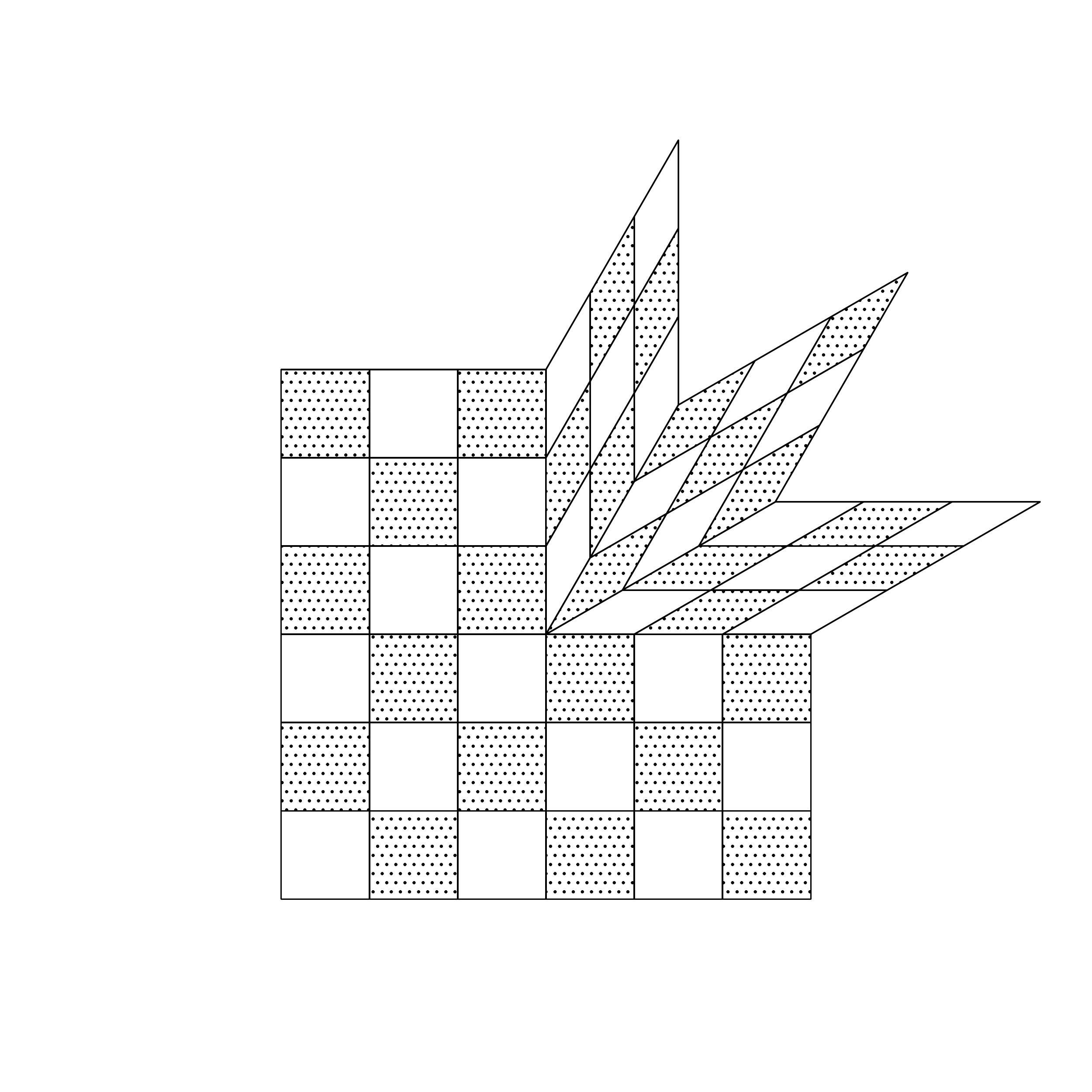}}
                \caption{}
                \label{fig:skew-tcd}
        \end{subfigure}        
        \caption{Examples of checkerboard-colorable, simply connected asymptotic complexes that are embedded in the plane. 
       }
        \label{fig:skeleton}
\end{figure}

\begin{definition}[Branched PS-front/Asymptotic quadrilateral] A {\em branched PS-front} is a mapping $r:A \to \mathbb{R}^3$ on an asymptotic complex $A$ such that the restriction $r_k = \left. r \right|_{F_k}$  is continuous on the face $F_k$ and a $C^{1M}$ PS-front on the interior $F_k^0$. An {\em asymptotic quadrilateral} is the image $r(F_k)$ of a face in a branched PS-front.
\label{defn:quad}
\end{definition}
An asymptotic quadrilateral is thus a ``rectangular" domain, bounded by 2 pairs of intersecting $u$ and $v$ asymptotic curves, on which we can define global asymptotic coordinates. 

\begin{definition}[Sector] Let $\psi: \Omega \to \mathbb{R}^3$ be a branched isometry.  A sector (at $p$) is a closed set $K \subset \Omega$, such that there is a  injection $\tau: K \to [0,u_0) \times [0,v_0)$, $\tau \in C(K) \cap C^2(K^0)$, and a PS-front $r:(0,u_0) \times (0,v_0) \times \mathbb{R}^3$ satisfying $\psi|_{K} = r \circ \tau$. Further $\tau(p) = (0,0)$ and $K$ contains the segments $\gamma_u = \tau^{-1}([0,u_0) \times\{0\})$ and  $\gamma_v = \tau^{-1}(\{0\} \times [0,v_0))$.
\label{def:sector}
\end{definition}
Informally, a sector at $p$ is a set bounded on `two sides' by a $u$- and a $v$- asymptotic curve through $p$, and contains no other asymptotic curves through $p$. Let $p \in \Omega \subset \mathbb{H}$ and let $\psi:\Omega \to \mathbb{R}^3$ be a branched isometry. 
The sum of the $m_p$ angles of the sectors at $p$ (in the surface) is  $2 \pi$.  The images of these sectors under the Gauss normal map $N$, however, can wind around the normal $N(p)$ {\em multiple times}, as depicted in Fig.~\ref{fig:nrmlmkysaddle}. This motivates 

\begin{definition} Let $V \subseteq \Omega$ be an open set, $p \in V$ and $U = V\setminus \{p\}$ denote a deleted neighborhood of $p$. Let $N: \Omega  \to S^2$ be a continuous map with the property that  $N(U) \subseteq S^2 \setminus\{\pm N(p)\}$, where $-N(p)$ is the antipodal point to $N(p)$. The {\em ramification index of the normal map} at $p$, denoted by $J_p$, is defined as the degree of the (composite) map
$$
S^1  \stackrel{\gamma}{\longrightarrow} U  \stackrel{N}{\longrightarrow} S^2 \setminus\{\pm N(p)\} \stackrel{\pi}\longrightarrow S^1,$$
where $\gamma$ is a simple closed curve in $U$, $x_{\perp} = x - \langle N(p),x\rangle N(p)$ and $\pi(x) = x_{\perp}/\|x_{\perp}\|$ is the canonical retraction $\pi: S^2 \setminus\{\pm N(p)\} \to S^1$ (``retracting to the equator").
\label{def:Jp}
\end{definition}  

For surfaces with negative extrinsic curvature, $J_p < 0$ everywhere since the normal winds clockwise for a counterclockwise circuit around $p$. 
If $J_p = -1$, the normal map is a local homeomorphism. However,  if $J_p < -1$, then $N(V)$ is a branched (``multiple-sheeted") covering of a neighborhood of $N(p)$, which is therefore a branch point for the inverse of the Gauss normal map. This justifies calling $p$ a branch point if $|J_p| > 1$, and is in keeping with standard usage \cite{kirchheim2001Rigidity,Gemmer2013Shape,EPL_2016}. 

Every immersion can be (locally) expressed as a graph $(x_1,x_2,w(x_1,x_2))$ where $(x_1,x_2)$ are coordinates in the tangent plane at $p$, and $w(x_1,x_2)$ is the normal displacement from this plane. In these coordinates, $\pi \circ N = \nabla w/\|\nabla w\|$, so we can compute the ramification index $J_p$ as the degree of the map $S^1 \to S^1$ given by 
$$
\{(x_1,x_2) \, | \,x_1^2 + x_2^2 = 1\}  \mapsto  \frac{\nabla w(\epsilon x_1, \epsilon x_2)}{\|\nabla w(\epsilon x_1, \epsilon x_2)\|},
$$ 
for any sufficiently small $\epsilon$. This computation of $J_p$ is illustrated in Fig.~\ref{fig:winding_number_normalfield}.

The winding number $J_p$ and the order of saddleness $m_p$ are related as follows 

\begin{lemma}
Let $y:\Omega \to \mathbb{R}^3$ be a $C^{1,1}$ pseudospherical immersion, and let $p$ be a point in $\Omega$. Then $J_p = 1-m_p$ where $J_p$ is the local degree of the Gauss normal map $N:\Omega \to S^2$ at $p$, and $m_p$ is the order of saddleness of the immersion $y$ at $p$.
\label{lem:covering-number}
\end{lemma}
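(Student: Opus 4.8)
\textit{Proof proposal.} The plan is to evaluate the degree of the retracted Gauss map $\pi\circ N$ around $p$ directly from the $m_p$-saddle model of a neighborhood of $p$, one sector at a time. By Definition~\ref{def:branched_immersion} a neighborhood $O_p$ of $p$ is an $m_p$-star $T_p=\bigcup_{i=1}^{2m_p}S_i$, the sectors $S_i$ being glued along asymptotic rays $\rho_0,\dots,\rho_{2m_p-1}$ emanating from $p$; by the coordinate conventions in~\eqref{eq:sector-coords} these rays, read cyclically, alternate between $u$-asymptotic curves and $v$-asymptotic curves (the shared ray $S_i\cap S_{i+1}$ is a $u$-curve precisely when $i$ is even). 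Fix a small circle $\gamma$ around $p$; it meets the rays at points $q_0,\dots,q_{2m_p-1}$ and is thereby cut into arcs $\gamma_i\subset S_i$ running from $q_{i-1}$ to $q_i$. Since $2\pi J_p$ is the total increment of $\arg(\pi\circ N)$ around $\gamma=\gamma_1*\cdots*\gamma_{2m_p}$, it suffices to compute the increment contributed by each $\gamma_i$.

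Next I would linearise $N$ near $p$. Let $R_\theta$ denote rotation by $\theta$ in the tangent plane $T_p$, oriented so that $N(p)$ is positive. Along a $u$-asymptotic curve through $p$ with unit tangent $\mathbf t$, the Frenet--Serret equations~\eqref{eq:frames} give $\partial_uN=N\times r_u$, hence $N(s)=N(p)+s\,(N(p)\times\mathbf t)+O(s^2)$ with $N(p)\times\mathbf t=R_{\pi/2}\mathbf t$; thus $\pi\circ N$ along the ray is the (nearly) constant direction $R_{\pi/2}\mathbf t$, up to $O(\operatorname{diam}\gamma)$. For a $v$-curve, \eqref{eq:frames} gives $\partial_vN=-N\times r_v$, producing instead $R_{-\pi/2}\mathbf t$. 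On a sector $S_i$ with asymptotic coordinates $(u,v)$, so that $\{v=0\}$ is its $u$-type boundary ray with tangent $\mathbf t^u_i$ and $\{u=0\}$ its $v$-type ray with tangent $\mathbf t^v_i$, the expansion $N(u,v)=N(p)+uN_u(0,0)+vN_v(0,0)+O(u^2+v^2)$ has $N_u(0,0)=R_{\pi/2}\mathbf t^u_i$, $N_v(0,0)=R_{-\pi/2}\mathbf t^v_i$, and $N_\perp=uN_u(0,0)+vN_v(0,0)+O(u^2+v^2)$ never vanishes on a small quarter-circle around $p$; hence along such an arc $\pi\circ N$ turns monotonically through exactly the short angle from $R_{\pi/2}\mathbf t^u_i$ to $R_{-\pi/2}\mathbf t^v_i$. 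Because $y$ is isometric, $S_i$ has interior angle $\alpha_i$ at $p$, so $\mathbf t^v_i=R_{\pm\alpha_i}\mathbf t^u_i$ with the sign set by the parity of $i$; in both cases a short computation shows this angle, \emph{traversed counterclockwise around $p$}, equals $\alpha_i-\pi\in(-\pi,0)$ --- the extra $-\pi$ per sector being the topological imprint of the indefinite (signature $(+,-)$) shape operator of a surface with $K<0$.

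The step I expect to be the main obstacle is promoting this near-$p$ computation to the genuine arc $\gamma_i$, i.e.\ ruling out additional full turns of $\pi\circ N$ inside a sector. For this, shrink $T_p$ so that on each $S_i$ the PS-front satisfies $N_u\times N_v\neq0$ throughout (as in the proof of Proposition~\ref{prop:assembly}); then $N|_{S_i}$ is a $C^1$ immersion, hence an embedding on a sufficiently small sector, so $N(x)\neq N(p)$ for every $x\in S_i\setminus\{p\}$, while $N(x)\neq-N(p)$ holds trivially. Therefore $\pi\circ N$ is defined and continuous on the simply connected set $\overline{S_i}\setminus\{p\}$, so the increment of $\arg(\pi\circ N)$ along a path in $\overline{S_i}\setminus\{p\}$ depends only on its endpoints. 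Deforming $\gamma_i$ (rel endpoints) to the concatenation ``$\rho_{i-1}$ inward from $q_{i-1}$, a small quarter-circle near $p$, then $\rho_i$ outward to $q_i$'' and invoking the ray estimates of the previous paragraph, the increment along $\gamma_i$ is $\alpha_i-\pi+O(\operatorname{diam}\gamma)$ (the partial-ray contributions are $O(\operatorname{diam}\gamma)$ and those along each fixed ray cancel between $\gamma_i$ and $\gamma_{i+1}$). Summing over the $2m_p$ sectors and using $\sum_i\alpha_i=2\pi$ gives $2\pi J_p=2\pi-2\pi m_p+O(\operatorname{diam}\gamma)$; since $J_p$ is an integer and $\operatorname{diam}\gamma$ can be made arbitrarily small, $J_p=1-m_p$. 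As a consistency check, a regular point has $m_p=2$ and four sectors, yielding $J_p=\tfrac1{2\pi}(2\pi-4\pi)=-1$, in agreement with Figure~\ref{fig:winding_number_normalfield}. The one bookkeeping hazard is making the orientation conventions in~\eqref{eq:frames}, in Definition~\ref{def:Jp}, and in the chosen (counterclockwise) traversal mutually consistent, so that each sector contributes $\alpha_i-\pi$ and not $\pi-\alpha_i$; this is precisely where the alternation of $u$- and $v$-rays is used.
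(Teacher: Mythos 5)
Your proposal is correct and follows essentially the same route as the paper: both arguments rest on the Lelieuvre relations $N_u = N\times r_u$, $N_v = -N\times r_v$, which rotate the $u$- and $v$-ray tangents by $+\pi/2$ and $-\pi/2$, so that each sector's image of the normal sweeps its opening angle minus $\pi$, and summing over the $2m_p$ sectors with $\sum_i\alpha_i = 2\pi$ gives $2\pi J_p = 2\pi(1-m_p)$. Your per-sector homotopy bookkeeping along a small circle is just a more explicit version of the paper's angle accounting via the constraint $0<\theta_i-\theta_{i+1}<\pi$ forced by $N_u\times N_v=-r_u\times r_v$.
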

\begin{proof}
We remark that the quantities $m_p$ and $J_p$ are well defined for $C^{1,1}$ immersion (and even for immersions with lower regularity), since $N: \Omega \to S^2$ is continuous 
(even Lipschitz) \cite{Hartman1959spherical}. The equality $J_p = (1-m_p)$ is known from the theory of weakly regular saddle surfaces (see \cite[Lemma  1.2]{Rozendorn1966Weakly}). We will have further use for the intuition behind this result so we give a short, self contained argument that holds for branched $C^{1,1}$ surfaces. Our argument is based on the Lelieuvre equations~\eqref{eq:lelieuvre}.

By invariance under Euclidean motions, we can, WLOG, assume that $y(p) = 0, N(p) = \mathbf{e}_3$. A saddle of order $m$ is defined by angles $0 = \beta_0 < \beta_1 < \cdots < \beta_{2m} = 2 \pi$ such that the tangent vectors, at $p$, to the $u$ and $v$ asymptotic curves are given by $\mathbf{s}_i = \cos(\beta_i) \mathbf{e}_1 + \sin(\beta_i) \mathbf{e}_2$ (cf. Eq.~\eqref{def:sectors}).   

From~\eqref{eq:lelieuvre}, we have,  $N_u = N \times r_u, N_v = - N \times r_v$, so the asymptotic curves lift to $S^2$ by the normal map $N$ into curves whose tangents at $N(p) = \mathbf{e}_3$ are given by $\mathbf{t}_i = \cos(\theta_i) \mathbf{e}_1 + \sin(\theta_i) \mathbf{e}_2$ where $\theta_i = \beta_i + \frac{\pi}{2} \bmod 2 \pi $ if $i$ is even and $\theta_i = \beta_i -  \frac{\pi}{2}  \bmod 2 \pi$ if $i$ is odd. We can determine the  values of $\theta_i$ by imposing the requirement $0 < \theta_{i}-\theta_{i+1} < \pi$, which is necessary to ensure that $N_u \times N_v = - r_u \times r_v$. Since $0 < \beta_{i+1}-\beta_i < \pi$, it follows that $\theta_{i+2} - \theta_i = \beta_{i+2} - \beta_i - 2 \pi$. Adding up the differences in the $\theta_i$, we thus get 
$$
\sum_{i=1}^{2m} [\theta_{i} - \theta_{i-1}] = \sum_{k = 1}^m [\theta_{2k} - \theta_{2k-2}] = \beta_{2m} - \beta_0 - 2 m \pi = 2(1- m) \pi,
$$ 
thus proving the claim that $J_p = 1 - m_p$.
\end{proof}

Fig.~\ref{fig:winding_number_normalfield} is an illustration of this result. It seems natural that there is no ``nice" way to approach the monkey saddle (Fig.~\ref{fig:mkysaddle}) through normal saddle surfaces (Fig.~\ref{fig:saddle}), since we cannot go from a winding number of 1 to a winding number of 2 continuously. This is indeed the case as we show in Theorem~\ref{thm:branched} below. This theorem encapsulates the principal motivation for an investigation of pseudospherical surfaces with branch points, namely that surfaces with branch points are distinct from smooth surfaces psuedospherical surfaces because they carry a topological index that cannot be smoothed away.  
Our approach is based on the ideas of Brezis and Nirenberg for the degree of BMO mappings \cite{Brezis1995Degree,Brezis1996Degree} with quantitative estimates from the theory of quasi-isometric mappings \cite{John1968quasi-isometricI,John1969quasi-isometricII}.  

Definition~\ref{def:Jp} for $J_p$ is through computing the index on a circle with sufficiently small radius $\epsilon$. We now show that the radius $\epsilon$ is only limited by the max curvature so that, for any minimizing sequence for $\mathcal{E}_\infty$ consisting of $C^2$ immersions, we have uniform control on the size of the circles that we may use to compute the ``local" degree of the normal map.

\begin{lemma} \label{lem:maximal} For all $k_{\max} < \infty$ there exist $\eta = \eta(k_{\max}) >0$ such that for all $0 < \delta < \eta$ and for all $C^2$ pseudospherical immersion $y : B_{3 \delta} \to \mathbb{R}^3$ with $\max(|\kappa_1(x)|,|\kappa_2(x)|) \leq k_{\max}$ for all $x \in \overline{B_{2\delta}}$, we have
\begin{enumerate}
    \item The normal map $N:B_{2 \delta} \to S^2$ is one-to-one.
   \item For all $x$ in the `collar' $B_{2 \delta} \setminus \overline{B_\delta}$, we have $\|N(x) - N_0\| \geq c(k_{\max}) \delta$, where $N_0$ is the image of the center of the geodesic ball $B_{2\delta}$.
\end{enumerate} 
\end{lemma}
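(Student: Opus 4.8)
The plan is to prove that, for $\delta$ below a threshold $\eta(k_{\max})$, the map $N|_{B_{2\delta}}$ is a bi-Lipschitz homeomorphism onto a quasiconvex subset of $S^2$, with all constants controlled by $k_{\max}$; conclusions (1) and (2) then follow immediately. The elementary starting point: since $\kappa_1\kappa_2=K=-1$ and $\max(|\kappa_1|,|\kappa_2|)\le k_{\max}$ on $\overline{B_{2\delta}}$, also $\min(|\kappa_1|,|\kappa_2|)\ge k_{\max}^{-1}$ there, so the shape operator $S$ has singular values in $[k_{\max}^{-1},k_{\max}]$. As $dN=-dy\circ S$ with $dy$ a linear isometry onto the tangent plane, $N$ is an immersion with $k_{\max}^{-1}\|v\|_g\le\|dN(v)\|\le k_{\max}\|v\|_g$ for every tangent vector $v$; moreover $dN$ is a linear isometry composed with a self-adjoint operator of determinant $-1$, a structure we must use somewhere, since the distortion bounds alone are scale-invariant whereas the conclusion is not, so the normalization $K=-1$ has to enter.

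The core of the argument is to control how much $dN$ can rotate over $B_{2\delta}$, uniformly in the immersion and with a bound that improves as $\delta\to0$. I would obtain this by passing to the almost-flat rescaling $\delta^{-1}y$: this replaces $B_{3\delta}$ by a radius-$3$ geodesic ball of a metric with constant curvature $-\delta^2$ and second fundamental form bounded by $\delta k_{\max}$, so the total curvature at this scale is small, $\int_{B_{3\delta}}(\kappa_1^2+\kappa_2^2)\,dA\le 2k_{\max}^2\,\mathrm{Area}(B_{3\delta})\le C(k_{\max})\delta^2$. Combining this smallness with the Gauss--Codazzi system (in asymptotic gauge, the sine-Gordon equation $\varphi_{uv}=\sin\varphi$ with $\varphi$ confined to $[\varphi_0(k_{\max}),\pi-\varphi_0(k_{\max})]$) and interior estimates for this (integrable) system, one extracts a scale-uniform modulus: there is $\omega=\omega_{k_{\max}}$ with $\omega(t)\to0$ as $t\to0$ and $\mathrm{osc}_{\overline{B_{2\delta}}}(S)\le\omega(\delta)$. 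Equivalently, one may invoke directly F.\ John's quantitative theory of quasi-isometric mappings \cite{John1968quasi-isometricI,John1969quasi-isometricII}, whose content is precisely that the rotational part of the differential of a quasi-isometric map on a ball has oscillation controlled by the distortion, through the curl-free constraint satisfied by any Jacobian. This scale-uniformity is the step I expect to be the main obstacle; the rest is routine.

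Given the modulus, $\mathrm{osc}_{B_{2\delta}}(dN)$ is small: the tangent plane turns at rate $\le k_{\max}$, so $\mathrm{osc}_{B_{2\delta}}(dy)\le C\,k_{\max}\delta$, whence $\mathrm{osc}_{B_{2\delta}}(dN)\le\mathrm{osc}(S)+k_{\max}\,\mathrm{osc}(dy)\le\omega(\delta)+C\,k_{\max}^2\delta=:\varepsilon(\delta)$. Writing $dN(x)=dN(p)+E(x)$ with $\|E\|\le\varepsilon(\delta)$ and integrating along minimizing geodesics of $\mathbb{H}^2$ (which, for $\delta$ small, are $(1+O(\delta^2))$-close to Euclidean segments in tangent-plane coordinates), one gets $\|N(x)-N(x')\|\ge\bigl(k_{\max}^{-1}-\varepsilon(\delta)-O(\delta^2)\bigr)\,d_g(x,x')$ for $x,x'\in B_{2\delta}$. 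Choosing $\eta=\eta(k_{\max})$ so small that $\varepsilon(2\eta)+O(\eta^2)<\tfrac12 k_{\max}^{-1}$ gives, for every $\delta<\eta$, that $N|_{B_{2\delta}}$ is injective (conclusion (1)) and in fact bi-Lipschitz onto its image with constant $2k_{\max}$.

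For conclusion (2), the image $N(B_{2\delta})$ is a $2k_{\max}$-bi-Lipschitz copy of the convex set $B_{2\delta}\subset\mathbb{H}^2$, hence $(2k_{\max})^2$-quasiconvex in $\mathbb{R}^3$: any two of its points are joined, within the image, by a rectifiable arc of length at most $(2k_{\max})^2$ times their chordal distance. Let $x\in B_{2\delta}\setminus\overline{B_\delta}$, so $d_g(p,x)\ge\delta$; since $N^{-1}$ is $2k_{\max}$-Lipschitz, every arc in $N(B_{2\delta})$ from $N_0=N(p)$ to $N(x)$ has length at least $d_g(p,x)/(2k_{\max})\ge\delta/(2k_{\max})$, while by quasiconvexity some such arc has length at most $(2k_{\max})^2\|N(x)-N_0\|$. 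Therefore $\|N(x)-N_0\|\ge c(k_{\max})\,\delta$ with $c(k_{\max})$ of order $k_{\max}^{-3}$, completing the proof.
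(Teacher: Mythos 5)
Your proposal has a genuine gap at its central step: the claimed scale-uniform modulus $\mathrm{osc}_{\overline{B_{2\delta}}}(S)\le\omega(\delta)\to 0$ is false. The curvature bound $\max(|\kappa_1|,|\kappa_2|)\le k_{\max}$ constrains the angle $\varphi$ between the asymptotic directions only \emph{pointwise}, confining it to $[\varphi_0(k_{\max}),\pi-\varphi_0(k_{\max})]$; it gives no control whatsoever on $\nabla\varphi$, i.e.\ on the geodesic curvatures $\kappa^u=-\varphi_u$, $\kappa^v=\varphi_v$ of the asymptotic lines. Since the sine--Gordon equation is hyperbolic, there are no interior gradient estimates to invoke: prescribing Goursat data $\varphi(u,0)$ that climbs from $\varphi_0$ to $\pi-2\varphi_0$ across a $u$-interval of length $\epsilon'\ll\delta$ (and $\varphi(0,v)\equiv\varphi_0$) produces, for $\delta$ small, a pseudospherical immersion with $\varphi\in[\varphi_0,\pi-\varphi_0]$ on $B_{2\delta}$ — so all hypotheses of the lemma hold — while the shape operator, and hence $dN$, swings by an amount of order $1$ (comparable to $k_{\max}$, not to $\tfrac12 k_{\max}^{-1}$) across an arbitrarily thin strip inside the ball. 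Your subsequent perturbative step, writing $dN(x)=dN(p)+E(x)$ and integrating along near-straight segments, therefore cannot be run: it needs $\|E\|<\tfrac12 k_{\max}^{-1}$, and no choice of $\eta(k_{\max})$ delivers that uniformly over the admissible immersions. The small-Willmore-energy observation ($\int(\kappa_1^2+\kappa_2^2)\,dA=O(\delta^2)$) does not rescue this, since an $L^1$-type bound on curvature does not control the $L^\infty$ oscillation of $dN$.

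Your parenthetical appeal to F.~John is also a mischaracterization, and it points to the actual fix (which is the paper's route). John's Theorem~III, as used in the paper, does not say that the rotational part of the differential of a quasi-isometry has small oscillation — for distortion constant $L\sim k_{\max}^2$ bounded away from $1$ no such smallness holds (those oscillation estimates are for $(1+\epsilon)$-quasi-isometries with $\epsilon$ small). What it does say is that an $L$-BLD \emph{local} homeomorphism of a ball $B_r$ is injective, indeed bi-Lipschitz, on the concentric ball $B_{r/L^2}$, with no continuity modulus for the differential required. The paper exploits exactly this: using the bound $k_{\max}^{-2}\le\sqrt{G(\mathbf w,\mathbf w)/g(\mathbf w,\mathbf w)}\le k_{\max}^2$ (your singular-value estimate), choosing $\eta\sim k_{\max}^{-1}$ so that $N(B_{3\delta})$ stays in a hemisphere around $N_0$ (this is the only place the non-scale-invariant smallness of $\delta$ is needed, not a modulus for $dN$), and then projecting orthogonally to $N_0$ to obtain an orientation-preserving planar BLD local homeomorphism, to which John's/Martio--V\"ais\"al\"a's quantitative injectivity theorem applies; both conclusions (injectivity and the collar lower bound $\|N(x)-N_0\|\ge c(k_{\max})\delta$) are read off directly from that theorem. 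Your quasiconvexity argument for conclusion (2) would be fine once a bi-Lipschitz bound is in hand, but as written the proposal does not establish that bound.
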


\begin{proof}
For a $C^2$ immersion $y:B_{3\delta} \to \mathbb{R}^3$, there are global asymptotic coordinates $u,v$ on $B_{2\delta}$ and an angle field $\varphi: B_{2\delta} \to (0,\pi)$  such that the metric is given by $g=du^2 +2 \sigma \cos(\varphi) du dv + dv^2$ \cite{hartman1951asymptotic} (See also~\eqref{eq:metrics}), and the pull back of the metric on the sphere by the Normal map gives $G = du^2 +dv^2 - 2 \sigma \cos(\varphi) du dv$. The larger principal curvature is given by $\max(\tan \frac{\varphi}{2}, \cot \frac{\varphi}{2})$ so the hypothesis gives the restriction $2 \tan^{-1} \frac{1}{k_{\max}} \leq \varphi \leq \pi-2 \tan^{-1} \frac{1}{k_{\max}}$. We note here that $\kappa_1(x) \kappa_2(x) = -1$ so, necessarily, $k_{\max} \geq 1$.

{For any tangent vector $\mathbf{w} \in \mathrm{span}(\frac{\partial}{\partial u},\frac{\partial}{\partial v})$, we have 
\begin{equation}
\label{eq:BLD}
    k_{\max}^{-2} \leq \frac{1-|\cos \varphi|}{1+|\cos \varphi|} \leq \sqrt{\frac{G(\mathbf{w},\mathbf{w})}{g(\mathbf{w},\mathbf{w})}} \leq \frac{1+|\cos \varphi|}{1-|\cos \varphi|} \leq k_{\max}^{2}
\end{equation}

Setting $\eta = \frac{\pi}{6} k_{\max}^{-1}$, it is straightforward to see that the length of a spherical arc between $N_0$ and $N(x)$ is less than $\frac{\pi}{2}$. Consequently,  $N(x) \cdot N_0 >0$ for all $x \in B_{3\delta}$ and the image of $B_{3\delta}$ under the normal map is contained within a hemisphere.

We can identify $B_{3\delta}$ with a subset of the unit disk through the Poincar\'{e} disk embedding \cite[Chap. 4]{anderson2005hyperbolic} (see also \S \ref{sec:poincare-ddg}). Pre- and post- composing the Normal map $N$ with complex conjugation and projection $\perp : S^2 \to \mathbb{R}^2$ into the orthogonal complement of $N_0$, we obtain the map $\overline{N}^\perp:B_{3\delta} \to \mathbb{R}^2$ given by $(x+iy) \mapsto  N(x-iy) - \langle N(x-iy), N_0\rangle N_0$. 

We collect a few properties of the map $\overline{N}^\perp$:
\begin{enumerate}
    \item The image of this map is contained in the unit disk.
    \item This map is $C^1$ since $N$ is $C^1$ and the other maps are smooth.
    \item  $\overline{N}^\perp$ preserves orientation since complex conjugation and $N$ are both orientation reversing, while $\perp$ preserves orientation.
    \item It follows from  the smoothness of complex conjugation, the smoothness of the Poincar\'e disk identification of the unit disk $x^2+y^2 < 1$ with the Hyperbolic plane, the smoothness of the orthogonal projection from the (open) hemisphere to the unit disk,  the compactness of $\overline{B_{2\delta}}$, and from~\eqref{eq:BLD} that an analogous relation is true for the mapping $\overline{N}^\perp$, i.e. the (local) distortion of lengths by the mapping $\overline{N}^\perp $ is bounded away from $0$ and $\infty$ on the ball $B_{2\delta}$. The constants giving these bounds only depend on $\eta$ and the constants in~\eqref{eq:BLD}, so they only depend on $k_{\max}$.
\end{enumerate} 

It follows that $\overline{N}^\perp:B_{2\delta} \to \mathbb{R}^2$ is a regular quasi-isometry \cite{John1968quasi-isometricI} (i.e. a Bounded length distortion (BLD) local homeomorphism \cite[\S 4]{Martio1988Elliptic}). Our conclusions are a direct restatement of the results of F. John, \cite[Thm. III]{John1968quasi-isometricI} (see also \cite[Lemma 4.3]{Martio1988Elliptic}).}
\end{proof}

In the preceding proof, we used the following result, first proved in \cite[Thm. III]{John1968quasi-isometricI}. We present an equivalent statement using the notation in \cite{Martio1988Elliptic}. 

\begin{definition*}[BLD mapping, Def. 2.1, Martio and V\"ais\"al\"a \cite{Martio1988Elliptic}] 
Let $L > 1$. A Lipschitz mapping $f : G \subseteq \mathbb{R}^n \to \mathbb{R}^n$ is said to be of $L$-bounded length distortion, abbreviated L-BLD, if, for a.e. $x \in G$, (i)  $|h|/L \leq |f'(x)h| \leq  L |h|$ for all $h \in  \mathbb{R}^n$, and (ii) $\mathrm{det}(Df(x)) >0$.
\end{definition*}

\begin{theorem*}[Thm. III, John \cite{John1968quasi-isometricI}] If $f : G \subseteq \mathbb{R}^n \to \mathbb{R}^n$ is an L-BLD immersion and if $B_r(x) \subseteq G$, then $\|f(w)-f(z)\|/L \leq \|w-z\| \leq L \|f(w)-f(z)\|$ for all $w,z \in B_{r/L^2}(x)$. 
\end{theorem*}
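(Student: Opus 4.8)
This is John's Theorem III; I sketch the argument, which follows his. The plan is to prove the two inequalities separately, with the left one elementary and the right one (non-collapse of distances) carrying all the weight. For the left inequality, note that a Lipschitz map is absolutely continuous on line segments, and the segment $[z,w]$ lies in the convex set $B_r(x)\subseteq G$; integrating the a.e.\ bound $|f'(\xi)h|\le L|h|$ along $[z,w]$ gives $\|f(w)-f(z)\|\le L\|w-z\|$, valid in fact for all $w,z\in B_r(x)$, so only the right inequality needs the restriction to $B_{r/L^2}(x)$.

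For the right inequality I would reduce, as usual, to a statement about images of balls centered at $x$. Write $x_0=x$, $y_0=f(x_0)$; it is harmless to assume $f\in C^1$ (the general Lipschitz case follows by mollification and passage to the limit, the two-sided bound keeping everything uniform), and then $\det Df>0$ makes $f$ a local diffeomorphism, so the Brouwer degree is defined and $\deg(f,B_\rho(x_0),y_0)\ge 1$ whenever $y_0\notin f(\partial B_\rho(x_0))$, by orientation preservation. As the degree is locally constant in the target, $f(B_\rho(x_0))$ then contains the whole component of $y_0$ in $\mathbb{R}^n\setminus f(\partial B_\rho(x_0))$, hence the ball $B_s(y_0)$ with $s=\operatorname{dist}(y_0,f(\partial B_\rho(x_0)))$. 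So it suffices to establish the radial lower bound $\operatorname{dist}(y_0,f(\partial B_\rho(x_0)))\ge\rho/L$ for $\rho$ up to (essentially) $r$: this yields $f(B_r(x_0))\supseteq B_{r/L}(y_0)$, and, together with a path-lifting / Jordan--Brouwer argument along increasing $\rho$, that $f$ is injective on $B_r(x_0)$.

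The radial lower bound is where the hypotheses genuinely bite, and this is the hard part. Let $u(y)=\|f(y)-y_0\|$; on $\{u>0\}$ one has $\nabla u(y)=Df(y)^{\mathsf T}(f(y)-y_0)/\|f(y)-y_0\|$, and since $|Df(y)h|\ge|h|/L$ for all $h$ is equivalent to $\|(Df(y))^{-1}\|\le L$, hence to $\|(Df(y)^{\mathsf T})^{-1}\|\le L$, we obtain the pointwise eikonal-type bound $|\nabla u|\ge 1/L$ on $\{u>0\}$. Following the steepest-descent curve of $u$ from a point $y$, the value $u$ drops at rate at least $1/L$ per unit arclength while staying $\ge 0$, so the curve must leave the open set $\{u>0\}\cap B_r(x_0)$ within arclength $L\,u(y)$, hitting either $u^{-1}(0)$ or $\partial B_r(x_0)$; hence $\operatorname{dist}\!\big(y,\,u^{-1}(0)\cup\partial B_r(x_0)\big)\le L\,u(y)$. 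Once one knows $u^{-1}(0)\cap B_r(x_0)=\{x_0\}$ — which I would extract from injectivity of $f$ on $B_r(x_0)$, proved inductively in $\rho$ in tandem with this very estimate — the inequality becomes $\min(\|y-x_0\|,\,r-\|y-x_0\|)\le L\|f(y)-y_0\|$, which gives the radial bound on the relevant range of $\rho$; a careful accounting of radii through this induction and through the inversion below is exactly what pins down the sharp constant $r/L^2$ (a cruder bookkeeping still gives the conclusion on a slightly smaller ball, which is all that is needed in the application).

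Finally, with $f$ injective on $B_r(x_0)$ and $f(B_r(x_0))\supseteq B_{r/L}(y_0)$, the inverse $g=(f|_{B_r(x_0)})^{-1}$ is a $C^1$ map on the \emph{convex} set $B_{r/L}(y_0)$ with $\|Dg\|=\|(Df)^{-1}\|\le L$; integrating $Dg$ along segments — legitimate now, since the domain is convex — shows $g$ is $L$-Lipschitz there. For $w,z\in B_{r/L^2}(x_0)$ the left inequality gives $\|f(w)-y_0\|,\|f(z)-y_0\|\le L\cdot r/L^2=r/L$, so $f(w),f(z)$ and the segment joining them lie in $B_{r/L}(y_0)$; applying the Lipschitz bound for $g$ to $f(w),f(z)$ gives $\|w-z\|=\|g(f(w))-g(f(z))\|\le L\|f(w)-f(z)\|$. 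The main obstacle, and the reason naive integration of $(Df)^{-1}$ fails, is that the images $f(B_\rho)$ need not be convex; this is overcome by using the orientation hypothesis and degree theory to force $f$ to cover a round neighborhood of $y_0$, and by the eikonal estimate $|\nabla u|\ge 1/L$ — the only place the lower bound $|Dfh|\ge|h|/L$ enters — to quantify how large that neighborhood is. The single most delicate point is proving $f^{-1}(y_0)\cap B_r(x_0)=\{x_0\}$, i.e.\ that local injectivity does not fail "globally on the ball," which is what ultimately forces the $L^{-2}$ loss in the radius.
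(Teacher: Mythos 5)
This statement is not proved in the paper at all: it is quoted verbatim (in Martio--V\"ais\"al\"a's notation) from John's 1968 paper and used as a black box in the proof of Lemma~\ref{lem:maximal}, so your sketch can only be measured against John's own argument, not against anything in this text. Measured that way, the overall skeleton (upper bound by integrating along a segment; degree/orientation to show the image covers a round neighborhood of $y_0$; inverting on the convex ball $B_{r/L}(y_0)$ and integrating $Dg$ there) is the right kind of argument, but two of your intermediate claims are false as stated, and they are exactly where the content of the theorem lives. You assert that the radial bound $\operatorname{dist}(y_0,f(\partial B_\rho(x_0)))\ge\rho/L$ holds ``for $\rho$ up to (essentially) $r$,'' that $f$ is injective on all of $B_r(x_0)$, and that the delicate point is to show $f^{-1}(y_0)\cap B_r(x_0)=\{x_0\}$. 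Take $n=2$, identify $\mathbb{R}^2\cong\mathbb{C}$, let $L>e^{2\pi}$, $r=\log L$, $G=B_r(0)$ and $f(z)=e^z$. Then $|Df(z)h|=|e^z|\,|h|$ with $e^{-r}\le |e^z|\le e^{r}$ on $G$, and $\det Df>0$, so $f$ is an $L$-BLD local homeomorphism; yet $f(0)=f(2\pi i)=1$ with $2\pi i\in B_r(0)$, so $f$ is not injective on $B_r(x_0)$, $f^{-1}(y_0)\cap B_r(x_0)\neq\{x_0\}$, and $\operatorname{dist}(y_0,f(\partial B_{2\pi}(0)))=0$, so the radial estimate already fails at $\rho=2\pi<r$. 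The conclusion of the theorem is untouched (here $r/L^2$ is minuscule), but your route --- global injectivity on $B_r$, then a single-valued inverse on $B_{r/L}(y_0)$ obtained from that --- cannot be repaired by ``careful accounting of radii'': the shrinkage of the radius is not deferred bookkeeping but the substance. John's proof never claims injectivity on the full ball; it constructs one branch of the inverse by continuing the local inverse along segments emanating from $y_0$, with the length estimate (lifts of paths of length $s$ have length at most $Ls$, hence stay in $B_{Ls}(x_0)$) limiting how far the continuation is valid, and a monodromy argument showing the lift of $[y_0,f(w)]$ ends at $w$ only for $w$ in the smaller concentric ball.

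A second, lesser gap: the reduction ``it is harmless to assume $f\in C^1$ by mollification'' is not justified. Mollification preserves the Lipschitz upper bound but not the lower bound $|Df(x)h|\ge |h|/L$: the averaged differential lies only in the convex hull of the admissible matrices, and matrices with $\sigma_{\min}\ge 1/L$ and positive determinant can average to something with much smaller least singular value, so both your eikonal estimate $|\nabla u|\ge 1/L$ and the local-diffeomorphism property can degrade in the limit you need to control. John's argument (and Martio--V\"ais\"al\"a's Lemma 4.3) avoids smoothing altogether, working directly with the metric/length form of the hypotheses and with topological degree or path-lifting for the continuous map itself; if you want to keep your differential-based presentation, you should either prove the statement first for locally bi-Lipschitz maps defined metrically, or explain why the a.e.\ differential bounds upgrade to the local distance bounds without regularization.
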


The following lemma weakens the hypotheses in the previous lemma, by (i) allowing for branched i.e. globally $C^{1,1}$ and piecewise $C^2$ immersions, and (ii) removing the uniform bound $k_{\max}$ for the max curvature. The conclusions are also correspondingly weaker.

\begin{lemma} Let $\Omega \subset \mathbb{H}^2$ denote a (proper) open subset of the Hyperbolic plane and let $y : \Omega \to \mathbb{R}^3$ be a branched pseudospherical immersion. For every point $p \in \Omega$, there exist $\delta > 0$ and $d_0 > 0$ such that:
\begin{enumerate}
    \item The normal map $N:B_{2 \delta}(p) \to S^2$ satisfies $N(x) \neq N(p)$ for any $x$ in the punctured ball $B_{2 \delta}(p) \setminus \{p\}$.
    \item For all $x$ in the `collar' $B_{2\delta}(p) \setminus \overline{B_{\delta}(p)}$, we have $\|N(x) - N(p)\| \geq d_0$.
\end{enumerate} 
\label{lem:injective}
\end{lemma}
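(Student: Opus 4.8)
The plan is to reduce the statement to a single sector of the branch point and there obtain the lower bound by linearizing the Gauss map at the corner. The claim is local, so by invariance under Euclidean motions we may assume $y(p)=0$ and $N(p)=\mathbf{e}_3$. By Definition~\ref{def:branched_immersion} there are an open neighborhood $O_p\ni p$, a homeomorphism $\tau_p:O_p\to T_p$ onto an $m_p$-star $T_p=\bigcup_{i=1}^{2m_p}S_i$, and an $m_p$-saddle $r_p:T_p\to\mathbb{R}^3$ with $y|_{O_p}=r_p\circ\tau_p$, where $\tau_p(p)$ is the central vertex of $T_p$. Fix $\delta_0>0$ with $B_{2\delta_0}(p)\subseteq O_p$. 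Since $\bigcup_i S_i=T_p$ we have $\bigcup_i\tau_p^{-1}(S_i)=O_p$, so $B_{2\delta_0}(p)\setminus\{p\}$ is covered by the sets $\tau_p^{-1}(S_i)\setminus\{p\}$; hence it suffices to produce, for each $i$, numbers $\rho_i,c_i>0$ with $\|N(q)-N(p)\|\ge c_i\,d(q,p)$ for every $q\in\tau_p^{-1}(S_i)$ with $d(q,p)<\rho_i$. Then $\delta:=\tfrac12\min(\delta_0,\min_i\rho_i)$ and $d_0:=\delta\min_i c_i$ verify conclusions (1) and (2).

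Fix a sector $S_i$ and use on it the asymptotic coordinates $(u,v)$, in which $S_i$ is a rectangle $R_i=[0,a_i]\times[0,b_i]$ with $p$ at the corner $(0,0)$, and $r_i:=r_p|_{S_i}$ is a $C^{1M}$ PS-front with weakly harmonic normal $N_i$. By the assembly construction (Prop.~\ref{prop:assembly}), $N_i$ is $C^1$ up to the corner, with $N_u(0,0)\times N_v(0,0)=\lim_{(u,v)\to(0,0)}N_u\times N_v$ of length $|\mathbf{s}^*_{i-1}\cdot\mathbf{s}_i|>0$; shrinking $a_i,b_i$ we may assume $N_u\times N_v\neq0$ on all of $\overline{R_i}$, so $DN_i(x)$ has rank $2$ for every $x\in\overline{R_i}$. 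Since $N\cdot N\equiv1$ forces $N_u,N_v\perp N(p)=\mathbf{e}_3$ at the corner, $DN_i(0,0):\mathbb{R}^2\to\mathbf{e}_3^{\perp}$ is invertible; let $\lambda_i>0$ be its smallest singular value. By uniform continuity of $DN_i$ on the compact convex rectangle $\overline{R_i}$ there is $\rho_i'>0$ with $\|DN_i(x)-DN_i(0,0)\|<\lambda_i/2$ whenever $|x|<\rho_i'$. For such $x$ the segment $[(0,0),x]$ lies in $R_i$, and writing $N_i(x)-N_i(0,0)=\int_0^1 DN_i(tx)\,x\,dt$ gives
\[
\|N_i(x)-N_i(0,0)\|\ \ge\ \|DN_i(0,0)\,x\|-\sup_{0\le t\le1}\|DN_i(tx)-DN_i(0,0)\|\,|x|\ \ge\ \tfrac{\lambda_i}{2}\,|x| .
\]

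It remains to convert the coordinate norm $|x|$ into geodesic distance. Because $y$ is an isometric immersion, the pullback of the hyperbolic metric to $S_i$ is, in these coordinates, $du^2+2\sigma\cos\varphi\,du\,dv+dv^2$ (Eq.~\eqref{eq:metrics}); the angle field $\varphi$ is continuous and at the corner equals $\alpha_i$ or $\pi-\alpha_i$, in either case in $(0,\pi)$, so after a final shrink of $R_i$ the quantity $|\cos\varphi|$ is bounded away from $1$ on $\overline{R_i}$ and this metric is uniformly comparable to $du^2+dv^2$. Consequently, for $q=\tau_p^{-1}(x)$ near $p$, the distance $d(q,p)$ is comparable to $|x|$ with constants depending only on the sector; combined with the displayed bound this yields $\|N(q)-N(p)\|\ge c_i\,d(q,p)$ for all $q\in\tau_p^{-1}(S_i)$ with $d(q,p)<\rho_i$, for suitable $\rho_i,c_i>0$. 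Taking the minimum over the $2m_p$ sectors completes the proof.

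The crux is that $p$ is a \emph{corner} of every coordinate patch $S_i$, so neither the inverse function theorem nor the BLD estimate of John (Thm.~III of \cite{John1968quasi-isometricI}) that powered Lemma~\ref{lem:maximal} applies at $p$ directly; the linearization bound $\|N_i(x)-N_i(0,0)\|\gtrsim|x|$ must be extracted by hand, and it rests on the one nontrivial input from the construction, namely that $\|N_u\times N_v\|$ has a strictly positive limit at $p$ (equivalently, $N_i$ is a $C^1$ immersion up to the corner). The remaining steps — covering the ball by sectors, and passing between the asymptotic, $m$-star, and hyperbolic metrics — are routine, the latter because $\varphi(p)\in(0,\pi)$ keeps the induced metric uniformly elliptic near $p$.
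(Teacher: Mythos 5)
Your argument is correct and reaches both conclusions, but by a genuinely different route than the paper. Both proofs decompose a neighborhood of $p$ into the $2m_p$ sectors, handle one sector at a time, and take a minimum at the end; the difference is what is done on each sector. The paper does not estimate anything at the corner: for each sector $S_i$ it extends the two bounding asymptotic curves smoothly past $p$ and, as in Prop.~\ref{prop:assembly}, reconstructs a $C^2$ pseudospherical immersion $\tilde y_i$ of a full neighborhood of $p$ that agrees with $y$ on $S_i$; since the Gauss map of a $C^2$ pseudospherical immersion is an immersion at $p$, it is injective near $p$, which gives conclusion (1) sector by sector, and conclusion (2) then follows from continuity of $N$ and compactness of the collar. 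You instead stay on the closed sector and linearize $N$ at the corner, using that $N_i$ is $C^{1M}$ up to the corner (Definition~\ref{def:m-saddle}) and that $\|N_u\times N_v\|=\sin\varphi$ tends to $\sin\alpha_i>0$ there, to obtain the quantitative bound $\|N(x)-N(p)\|\ge c\,d(x,p)$. This buys an explicit Lipschitz-below estimate (in the spirit of the BLD bounds behind Lemma~\ref{lem:maximal}, but at a corner where John's theorem does not apply directly) and yields $\delta$ and $d_0$ in one stroke, at the price of tracking boundary regularity of $DN_i$; the paper's extension trick avoids that bookkeeping but is purely qualitative. Two small repairs: the nonvanishing corner limit of $N_u\times N_v$ should be derived from the definition of a branched immersion (sector angle $\alpha_i\in(0,\pi)$ together with $\|N_u\times N_v\|=\sin\varphi$ from Eq.~\eqref{eq:def_phi}), not cited from the assembly construction of Prop.~\ref{prop:assembly}, which is a particular construction rather than a property of all branched immersions; and the implication ``$d(q,p)<\rho_i$ implies $|x|<\rho_i'$'' used in your coordinate-to-geodesic conversion is cleanest by compactness (the continuous function $d(p,\cdot)$ has a positive minimum on the part of the closed sector where $|x|\ge\rho_i'$), since a path realizing $d(q,p)$ may leave the sector, so two-sided comparability of $d(q,p)$ and $|x|$ is not immediate from the metric expression alone.
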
 
\begin{proof} 

If $y$ is a $C^2$ immersion, the normal map $N:\Omega \to S^2$ is an immersion at $p$ and thus injective in a neighborhood of $p$, implying the existence of an appropriate $\delta>0$ such that for all $x \in B_{3\delta}(p)\setminus\{p\}$ we have $N(x) \neq N(p)$. Since $\overline{B_{2 \delta}(p)} \setminus B_{\delta}(p)$ is a compact subset of $ B_{3\delta}(p)\setminus\{p\}$ and $N$ is continuous, the conclusions follow.

If $y$ is a branched immersion, the normal map is not injective on any neighborhood of a branch point $p$ since $p$ is a ramification point for the  Gauss Normal map $N: \Omega \to S^2$.  However, if $S_i \subset \Omega$ is one sector at the branch point $p$, we can extend the asymptotic curves bounding $S_i$ smoothly so that the extensions satisfy Eq.~\eqref{eq:frames}. As in Prop.~\ref{prop:assembly}, we can now construct a $C^2$ immersion $\tilde{y}_i$ on a neighborhood of $p$, one that agrees with $y$ on the sector $S_i$. Thus there is a $\delta_i > 0$ such that $N(x) \neq N(p)$ on $S_i \bigcap B_{3\delta_i}(p)$. Setting $\delta = \min(\delta_0,\delta_1,\ldots,\delta_{2m_p-1})$ gives a $\delta >0$ with the required property. 
\end{proof}

\begin{theorem} \label{thm:branched} Let $\Omega$ denote an open, simply connected, domain in the Hyperbolic plane and $y: \Omega \to \mathbb{R}^3$ be a $C^{1,1}$ immersion, possibly with branch points. Assume that there exists a sequence of $C^2$ pseudospherical immersions  $y_n: \Omega \to \mathbb{R}^3$ such that 
\begin{enumerate}
    \item $y_n \to y$ in $W^{2,2}_{\text{loc}}$.
    \item $\mathcal{E}_\infty[y_n] \leq k_{\max}$ for all $n$.
\end{enumerate}    
Then $m_p[y] = 2$ for every point in $\Omega$.
\end{theorem}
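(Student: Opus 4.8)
The plan is to fix an arbitrary $p \in \Omega$ and prove that the ramification index of the limiting Gauss map satisfies $J_p[y] = -1$. By Lemma~\ref{lem:covering-number} this is equivalent to $m_p[y] = 2$, since $J_p = 1 - m_p$ and $J_p < 0$ holds at every point of a surface with negative extrinsic curvature. The argument rests on two facts: the quantitative injectivity of the normal maps of the approximants near $p$ furnished by Lemma~\ref{lem:maximal}, together with the (qualitative) statement of Lemma~\ref{lem:injective} applied to the possibly branched limit $y$; and an upgrade of hypothesis~(1) from $W^{2,2}_{\mathrm{loc}}$ to $C^1_{\mathrm{loc}}$ convergence, which is exactly where hypothesis~(2) is used.

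For the upgrade, observe that each $y_n$ is a $C^2$ isometric immersion of the \emph{fixed} metric $g$ of curvature $-1$, so in local coordinates on $\Omega$ the Gauss formula reads $\partial_i\partial_j y_n = \Gamma^k_{ij}[g]\,\partial_k y_n + (h_n)_{ij} N_n$, with $\Gamma^k_{ij}[g]$ independent of $n$ and the entries $(h_n)_{ij}$ of the second fundamental form controlled by the principal curvatures. Together with $|\partial_k y_n|^2 = g_{kk}$ and hypothesis~(2), $\mathcal{E}_\infty[y_n] \le k_{\max}$, this gives $\|y_n\|_{C^{1,1}(K)} \le C(K, k_{\max})$ on every compact $K \subset \Omega$. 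Since a bounded sequence in $C^{1,1}(K)$ is precompact in $C^1(K)$ by Arzel\`a--Ascoli, every subsequence of $(y_n)$ has a further subsequence converging in $C^1_{\mathrm{loc}}$, and the limit must be $y$ because $y_n \to y$ already in $L^1_{\mathrm{loc}}$. Hence $y_n \to y$ in $C^1_{\mathrm{loc}}$, so $\nabla y_n \to \nabla y$ uniformly on compacts; since $\|\partial_1 y_n \times \partial_2 y_n\| = \sqrt{\det g}$ is bounded away from $0$ on compacts, it follows that $N_n \to N$ uniformly on compacts, and in particular $N_n(p) \to N(p) =: q$.

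Next, choose $\delta > 0$ small enough that $\overline{B_{3\delta}(p)} \subset \Omega$; that Lemma~\ref{lem:injective} holds for $y$ at $p$ with this $\delta$ and some $d_0 > 0$ (so $N \neq q$ on $B_{2\delta}(p)\setminus\{p\}$ and $\|N - q\| \ge d_0$ on the collar $B_{2\delta}(p)\setminus\overline{B_\delta(p)}$); that $\delta < \eta(k_{\max})$ as in Lemma~\ref{lem:maximal}; and that $\|N(x) - q\| < \tfrac12$ for all $x \in \overline{B_{2\delta}(p)}$, which is possible by continuity of $N$. Let $\gamma = \partial B_\rho(p)$ for some $\rho \in (\delta, 2\delta)$: a simple closed curve in $B_{2\delta}(p) \setminus \{p\}$ generating its fundamental group and lying in the collar. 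Let $\pi_q : S^2 \setminus \{\pm q\} \to S^1$ be the retraction onto the equator orthogonal to $q$. For each $n$, Lemma~\ref{lem:maximal} gives that $N_n$ is injective on $B_{2\delta}(p)$ with image in a hemisphere about $N_n(p)$; thus $N_n$ is a local homeomorphism at $p$, so its local degree is $\pm 1$, and the sign convention for negative curvature forces $J_p[y_n] = -1$. Since $\gamma$ lies in $B_{2\delta}(p)\setminus\{p\}$ and generates its fundamental group, Definition~\ref{def:Jp} lets us realize this value as $\deg(\pi_{N_n(p)} \circ N_n|_\gamma) = -1$. By the choices above, $N(\gamma)$ lies in a compact subset of $S^2 \setminus \{\pm q\}$, and $N_n|_\gamma \to N|_\gamma$ uniformly, so for large $n$ the map $\pi_q \circ N_n|_\gamma$ is a well-defined continuous map $S^1 \to S^1$ converging uniformly to $\pi_q \circ N|_\gamma$; uniformly close circle maps being homotopic, $\deg(\pi_q \circ N_n|_\gamma) = \deg(\pi_q \circ N|_\gamma)$ for all large $n$, and by Definition~\ref{def:Jp} (with base point $q = N(p)$) the right-hand side equals $J_p[y]$. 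Finally, $\deg(\pi_q \circ N_n|_\gamma)$ differs from $\deg(\pi_{N_n(p)} \circ N_n|_\gamma)$ only in the base point of the retraction; since $N_n(p) \to q$, for $n$ large the short geodesic arc from $q$ to $N_n(p)$ on $S^2$ stays uniformly away from $N_n(\gamma)$ and from $-N_n(\gamma)$, so sliding the base point of the retraction along this arc is a legitimate homotopy of maps $S^1 \to S^1$, whence $\deg(\pi_q \circ N_n|_\gamma) = \deg(\pi_{N_n(p)} \circ N_n|_\gamma) = -1$. Combining, $J_p[y] = -1$, i.e.\ $m_p[y] = 2$; as $p$ was arbitrary, this proves the theorem.

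I expect the real obstacle to be the genuine weakness of $W^{2,2}_{\mathrm{loc}}$ convergence: in two dimensions $W^{2,2}$ embeds into $C^{0,\alpha}$ for $\alpha < 1$, but the gradient lies only in $W^{1,2}$, which does not embed into $C^0$; so, a priori, the $N_n$ need not converge uniformly, and one cannot even make sense of $N_n(p) \to N(p)$. This is precisely why the theorem carries the uniform bound $\mathcal{E}_\infty[y_n] \le k_{\max}$ --- through the Gauss formula it promotes the convergence to $C^1_{\mathrm{loc}}$, after which everything is a matter of degree theory. A secondary point requiring care is that the degree $-1$ of the approximants is anchored at \emph{their own} normals $N_n(p)$, not at the limiting normal $N(p)$, and the base-point homotopy above is what reconciles the two. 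An alternative route, closer to the Brezis--Nirenberg framework referenced earlier, would be to stay at the $W^{1,2} \subset \mathrm{VMO}$ level throughout and invoke the degree theory for VMO maps together with the bounded-length-distortion estimates underlying Lemma~\ref{lem:maximal}; this avoids the upgrade of the topology but is more delicate to set up.
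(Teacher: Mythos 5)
Your proof is correct, but it reaches the conclusion by a genuinely different route than the paper. Where you differ is the stability-of-degree step: the paper never upgrades the convergence of the immersions. It observes that $W^{2,2}_{\mathrm{loc}}$ convergence of $y_n$ gives $W^{1,2}$ (hence BMO and $L^1$) convergence of the normal maps on $\overline{B_{2\epsilon}(p)}$, and then invokes the Brezis--Nirenberg stability of degree under BMO convergence, with Lemmas~\ref{lem:maximal} and~\ref{lem:injective} supplying exactly the uniform collar separation ($\|N_n - N_n(p)\| \geq \rho_0$ and $\|N-N(p)\|\geq \rho_0$ on $\overline{B_{2\epsilon}(p)}\setminus B_\epsilon(p)$, uniformly in $n$) that this framework requires; the curvature bound $\mathcal{E}_\infty[y_n]\le k_{\max}$ enters only through Lemma~\ref{lem:maximal}, to fix a scale $\epsilon$ independent of $n$. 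You instead use the curvature bound much more strongly: through the Gauss formula it controls the full second fundamental forms, giving uniform $C^{1,1}$ bounds on compacts, so Arzel\`a--Ascoli upgrades $y_n \to y$ to $C^1_{\mathrm{loc}}$ and the normals converge uniformly; after that you only need classical degree theory for uniformly convergent circle maps, and your handling of the two delicate points (the base point of the retraction is $N_n(p)$ for the approximants versus $N(p)$ for the limit, and the need to keep $N$, $N_n$ away from $\pm N(p)$ on the collar so the retraction and the homotopies are legitimate) is careful and complete; Lemma~\ref{lem:maximal} still earns its keep in your argument by making $N_n$ injective on $B_{2\delta}(p)$ with image in a hemisphere, so that the degree of $\pi_{N_n(p)}\circ N_n$ on your fixed circle $\gamma$ equals the local degree $-1$ at $p$. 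The trade-off is clear: your route is more elementary and self-contained (no BMO/VMO degree theory), at the price of using the uniform curvature bound as a genuine compactness hypothesis; the paper's route keeps the topology at the level the hypotheses naturally provide, which is why it cites the quasi-isometric/BLD estimates and the Brezis--Nirenberg machinery. You correctly identified this alternative in your closing remark --- it is in fact the proof the paper gives.
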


\begin{proof}
$p \in \Omega$ is an arbitrary point. In what follows, let $\epsilon>0$ be sufficiently small so that $B_{3\epsilon}(p) \subseteq \Omega$ and  $\epsilon < \min(\eta(k_{\max}),\delta(p))$ for $\eta(k_{\max})$ as given by Lemma~\ref{lem:maximal}, and $\delta(p)$, as given by Lemma~\ref{lem:injective}. Also, there is a corresponding $\rho_0(p) = \min(c(k_{\max}) \epsilon, d_0(p))>0$, such that $x \in \overline{B_{2\epsilon}(p)} \setminus B_\epsilon(p)$ implies that $\|N(x)-N(p)\| \geq \rho_0(p)$ and  $\|N_n(x)-N_n(p)\| \geq \rho_0(p)$ for all $n$, where $N$ and $N_n$ are the normal maps for the immersions $y$ and $y_n$ respectively.

$\epsilon >0$ now gives uniform control on the size of the geodesic ball $B_{\epsilon}(p)$ whose boundary can be used to compute the local winding number $J_n(p)$ and the limiting winding number $J_p$, as in Definition~\ref{def:Jp}, at (a potential branch point) $p$.

For the  $C^2$ immersions $y_n$, $N_n$ is locally one to one \cite{Hartman1959spherical} and $J^{(n)}_p$, the local degree of the normal map $N_n$ at $N_n(p)$ is $-1$ (from the reversal of orientation). $W^{2,2}_{\text{loc}}$ convergence  $y_n \to y$ implies $W^{1,2}$ convergence of the Normal maps on compact sets (here $\overline{B_{2 \epsilon}(p)})$. Convergence of the normals in $W^{1,2}(\overline{B_{2 \epsilon}(p)})$ implies convergence in BMO \cite[\S 5.8.1]{evans2} as well as in $L^1(\overline{B_{2 \epsilon}(p)})$. Our maps $N_n$ thus satisfy the hypotheses required for the stability of degree under BMO convergence \cite[Property 2, \S II.2]{Brezis1996Degree}. This implies $J_p = -1$ for the immersion $y$. Lemma~\ref{lem:covering-number} now implies that $m_p = 2$.
\end{proof}

According to Thm.~\ref{thm:branched}, the monkey saddle in Fig.~\ref{fig:mkysaddle}, which has a point $p$ with $J_p = -2$, {\em cannot} be approximated, in  $W^{2,2}_{\text{loc}}$,  by sequences of $C^2$ pseudospherical immersions with uniformly bounded principal curvatures $\mathcal{E}_\infty(y_n) \leq k_{\max} < \infty$. This is a local statement, so the relevant issue is not that the principal curvatures are getting large away from the branch point $p$. Indeed, since $W^{2,\infty}_{\mathrm{loc}}$ convergence implies $W^{2,2}_{\mathrm{loc}}$ convergence, it follows that any sequence of smooth pseudospherical immersions that converges to the monkey saddle in $W^{2,2}_{\mathrm{loc}}$ necessarily has blowup of the principal curvatures on arbitrarily small neighborhoods of the branch point $p$ and therefore does not converge in $W^{2,\infty}_{\mathrm{loc}}$. In physical terms, the index $m_p$ (or equivalently $J_p$) makes branch points {\em topological defects}, and they cannot be ``smoothed out" while keeping the principal curvatures uniformly bounded.

 Theorem~\ref{thm:branched} allows/suggests  the possibility that the infimum of max curvature $\mathcal{E}_\infty$  for $C^{1,1}$ branched isometries can be strictly smaller than the infimum over $C^2$ or smoother isometries, since we cannot approximate isometries with a nonempty set of branch points $\{p_i \, | \, J(p_i) \geq 2\}$, in $W^{2,2}_{\mathrm{loc}}$, or {\em a fortiori} in $W^{2,\infty}_{\mathrm{loc}}$, by smooth isometries with locally uniformly bounded curvatures. Such an energy gap between these two regularity classes is certainly unexpected, since branched isometries can indeed be approximated by smooth mappings. Also, this behavior is in striking contrast to the case of flat \cite{Pakzad2004Sobolev,Hornung2011Approximation} and elliptic surfaces \cite{Hornung2018Regularity}, where $W^{2,2}$ isometries (respectively $C^{1,1}$ isometries) can be approximated in $W^{2,2}_{\mathrm{loc}}$ (resp. $W^{2,\infty}_{\mathrm{loc}}$) by smooth isometries.
 
 We present numerical evidence to support the existence of an energy gap for surfaces with constant negative curvature (see Fig.~\ref{fig:energywithradius}), and argue that rather than being merely a mathematical curiosity, this energy gap is key to explaining the observed ubiquity of undulating morphologies for hyperbolic sheets in nature, despite the existence of smoother isometries \cite{EPL_2016}. The existence of an energy gap for the max curvature and Willmore functionals restricted to isometries is an example of the {\em Lavrentiev} phenomenon \cite{Lavrentieff1927Quelques,Ball1985One}, \cite[\S 18.5]{Cesari1983Optimization}, and this has important consequences for numerically minimization of energy functionals \cite{Ball1987Numerical}. We discuss these issues further in \S \ref{sec:discussion}.

\begin{remark}
\label{rem:approximable}
Thm.~\ref{thm:branched} does not imply that $y$, a $W^{2,2}_{\text{loc}}$ limit of $C^2$ pseudospherical immersions is necessarily $C^2$, although the local degree of $y$ is $-1$ everywhere. Indeed, the construction from Eq.~\ref{def:sectors} with $m=2$ (4 sectors) but with $\alpha_1 + \alpha_2 \neq \pi$ and $\alpha_2 + \alpha_3 \neq \pi$ gives a piecewise smooth, non-$C^2$ surface in any neighborhood of $p$ since the $u$ and the $v$ asymptotic curves through $p$  (respectively $\gamma_u$ and $\gamma_v$) are not differentiable at $p$. However, $\gamma_u$ and $\gamma_v$ can  be uniformly approximated by smooth solutions of Eq.~\eqref{eq:frames} obtained by smoothing the (distributional) geodesic curvature(s) $\kappa^u$ (respectively $\kappa^v$) of $\gamma_u$ (resp. $\gamma_v$) giving a pair of intersecting ``initial curves". Solving the Lelieuvre equations yields smooth pseudospherical surfaces that converge to a $C^{1,1}$ immersion with $J =-1$ everywhere. This argument also gives approximations by smooth isometries for the $C^{1M}$ pseudospherical surfaces investigated by Dorfmeister and Sterling \cite{dorfmeister2016pseudospherical}, which have $J=-1$ everywhere, in contrast to the branched pseudospherical surfaces considered in this work.
\end{remark}

\subsection{Introducing a new branch point: Surgery}
\label{sec:introducingabranchpoint}
Here we outline another specific example of a branched surface, illustrating an approach that we call {\em surgery}, in contrast to the approach of {\em assembly} in the earlier section. 
In the process of surgery, we introduce a branch point into a ``pre-existing" PS-front. 
\begin{figure}[ht]
\center
        \begin{subfigure}[b]{0.33\textwidth}
                \centering
                {\includegraphics[trim={3cm, 0.5cm, 3.5cm, 0.5cm}, clip, width=.85\linewidth]{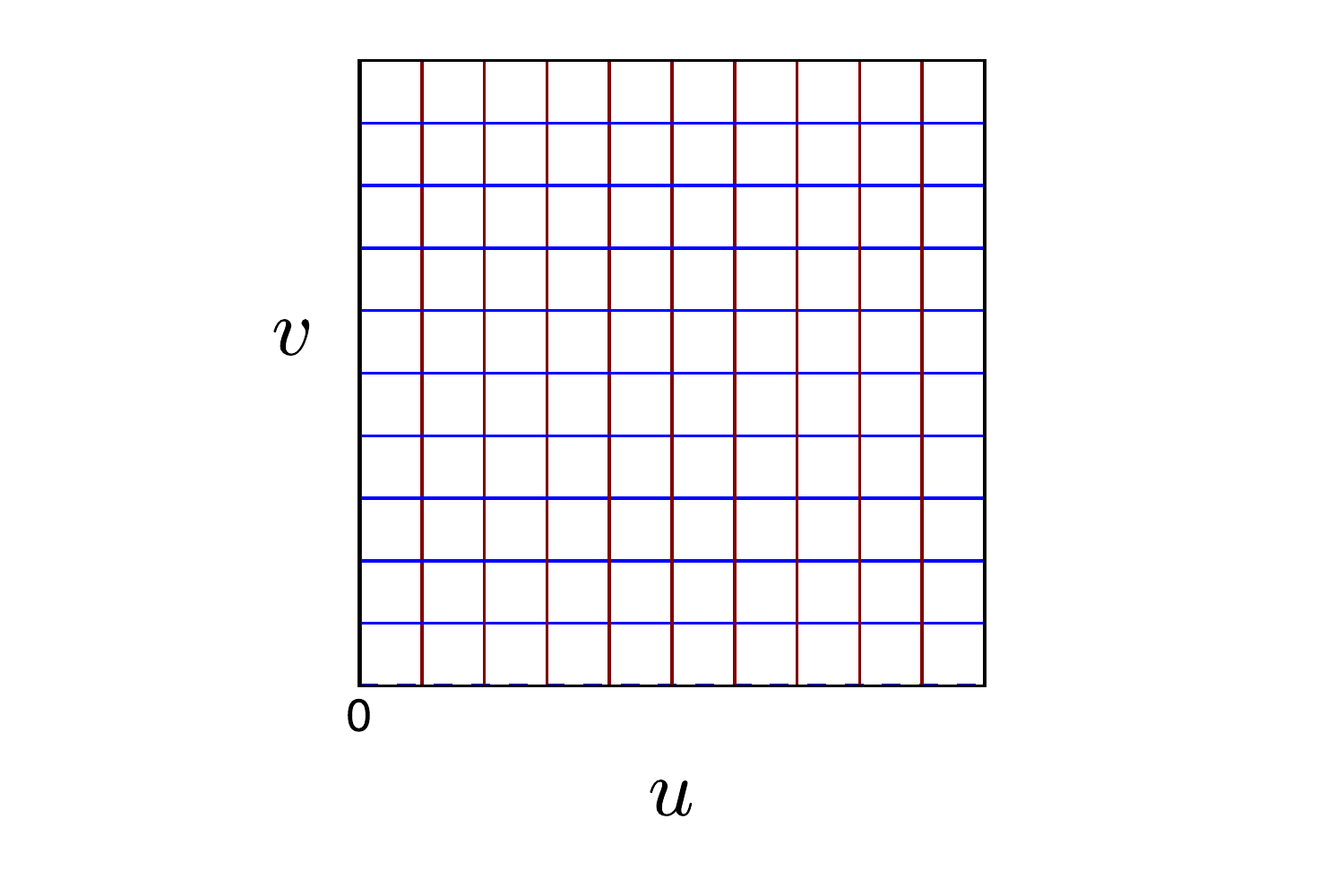}}
                \caption{}
                \label{fig:patching1}
        \end{subfigure}%
        \begin{subfigure}[b]{0.33\textwidth}
                \centering
                {\includegraphics[trim={3cm, 0.5cm, 3.5cm, 0.5cm}, clip, width=.85\linewidth]{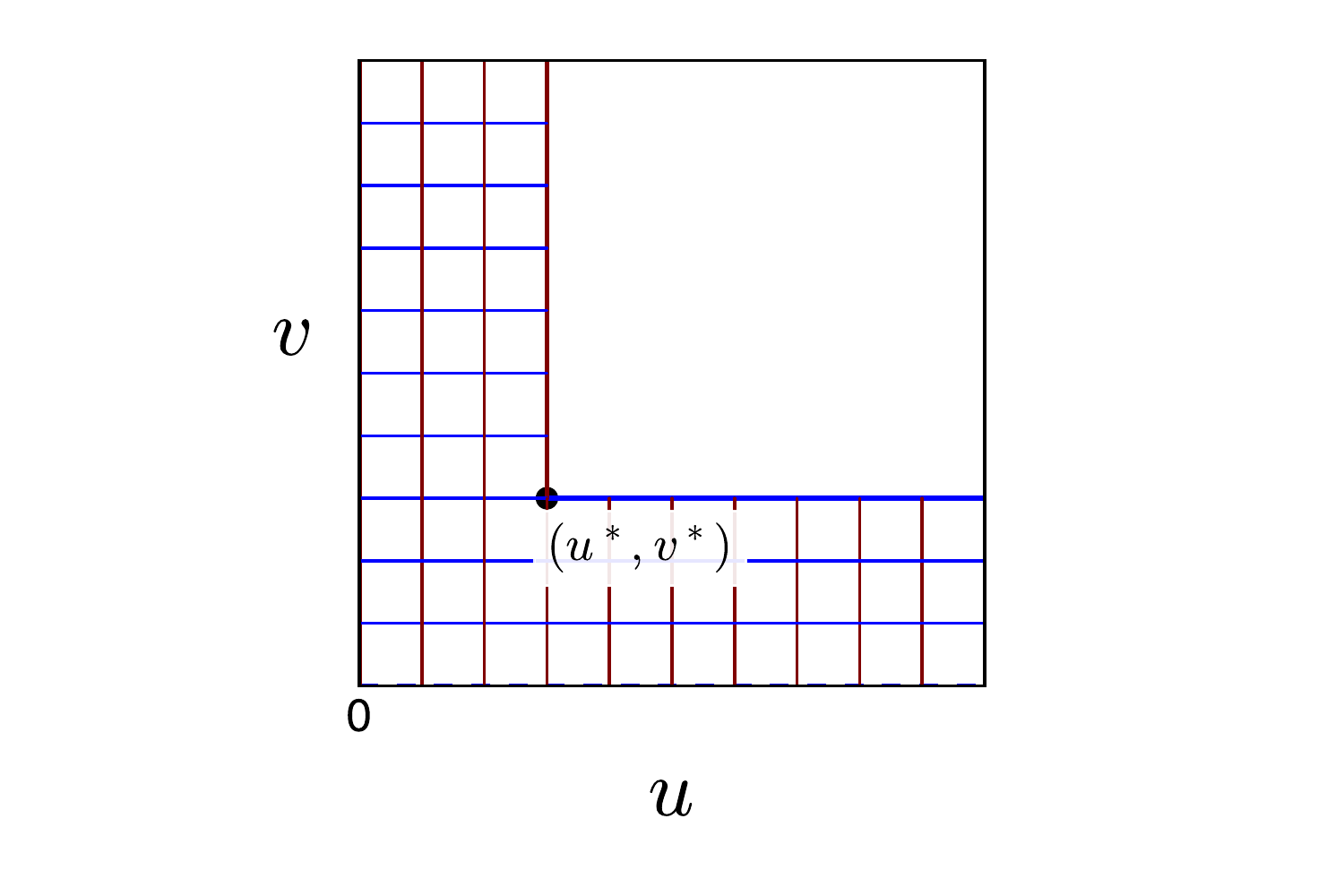}}
                \caption{}
                \label{fig:patching2}
        \end{subfigure}%
        \begin{subfigure}[b]{0.33\textwidth}
                \centering
                {\includegraphics[trim={3cm, 0.5cm, 3.5cm, 0.5cm}, clip, width=.85\linewidth]{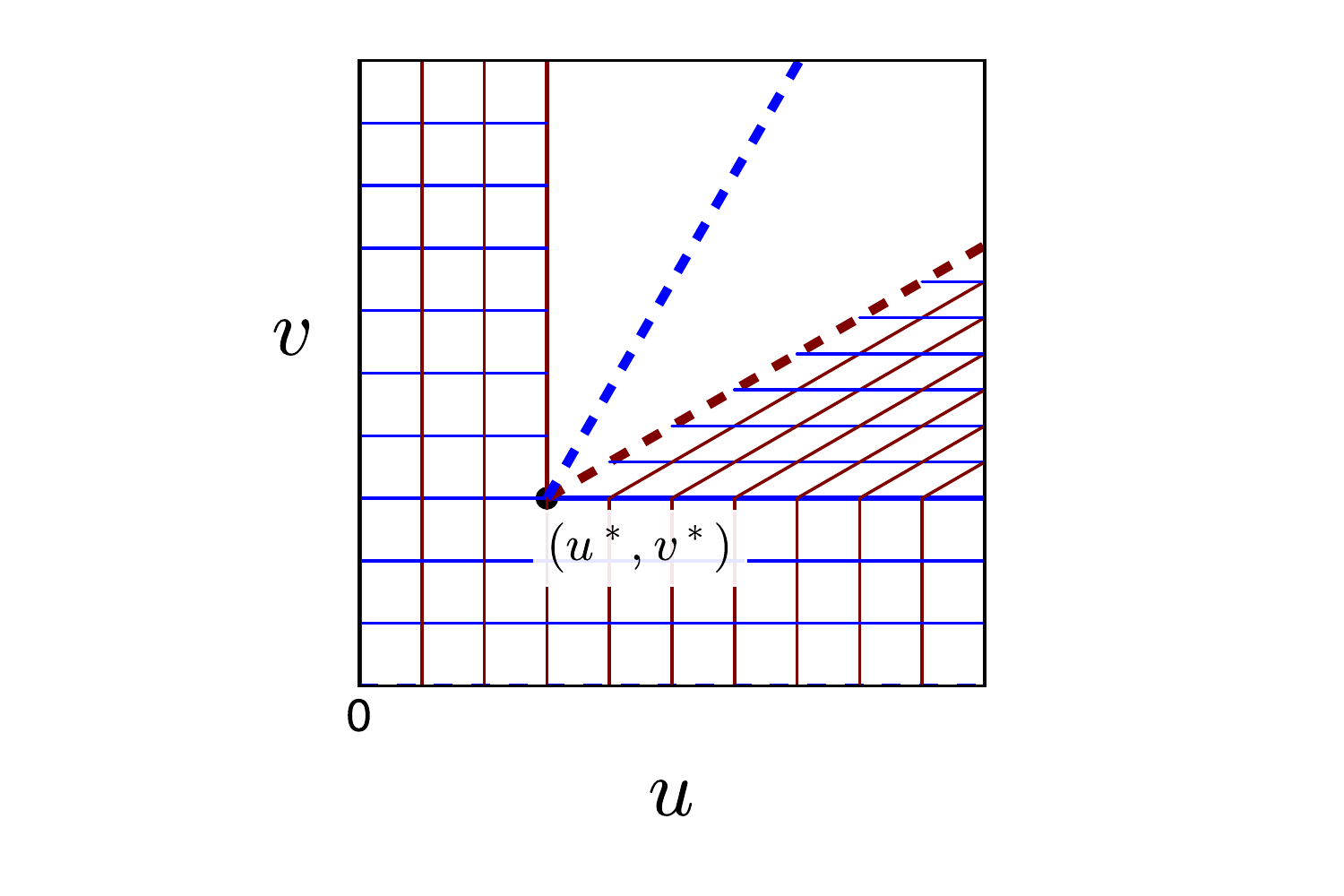}}
                \caption{}
                \label{fig:patching3}
        \end{subfigure}
        \caption{Surgery for asymptotic coordinate patches in $\Omega$. (a) $\Omega$, (b) $\Omega^*$, and (c) $\Omega^* \bigcup \Omega_1$. The normal field along the $u$-line in $\Omega_1$ is obtained by copying the corresponding data from the immersion of $\Omega^*$.}
        \label{fig:asymptoticcoordinatepatches}
\end{figure}

\begin{lemma}[Surgery] Let  $\Omega_0=  [0,u_{\max}] \times [0,v_{\max}]$ and let $r_0:\Omega_0 \to \mathbb{R}^3$ be a $C^{1M}$ PS-front. Given $(u^*,v^*)$ in the interior of $\Omega_0$ and $\tilde{u},\tilde{v} > 0$, let $\Omega^* = [0,u_{\max}] \times [0,v_{\max}] \setminus [u^*,u_{\max}] \times [v^*,v_{\max}], \Omega_1 = [0,u_{\max} -u^*] \times[0,\tilde{v}],  \Omega_2 = [0,\tilde{u}] \times[0,\tilde{v}], \Omega_3 = [0,\tilde{u}] \times  [0,v_{\max} -v^*].$  There exist PS-fronts $r_i : \Omega_i \to \mathbb{R}^3$ and attaching maps $\chi_j$ such that we can glue together $\Omega^*$ with $\Omega_i, i=1,2,3$ and the PS-front $\left. r_0\right|_{\Omega^*}$ with the PS-fronts $r_i,i=1,2,3$ to obtain a branched PS-front with a branch point at $(u^*,v^*) \in \Omega^*$.
\label{lem:surgery}
\end{lemma}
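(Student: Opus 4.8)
The plan is to realize the glued surface as a branched PS-front over an asymptotic complex $A$ built from $\Omega^*$ and the three new rectangles, arranged so that the image of $p=(u^*,v^*)$ is an interior vertex of degree six; the new fronts $r_1,r_2,r_3$ are produced by a chain of Moutard--Goursat problems whose boundary normal fields are copied from the pieces already in hand, recovering each $r_i$ from the Lelieuvre formulae~\eqref{eq:lelieuvre} with the primitive pinned by $r_i(0,0)=r_0(p)$.

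First I would fix the combinatorics. Subdivide the L-shaped $\Omega^*$ into three rectangular faces meeting at $p$: $Q_A=[0,u^*]\times[0,v^*]$, $Q_B=[u^*,u_{\max}]\times[0,v^*]$, $Q_C=[0,u^*]\times[v^*,v_{\max}]$. The excised rectangle meets $\Omega^*$ along the $u$-segment $L_u=[u^*,u_{\max}]\times\{v^*\}\subset\partial Q_B$ and the $v$-segment $L_v=\{u^*\}\times[v^*,v_{\max}]\subset\partial Q_C$, which share the endpoint $p$. I attach $\Omega_1$ along $L_u$ (its $u$-edge, corner sent to $p$) via $\chi_1$, attach $\Omega_3$ along $L_v$ (its $v$-edge, corner to $p$) via $\chi_3$, and insert $\Omega_2$ between them by identifying the $v$-edge of $\Omega_1$ with the $v$-edge of $\Omega_2$ and the $u$-edge of $\Omega_2$ with the $u$-edge of $\Omega_3$; after shifting each face so the relevant corner is the origin, all attaching maps are of the form prescribed in Definition~\ref{def:A-complex}. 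Every face is a quadrilateral and $u$-edges glue only to $u$-edges, $v$-edges only to $v$-edges, so the alternating $u/v$ edge partition extends globally and $A$ is an asymptotic complex; by the checkerboard-coloring lemma it is two-colorable, and walking around the link of $p$ shows the six incident faces, in cyclic order, are $Q_A,Q_B,\Omega_1,\Omega_2,\Omega_3,Q_C$ with incident edges alternating between families. Thus $p$ is an interior vertex of degree six, so $m_p=3$, and by Lemma~\ref{lem:covering-number} $J_p=-2$: $p$ is a branch point.

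For the analytic step I would work in arc-length asymptotic parameters, so that $r_u\cdot r_u\equiv r_v\cdot r_v\equiv 1$, hence $N_u\cdot N_u\equiv N_v\cdot N_v\equiv 1$, and weak regularity is automatic. Put $N_p=N_0(p)$ and let $\mathbf{e}_u=r_{0,u}(p)$, $\mathbf{e}_v=r_{0,v}(p)$ be the two asymptotic directions at $p$ (linearly independent, assuming, as is the relevant case, that $r_0$ is an immersion at $p$); choose unit vectors $\mathbf{t}_1,\mathbf{t}_2\perp N_p$ dividing the angle from $\mathbf{e}_u$ to $\mathbf{e}_v$ measured through the notch into three positive parts, so the six sector angles at $p$ are positive and sum to $2\pi$. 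Then I build the fronts in order: (1) solve the Moutard Goursat problem~\cite[Thm.~1.12]{bobenko2008bdiscrete} on $\Omega_1$ with $N$ on the $u$-edge the copy $t\mapsto N_0(u^*+t,v^*)$ of the old normal along $L_u$, and $N$ on the $v$-edge the normal of a smooth arc-length Frenet frame $G_1$ solving the $v$-line equations in~\eqref{eq:frames} (say with $\kappa^v\equiv 0$) from $(r_0(p),\mathbf{t}_1,N_p)$, and recover $r_1$; (2) solve on $\Omega_2$ with $N$ on the $v$-edge the copy of $N_1$ along $\Omega_1$'s $v$-edge (which is $G_1$) and $N$ on the $u$-edge a smooth arc-length frame $G_2$ solving the $u$-line equations from $(r_0(p),\mathbf{t}_2,N_p)$, and recover $r_2$; (3) solve on $\Omega_3$ with $N$ on the $u$-edge the copy of $N_2$ along $\Omega_2$'s $u$-edge (which is $G_2$) and $N$ on the $v$-edge the copy $t\mapsto N_0(u^*,v^*+t)$ of the old normal along $L_v$, and recover $r_3$. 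Because $|N_i|\equiv 1$, the Moutard equation~\eqref{eq:moutard} makes $N_{i,u}\cdot N_{i,u}$ independent of $v$ and $N_{i,v}\cdot N_{i,v}$ independent of $u$, each therefore equal to its edge value $1$; so $N_i$ is weakly harmonic and $r_i$ a weakly regular $C^{1M}$ PS-front on all of $\Omega_i$, with no smallness needed on $\tilde u,\tilde v$, and since $L_u,L_v$ are never differentiated twice the $C^{1M}$ regularity of $r_0$ suffices. The only corner compatibility the Goursat problems require is that the two boundary normals agree at the origin, which holds since all data were arranged to equal $N_p$ there.

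Finally I would verify that the map $r$ assembled from $r_0|_{\Omega^*}$ and $r_1,r_2,r_3$ is well defined and continuous on $A$: along $L_u$ the fronts $r_0$ and $r_1$ carry the same normal, hence the same $r_u=N_u\times N$ by~\eqref{eq:lelieuvre}, and both equal $r_0(p)$ at $p$, so $r_1\equiv r_0$ on $L_u$; the same argument --- equal normal data plus the pinning $r_i(0,0)=r_0(p)$ and uniqueness of the Lelieuvre primitive --- gives agreement along $\Omega_1\cap\Omega_2$, $\Omega_2\cap\Omega_3$ and $L_v$. Then $r:A\to\mathbb{R}^3$ is a branched PS-front whose only multiple-sheeted point is the degree-six vertex $p$. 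The hard part will be this global bookkeeping rather than any single estimate: one must check that the chain of Goursat problems, with data copied around the link of $p$, closes up to a single-valued continuous map --- in particular that $\Omega_3$, whose two relevant edges are dictated by two unrelated sources ($\Omega_2$ on one side and the old front along $L_v$ on the other), is consistent at $p$ --- and that the subdivision-and-gluing genuinely meets all the asymptotic-complex axioms globally (strong regularity of $A$, the consistent $u/v$ partition, the normal form of the attaching maps), not merely near $p$.
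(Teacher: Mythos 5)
Your proposal is correct and follows essentially the same route as the paper: introduce two new asymptotic directions subdividing the excised angle at $(u^*,v^*)$, prescribe Amsler-type (geodesic) Frenet data on the new curves and copy the old normal along the cut edges, solve the Moutard--Goursat problems on $\Omega_1,\Omega_2,\Omega_3$, recover the $r_i$ via the Lelieuvre formulae, and check agreement of positions from agreement of normals along shared edges. The only differences are cosmetic --- you build the patches in the order $\Omega_1\to\Omega_2\to\Omega_3$ while the paper constructs the central Amsler patch $\Omega_2$ first, and your explicit subdivision of the L-shaped $\Omega^*$ into quadrilateral faces and your verification that unit-speed data propagates (so no smallness on $\tilde u,\tilde v$ is needed) are, if anything, slightly more careful than the paper's write-up.
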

\begin{proof} 
We set $\mathbf{z}_1 = r_0(u^*,v^*),  \mathbf{n}_1 = N_0(u^*,v^*), \mathbf{t}_u = \partial_u r_0(u^*,v^*)$ and  $\mathbf{t}_v = \partial_v r_0(u^*,v^*)$. We define the asymptotic complex $A$ using the attaching maps  
\begin{align}
\chi_1: (u,0) \in \Omega_1 \mapsto (u^*+u,v^*) \in \Omega^*, & \quad  \chi_2: (u,0) \in \Omega_3 \mapsto (u,0) \in \Omega_2  \nonumber \\
\chi_3: (0,v) \in \Omega_3 \mapsto (u^*,v^*+v) \in \Omega^*, & \quad  \chi_4: (0,v) \in \Omega_1 \mapsto (0,v) \in \Omega_3 
\label{eq:attaching}
\end{align}

We construct PS-fronts $r_1,r_2$ and $r_3$ on the rectangles $\Omega_1 = [0,u_{\max}-u^*] \times[0,\tilde{v}],\Omega_2 = [0,\tilde{u}] \times [0,\tilde{v}]$ and $\Omega_3 = [0,\tilde{u}] \times [0,v_{\max}-v^*]$ respectively which are then assembled with the PS-front $r_0$ on $\Omega^*$ as in \S\ref{sec:kminusonemonkeysaddle}. The procedure for gluing the patches is outlined in Fig.~\ref{fig:asymptoticcoordinatepatches}, and the corresponding gluing procedure for the immersions, $r_i$ is illustrated in Fig.~\ref{fig:branchpointintroduction}.

We will take $r_2$ to be an Amsler patch on $\Omega_2$ with boundary conditions given by~\eqref{amsler-frame} with data inherited from $r_0$ by attaching at $(u^*,v^*)$. Specifically, we set 
\begin{equation}
\mathbf{z} = \mathbf{z}_1, \quad \mathbf{n} = \mathbf{n}_1, \quad \mathbf{e}^{(1)}_u = \frac{2  \mathbf{t}_v+  \mathbf{t}_u}{\|2  \mathbf{t}_v+  \mathbf{t}_u\|}, \quad \mathbf{e}^{(1)}_v = \frac{2  \mathbf{t}_u+  \mathbf{t}_v}{\|2  \mathbf{t}_u+  \mathbf{t}_v\|},
\end{equation}
as an approximation to trisecting the angle between the asymptotic curves at the branch point. Solving~\eqref{eq:moutard}  and~\eqref{eq:lelieuvre} gives  $N_2$ and the corresponding  PS-front $r_2$.

\begin{figure}[htbp]
\center
        \begin{subfigure}[b]{0.5\textwidth}
                \centering
                \includegraphics[width=.85\linewidth]{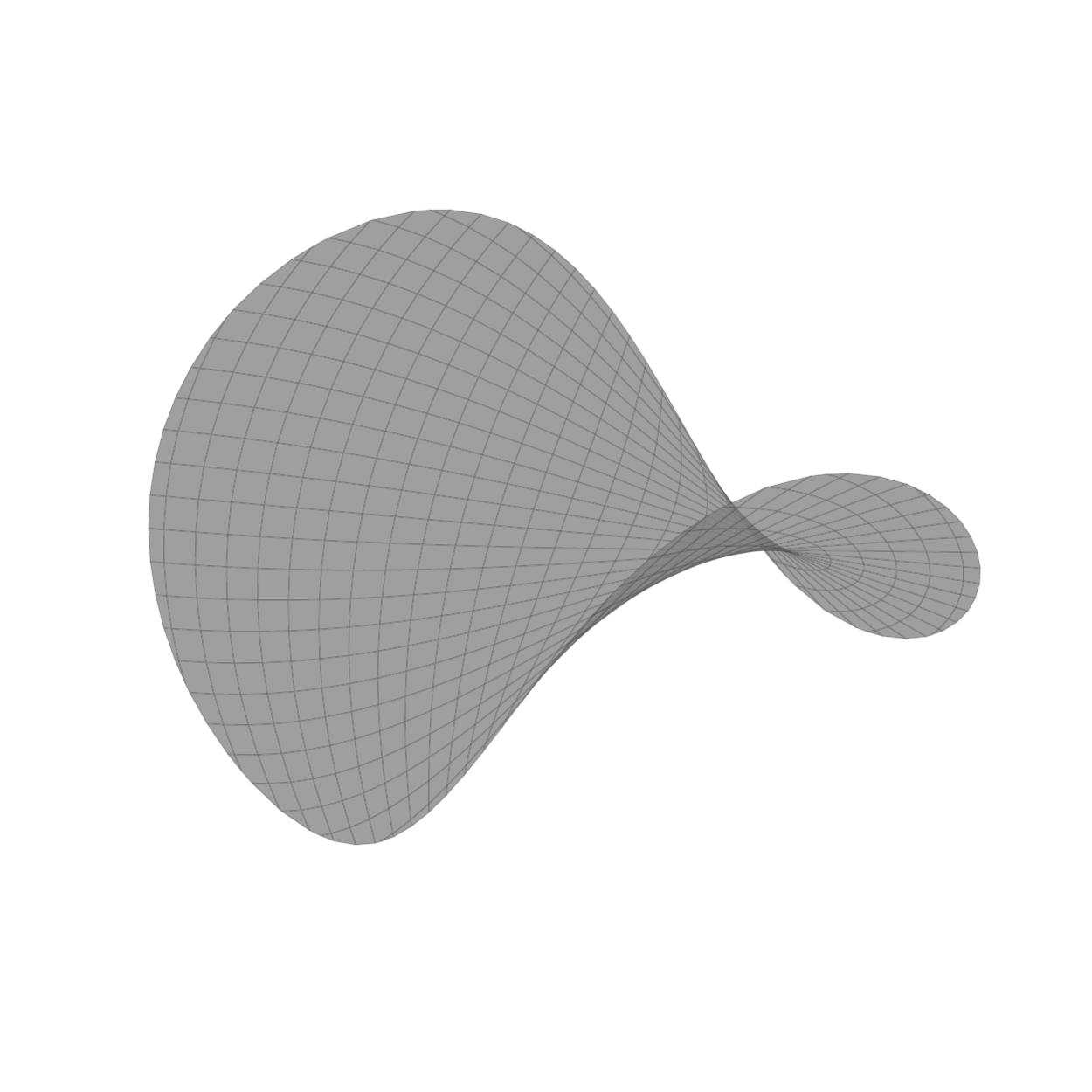}
                \caption{}
                \label{fig:introduce_branch_point_stage0}
        \end{subfigure}%
        \begin{subfigure}[b]{0.5\textwidth}
                \centering
                \includegraphics[width=.85\linewidth]{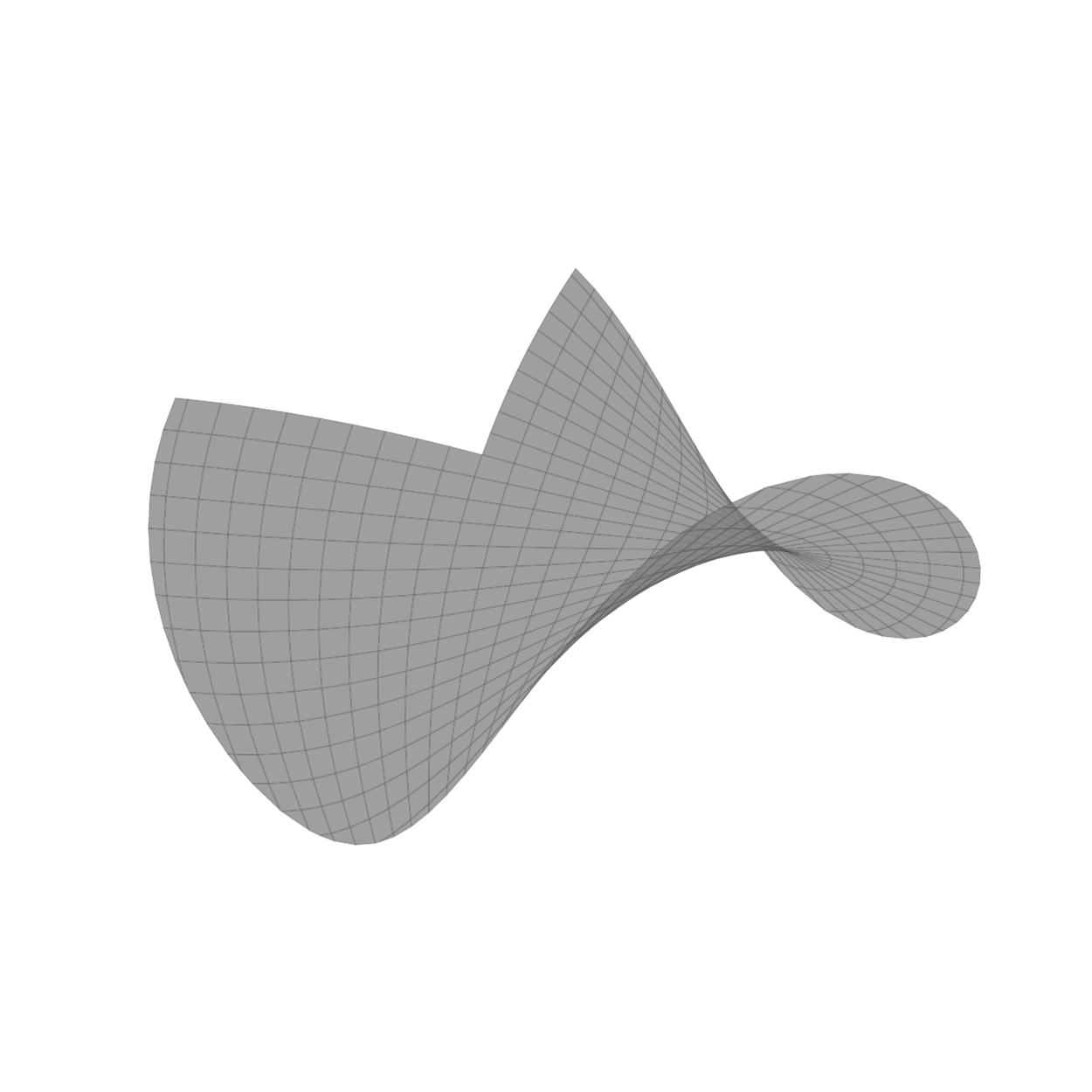}
                \caption{}
                \label{fig:introduce_branch_point_stage1}
        \end{subfigure}

        \begin{subfigure}[b]{0.33\textwidth}
                \centering
                \includegraphics[width=.85\linewidth]{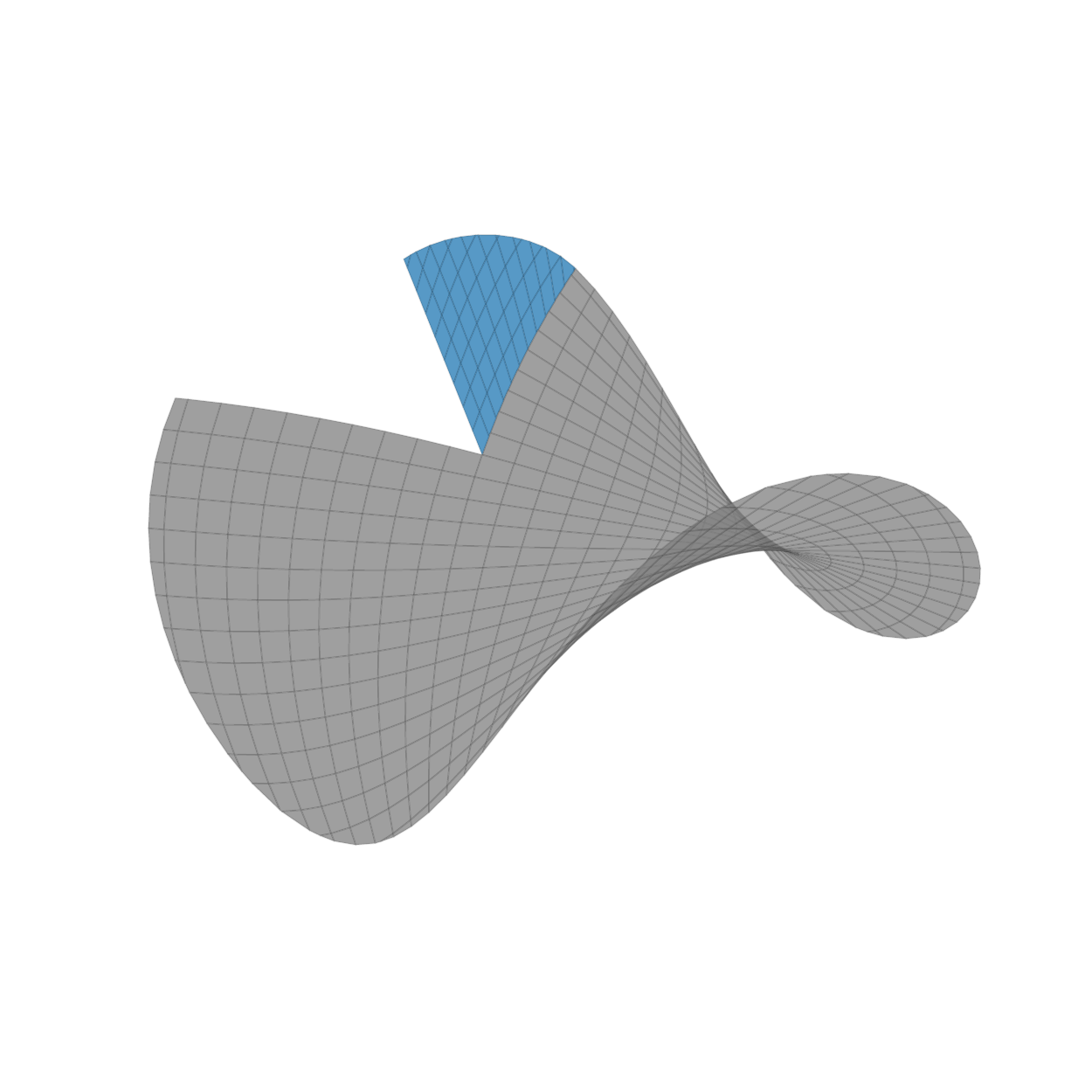}
                \caption{}
                \label{fig:introduce_branch_point_stage2}
        \end{subfigure}%
        \begin{subfigure}[b]{0.33\textwidth}
                \centering
                \includegraphics[width=.85\linewidth]{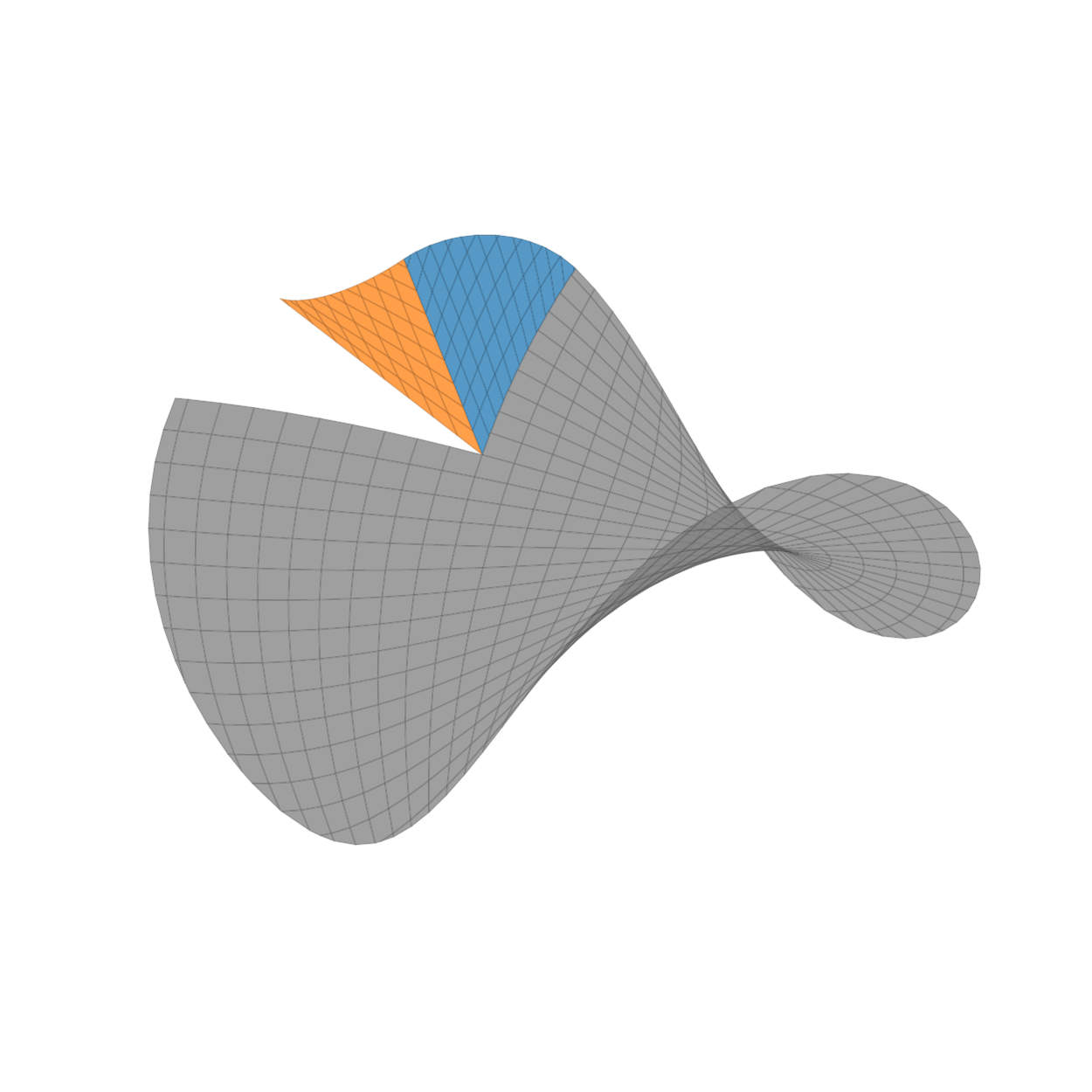}
                \caption{}
                \label{fig:introduce_branch_point_stage3}
        \end{subfigure}%
        \begin{subfigure}[b]{0.33\textwidth}
                \centering
                \includegraphics[width=.85\linewidth]{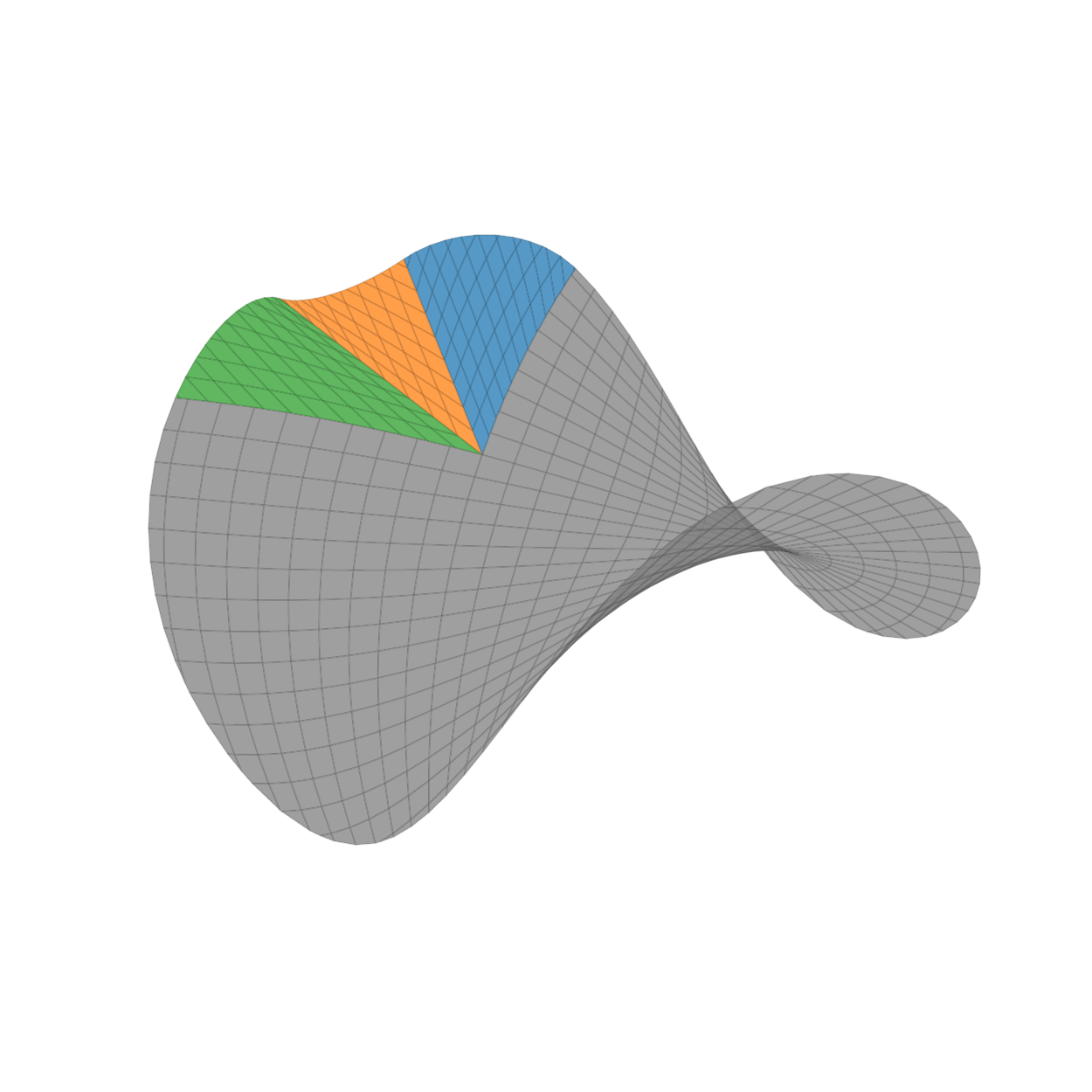}
                \caption{}
                \label{fig:introduce_branch_point_stage4}
        \end{subfigure}%
        \caption{Introducing a branch point into a smooth pseudospherical surface away from the origin. The resulting sectors have curved edges. 
       }
        \label{fig:branchpointintroduction}
\end{figure}

To build the Gauss map, $N_1:\Omega_1\to S^2$, again, we need only prescribe normal data along the axes: $u\geq 0$ and $v\geq 0$, where the coordinates $(u,v)$ are now ``local" to $\Omega_1$. 
We get data along $v = 0$ by copying it from the normal field $N_0$ using the attaching map $\chi_1$: 
\begin{equation}
N_1(u, 0) = N_0(\chi_1(u,0))\textrm{ for } u \in [0, u_{\max}-u^*],
\end{equation}
The data for $N_1$ along $u = 0$ comes from the PS-front $r_2$:
\begin{equation}
N_1(0, v) = N_2(\chi_4(0, v)) = \cos(v) \mathbf{n}_1 - \sin(v) \mathbf{n}_1 \times \mathbf{e}^{(1)}_v \textrm{ for } v \in [0, \tilde{v}],.
\end{equation}
We can now obtain a weakly harmonic normal field $N_1$ by solving the Moutard equation~\eqref{eq:moutard} on the rectangle $\Omega_1$ and then integrating the Lelieuvre equations to obtain the PS-front $r_1$. A similar procedure yields $N_3$ and $r_3$.

\begin{figure}[htbp]
\center
        \begin{subfigure}[b]{0.5\textwidth}
                \centering
                {\includegraphics[trim={1.5cm, 2.5cm, 1cm, 2.5cm}, clip, width=.85\linewidth]{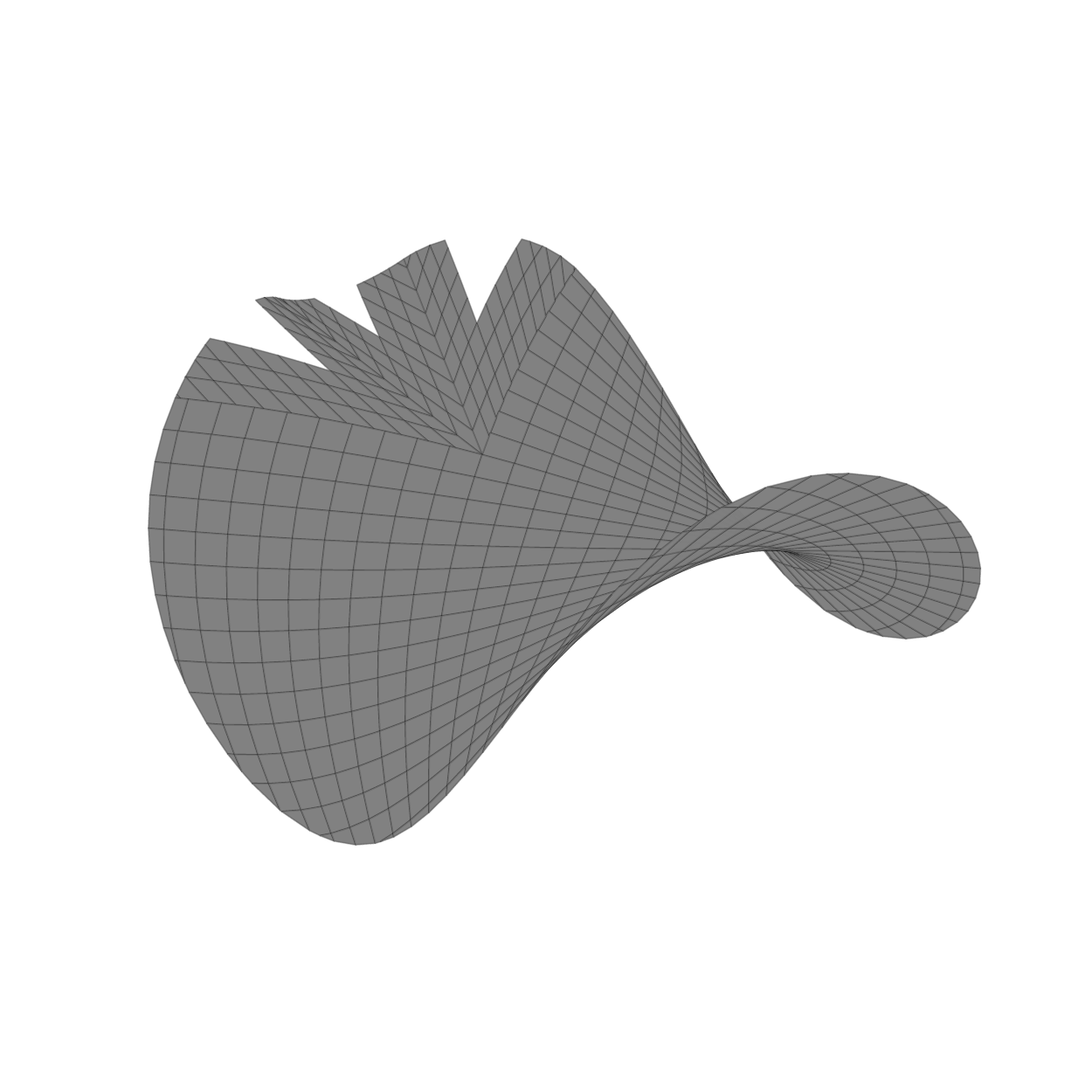}}
                \caption{}
                \label{fig:introduce_branch_point_stage5}
        \end{subfigure}%
        \begin{subfigure}[b]{0.5\textwidth}
                \centering
                {\includegraphics[trim={1.5cm, 2.5cm, 1cm, 2.5cm}, clip, width=.85\linewidth]{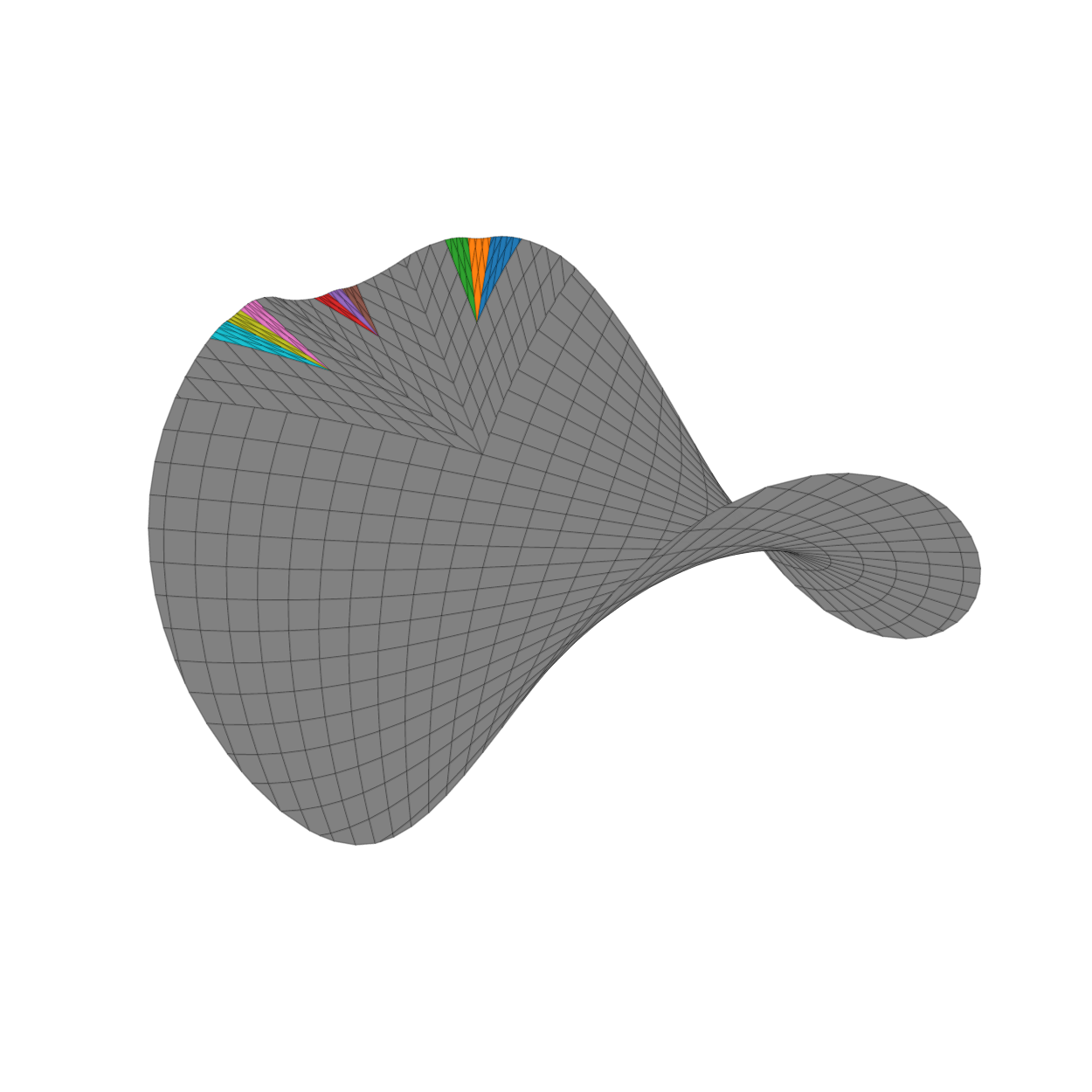}}
                \caption{}
                \label{fig:introduce_branch_point_stage6}
        \end{subfigure}
        \caption{Recursively performing surgery on an initially smooth surface.}
        \label{fig:recursiveconstruction}
\end{figure}

By construction $N_0 = N_1 \circ \chi_1 \Rightarrow r_0 = r_1 \circ \chi_i$ on $\Omega^* \cap \Omega_1$, and similar relations hold on all the edges where asymptotic quadrilaterals intersect. We can therefore assemble the PS-fronts $r_0,r_1,r_2$ and $r_3$ to obtain a branched PS-front $\psi:A \to \mathbb{R}^3$ that agrees with $r_0$ on $\Omega^*$, and on a subdomain such where $N_u \times N_v$ does not vanish, to obtain a $C^{1,1}$ isometric immersion with $K=-1$. 
The topological structure of the asymptotic lines corresponds to  a monkey saddle ($2m = 6$) at the branch point $(u^*, v^*)$ -- there are six asymptotic rays extending from the branch point. 
\end{proof}
It is clear how we can repeat this procedure recursively by picking branch point, cutting out one sector from this branch point, and replacing it with 3 new sectors. We call this procedure {\em surgery} to contrast it with the procedure in \S\ref{sec:kminusonemonkeysaddle}, which we refer to as assembly.  Surfaces with a second generation of branch points are shown in Figure \ref{fig:recursiveconstruction}.

\subsection{The Sine-Gordon equation for surfaces with branch points}
\label{sec:sinegordon}

Let $f:\Omega \to \mathbb{R}^3$ be a smooth pseudospherical immersion, so that the asymptotic curves and the angle function $\varphi(u,v)$ are differentiable. We can define a one form $\alpha = \frac{1}{2}(\varphi_v dv - \varphi_u du)$ and an area 2-from $\beta = \sqrt{\mathrm{det}(g_{ij})} \,du \wedge dv$ where $g = du^2 + 2 \sigma \cos \varphi \,du dv + dv^2$ and the sign of the square root is picked so that the orientation induced by $\beta$ agrees with the orientation induced by $\omega$ or equivalently, by $N^\omega$ (See Eq.~\eqref{eq:metrics}). It is now straightforward to check that $\beta = \sigma \sin \varphi \,du \wedge dv$. On a domain where $\sigma$ does not change sign, the sine-Gordon equation~\eqref{eq:sg} is equivalent to $d \alpha - \beta = 0$. Integrating over an asymptotic quadrilateral $R = \{u_0 \leq u \leq u_1, v_0 \leq v \leq v_1\}$ we obtain the Hazzidakis formula
\begin{equation}
\Delta_R \varphi \equiv \varphi(u_0,v_0) - \varphi(u_0,v_1) + \varphi(u_1,v_1) - \varphi(u_1,v_0) = A(R)
\label{eq:hazzidakis}
\end{equation}
where $\Delta_R \varphi = \sum (-1)^{\ell_i} \varphi_{i}$, $i$ indexes the vertices in the quadrilateral, $\ell_i$ is the modulo 2 length of any path from the vertex $(u_0,v_0)$ to the vertex labelled $i$, and $A$ is the area of (the immersion of) the quadrilateral. In order that $R$ be immersed into $\mathbb{R}^3$, we must have $0 < \varphi(u,v) < \pi$ on $R$, which gives $A(R) < 2 \pi$ for any immersed asymptotic quadrilateral. The Hazzidakis formula~\eqref{eq:hazzidakis} holds even in circumstances where $\varphi$ is not differentiable. For $C^{1M}$ PS-fronts $\varphi$ only needs to be $C^0$ but this formula still holds and the sine-Gordon equation can be interpreted in a distributional sense \cite{dorfmeister2016pseudospherical}.

\begin{definition}[Hamburger polygons] A Hamburger polygon $\gamma$ is a piecewise $C^1$ Jordan curve that bounds a domain, $\gamma = \partial \Gamma$, and consists of arcs that are either $u$ or $v$ asymptotic curves  \cite{hamburger1921},\cite[\S 3.3]{Lorentz_surfaces_Weinstein}.
\end{definition}
Eq.~\eqref{eq:hazzidakis} naturally extends to Hamburger polygons contained in domains where the immersion $r$ is $C^2$. Integrating the sine-Gordon equation $d \alpha - \beta = 0$ on $\Gamma$, we get
\begin{equation}
    \Delta_\Gamma \varphi \equiv \sum_i (-1)^{\ell_i} \varphi_{i} = \oint_\gamma \alpha = \int_\Gamma \beta = A(\Gamma),
    \label{eq:sg-hamburger}
\end{equation}
where $i$ indexes the vertices in the Hamburger polygon and $\ell_i$ is $0 \bmod 2$ at every initial vertex for an arc from the $u$-family (also a terminal vertex for a $v$-arc) and $\ell_i = 1 \bmod 2$ at every terminal vertex of a $u$-arc (resp. initial vertex of a $v$-arc), with respect to the orientation on $\gamma$ that is induced by $\omega$. 

Asymptotic quadrilaterals (Definition~\ref{defn:quad}) and $m$-stars (Definition~\ref{def:m-star}) are bounded by asymptotic curves, so they are examples of Hamburger polygons. However, Eq.~\ref{eq:sg-hamburger} is only guaranteed to apply to $C^2$ asymptotic quadrilaterals, agreeing with the Hazzidakis formula~\eqref{eq:hazzidakis}. Every $m$-star with $m > 2$ contains a branch point, where the immersion is not smooth, so further work is needed to deduce the analog of Eq.~\ref{eq:sg-hamburger} for $m$-stars, or more generally for $C^{1,1}$ branched pseudospherical surfaces. For $C^{1M}$ surfaces, with a continuous $\varphi$, we see that $\Delta_\Gamma \equiv \Delta(\Gamma) \to 0$ as $A(\Gamma) \to 0$, so there is no {\em concentration} for the quantity $\Delta_\Gamma = \oint_{\partial \Gamma} \alpha$ on sets of vanishing area. 

For branched surfaces, $\varphi$ is not always continuous and $\varphi$ necessarily has jumps across the asymptotic curves that are incident on a branch point.  This might potentially result in concentration of $\Delta$ on these ``singular" objects. We can determine the potential concentrations of $\Delta$ on branch points, and along the asymptotic curves that are incident on branch points, by using appropriate Hamburger polygons as illustrated in Fig.~\ref{fig:concentrations}.

\begin{figure}[ht]
\centering
        \begin{subfigure}[t]{0.5\textwidth}
                \centering
                {\includegraphics[height=6cm]{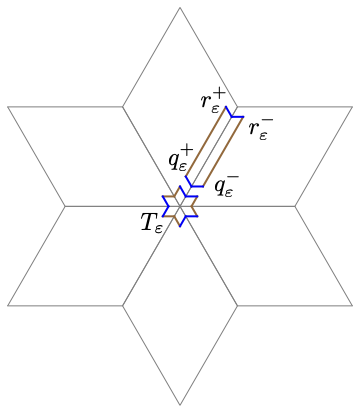}}
                \caption{}
                \label{fig:sinegordonepsrectangle}
        \end{subfigure}%
        \begin{subfigure}[t]{0.5\textwidth}
                \centering
                {\includegraphics[height=6cm]{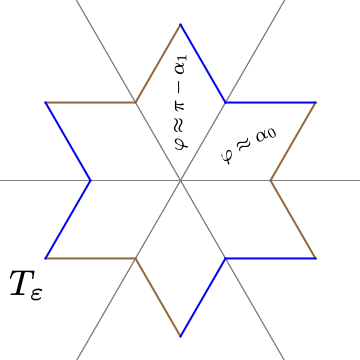}}
                \caption{}
                \label{fig:sinegordonepsstar}
        \end{subfigure}        
        \caption{(a) The Hamburger polygon $T_\varepsilon$ allows us to compute the concentration of $\Delta$ at the central branch points and the rectangle $R_\varepsilon = [q_\varepsilon^- r_\varepsilon^- r_\varepsilon^+ q_\varepsilon^+]$ determines the concentration on an asymptotic curve incident on the branch point. (b) Blowing up the polygon $T_\varepsilon$. The angle $\varphi$ is nearly constant on each sector. $\varphi = \alpha_{2i}$ on the even sectors and $\varphi = \pi - \alpha_{2i+1}$ on the odd sectors, where $\alpha_j$ is the angle between the asymptotic curves bounding the $j$\textsuperscript{th}sector.}
        \label{fig:concentrations}
\end{figure}

\begin{lemma}[Concentration at branch points] Let $T_i$ be an $m$-star that is obtained from $2m_i$ asymptotic quadrilaterals incident on a point $p_i$. Then $\Delta_{T_i}  = A(T_i) - (m_i-2) \pi$.
\label{lem:m-star}
\end{lemma}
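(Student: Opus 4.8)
The plan is to reduce to the ordinary Hazzidakis formula~\eqref{eq:hazzidakis} on each sector and then collect the terms that fail to cancel at the branch point. Write $T_i=\bigcup_{k=1}^{2m_i}S_k$ as the union of its $2m_i$ asymptotic quadrilaterals incident on $p_i$ (Definition~\ref{defn:quad}); by hypothesis the immersion is $C^2$ on each $S_k$, so~\eqref{eq:sg-hamburger} (equivalently~\eqref{eq:hazzidakis}) applies there and gives $\Delta_{S_k}\varphi=A(S_k)$, where $\Delta_{S_k}\varphi$ is the alternating sum of the one-sided (sector-interior) values of $\varphi$ at the four corners of $S_k$. Summing over $k$ and using additivity of area yields $\sum_k\Delta_{S_k}\varphi=A(T_i)$, so the whole content of the lemma is the identity $\Delta_{T_i}\varphi=\sum_k\Delta_{S_k}\varphi-(m_i-2)\pi$; equivalently, the oriented boundary integral $\Delta_{T_i}\varphi=\oint_{\partial T_i}\alpha$ (with $\alpha=\tfrac12(\varphi_v\,dv-\varphi_u\,du)$) differs from $\sum_k\oint_{\partial S_k}\alpha$ by exactly $(m_i-2)\pi$.

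First I would record the two sources of this discrepancy. Each interior asymptotic ray $\gamma_k$ emanating from $p_i$ is shared by $S_k$ and $S_{k+1}$; across $\gamma_k$ the coordinate labelling swaps and the sign $\sigma$ of~\eqref{eq:Nfront}--\eqref{eq:metrics} reverses, so the angle $\varphi$ is discontinuous there and the two boundary integrals along $\gamma_k$ do not cancel pairwise as they would for a smooth surface. Second, the central vertex $p_i$ lies in every $S_k$ and enters each $\Delta_{S_k}\varphi$ with a $+$ sign; since $\varphi$ is the angle between the asymptotic directions and, at $p_i$ inside $S_k$, these directions subtend exactly the opening angle $\alpha_k$ of the sector, one has $\varphi_k(p_i)=\alpha_k$ on the $m_i$ sectors with $\sigma=+1$ and $\varphi_k(p_i)=\pi-\alpha_k$ on the $m_i$ sectors with $\sigma=-1$ (this is the blow-up picture in Fig.~\ref{fig:concentrations}(b)). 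The key step is then to show that, once the boundary integrals along the rays are accounted for, the far-endpoint contributions cancel against the corresponding terms already present in $\oint_{\partial T_i}\alpha$, so that the net discrepancy depends only on the local data $\{\varphi_k(p_i)\}$ at the center. Evaluating the resulting alternating sum of the $\varphi_k(p_i)$, the $\pi$'s carried by the $m_i$ sectors with $\sigma=-1$ together with the constraint $\sum_{k=1}^{2m_i}\alpha_k=2\pi$ combine to give precisely $(m_i-2)\pi$, proving $\Delta_{T_i}\varphi=A(T_i)-(m_i-2)\pi$. As a check, $m_i=2$ returns the ordinary Hazzidakis formula with no correction, consistent with the fact that the induced metric of the $m$-saddle is smooth with $K=-1$.

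An equivalent and perhaps cleaner route is to observe that $\Delta_\Gamma\varphi-A(\Gamma)$ is unchanged when a ``collar'' built from $C^2$ asymptotic quadrilaterals is peeled off $T_i$ (the newly interior edges being handled by Hazzidakis and by cancellation of the matching boundary integrals, after the ray-jump contributions along the collar are bookkept as above), so that $\Delta_{T_i}\varphi-A(T_i)=\Delta_{T_i^\varepsilon}\varphi-A(T_i^\varepsilon)$ for a shrinking family of $m$-stars $T_i^\varepsilon$ centered at $p_i$; letting $\varepsilon\to0$ makes $A(T_i^\varepsilon)\to0$ and reduces the computation to $\lim_{\varepsilon\to0}\oint_{\partial T_i^\varepsilon}\alpha$, a Hamburger polygon whose vertices all tend to $p_i$ along prescribed sectors and on which $\varphi$ is $o(1)$-close to the piecewise-constant profile $\{\alpha_k\}\cup\{\pi-\alpha_k\}$.

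The main obstacle is the orientation-and-sign bookkeeping in the final step: one has to fix compatible orientations so that every $S_k$ is traversed consistently with the ambient orientation of the $m$-star even though \emph{both} the coordinate orientation $du_k\wedge dv_k$ and the sign $\sigma_k$ alternate from one sector to the next; one has to verify that the outer-vertex contributions really do cancel between $\oint_{\partial T_i}\alpha$ and $\sum_k\oint_{\partial S_k}\alpha$, so that the discrepancy is genuinely localized at $p_i$ and does not depend on the (arbitrary) sizes of the $S_k$; and one has to carry the $(-1)^{\ell_j}$ signs on $\partial T_i$ (resp.\ $\partial T_i^\varepsilon$) accurately enough to see the $\alpha_k$-dependent terms collapse to the $m_i$-dependent constant $-(m_i-2)\pi$. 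This lemma is exactly the single-branch-point case of the global Sine-Gordon identity Thm.~\ref{thm:sg-branched}, which is then obtained by patching such local computations over a domain containing several branch points.
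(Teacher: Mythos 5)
Your overall skeleton matches the paper's (localize the defect at $p_i$, apply Hazzidakis~\eqref{eq:hazzidakis} on the smooth pieces, and evaluate a blow-up limit in which $\varphi$ is close to the piecewise-constant profile $\alpha_j$ or $\pi-\alpha_j$ with $\sum_j\alpha_j=2\pi$), but the step you defer as ``orientation-and-sign bookkeeping'' is the actual mathematical content, and two of your assertions, as written, would derail it. First, you claim that across an interior ray ``the two boundary integrals along $\gamma_k$ do not cancel pairwise.'' In fact they do cancel, and proving this is half of the paper's proof: although $\varphi$ jumps across an asymptotic curve incident on $p_i$, the one-sided derivatives along that curve agree, $\partial_u\varphi^{+}=\partial_u\varphi^{-}=-\kappa^u$ by~\eqref{eq:sg}, because both equal (minus) the geodesic curvature of the shared curve, which is determined by the curve and the common metric. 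Hence the jump of $\varphi$ is constant along each ray, an $\varepsilon$-thin rectangle straddling the ray satisfies $\Delta_{R_\varepsilon}=O(\varepsilon)$, and there is no concentration of $\oint\alpha$ along the rays. Without this fact your key claim that ``the net discrepancy depends only on the local data $\{\varphi_k(p_i)\}$'' is unsupported; the same fact is what your collar-peeling variant silently uses (``ray-jump contributions \dots bookkept as above''), and it is also what controls the ambiguity of the one-sided values of $\varphi$ at the outer endpoints of the rays on $\partial T_i$.

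Second, your sign claim at the center is wrong as stated. With the orientation convention under which Hazzidakis reads $\Delta_{S_k}=+A(S_k)$ on \emph{every} sector, the vertex $p_i$ enters $\Delta_{S_k}$ with a sign that alternates with $\sigma_k$ from sector to sector (this is exactly how the alternating sum $\sum_j(-1)^j\varphi_j$ arises in the paper's computation on the small star $T_\varepsilon$). If $p_i$ really carried a $+$ in every sector, the central contribution would be $\sum_{\sigma=+1}\alpha_k+\sum_{\sigma=-1}(\pi-\alpha_k)=m_i\pi+\bigl(\sum_{\sigma=+1}\alpha_k-\sum_{\sigma=-1}\alpha_k\bigr)$, which depends on the individual $\alpha_k$ and is not $(m_i-2)\pi$; it is the alternating sum $\sum_k\sigma_k\varphi_k(p_i)=\sum_k\alpha_k-m_i\pi=-(m_i-2)\pi$ that yields the topological constant. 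You do later speak of ``the resulting alternating sum,'' but the alternation has to be derived from the flip of $\sigma$ (equivalently of $du\wedge dv$ relative to $\omega$) between adjacent sectors, not assumed. So the proposal assembles the right ingredients but leaves unproven precisely the two points the paper's proof supplies: the no-concentration statement along the incident asymptotic curves via matching geodesic curvatures, and the alternation of signs at the branch point that collapses the $\alpha_k$-dependence to $-(m_i-2)\pi$.
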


\begin{proof}
From Definition~\ref{def:m-star} of an $m$-star, we see that $\partial T_i$ is a $2m_i$ sided Hamburger polygon, as shown in Fig.~\ref{fig:sinegordonepsrectangle}. 
For the $\varepsilon$-thin ``rectangle" $R_\varepsilon$ shown in Fig.~\ref{fig:sinegordonepsrectangle}, we have
$ \Delta_{R_{\varepsilon}} = \varphi(r_{\varepsilon}^{+}) - \varphi(r_{\varepsilon}^{-}) + \varphi(q_{\varepsilon}^{-}) - \varphi(q_{\varepsilon}^{+}).
$
Let us first assume that this rectangle straddles a $u$-curve incident on $p_i$. In this case, we can estimate $\varphi(r_{\varepsilon}^{+}) - \varphi(q_{\varepsilon}^{+}) = \int \partial_u \varphi^+ du + O(\varepsilon)$ noting that the integral is taken entirely inside a sector at $p_i$, so there are no discontinuities along the integration path. Similarly, $\varphi(r_{\varepsilon}^{-}) - \varphi(q_{\varepsilon}^{-}) = \int \partial_u \varphi^- du + O(\varepsilon)$. Although $\varphi^+$ and $\varphi^-$, the limits of the angle $\varphi$ in approaching the boundary $S^+ \cap S^-$ from either side are different, their derivatives $\partial_u \varphi^{\pm} = -\kappa^u$ have to match, since they are both equal to the geodesic curvature of a $u$-curve that is common to both sectors (See Eq.~\eqref{eq:sg}). Consequently, $\Delta_{R_{\varepsilon}} = O(\varepsilon)$. A similar argument also applies to $v$-curves incident on $p_i$. Thus, there is no concentration of $\Delta$ along the asymptotic curves that are incident on branch points. 

We now consider the concentration of $\Delta$ on the branch point $p_i$  with order of saddleness $m_i$ enclosed by a $\varepsilon$-small, $m_i$-star $T_\varepsilon$, comprising of asymptotic rhombi $R_0,R_1,\ldots,R_{2 m_p-1}$ as shown in Figure \ref{fig:sinegordonepsrectangle}. As discussed in Prop.~\ref{prop:assembly}, the local structure is given by alternating sets of $m_i$ $u$-curves and $m_i$ $v$-curves that are incident at $p$ with well defined tangent directions. 
Let $\alpha_j, j=0,1,2,\ldots,2m_p-1$ denote the angle of the rhombus $R_j$ at $p_i$ with respect to the orientation $\omega$ induced by the normal $N(p_i)$. This is consistent with the definitions in \S \ref{sec:kminusonemonkeysaddle}. Clearly $\sum_{j=0}^{2m_i-1}\alpha_j = 2\pi$. From Eq.~\eqref{eq:def_phi}, we see that the angles between the asymptotic directions at $p_i$ are given by comparing the sense of the rotation from $r_u$ to $r_v$, chosen to be directed away from $p$, with the orientation induced by $\omega$:
\begin{equation}
	\varphi_j = \left\{\begin{matrix} \alpha_j &\text{if $r_u$ to $r_v$ is counter-clockwise}  \\ \pi - \alpha_j& \textrm{otherwise} \end{matrix}\right.
\end{equation}

On each rhombus $R_j$, the surface restricts to a $C^2$ (even smooth) PS-front, so it follows that $\varphi$ is continuous. In particular, at the vertex $q_j$, diagonally across from $p$ in $R_j$, we have $\varphi(q_j) = \varphi_j + O(\varepsilon)$. We can now compute,
\begin{equation}
    \Delta_{T_{\varepsilon}} = \sum_j (-1)^i \varphi_j + O(\epsilon)  = -(m_p -2)\pi + O(\epsilon)
\end{equation}
Combining these results, with the contributions of the quadrilaterals that comprise the complement of the $\varepsilon$-thin rectangles and the $\varepsilon$-small $m_i$-star $T_\epsilon$, that are given by the Hazzidakis formula \eqref{eq:hazzidakis}, we get $\Delta_{T_i}  = A(T_i) - (m_i-2) \pi$.
\end{proof}

Note that the same argument also applies at points $p$ with $m_p = 2$. This lemma shows that branch points do indeed concentrate $\Delta$. This concentration, equal to $-(m_p-2) \pi$ at a point $p$, has a definite sign, and is zero at points where the surface is locally a $2$-saddle, as we would expect. 
It is straightforward to ``globalize" the arguments from above to get a generalization of the (integrated form) of the sine-Gordon equation that is valid even for $C^{1,1}$ branched pseudospherical immersions. We record this in the following theorem:
\begin{theorem} 
\label{thm:sg-branched} Let $r:(\Omega,g) \to \mathbb{R}^3$ be a branched pseudospherical immersion, with finitely many isolated branch points $p_i, i =1,2,\ldots,k$. Let $\Gamma \subset \Omega$ be a domain with compact closure in $\Omega$ whose boundary $\gamma = \partial \Gamma$ is a Hamburger polygon with vertices $q_0,q_1,\ldots, q_{2j-1}$ and $q_0$ is an initial vertex for a $u$-arc with respect to an orientation $\omega$ on $\Omega$. Then, we have 
\begin{equation}
  \Delta_\Gamma \equiv \sum_{n=0}^{2j-1} (-1)^n \varphi(q_n) = A(\Gamma) - \sum_{p_i \in \Gamma} (m_i-2) \pi
  \label{sg-global}
\end{equation}
where $\varphi$, the angle between the asymptotic curves, is defined by $\sigma = \mathrm{sign}(\omega(\partial_u r,\partial_v r)), \varphi \in (0,\pi),\sin \varphi = \|\partial_u r \times \partial_v r\|, \cos \varphi = \sigma \partial_u r \cdot \partial_v r$. 
\end{theorem}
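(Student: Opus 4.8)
The plan is to prove \eqref{sg-global} by a cut-and-paste argument: decompose $\Gamma$ into asymptotic quadrilaterals on which the Hazzidakis formula \eqref{eq:hazzidakis} holds, together with small $m_i$-stars around the branch points controlled by Lemma~\ref{lem:m-star}, and then sum, checking that all interior contributions telescope away. It is convenient to read $\Delta_\Gamma$ as a line integral: with $\alpha=\tfrac12(\varphi_v\,dv-\varphi_u\,du)$, on a $u$-arc of a Hamburger polygon one has $dv=0$ so $\int\alpha$ telescopes to $\tfrac12$ times the difference of $\varphi$ at the endpoints, and likewise on $v$-arcs, whence $\oint_{\gamma}\alpha=\sum_{n}(-1)^n\varphi(q_n)=\Delta_\Gamma$ whenever $q_0$ initiates a $u$-arc. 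Since line integrals are additive under subdivision of the domain, this is the form I would work with. (After a harmless perturbation one may assume $\gamma$ meets the asymptotic curves incident on the $p_i$ only at its vertices $q_n$.)

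\textbf{Decomposition.} Because the branch points $p_1,\dots,p_\ell$ lying in $\Gamma$ are finitely many and isolated and $\overline\Gamma$ is compact, I would fix $\varepsilon>0$ small enough that the $\varepsilon$-small $m_i$-stars $T_i=T_i^{\varepsilon}$ — built from the $2m_i$ asymptotic quadrilaterals meeting $p_i$, which exist by the local structure near a branch point in Definition~\ref{def:branched_immersion}/Proposition~\ref{prop:assembly} — are pairwise disjoint and compactly contained in the interior of $\Gamma$. Using the asymptotic complex of $r|_{\Gamma}$, refined so that it is compatible with $\gamma$, with the $\partial T_i$, and with every singular asymptotic curve, I would then write $\Gamma=\bigcup_a Q_a$ as a finite union of asymptotic quadrilaterals, the $2m_i$ of them around $p_i$ forming $T_i$ and the rest tiling $\Gamma^{\ast}=\Gamma\setminus\bigcup_i\operatorname{int}(T_i)$. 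On each $Q_a$ the restriction $r|_{Q_a}$ is a $C^{1M}$ PS-front with continuous angle field, so the Hazzidakis formula \eqref{eq:hazzidakis} (equivalently \eqref{eq:sg-hamburger}) gives $\oint_{\partial Q_a}\alpha_a=A(Q_a)$; summing over all $Q_a\subset\Gamma$ yields $\sum_a\oint_{\partial Q_a}\alpha_a=A(\Gamma)$.

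\textbf{Regrouping and conclusion.} I would then reorganize the left-hand side by edges and vertices of the complex. Every interior edge $E$ is shared by exactly two quadrilaterals and traversed in opposite senses; since $E$ is, say, a $u$-edge and $\partial_u\varphi=-\kappa^u$ is the geodesic curvature of the common $u$-asymptotic curve — hence the same from either side — the two contributions of $E$ cancel, even though $\varphi$ may jump across $E$. (This is exactly the ``$R_\varepsilon$'' computation in the proof of Lemma~\ref{lem:m-star}, now read on one edge, and it is what shows there is no concentration along the singular asymptotic curves.) At an interior vertex $v$ the left side picks up the alternating sum of the values $\varphi^{(Q_j)}(v)$ over the quadrilaterals at $v$: if $v$ is a smooth point — in particular a transversal crossing of two singular asymptotic curves, where $m_v=2$ — this sum vanishes by the same argument as in Lemma~\ref{lem:m-star} applied at an $m=2$ point; if $v=p_i$ it equals $(m_i-2)\pi$, as one sees by applying the already-summed identity to $T_i$ itself: $\sum_{Q_a\subset T_i}\oint_{\partial Q_a}\alpha_a=A(T_i)$, the interior edges of $T_i$ again contribute nothing, $p_i$ is the only interior vertex, and the outer boundary contributes $\oint_{\partial T_i}\alpha=\Delta_{T_i}=A(T_i)-(m_i-2)\pi$ by Lemma~\ref{lem:m-star}, so the $p_i$-contribution is $(m_i-2)\pi$. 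What survives on the left is the outer-boundary term $\oint_\gamma\alpha=\Delta_\Gamma$, so $A(\Gamma)=\Delta_\Gamma+\sum_{p_i\in\Gamma}(m_i-2)\pi$, which is \eqref{sg-global}; nothing depended on $\varepsilon$, so no limit is needed (or one lets $\varepsilon\to0$ and the $O(\varepsilon)$ errors vanish).

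\textbf{Main obstacle.} The only real work is the bookkeeping in the regrouping step: one must be sure that the jumps of $\varphi$ across the singular asymptotic curves, and at their mutual crossings, produce no net concentration — that is, that the edge contributions cancel and the alternating vertex sums vanish at every non-branch point despite these jumps. Both facts are already present inside the proof of Lemma~\ref{lem:m-star} (the matching $\partial_u\varphi=-\kappa^u$, and the $m=2$ instance of the concentration computation), so the remaining effort is organizational, plus the mildly technical point of producing a refinement of the asymptotic complex in which $\gamma$, the $\partial T_i$, and all singular asymptotic curves are unions of edges and $\gamma$ meets the singular set only at vertices.
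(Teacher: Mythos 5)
Your overall strategy (quadrilaterals plus small $m$-stars, Hazzidakis plus Lemma~\ref{lem:m-star}, then cancellation of interior contributions) is the same as the paper's, but the accounting you use to implement it contains a genuine error. The pivot of your argument is the identification of the alternating vertex sum with a line integral, $\Delta_P=\oint_{\partial P}\alpha$, applied to $\partial T_i$ and to $\gamma$. This identity is false on a branched surface whenever an arc of the polygon crosses one of the asymptotic curves incident on a branch point: along such a curve $\varphi$ jumps, and since $\partial_u\varphi^{\pm}=-\kappa^u$ (resp.\ $\partial_v\varphi^{\pm}=\kappa^v$) on both sides by \eqref{eq:sg}, the jump is \emph{constant} along the curve and equals its (generically nonzero) value at the branch point; so the telescoping of $\int\alpha$ along an arc fails by exactly that jump at each crossing. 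A concrete counterexample is your own $T_i^{\varepsilon}$: the integrand of $\alpha$ is bounded (it is built from the geodesic curvatures $\kappa^u,\kappa^v$) and $\partial T_i^{\varepsilon}$ has length $O(\varepsilon)$, so $\oint_{\partial T_i^{\varepsilon}}\alpha=O(\varepsilon)$, whereas $\Delta_{T_i^{\varepsilon}}\to-(m_i-2)\pi\neq 0$ by Lemma~\ref{lem:m-star}. Hence the step ``the outer boundary contributes $\oint_{\partial T_i}\alpha=\Delta_{T_i}$'' and the final step ``$\oint_\gamma\alpha=\Delta_\Gamma$'' are both wrong; nor can the crossings be perturbed away, since the singular curves emanating from each $p_i\in\Gamma$ must exit through $\gamma$, and arranging the exit at a vertex $q_n$ only makes $\varphi(q_n)$ ill-defined. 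There is also an internal inconsistency: $\sum_a\oint_{\partial Q_a}\alpha_a$ is a sum of edge integrals and carries no vertex terms at all, so once you cancel every interior edge by the derivative-matching argument, what remains is $\oint_\gamma\alpha$ and nothing else -- you would conclude $\oint_\gamma\alpha=A(\Gamma)$ with no branch-point correction. You cannot simultaneously claim full edge cancellation and a nonzero interior-vertex contribution $(m_i-2)\pi$ at $p_i$; in your write-up the correct formula \eqref{sg-global} emerges only because these two misidentifications compensate.

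The fix is to work, as the paper does, with the vertex sums themselves: decompose $\Gamma$ into the $m$-stars $T_i$ (kept whole) and asymptotic quadrilaterals, and prove the additivity $\Delta_\Gamma=\sum_j\Delta_{\Gamma_j}$ directly by checking that at every auxiliary vertex the signed values of $\varphi$ cancel. This is where the real work lies, and it succeeds for one of two reasons: either the auxiliary vertices are placed off the singular curves (this is what the thin rectangles $R_\varepsilon$ straddling the singular curves accomplish in the proof of Lemma~\ref{lem:m-star}), or, if singular curves are used as subdivision edges, the constancy of the jump of $\varphi$ along each such curve makes the two endpoint contributions of every singular edge cancel in pairs, leaving only the concentration $-(m_i-2)\pi$ at the branch points and the outer vertex sum $\Delta_\Gamma$; then the Hazzidakis formula \eqref{eq:hazzidakis} and Lemma~\ref{lem:m-star} finish the proof exactly as you intend. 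Alternatively, if you prefer the line-integral language, you must augment $\oint\alpha$ by the jump of $\varphi$ at every crossing of a singular curve with the polygon; summing those jump terms is precisely what transports the defect $-\sum_{p_i\in\Gamma}(m_i-2)\pi$ from the branch points to the boundary, and omitting them is the gap in your proposal.
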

\begin{proof}
The domain $\Gamma$ decomposes into a union of finitely many $m$-stars, each enclosing a branch point, and a collection of finitely many asymptotic quadrilaterals. Therefore $\Gamma = \bigcup_{j=1}^N \Gamma_J$ where each $\Gamma_j$ is a Hamburger polygon. Since $\omega$ will induce opposite orientations on a edge that is in $\Gamma_j \bigcap \Gamma_{j'}$ with $j \neq j'$, it is easy to see that $\Delta_\Gamma = \sum_{j=1}^M \Delta_{\Gamma_j}$. The theorem now follows from the additivity of the area $A$, the Hazzidakis formula~\eqref{eq:hazzidakis} and the `concentration at branch points'  lemma~\ref{lem:m-star}.
\end{proof}
\begin{remark}
\label{rmk:distributed}
The principal curvatures of a pseudospherical immersion are given by $\kappa_1 = \tan \frac{\varphi}{2}, \kappa_2 = -\cot \frac{\varphi}{2}$ so $\kappa_1 \kappa_2 = -1$. The Willmore energy is given by a density $\kappa_1^2 +\kappa_2^2$, and the $W^{2,\infty}$ energy is given by $\sup_{x \in \Omega} \max(|\kappa_1(x)|,|\kappa_2(x)|)$. In either case, optimizing the energy demands that we keep $\varphi \approx \frac{\pi}{2}$ everywhere.

If $\varphi$ were identically equal to $\frac{\pi}{2}$, the left hand side of ~\eqref{sg-global} is zero since there are equal number of positive and negative contributions from $(-1)^n\varphi(q_n)$. The right hand side, however, is a difference between two positive quantities, the continuously varying quantity $A(\Gamma)$ and a discrete quantity $\sum_{p_i \in \Gamma} (m_i-2) \pi$. It is therefore impossible to have $\varphi \equiv \frac{\pi}{2}$ everywhere. This underscores the need to distribute branch points on $\Omega$ so there is  ``quasi-local" cancellation between the area form and the branch point contributions, i.e. energy optimal branched pseudospherical immersions will arise from attempting to place, on average, 1 branch point with $m=3$ in every Hamburger polygon $\Gamma$ with area $A(\Gamma) = \pi$. Each such branch point adds an extra undulation to the surface, that persists from the branch point out to the boundary.
\end{remark}

\section{Discrete differential geometry for branched pseudospherical surfaces} \label{sec:ddg}

Our goal is to construct discrete analogs of the geometric notions in \S\ref{sec:branchedsurfaces}. As in Prop.~\ref{prop:assembly}, branched surfaces are realized by patching asymptotic rectangles, with the combinatorics given by the underlying asymptotic complex. Following this approach, we will build discrete PS-fronts by appropriate gluing of discrete $K$-surfaces (see Definition \ref{def:K-surf} below.)

Asymptotic rectangles are discretized by rectangular subsets of $\epsilon \mathbb{Z}^2$ for sufficiently small $\epsilon >0$. Indeed, there is a natural inclusion $\lambda_k:M_k := \{0,\epsilon,2\epsilon,\ldots,i_k \epsilon\} \times \{\epsilon,2\epsilon,\ldots,j_k\epsilon\} \subset \epsilon \mathbb{Z}^2 \to F_k$ given by inverting the bijection $\psi_k:F_k \to [0,u_k] \times [0,v_k]$ (See Definition~\ref{def:A-complex}. WLOG we can assume $u_k,v_k$ are multiples of $\epsilon$ using small perturbations if necessary). The sets $\{(i \epsilon ,j_0 \epsilon) | \,0 \leq i \leq i_k\}$ and $\{(i_0 \epsilon,j\epsilon) |\, 0 \leq j \leq j_k\}$ 
are the `discrete' $u$ and $v$ asymptotic curves.

Rectangular subsets of $\epsilon \mathbb{Z}^2$ have a natural quadgraph structure given by the faces $[i \epsilon, (i+1) \epsilon] \times [j \epsilon, (j+1) \epsilon]$ and the natural attaching maps induced by inclusion into $\mathbb{R}^2$.  This structure, along with the attaching maps defining the asymptotic complex $A$, inherited through the mappings $\lambda_k:M_k \to F_k$, define a quadgraph $Q^\epsilon$, which will be the setting for our numerical constructions of (discrete) branched PS-fronts and pseudospherical surfaces.

As with the `continuous' construction in \S\ref{sec:branchedsurfaces}, we will first construct a discrete Lorentz-harmonic normal field $N^\epsilon: Q^\epsilon \to S^2$, and then determine the corresponding discrete immersion $r^\epsilon:Q^\epsilon \to \mathbb{R}^3$ using an appropriate discretization of the Lelieuvre equations~\eqref{eq:lelieuvre}.

Within each face of the asymptotic complex, 
generating a 
PS-front reduces to solving \eqref{eq:moutard}. 
As we discussed above the discretization of a face uses square grids, i.e. subsets of $\mathbb{Z}^2$, so we denote an arbitrary node by $(i, j)$. We use the following notation, which is standard in DDG \cite[Chap. 2]{bobenko2008bdiscrete}, to denote the discretization of a function $f$ on an elementary quad:
\begin{align}
f_{i, j} = f_0, f_{i+1, j} = f_1, f_{i, j+1} = f_2,\textrm{ and } f_{i+1, j+1} = f_{12}.
\end{align}

\begin{definition} \label{def:K-surf}[Discrete $K$-surface] A map $r : J \subseteq \mathbb{Z}^2 \to \mathbb{R}^3$ is called a {\em discrete $K$-surface} if and only if there exists a discrete map $N : J \to S^2$ such that, on every quad, 
\begin{equation}
r_{1}  = r_{0} + N_{1} \times N_{0},  \quad
r_{2}  = r_{0} - N_{2} \times N_{0}, 
\label{eq:d_lelieuvre}
\end{equation}
\end{definition}
Eqs.~\eqref{eq:d_lelieuvre} are the {\em discrete Lelieuvre equations} ({\em cf.} Eq.~\eqref{eq:lelieuvre}) and go back to the work of Sauer \cite{sauer1950parallelogrammgitter} and Wunderlich \cite{wunderlich1951differenzengeometrie}. The discrete Lelieuvre equations are natural discretizations of the Lelieuvre (differential) equations~\eqref{eq:lelieuvre}. They guarantee that $r_{i\pm1,j} -r_{i,j}$ and $r_{i,j \pm1} -r_{i,j}$ are orthogonal to $N_{i,j}$, i.e. the vertex stars are planar for any solution of~\eqref{eq:d_lelieuvre}. 

Definition~\ref{def:K-surf} only requires us to distinguish $u$-edges (corresponding to $r_1-r_0$) and $v$-edges (giving $r_2-r_0$) and therefore generalize naturally to discrete $K$-surfaces defined on Asymptotic complexes, through the requirement that ~\eqref{eq:d_lelieuvre} hold on each quad with the following labeling of vertices: Give one of the 4 vertices the index 0. Label the neighbor of 0 along a $u$-edge by 1 and the neighbor along a $v$-edge by 2. Finally label the diagonally opposite vertex $12$. On each quad we have two possible definitions of $r_{12}$, either from the path $0 \to 1 \to 12$ or the path $0 \to 2 \to 12$. Compatibility requires that 
\begin{equation}
    N_1 \times N_0 - N_{12} \times N_1 - (-N_2 \times N_0 + N_{12} \times N_2) = (N_1+N_2) \times (N_0 + N_{12}) = 0
\label{eq:d_compatible}
\end{equation}
Directly discretizing the (continuous) compatibility condition Eq.~\eqref{eq:lorentz_harmonic}, yields
\begin{align}
N_{uv} \times N =0  \mapsto 0 &  = \left( N_{12} + N_{0} - (N_{1} + N_{2})\right)\times\frac{\left(N_{0} + N_{1} + N_{2} + N_{12} \right)}{4}  \nonumber \\
					&= \frac{2}{4}\left( N_{0}\times N_{1} - N_{1}\times N_{12} + N_{12}\times N_{2} - N_{2}\times N_{0} \right) \nonumber \\
					&= \frac{1}{2}\left(N_{0} + N_{12}\right)\times\left( N_{1} + N_{2}\right) \label{eqn:discreteconstantcurvaturepde}
\end{align}
This is the same as equation~\eqref{eq:d_compatible}. This discretization therefore has the remarkable property that the discretization of the (continuous) compatibility condition for the Lelieuvre formulae {\em is exactly} the same as the discrete compatibility of the discrete Lelieuvre formulae, rather than, as one might plausibly imagine, a numerical approximation that recovers the exact result in the limit the discretization size $h$ goes to zero. This particular discretization exemplifies a key idea in discrete differential geometry (DDG). Rather than serving merely as numerical discretizations of the ``true" (continuous) differential geometry, DDG is  a complete theory in its own right \cite[p. xiv]{bobenko2008bdiscrete}.

We now give short, self-contained proofs of standard results from DDG for $K$-surfaces $r:\mathbb{Z}^2 \to \mathbb{R}^3$ (See the text \cite{bobenko2008bdiscrete} for further details). We first exhibit solutions for the discrete Goursat problem of specifying $N(i,0)$ and $N(0,j)$ and solving for $N(i,j)$, on a single quad. On an elementary quad, assume  that $N_{12}$ is unknown, while values for $N_0, N_1$ and $N_2$ are known. Then \eqref{eqn:discreteconstantcurvaturepde} requires 
\begin{equation*}
N_{12} = \nu (N_1 + N_2) - N_0,
\end{equation*}
for some $\nu\in\mathbb{R}$, as is the case for a Moutard net \cite[\S 2.3]{bobenko2008bdiscrete}. The condition that $N_{12}$ is a unit vector gives a  quadratic equation for $\nu$:
\begin{align*}
\langle N_{12}, N_{12} \rangle & = \nu^2\langle N_1 + N_{2}, N_1 + N_{2} \rangle - 2\nu\langle N_1 + N_{2}, N_0 \rangle + \langle N_0, N_0\rangle \\
						 & = \nu^2\|N_1 + N_{2}\|^2 - 2\nu\langle N_1 + N_{2}, N_0 \rangle + 1, \\
\intertext{which reduces to}
0 & = \nu\left( \nu\|N_1 + N_{2}\|^2 - 2\langle N_1 + N_{2}, N_0 \rangle \right).
\end{align*}
This implies that $\nu=0$ and $N_{12} = -N_0$ or $\nu = 2\frac{\langle N_1 + N_{2}, N_{0}\rangle}{\langle N_1 + N_{2}, N_1 + N_{2} \rangle}$ and
\begin{equation}
N_{12} = \left[\frac{(N_1 + N_{2})(N_1 + N_{2})^T}{\langle N_1 + N_{2}, N_1 + N_{2}\rangle} - \mathbb{I}\right] N_0.
\label{eq:householder}
\end{equation}
The former being the antipodal point, and the latter being the desired solution. This is the Householder reflection of $N_0$ through the plane generated by $N_1$ and $N_{2}$. Though we  solved for $N_{12} = N_{i+1, j+1}$ above, this approach can be used to solve for the fourth normal vector provided the normal at the three other corners is given (See Figure \ref{fig:chebyshevstwo}).

\begin{lemma}If $\|N_0-N_1\| = \|N_0-N_2\|$, and $N_{12}$ is determined by Householder reflection as in~\eqref{eq:householder}, it follows that $N_0N_1N_{12}N_2$ is a spherical rhombus. 
\label{lem:Chebyshev}
\end{lemma}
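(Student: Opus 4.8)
The plan is to identify the map $N_0 \mapsto N_{12}$ appearing in~\eqref{eq:householder} with a genuine linear isometry of $\mathbb{R}^3$ fixing the origin, and then to observe that this isometry \emph{interchanges} $N_1$ and $N_2$. Granting that, the four consecutive side lengths of the spherical quadrilateral $N_0N_1N_{12}N_2$ pair off into two equal pairs, and the hypothesis $\|N_0-N_1\|=\|N_0-N_2\|$ collapses the two pairs into one, giving all four sides equal.

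First I would reread the derivation just before~\eqref{eq:householder}: the requirement $\|N_{12}\|=1$ forces $N_{12}=\nu(N_1+N_2)-N_0$ with $\nu=2\langle N_1+N_2,N_0\rangle/\langle N_1+N_2,N_1+N_2\rangle$. Setting $v:=N_1+N_2$ and $R x := \dfrac{2\langle v,x\rangle}{\langle v,v\rangle}\,v - x$, this says exactly $N_{12}=RN_0$, where $R$ is the orthogonal involution fixing the axis $\mathbb{R}v$ and negating its orthogonal complement (equivalently, rotation by $\pi$ about $\mathbb{R}v$). Since $R\in O(3)$ and fixes the origin, it preserves all Euclidean distances among the vectors in question.

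The crux is to evaluate $RN_1$ and $RN_2$. Decompose $N_1=\tfrac12 v+w$ with $w=\tfrac12(N_1-N_2)$; because $N_1$ and $N_2$ are \emph{unit} vectors, $\langle v,w\rangle=\tfrac12(\|N_1\|^2-\|N_2\|^2)=0$, so $w\perp v$, and therefore $RN_1=\tfrac12 v-w=N_2$; symmetrically $RN_2=N_1$. This identity $w\perp v$ is the single genuinely load-bearing line of the proof — it is exactly where the hypothesis that $N_1,N_2$ lie on $S^2$ (rather than being arbitrary vectors) enters — and everything else is bookkeeping. Applying the isometry $R$ then gives $\|N_{12}-N_2\|=\|RN_0-RN_1\|=\|N_0-N_1\|$ and $\|N_{12}-N_1\|=\|RN_0-RN_2\|=\|N_0-N_2\|$. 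Hence the four sides of $N_0N_1N_{12}N_2$, taken in cyclic order, have chord lengths $\|N_0-N_1\|$, $\|N_1-N_{12}\|=\|N_0-N_2\|$, $\|N_{12}-N_2\|=\|N_0-N_1\|$, $\|N_2-N_0\|$, so the hypothesis $\|N_0-N_1\|=\|N_0-N_2\|$ makes all four equal.

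Finally I would pass from equal chords to equal geodesic arcs: for $a,b\in S^2$ the spherical distance is $\arccos\langle a,b\rangle$, a strictly monotone function of $\|a-b\|^2=2-2\langle a,b\rangle$, so equal chord lengths force equal great-circle side lengths, and $N_0N_1N_{12}N_2$ is by definition a spherical rhombus. I do not anticipate any real obstacle; the two things to be careful about are matching the combinatorial labeling of the quad so that $(N_0,N_1)$, $(N_1,N_{12})$, $(N_{12},N_2)$, $(N_2,N_0)$ are the four \emph{sides} and not diagonals, and getting the direction of the swap $R\colon N_1\leftrightarrow N_2$ right, which is precisely the $w\perp v$ computation.
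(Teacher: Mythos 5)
Your proof is correct, but it is packaged differently from the paper's. The paper argues directly with inner products: writing $N_{12}=\nu(N_1+N_2)-N_0$, it uses $\|N_{12}\|=\|N_0\|=1$ to get $0=\langle N_{12}-N_0,N_{12}+N_0\rangle=\nu\left(\langle N_{12},N_1+N_2\rangle-2\cos\delta\right)$, and $\|N_1\|=\|N_2\|=1$ to get $\langle N_{12},N_1-N_2\rangle=0$, concluding $\langle N_{12},N_1\rangle=\langle N_{12},N_2\rangle=\cos\delta$, i.e. all four spherical sides have length $\delta$. You instead identify $N_0\mapsto N_{12}$ with the orthogonal involution $R$ (rotation by $\pi$ about $\mathbb{R}(N_1+N_2)$) and show that $R$ swaps $N_1$ and $N_2$, so equal sides follow from $R$ being an isometry together with the hypothesis. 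The two arguments rest on exactly the same two facts --- $N_1-N_2\perp N_1+N_2$ because $N_1,N_2$ are unit vectors (your ``load-bearing line'', the paper's second displayed identity), and the unit-norm constraint on $N_0,N_{12}$, which for you is absorbed into $R\in O(3)$ --- so they are essentially equivalent; what your symmetry formulation buys is the observation, valid without any hypothesis on $N_0$, that opposite sides of the quad are always equal (the quad has a built-in $\pi$-rotation symmetry about the direction $N_1+N_2$), while the paper's computation more directly yields the quantitative relation $\langle N_{12},N_1\rangle=\langle N_{12},N_2\rangle=\cos\delta$ that is immediately reused to propagate the Chebyshev property across the net. One point you handled well: you work from the form $N_{12}=\nu(N_1+N_2)-N_0$ with $\nu=2\langle N_1+N_2,N_0\rangle/\langle N_1+N_2,N_1+N_2\rangle$ derived just before \eqref{eq:householder}; the displayed formula \eqref{eq:householder} as printed omits the factor $2$ in the projector term, so your reading (and hence your $R$) is the intended one. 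Your closing step from equal chords to equal great-circle arcs via monotonicity of $\arccos$ is fine and makes explicit what the paper leaves implicit.
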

\begin{proof} Since the angle $\delta$ between $N_0$ and $N_1$ is the same as the angle between $N_0$ and $N_2$, $\langle N_0,N_1 \rangle = \langle N_0,N_2\rangle = \cos \delta$ and we have 
\begin{align}
    0 & = \langle N_{12} - N_{0},N_{12}+N_0\rangle = \nu(\langle N_{12},N_{1}+N_2\rangle - 2 \cos \delta) \nonumber \\
    0 & = \nu \langle N_{1} + N_{2},N_{1}-N_2\rangle = \nu\langle N_{12},N_{1}-N_2\rangle \nonumber \\
    \implies \cos \delta & = \langle N_{12},N_1 \rangle = \langle N_{12},N_2\rangle,
     \label{eq:d_chebyshev}
\end{align}
proving that $N_0N_1N_{12}N_2$ is a spherical rhombus
\end{proof}
Recursively applying~\eqref{eq:householder} we can solve for the normal field on an asymptotic quadrilateral if it is specified on two of its boundaries, as illustrated in Fig.~\ref{fig:goursat}. In addition, this procedure also determines the normal field on the other two boundaries. Since the $u$ and $v$ asymptotic curves have constant torsions (See~\eqref{eq:frames}) we can discretize these boundaries  so that $\langle N_{i,0},N_{i+1,0} \rangle = \langle N_{0,j},N_{0,j+1}\rangle = \cos \delta$. By~\eqref{eq:d_chebyshev}, we get $\langle N_{i,j},N_{i+1,j} \rangle = \langle N_{i,j},N_{i,j+1}\rangle = \cos \delta$ and $\|r_{i+1,j} - r_{i,j}\|= \|r_{i,j+1} - r_{i,j}\| = \sin \delta$ for all $i,j$, so the discrete surface $r_{ij}$ is a discrete Chebyshev net in $\mathbb{R}^3$ and the corresponding normal field $N_{ij}$ is a discrete Chebyshev net in $ S^2$ as illustrated in Fig.~\ref{fig:chebyshevstwo}. For our purposes, we need to generalize the ideas from above to consider mappings $r:Q \to \mathbb{R}^3$ and $N:Q \to S^2$, where $Q$ is a general asymptotic complex, and not restricted to be a subset of $\mathbb{Z}^2$. This motivates

\begin{figure}[ht]
\centering
\begin{subfigure}[t]{0.5\textwidth}
		\centering
        {\includegraphics[trim={0.cm, 0cm, 0.cm, 0cm}, clip, width=0.8\linewidth]{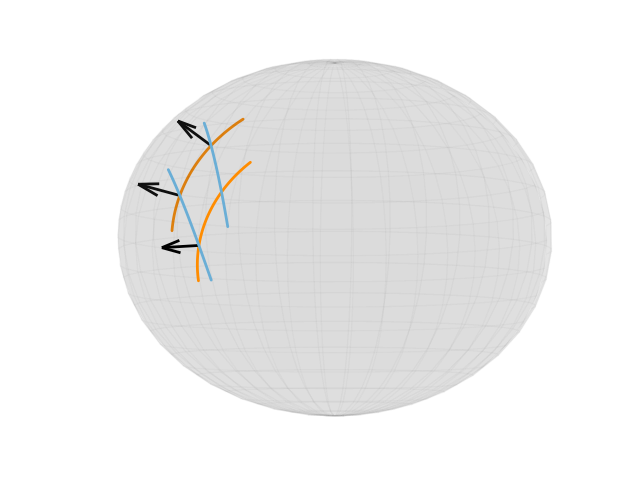}}
        \caption{}
        \label{fig:chebyshevstwo}
    \end{subfigure}%
    \begin{subfigure}[t]{0.5\textwidth}
		\centering
        {\includegraphics[trim={0.cm, 0cm, 0.0cm, 0cm}, clip, width=0.8\linewidth]{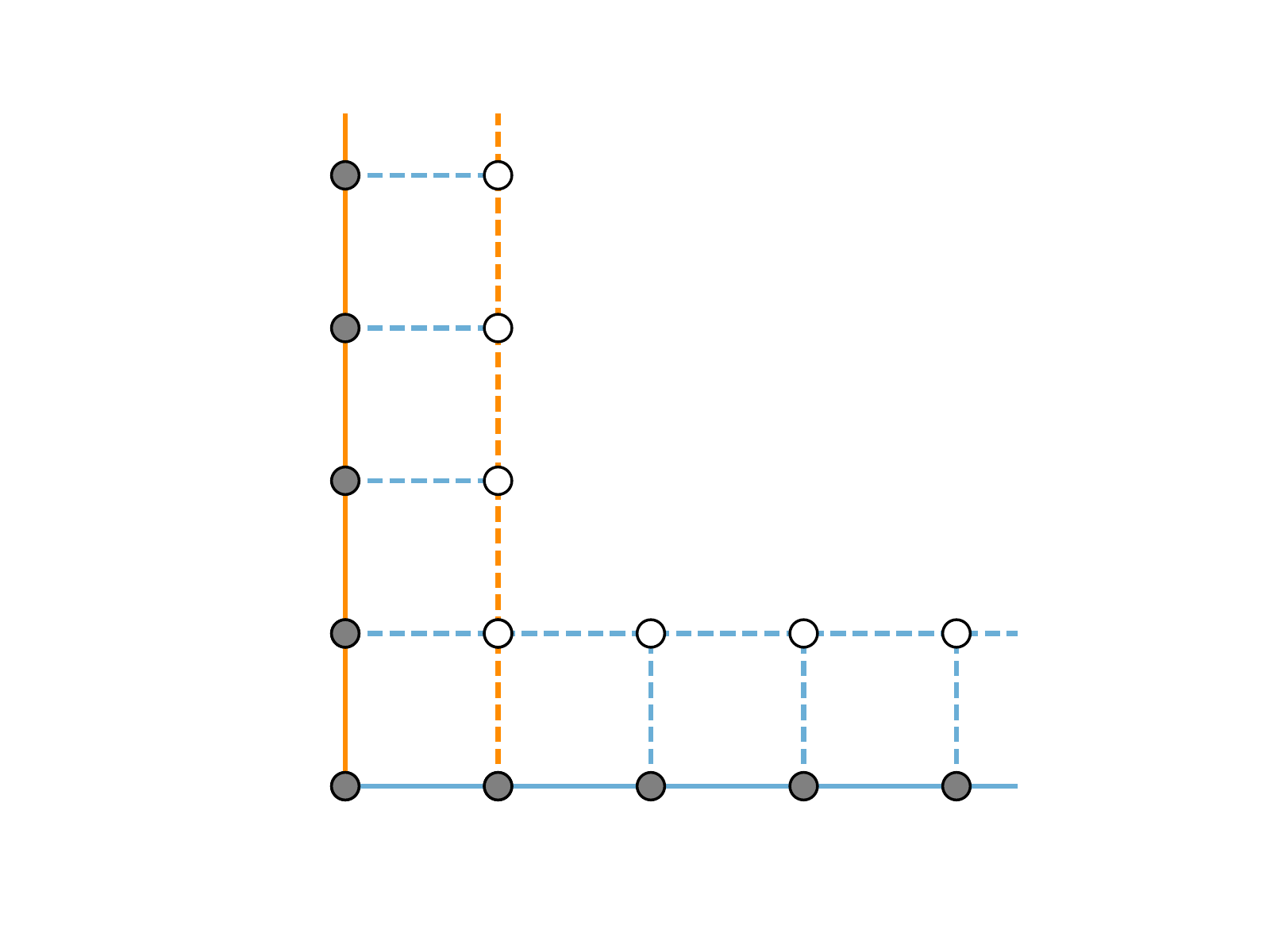}}
        \caption{}
        \label{fig:goursat}
    \end{subfigure}%
    \caption{(a) A single quadrilateral in the induced Chebyshev net on $ S^2$. Given the normal vectors at three vertices, the normal at the fourth vertex is determined. (b) The Goursat-type discretized problem on the asymptotic quadrilateral, $\Omega$. The nodes filled with grey represent provided boundary data, and open nodes are iteratively solved for via the system \eqref{eqn:discreteconstantcurvaturepde}}
    \label{fig:scri}
\end{figure}%

\begin{definition}[Spherical Chebyshev net] A spherical Chebyshev net is a branched embedding $N:Q \to S^2$ of an asymptotic complex $Q \subset \mathbb{R}^2$ into the sphere that (i) maps every quad onto a spherical rhombus, (ii) reverses orientation, and (iii) satisfies 
\begin{equation}
  \begin{cases} \sum_{p \in F_k} \alpha_k = 2 \pi (1-m_p) & p \mbox{ in the interior has degree }2 m_p, \\ \sum_{p \in F_k} \alpha_k \in ((1-d_p)\pi,\min((3-d_p) \pi, 0)) & p \mbox{ on the boundary has degree }d_p, \\  \end{cases}
\label{eq:unramified}
\end{equation}
where the sums are over all the faces $F_k$ incident on $p$, and $\alpha_k$ is the (negative) angle at $p$ for the image $N(F_k)$.
\label{def:sphchebnet}
\end{definition}
Condition~\eqref{eq:unramified} enforces the hypothesis in Lemma~\ref{lem:covering-number} at interior vertices, and allows for ``closing"  an edge (respectively corner) vertex with $d_p$ odd (resp. even), i.e. making it an interior vertex by adding  2 (resp. 3) sectors with angles in $(-\pi,0)$. 
\begin{lemma}
Let $Q^\epsilon$ be an Asymptotic complex (a simply connected, checkerboard colorable quadgraph). For any spherical Chebyshev net $N:Q^\epsilon \to S^2$, the discrete Lelieuvre equations~\eqref{eq:d_lelieuvre} are compatible, and generate generalized $K$-surface(s) $r:Q^\epsilon \to \mathbb{R}^3$.
\label{lem:construct}
\end{lemma}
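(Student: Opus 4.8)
The plan is to reinterpret the discrete Lelieuvre data as an $\mathbb{R}^3$-valued discrete 1-form on the 1-skeleton of $Q^\epsilon$, verify it is ``closed'' face-by-face using only property (i) of Definition~\ref{def:sphchebnet} (each quad is a spherical rhombus), and then integrate it using the fact that $Q^\epsilon$ is simply connected. First I would, for each directed edge $a\to b$ of $Q^\epsilon$, set $\delta r(a\to b) = N_b\times N_a$ if $\{a,b\}$ is a $u$-edge and $\delta r(a\to b) = N_a\times N_b$ if $\{a,b\}$ is a $v$-edge. This uses only the global partition of edges into $E^u$ and $E^v$ from Definition~\ref{def:A-complex}, so it is unambiguous even though the attaching maps may contain ``flips''; and one checks immediately that $\delta r(b\to a)=-\delta r(a\to b)$, so $\delta r$ is a genuine discrete 1-form.

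The heart of the argument is closedness on each face. Fix a face $F_k$ with its bijection $\psi_k:F_k\to[0,u_k]\times[0,v_k]$, and label the vertices $0,1,2,12$ as the preimages of $(0,0),(u_k,0),(0,v_k),(u_k,v_k)$, so that $\partial F_k$ traverses $0\to1\to12\to2\to0$ with $u$- and $v$-edges alternating. Summing $\delta r$ around this loop gives
\[
\oint_{\partial F_k}\delta r = N_1\times N_0 + N_1\times N_{12} + N_2\times N_{12} + N_2\times N_0 = (N_1+N_2)\times(N_0+N_{12}),
\]
which is exactly the compatibility quantity~\eqref{eq:d_compatible}. Since $N$ maps $F_k$ onto a spherical rhombus $N_0N_1N_{12}N_2$, in particular $\|N_1-N_0\|=\|N_2-N_0\|$, so by Lemma~\ref{lem:Chebyshev} (equivalently, by the Householder formula~\eqref{eq:householder}) we have $N_{12}=\nu(N_1+N_2)-N_0$ for some $\nu\in\mathbb{R}$, whence $N_0+N_{12}=\nu(N_1+N_2)$ is parallel to $N_1+N_2$ and the period vanishes. (The same conclusion follows directly: $N_0+N_{12}$ and $N_1+N_2$ are both invariant under the reflections through the perpendicular-bisector planes of the two diagonals of the rhombus, hence both lie on the line in which these planes meet; the only configurations where this fails are the degenerate ones in which one of the two sums is $0$, and there parallelism is automatic.)

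It remains to integrate. Because $Q^\epsilon$ is simply connected (Remark~\ref{remark:simply_connected}, so it is homeomorphic to a disk and $H_1(Q^\epsilon)=0$), every $1$-cycle in the $1$-skeleton is an integer combination of face boundaries; by the previous step $\delta r$ has zero period over each face boundary, hence zero period over every closed loop. Fixing a base vertex $v_\ast$ and a value $r(v_\ast)\in\mathbb{R}^3$, the prescription $r(v)=r(v_\ast)+\sum_{e\in P}\delta r(e)$ along any edge-path $P$ from $v_\ast$ to $v$ is therefore well defined. Unwinding the definition of $\delta r$ along the two edges emanating from the vertex $0$ of an arbitrary face yields $r_1=r_0+N_1\times N_0$ and $r_2=r_0-N_2\times N_0$, i.e.\ precisely the discrete Lelieuvre equations~\eqref{eq:d_lelieuvre}; so $r:Q^\epsilon\to\mathbb{R}^3$ is a generalized discrete $K$-surface in the sense of Definition~\ref{def:K-surf}, the only residual freedom being the global translation $r(v_\ast)$ — hence ``$K$-surface(s)''.

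The main obstacle is the single non-bookkeeping step, namely the geometric fact that a spherical rhombus $N_0N_1N_{12}N_2$ satisfies $N_0+N_{12}\parallel N_1+N_2$, together with a clean treatment of the degenerate quads; both are handled either by reducing to the Householder reflection via Lemma~\ref{lem:Chebyshev} or by the diagonal-reflection symmetry argument sketched above. Everything else — the antisymmetry of $\delta r$, its insensitivity to attaching-map flips, and the discrete Poincaré lemma on a simply connected quadgraph — becomes routine once the $1$-form viewpoint is in place.
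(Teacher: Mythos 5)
Your proposal is correct and follows essentially the paper's route: on each quad you show $(N_1+N_2)\times(N_0+N_{12})=0$ from the rhombus property and then integrate the Lelieuvre increments over the simply connected complex, and your reflection-through-the-diagonal-bisector-planes argument is just a repackaging of the paper's computation that both sums are orthogonal to the two nonzero, mutually orthogonal diagonals $N_0-N_{12}$ and $N_1-N_2$. One caution: Lemma~\ref{lem:Chebyshev} as stated gives the converse implication (Householder formula $\Rightarrow$ rhombus), so it cannot be cited for ``rhombus $\Rightarrow N_{12}=\nu(N_1+N_2)-N_0$''; it is your direct symmetry argument that carries the step, and the degeneracies one must exclude there are collapsed diagonals ($N_0=N_{12}$ or $N_1=N_2$) --- the same implicit nondegeneracy behind the paper's ``neither is zero'' --- rather than vanishing sums, which are in fact harmless.
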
 
\begin{proof}
{We have, $\langle N_0 -N_{12},N_0 + N_1 + N_2 + N_{12} \rangle = |N_0|^2-|N_{12}|^2 + \langle N_0,N_1 + N_2 \rangle  -\langle N_{12}, N_1 + N_2  \rangle = 0$ so $N_0+N_{12}$ and $N_1+N_2$ are both perpendicular to $N_0 - N_{12}$. A similar calculation shows that $N_0+N_{12}$ and $N_1+N_2$ are also perpendicular to $N_1 - N_{2}$. 

Finally, $\langle N_0-N_{12},N_1-N_2 \rangle = \cos \delta - \cos \delta -\cos \delta +\cos \delta = 0$ so $N_1-N_2$ and $N_0-N_{12}$ are not parallel since neither is zero. This implies that $N_0+N_{12}$ and $N_1 + N_2$ are parallel and thus satisfy the compatibility condition~\eqref{eq:d_compatible}. We can therefore ``integrate" the discrete Lelieuvre equations along any path in $Q^\epsilon$, starting from a designated `origin' $o$. Since $Q^\epsilon$ is simply connected, we can find a path from $o$ to every other vertex, and summing~\eqref{eq:d_lelieuvre} over the path gives a consistent definition of $r:Q^\epsilon \to \mathbb{R}^3$. This gives a 3 parameter family of generalized $K$-surfaces determined by the initial (arbitrary) choice of $r(o) \in \mathbb{R}^3$. 

In general, this mapping can be ramified \cite{wissler1972}, but imposing condition~\eqref{eq:unramified} ensures that $r$ is not multi-sheeted, in contrast to $N$. In particular this condition forces $\sum \alpha_k = 2 \pi J_p = 2\pi (1-m_p)$  at all interior vertices, giving consistency with Lemma \ref{lem:covering-number}.}
\end{proof}

The problem of constructing discrete PS-fronts therefore reduces to the problem of constructing spherical Chebyshev nets on Asymptotic complexes. To adapt the assembly and surgery procedures defined for continuous surfaces to the discrete setting we define

\begin{definition}[A corner vertex] A vertex $q$ in an asymptotic complex $Q^\epsilon$ is a corner vertex if a $u$-edge as well as a $v$-edge incident on $q$ are contained in the boundary $\partial Q^\epsilon$.
\label{def:corner}
\end{definition}

\begin{definition}[Boundary segments] A boundary segment  is a curve $\gamma = E_1 \cup E_2 \cdots \cup E_m \subset \partial Q^\epsilon$, where the edges overlap, $E_i \cap E_{i+1} \neq \emptyset$, and are all either $u$ or $v$ edges. 
\end{definition}

\begin{lemma}
A boundary segment $\gamma$ is incident on a corner vertex $q \in Q^\epsilon$ if and only if $q \in \partial \gamma$. Conversely, every corner vertex $q$ determines two maximal boundary segments, $\gamma_u$ consisting of $u$-edges and $\gamma_v$ consisting of $v$-edges.
\label{lem:segments}
\end{lemma}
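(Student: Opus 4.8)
The plan is to reduce the entire statement to bookkeeping on the cyclic word of edge types around the boundary of $Q^\epsilon$. Since $Q^\epsilon$ is a simply connected asymptotic complex, it is a topological disk (Remark~\ref{remark:simply_connected}), so the faces incident on any boundary vertex $q$ form a fan, and consequently exactly two of the edges meeting $q$ lie on $\partial Q^\epsilon$. Each boundary edge carries a well-defined type ($u$ or $v$), and by Definition~\ref{def:corner} the vertex $q$ is a corner vertex precisely when these two boundary edges have opposite types. It is convenient to record $\partial Q^\epsilon$ as a cyclic sequence of boundary edges $e_1,\dots,e_L$ with $e_i\cap e_{i+1}$ a single vertex (indices mod $L$); then corner vertices are exactly those $e_i\cap e_{i+1}$ for which $e_i,e_{i+1}$ have different types, and a boundary segment is a maximal-or-not contiguous run $e_a,e_{a+1},\dots,e_b$ all of one type.

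First I would prove the forward direction of the ``iff''. Suppose $q$ is a corner vertex incident on a boundary segment $\gamma=E_1\cup\cdots\cup E_m$, with all $E_j$ of a single type, say $u$. If $q$ were an \emph{interior} vertex of $\gamma$ (a vertex of $\gamma$ that is not an endpoint), then $q=E_j\cap E_{j+1}$ for some $j$, so $q$ would lie on two $u$-type boundary edges; but a corner vertex lies on exactly two boundary edges and they have opposite types --- a contradiction. Hence $q\in\partial\gamma$. The converse direction is immediate: $\partial\gamma$ (the two endpoints of the arc $\gamma$) is a subset of the vertex set of $\gamma$, so $q\in\partial\gamma$ trivially makes $\gamma$ incident on $q$. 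Thus for a corner vertex $q$, incidence on $\gamma$ is the same as $q\in\partial\gamma$.

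For the ``conversely'' statement, fix a corner vertex $q=e_i\cap e_{i+1}$ and WLOG let $e_i$ be a $u$-edge and $e_{i+1}$ a $v$-edge. Starting from $e_i$ and proceeding along $\partial Q^\epsilon$ in the direction away from $q$ (through $e_{i-1},e_{i-2},\dots$), append edges as long as they remain $u$-edges; let $\gamma_u$ be the resulting run. This terminates because $Q^\epsilon$ is finite, and it cannot wrap all the way around, since traversing backward from $e_i$ one eventually reaches $e_{i+1}$, which is a $v$-edge and forces the run to stop beforehand; so $\gamma_u$ is a genuine (non-closed) boundary segment of $u$-edges with $q$ as one endpoint, and it is maximal among $u$-segments because the edges of $\partial Q^\epsilon$ immediately following its two ends (namely $e_{i+1}$ on the $q$-side and the first excluded edge on the other side) are both $v$-edges. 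Defining $\gamma_v$ symmetrically, starting from $e_{i+1}$ and moving away from $q$, yields a maximal boundary segment of $v$-edges, also with endpoint $q$. Finally, these are the only two: by the first part any boundary segment incident on $q$ has $q$ as an endpoint, and since $q$ lies on exactly two boundary edges, at most two maximal boundary segments can be incident on $q$; hence $\gamma_u$ and $\gamma_v$ are uniquely determined by $q$.

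I do not anticipate a genuine obstacle here --- the whole argument is combinatorics on a cyclic two-letter word. The one place needing a little care is the appeal, at the very start, to the disk structure of $Q^\epsilon$ (via Remark~\ref{remark:simply_connected}) to guarantee that every boundary vertex lies on exactly two boundary edges; without this, the notion of ``corner vertex'' and the counting arguments above would not be well-behaved. If one preferred to avoid the topological input, the same fact can be extracted directly from strong regularity of the quad-graph together with the alternation of edge types around each face, which forces $\partial Q^\epsilon$ to be a disjoint union of edge-cycles --- enough for everything above.
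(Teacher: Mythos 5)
Your argument is correct and follows essentially the same route as the paper: both rest on the fact that $Q^\epsilon$ is a disk (Remark~\ref{remark:simply_connected}), so $\partial Q^\epsilon$ is a Jordan curve and a corner vertex $q$ lies on exactly two boundary edges, one $u$ and one $v$. You simply spell out the combinatorial bookkeeping (endpoint vs.\ interior vertex of a run, construction and maximality of $\gamma_u,\gamma_v$) that the paper's proof compresses into ``the lemma immediately follows.''
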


\begin{proof} Since $Q^\epsilon$ is simply connected  and embeddable in $\mathbb{R}^2$, (see Remark~\ref{remark:simply_connected}) it follows that $\partial Q^\epsilon$ is a Jordan curve consisting of $u$ and $v$ segments.  Definition~\ref{def:corner} implies that $q$ is on one $u$ and one $v$ edge contained in the boundary, a Jordan curve, so $q$ is not on any other edge contained in the boundary. The lemma immediately follows.
\end{proof}

\begin{lemma}[Discrete assembly] Let $2m \geq 4$ be even and $\gamma_i, i=1,2,\ldots 2m$ be mappings $\gamma_i :\{0,1,\ldots,l_i\} \to S^2$ such that (i)  $\gamma_i(0) = (0,0,1)$ for all $i$, (ii) $\alpha_i = \angle \gamma_{i-1}(1) \gamma(0) \gamma_{i}(1)  \in (-\pi,0)$ and $\sum_{i=1}^{2m} \alpha_i = 2 \pi(1-m)$ (here $\gamma_0 = \gamma_{2m}$), and (iii) $\langle\gamma_i(k),\gamma_i(k+1) \rangle = \cos \delta$ for all admissible $i,k$.  This data uniquely determines a maximal asymptotic complex $Q$, a spherical Chebyshev net $N:Q \to S^2$ and an unramified $K$-surface $r:Q \to \mathbb{R}^3$.
\label{lem:discrete_assembly}
\end{lemma}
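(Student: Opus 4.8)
The plan is to mimic the continuous assembly of Proposition~\ref{prop:assembly} in the discrete category, producing the three required objects in stages: the combinatorial asymptotic complex $Q$, the discrete Lorentz-harmonic normal field $N:Q\to S^2$, and then the $K$-surface $r$ via Lemma~\ref{lem:construct}. Here the curves $\gamma_i$ play the role that the Frenet frames $F_i$ played in Proposition~\ref{prop:assembly}: they are the prescribed values of $N$ along the $2m$ bounding asymptotic rays of the sectors meeting at the branch point. First I would fix the combinatorics. Each sector is discretized by a rectangular grid $M_i\subset\mathbb{Z}^2$ of dimensions $l_{i-1}\times l_i$, and these $2m$ grids are glued cyclically --- all sharing one apex vertex $o$, with the $\mathbf{s}_i$-edge of $M_i$ identified with the $\mathbf{s}_i$-edge of $M_{i+1}$ --- following Definition~\ref{def:m-star} and the attaching-map prescription of Definition~\ref{def:A-complex}, with the $u/v$ labelling inherited from~\eqref{eq:sector-coords}. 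The result is a disk-like fan of quads about $o$, hence a simply connected asymptotic complex, which is then automatically checkerboard colorable. Finally I prescribe the normal data: along the discrete ray corresponding to $\mathbf{s}_i$ set $N=\gamma_i$; this is consistent at $o$ since $\gamma_i(0)=(0,0,1)$ for all $i$.

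Next I would produce $N$ on all of $Q$ by solving the discrete Goursat problem on each sector. On $M_i$ the two bounding rays carry the data $\gamma_{i-1}$ and $\gamma_i$, and iterating the Householder reflection~\eqref{eq:householder} (equivalently the discrete Moutard equation~\eqref{eqn:discreteconstantcurvaturepde}) determines $N$ at every vertex of $M_i$; at each step there is a unique admissible value, the non-antipodal one. Hypothesis (iii), $\langle\gamma_i(k),\gamma_i(k+1)\rangle=\cos\delta$, together with Lemma~\ref{lem:Chebyshev} applied quad by quad, forces every elementary quad of $M_i$ to map onto a spherical rhombus with the common edge-angle $\delta$. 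On an edge shared by $M_i$ and $M_{i+1}$ both sectors read off the same curve $\gamma_i$, so the sector solutions patch to a globally defined $N:Q\to S^2$, which reverses orientation on each quad by the sign pattern of the discrete Lelieuvre equations~\eqref{eq:d_lelieuvre}.

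Then I would check the ramification condition~\eqref{eq:unramified} and invoke Lemma~\ref{lem:construct}. At the apex $o$, which is an interior vertex of degree $2m$, the angle at $N(o)$ of the rhombus $N(F_k)$ contributed by sector $M_k$ is determined purely by the two edges $N(o)\to\gamma_{k-1}(1)$ and $N(o)\to\gamma_k(1)$, hence equals $\angle\gamma_{k-1}(1)\,\gamma_k(0)\,\gamma_k(1)=\alpha_k$; by hypothesis (ii) these angles sum to $2\pi(1-m)$, which is precisely~\eqref{eq:unramified} at $o$ and identifies the order of saddleness $m_o=m$, consistently with Lemma~\ref{lem:covering-number}. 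The remaining interior vertices have degree $4$, where~\eqref{eq:unramified} can fail if the normal field winds more than once; I therefore take $Q$ to be the \emph{maximal} sub-complex of this discrete $m$-star that contains $o$, is a simply connected asymptotic complex, and satisfies~\eqref{eq:unramified} at every interior vertex (this sub-complex is nonempty, since it contains at least the inner ring of $2m$ quads). On this $Q$ the map $N$ is a spherical Chebyshev net in the sense of Definition~\ref{def:sphchebnet}, so Lemma~\ref{lem:construct} produces a compatible, unramified $K$-surface $r:Q\to\mathbb{R}^3$, unique up to the translational choice $r(o)\in\mathbb{R}^3$. Uniqueness of the whole package then follows step by step: the combinatorics are forced by $\{l_i\}$, the normal field by the one-admissible-solution property of~\eqref{eqn:discreteconstantcurvaturepde}, and $r$ by summing~\eqref{eq:d_lelieuvre} along paths in the simply connected $Q$.

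The main obstacle will be the bookkeeping around maximality. One must verify carefully that the cyclically glued grids genuinely satisfy the axioms of an asymptotic complex (Definition~\ref{def:A-complex}) --- correct alternation of $u$- and $v$-edges around $o$, correctness of the attaching maps, strong regularity --- and, more delicately, that after passing to the maximal sub-complex on which~\eqref{eq:unramified} holds, the result is still a simply connected fan about $o$: since different sectors may have to be truncated to different depths, one must truncate so that identified edges retain matching lengths and then argue that the truncated fan remains simply connected and star-shaped about $o$. A companion point to check is that the Householder step~\eqref{eq:householder} stays well posed, i.e.\ $N_1+N_2\neq 0$ at each application; this holds in a neighborhood of $o$ for the given $\delta$, and delimiting the region where it persists is subsumed in the choice of the maximal $Q$.
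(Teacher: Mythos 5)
Your proposal follows essentially the same route as the paper's own proof: glue the $2m$ rectangular grids cyclically via attaching maps to form the asymptotic complex, prescribe $N=\gamma_{i-1},\gamma_i$ along the bounding rays of each sector, fill in each rectangle by the discrete Goursat/Householder step~\eqref{eq:householder} (which by Lemma~\ref{lem:Chebyshev} yields spherical rhombi), note agreement on overlaps and the alternating $u$/$v$ labelling, and invoke Lemma~\ref{lem:construct} for the unramified $K$-surface. Your extra care about verifying~\eqref{eq:unramified} at the apex via hypothesis (ii), the non-degeneracy of the Householder step, and the meaning of ``maximal'' goes somewhat beyond the paper's terse argument but does not change the method.
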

\begin{proof}
Let $J_i = \{0,1,\ldots,l_{i-1}\} \times \{0,1,\ldots,l_i\}, i=1,2,\ldots,2m$ be $2m$ rectangular domains. We will set $l_0 = l_{2m},J_0 = J_{2m}$.  Defining the attaching maps $\chi_i :(k,0) \in J_i \mapsto (l_{i-1},k) \in J_{i-1}$, we obtain a (discrete) asymptotic complex $Q$. On each of the sets $J_i$, we define $N_i(k,0) = \gamma_{i-1}(k), N_i(0,l) = \gamma_i(l)$  and extend $N_i$ to the rectangle $J_i$ using \eqref{eq:householder}. By construction, the definitions agree along the overlaps, so we can use the attaching maps  to obtain a spherical Chebyshev net $N:Q \to S^2$. Taking the edges $\gamma_i$ with $i$ even as the $u$-edges and $i$ odd as the $v$ edges, we can consistently extended the definition of $u$ and $v$ edges on every rectangle $J_i$. The result now follows from Lemma~\ref{lem:construct}.
\end{proof}

\begin{lemma}[Discrete surgery] Let $Q$ be an asymptotic complex and  $N:Q \to S^2$ be a spherical Chebyshev net (in particular, all vertices satisfy \eqref{eq:unramified}). Given $l_1,l_2 \in \mathbb{N}$ and $q$, a corner vertex for $Q$  we can define an asymptotic complex $Q' \supset Q$ by attaching 3 rectangular domains $J_i,i=1,2,3$ to $Q$ and extending the spherical Chebyshev net $N$ to $N':Q' \to S^2$ such that $q \notin \partial Q'$ and the associated $K$-surface is unramified, i.e. single-sheeted. 
\end{lemma}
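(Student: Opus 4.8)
The plan is to mimic the surgery of Lemma~\ref{lem:surgery} in the discrete setting and then apply Lemma~\ref{lem:construct}. First, by Lemma~\ref{lem:segments} the corner vertex $q$ determines two maximal boundary segments $\gamma_u\subset\partial Q$ (a chain of $u$-edges) and $\gamma_v\subset\partial Q$ (a chain of $v$-edges), along both of which the given spherical Chebyshev net $N$ is already prescribed. Since the edges at $q$ alternate between the $u$- and $v$-families and the two extreme ones (the ones lying in $\partial Q$) come from different families, the degree $d_q$ of $q$ is even, and by \eqref{eq:unramified} the $d_q-1$ faces of $Q$ at $q$ contribute a total signed angle $\Sigma_0\in((1-d_q)\pi,\min((3-d_q)\pi,0))$ at $N(q)$, whose bounding directions are fixed by the data on the first edges of $\gamma_u,\gamma_v$. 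I then introduce rectangular grids $J_1,J_2,J_3\subset\mathbb{Z}^2$ with sides governed by $l_1$, $l_2$ and the lengths of $\gamma_u,\gamma_v$, exactly as for $\Omega_1,\Omega_2,\Omega_3$ in Lemma~\ref{lem:surgery}, and form $Q'=Q\cup J_1\cup J_2\cup J_3$ by attaching $J_3$ to $\gamma_u$, $J_1$ to $\gamma_v$, and $J_1,J_2,J_3$ to one another along their remaining edges through $q$ (with $u/v$ labels inherited so as to alternate around $q$), so that the three new faces at $q$ fan through the angular wedge complementary to the old faces. Gluing rectangles along boundary edges respecting the edge labeling keeps the complex strongly regular, simply connected and checkerboard-colorable, hence $Q'$ is again an asymptotic complex (Definition~\ref{def:A-complex}) in which $q$ is now an interior vertex of degree $d_q+2$, and every boundary edge of $Q$ at $q$ is shared by two faces, so $q\notin\partial Q'$.

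Next I extend the normal field. Along the new $u$-boundary curve through $q$ that meets $\gamma_u$, copy the values of $N$ on $\gamma_u$; along the new $v$-boundary curve through $q$ that meets $\gamma_v$, copy $N$ on $\gamma_v$; and along the two curves bounding $J_2$ at $q$, prescribe discrete curves issuing from $N(q)$ with constant spherical edge-angle $\delta$ (the discrete torsion already carried by $Q$) whose initial tangents at $N(q)$ approximately trisect the wedge complementary to $\Sigma_0$, chosen so that the total signed angle at $q$ becomes $2\pi(1-m_q)$ with $m_q=d_q/2+1$; this is possible because, by the range of $\Sigma_0$ in \eqref{eq:unramified}, the required contribution $2\pi(1-m_q)-\Sigma_0$ lies in $(-3\pi,0)$, and any number in $(-3\pi,0)$ is a sum of three numbers in $(-\pi,0)$. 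Filling the interior of each $J_i$ by the Householder recursion \eqref{eq:householder} (which never degenerates since $\delta<\pi/2$) produces, by Lemma~\ref{lem:Chebyshev}, spherical rhombi, and \eqref{eq:d_chebyshev} propagates the edge-angle $\delta$ throughout. Passing, as in Lemma~\ref{lem:discrete_assembly}, to the maximal sub-complex on which the $N$-net restricted to each $J_i$ is a local embedding, every new interior vertex has four rhombic faces filling a neighborhood of its image (signed angle sum $-2\pi=2\pi(1-2)$), the old interior vertices of $Q$ keep their angle sums, and the sum at $q$ is $2\pi(1-m_q)$ by construction; so $N':Q'\to S^2$ meets all requirements of Definition~\ref{def:sphchebnet} and is a spherical Chebyshev net.

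Finally, I apply Lemma~\ref{lem:construct} to $N':Q'\to S^2$: the discrete Lelieuvre equations \eqref{eq:d_lelieuvre} are compatible, so they integrate (up to the choice of one base value) to a generalized $K$-surface $r:Q'\to\mathbb{R}^3$, and because \eqref{eq:unramified} holds at every interior vertex --- in particular $\sum\alpha_k=2\pi(1-m_q)$ at $q$, consistent with Lemma~\ref{lem:covering-number} --- the surface $r$ is unramified, i.e. single-sheeted. Fixing the base value so that $r$ coincides with the given $K$-surface at one vertex of $Q$ makes $r$ extend it.

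I expect the delicate step to be the prescription of the angular data at $q$: the three new faces must exactly close the vertex, producing signed angle $2\pi(1-m_q)$, while each still maps to a non-degenerate spherical rhombus and no new interior vertex becomes ramified. The Moutard/Householder structure \eqref{eq:householder} carries most of the load, since it automatically returns rhombi of the right side length; what remains is controlling non-degeneracy over the prescribed extents $l_1,l_2$, which is handled either by keeping $l_1,l_2$ moderate or, following Lemma~\ref{lem:discrete_assembly}, by passing to the maximal sub-complex on which the recursion stays a local embedding.
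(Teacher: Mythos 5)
Your proposal is correct and follows essentially the same route as the paper: identify the two maximal boundary segments at $q$ via Lemma~\ref{lem:segments}, attach three rectangles as in the continuous surgery, prescribe constant-edge-angle boundary curves issuing from $N(q)$ whose angles close the vertex to $2\pi(1-m_q)$ with $m_q=d_q/2+1$, fill by the Householder recursion, and conclude with the argument of Lemma~\ref{lem:discrete_assembly} (equivalently Lemma~\ref{lem:construct}). The only cosmetic difference is that the paper makes the concrete choice of three equal angles $\alpha=-\pi+\delta/3$ and takes the two new curves to be equally spaced points on great circles through $N(q)$, whereas you allow general constant-edge-angle curves and verify the angle bookkeeping explicitly; both satisfy the hypotheses needed for the assembly step.
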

\begin{proof} We set $\gamma_0 = \gamma_u$ and $\gamma_3 = \gamma_v$ where $\gamma_u$ and $\gamma_v$ are the boundary $u$ and $v$ boundary segments incident on $q$ whose existence is given by Lemma~\ref{lem:segments}. $q$ satisfies~\eqref{eq:unramified} and this defines $\delta \in (0, 3\pi)$. Since $q$ is a corner vertex, $d_p \geq 2$ is even. Let $\alpha = -\pi + \delta/3 \in (-\pi,0)$. Determine $\gamma_1(1)$ by $\angle \gamma_{0}(1) N(q) \gamma_{1}(1) = \alpha$ and $\gamma_2(1)$ by $\angle \gamma_{1}(1) N(q) \gamma_{2}(1) = \alpha$. Now we set $\gamma_i(k) = N(q) \cos(k \delta)  + \frac{\gamma_i(1)- \cos \delta N(q)}{\sin \delta} \sin(k \delta), k=1,2,\ldots,l_i$ corresponding to equally spaced points on geodesics on the sphere. An argument identical to the proof of Lemma~\ref{lem:discrete_assembly} gives the desired result.

Note that, by adding three spherical sectors with angle $\alpha$ at the boundary point $q$, we ensure that \eqref{eq:unramified} is satisfied at $q$, and of course, we have not introduced further branch points, or modified the solution at existing branch points away from $q$.
 \end{proof}

\begin{remark}
We henceforth consider the discrete mappings $N:Q^\epsilon \to S^2, r:Q^\epsilon \to \mathbb{R}^3$ as our objects of interest. It is also possible to treat them as discrete approximations of the continuous mappings considered in \S\ref{sec:branchedsurfaces}. With finitely many, isolated, branch points the passage to the continuous limit upon refinement of the quadmesh $Q^\epsilon$ follows from a straightforward application of standard arguments that are outlined in~\cite[\S 5.5]{bobenko2008bdiscrete}, applied to one asymptotic rectangle at a time. As a ``fully discrete" alternative we can also build approximations to the branched surface using hyperboloid surface patches since our quadmeshes are checkerboard colorable~\cite{Huhnen-Venedey2014Discretization}.
\end{remark}

\subsection{DDG on the Poincar\'{e} Disk}
\label{sec:poincare-ddg}

Thus far, we have constructed branched pseudospherical surfaces as $K$-surfaces, i.e. mappings $r:Q^\epsilon \to \mathbb{R}^3$ from asymptotic coordinates into $\mathbb{R}^3$. 
However, the primary object of interest in elasticity is the deformation $y:\Omega \to \mathbb{R}^3$, the mapping from the Lagrangian (material) domain $\Omega$ to the Eulerian (lab) frame  $\mathbb{R}^3$. To construct this mapping, we need also to compute the transformation $\zeta: Q^\epsilon \to \Omega$ that allows us to identify the material location corresponding to a point with given asymptotic coordinates so that $y = r \circ \zeta^{-1}$. To this end, we start with a coordinatization of $\Omega$.

Since our interest is in pseudospherical surfaces, we have $\Omega \subset \mathbb{H}^2$, and we can identify $\mathbb{H}^2$ with the Poincar\'{e} disk $(\mathbb{D},g)$ given by $\mathbb{D}  = \{z \, |\, |z| < 1\}$, the unit disk, and $\displaystyle{g = \frac{4 dz d\bar{z}}{1-|z|^2}}$ \cite[Chap. 4]{anderson2005hyperbolic}. $z$ is our Lagrangian or reference coordinate, since it labels material points {\em independently of their particular locations} in $\mathbb{R}^3$, i.e. independent of  the deformation $y:\Omega \to \mathbb{R}^3$. We record a few standard facts about the Poincar\'{e} disk model for $\mathbb{H}^2$:
\begin{enumerate}[leftmargin=*,label={(\Alph*)}]    
    \item The distance between two points $z_1,z_2 \in \mathbb{D}$ is given by 
    \begin{equation*}
d_{\mathbb{H}^2}(z_1, z_2) = \arccosh\left(1 + 2\frac{|z_1 - z_2|^2}{(1 - |z_1|^2)(1 - |z_2|^2)} \right),
\end{equation*}
In particular, if one of the points is the origin, this expression reduces to
\begin{equation}
\label{eq:hyp_radius}
d_{\mathbb{H}^2}(0, z) =2\arctanh(|z|).
\end{equation}
\item The orientation preserving isometries of $\mathbb{H}^2$ are given by (a subgroup of) the M\"{o}bius transformations
\begin{equation}
\label{eq:mobius}
f(z; z_0,\theta) = e^{i\theta} \frac{z + z_0}{1 + z\bar{z_0}}
\end{equation} 
where $|z_0| < 1, \theta \in [0,2\pi)$. For our purposes, it suffices to take $\theta = 0$ and we shall henceforth drop this variable and use $f(z; z_0) = \frac{z + z_0}{1 + z\bar{z_0}}$. It is straightforward to check that $f'(0;z_0)=1-|z_0|^2$ is real and positive, and $f^{-1}(w;z_0) = f(w;-z_0) = \frac{w - z_0}{1 - w\bar{z_0}}$.
\item \label{geodesics}Equally spaced points on the geodesics through $z=0$, are given by $\gamma_\beta(n) = e^{i \beta} \tanh\left(\frac{n \Delta}{2}\right)$, where $\Delta$ is the separation between successive points on the geodesic. Likewise, geodesics through a point $z_0$ are given by $z_n = f(\gamma_\beta(n);z_0)$.
\end{enumerate}

As we argued above, constructing the appropriate DDG for $K=-1$ surfaces is equivalent to constructing discrete Chebyshev nets, i.e. rhombic quadrilaterals in the appropriate space. Constructing such rhombi on $ S^2$, as in~\eqref{eq:householder}, gives us DDG for the Gauss Normal map. As we now show, the same idea also applies to the problem of finding the (discrete) mapping $\zeta:Q^\epsilon \to \Omega \subset \mathbb{H}^2$. Given $\zeta_0, \zeta_1$ and $\zeta_2$ with $d_{\mathbb{H}^2}(\zeta_0,\zeta_1) = d_{\mathbb{H}^2}(\zeta_0,\zeta_2)= 2 \tanh\left(\frac{\Delta}{2}\right)$, we can apply the isometry $f(.,-\zeta_0)$ to these points and obtain 
$$
w_j = f(\zeta_j,-\zeta_0), \quad w_0 =0, \quad w_1 =  \frac{\Delta}{2} e^{i \beta_1}, \quad w_2 =  \frac{\Delta}{2} e^{i \beta_2}.
$$
The fourth vertex $w_{12}$ of the ``normalized" rhombus diagonally across from the vertex $w_0$ at the origin, can be determine by a straightforward computation after setting $d_{\mathbb{H}^2}(w_{12},w_1) = d_{\mathbb{H}^2}(w_{12},w_2)= 2 \tanh\left(\frac{\Delta}{2}\right)$. $\zeta_{12}$ is then obtained by applying the inverse mapping $f(.,\zeta_0)$. Putting everything together, we have
\begin{align}
w_j & = f(\zeta_j,-\zeta_0) \quad i=0,1,2\nonumber \\
w_{12} & = \frac{w_1+w_2}{1+|w_1w_2|}, \nonumber \\
    \zeta_{12} & = f(w_{12},\zeta_0)
\label{poincare-ddg}
\end{align}

\begin{figure}[htbp]
        \begin{subfigure}[t]{0.5\textwidth}
                \centering
                {\includegraphics[trim={0cm, -0.5cm, 0, 0cm}, clip, width=.85\linewidth, height=6.5cm]{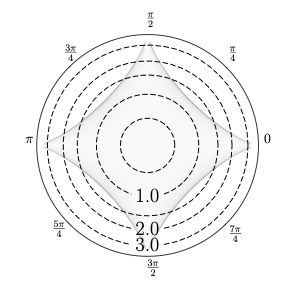}}
                \caption{}
                \label{fig:poincaregeodesics}
        \end{subfigure}%
        \begin{subfigure}[t]{0.5\textwidth}
                \centering
                {\includegraphics[trim={1cm, 0.75cm, 0, 0.5cm}, clip, width=.85\linewidth, height=6cm]{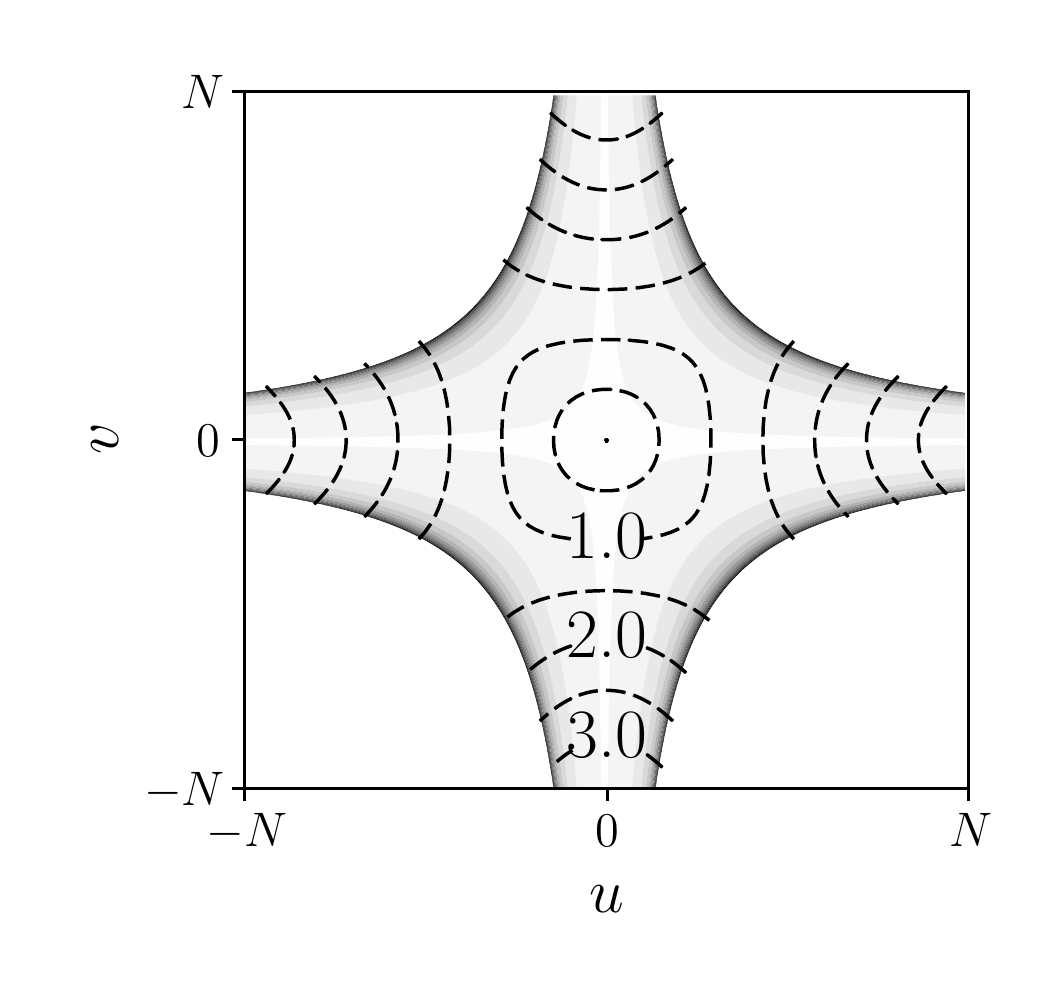}}
                \caption{}
                \label{fig:amslergeodesics}
        \end{subfigure}%
        \caption{Examples of a piece of an Amsler surface represented in (a) asymptotic coordinates $(u, v)$ and (b) in the Poincar\'{e} disk $z$, up to the singular edge, colored by the angle $\phi$ and contoured by geodesic radius with labels.}
        \label{fig:geodesicballs}
\end{figure}

It is now straightforward to
construct (branched) Chebyshev nets in $\mathbb{H}^2$ that inherit their topology from a 
given asymptotic complex. 
More formally, a discrete hyperbolic Chebyshev net is a  quadgraph in $\mathbb{H}^2$ with 
an assignment of $u$ and $v$ labels to the edges such that each face (quad) has two $u$ and two $v$ edges which alternate, and satisfying~\eqref{poincare-ddg} on each quad, where $\zeta_0$ and $\zeta_{12}$ are one set of non-adjacent vertices, and $\zeta_1,\zeta_2$ are the vertices on the other diagonal. A branch point is any interior vertex with degree $2m \geq 6$. From the Chebyshev net in $\mathbb{H}^2$, we can immediately construct the corresponding $K$-surface (discretized surface) in $\mathbb{R}^3$ by requiring that each star (the edges incident on a vertex $r_{j,k}$) be planar with lengths and angles given by the Chebyshev net at the vertex $\zeta_{j,k}$, i.e. the mapping between the Poincar\'e disk and $\mathbb{R}^3$ is a discrete conformal map at each vertex. This mapping between the Poincar\'e disk and $\mathbb{R}^3$ is the desired Lagrangian to Eulerian map. Although differing in details, the idea of conformally mapping the Hyperbolic plane into $\mathbb{R}^3$ has been used to investigate the wavy edges of leaves \cite{Nechaev2017From,nechaev2001plant}, and for energetic and geometric approaches to studying buckling in hyperbolic elastic surfaces \cite{nechaev2015buckling}.

As an illustration of our approach, we construct a discrete hyperbolic Chebyshev net corresponding to an Amsler surface with an angle $\varphi =\pi/2$ between the straight asymptotic lines where they intersect. Since these asymptotic lines are also geodesics in $\mathbb{R}^3$, the same is true for the corresponding curves in the Poincar\'e disk. We pick the origin $z=0$ to correspond to this point of intersection. If the rhombi have a side-length $\Delta$ it follows that the `Amsler-type' boundary data on the Poincar\'e disk are given by $\zeta_{j,0} = \tanh\left(\frac{j \Delta}{2}\right), \zeta_{0,k} = i \tanh\left(\frac{k \Delta}{2}\right)$. We then solve for $\zeta_{j,k}$ with $j\neq 0, k\neq 0$ using~\eqref{poincare-ddg}. The (discretized) angle between the asymptotic lines at node $j,k$ is given by 
\begin{equation}
\varphi_{j,k} = \mathrm{arg}(w_2w_1^*),
\label{eq:angle}
\end{equation}
where the $w_j$ are determined by~\eqref{poincare-ddg} with $\zeta_0 = \zeta_{j,k}, \zeta_1 = \zeta_{(j+1),k}, \zeta_2 = \zeta_{j,(k+1)}$. 

The results are displayed in Figure \ref{fig:geodesicballs}. Fig.~\ref{fig:poincaregeodesics} shows the hyperbolic Chebyshev net $\zeta_{j,k}$ where each node is colored by the angle $\varphi_{j,k}$ up to the contour $\varphi= \pi$ corresponding to the singular edge. The dashed curves are the boundaries of geodesic disks, labelled by radius. It is clear that the Amsler surface with angle $\pi/2$ allows us to smoothly embed a geodesic disks of radius 1 into $\mathbb{R}^3$ but not a disk of radius 1.5 \cite{gemmer2011shape}. Fig.~\ref{fig:amslergeodesics} displays the same information in terms of the discrete indices $j,k$ which are proxies for the asymptotic coordinates $u$ and $v$. Since the geodesic distance to the origin is easily computed in the Poincar\'e disk by~\eqref{eq:hyp_radius}, we have an efficient method to determine geodesic radii on pseudospherical surfaces without explictly integrating the arclength \cite{gemmer2011shape} or solving an eikonal equation on the surface. Fig.~\ref{fig:amslerspace} shows the corresponding $K$-surface in $\mathbb{R}^3$, a discretization of the Amsler surface with angle $\frac{\pi}{2}$ between the generators. Multiple singular edges are discernible by their characteristic cuspidal form ({\em cf.} Fig.~\ref{fig:bobbin}). 

\begin{figure}[htbp]
                \centering
                {\includegraphics[trim={1cm, 4cm, 1cm, 4cm}, clip,width=.6\linewidth]{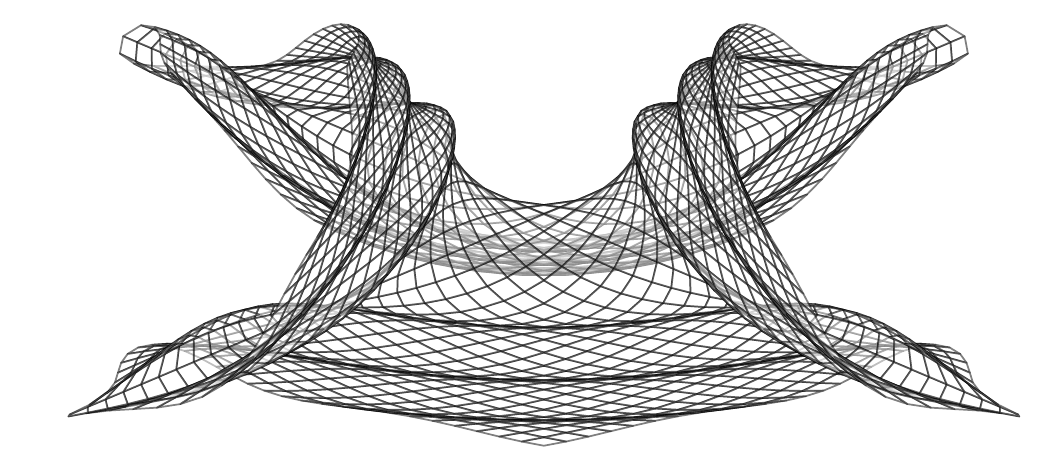}}
                \caption{The Amsler surface with $\varphi_0 = \frac{\pi}{2}$.}
                \label{fig:amslerspace}
\end{figure}

The last notion  we need to introduce 
is that of a {\em reversal}. We know that, in general, a pseudospherical parametrization $r(u,v)$ does not correspond to an immersed surface, and the failure of (local) injectivity is associated with the locus where $\partial_u r \times \partial_v r = 0$. The notion of reversal captures this idea in a discrete setting. Let $\omega$ be an orientation on $\mathbb{H}^2$. If $\zeta_{j,k}$ is a regular point, it is incident on 4 quads given by $\zeta_{j+p,k+q}$, where $p,q \in \{-1,0,1\}$. We say that there is a reversal at $\zeta_{j,k}$ if 
\begin{equation}
\prod_{p \in \{-1,1\},q \in \{-1,1\}} \omega(f(\zeta_{j+p,k},-\zeta_{j,k}),f(\zeta_{j,k+q},-\zeta_{j,k})) \leq 0.
\label{reversal}
\end{equation}
This condition  is invariant under M\"obius transformations and also under reversal of the orientation $\omega \to - \omega$ being a product of 4 terms. The import of this condition is that, at a reversal one of the quads that is incident on $\zeta_{j,k}$ is flipped relative to the other three, 
so the Chebyshev net is folding over itself. The Amsler surface in Fig.~\ref{fig:amslerspace} corresponds to three reversals of the associated hyperbolic Chebyshev net.

\begin{figure}[ht]
        \begin{subfigure}[t]{0.33\textwidth}
                \centering
                \includegraphics[trim={3.5cm 0 3.5cm 0}, clip, width=.85\linewidth]{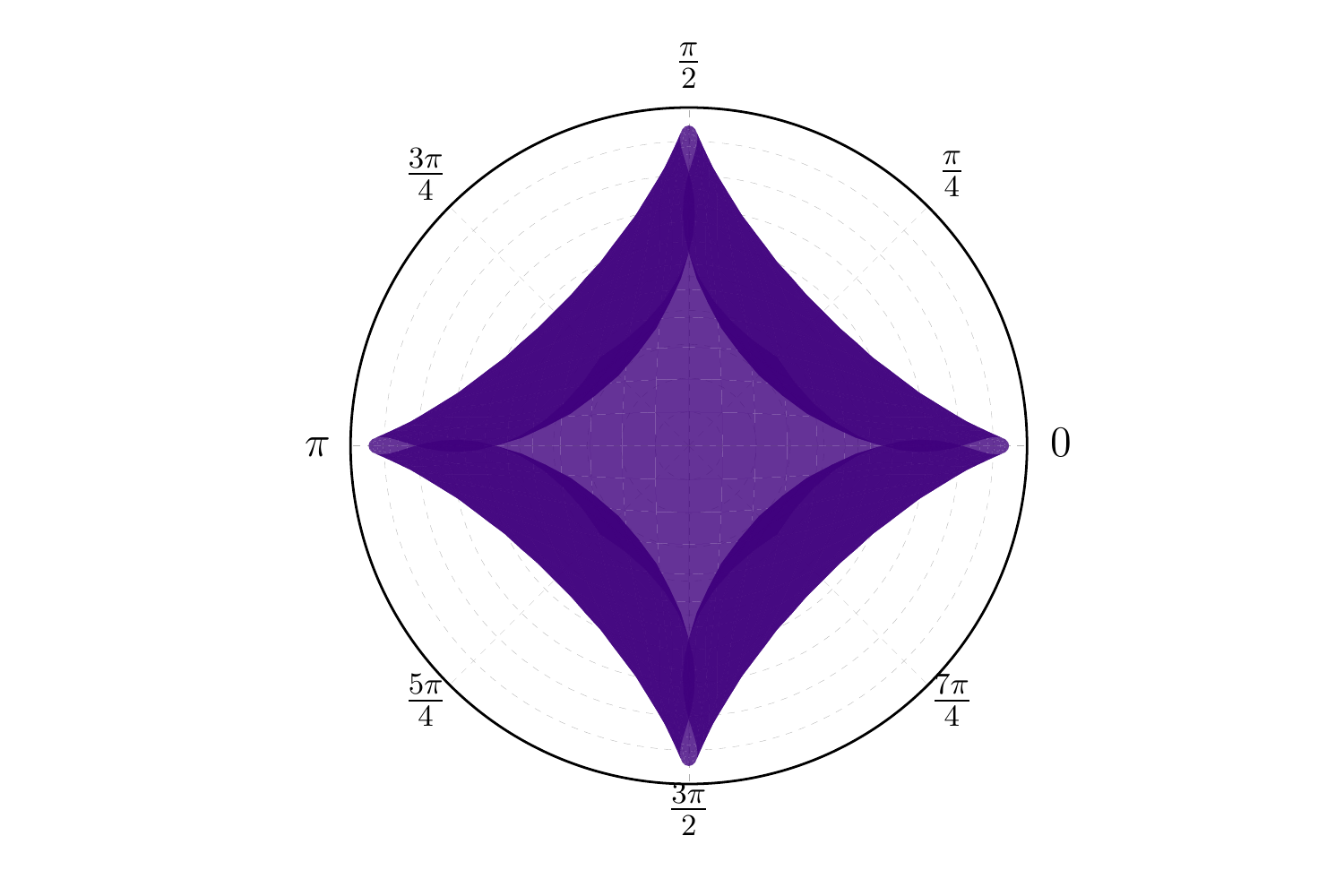}
                \caption{}
                \label{fig:poincaresmooth}
        \end{subfigure}%
        \begin{subfigure}[t]{0.33\textwidth}
                \centering
                \includegraphics[trim={3.5cm 0 3.5cm 0}, clip, width=.85\linewidth]{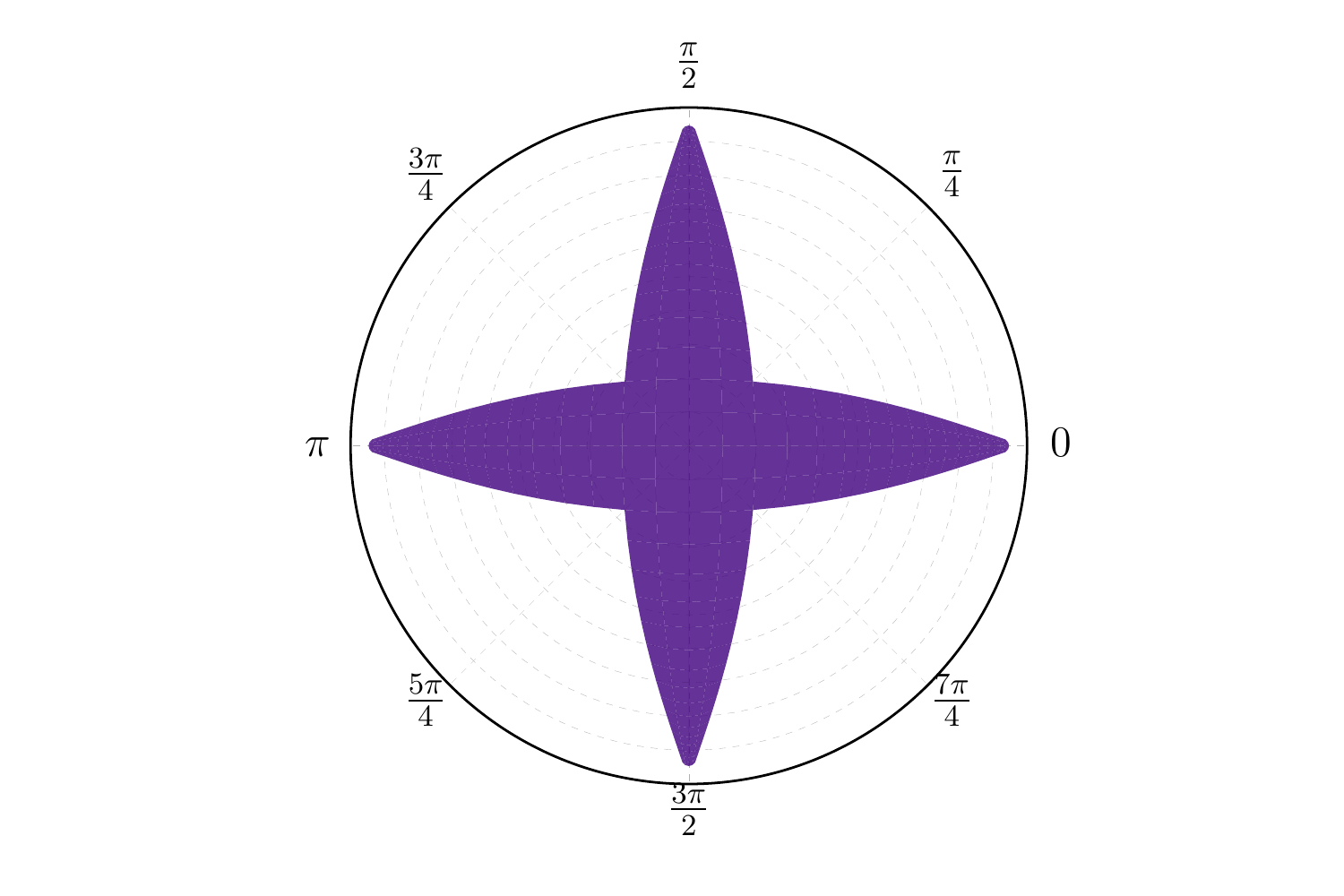}
                \caption{}
                \label{fig:poincarecropped}
        \end{subfigure}
        \begin{subfigure}[t]{0.33\textwidth}
                \centering
                \includegraphics[trim={3.5cm 0 3.5cm 0}, clip, width=.85\linewidth]{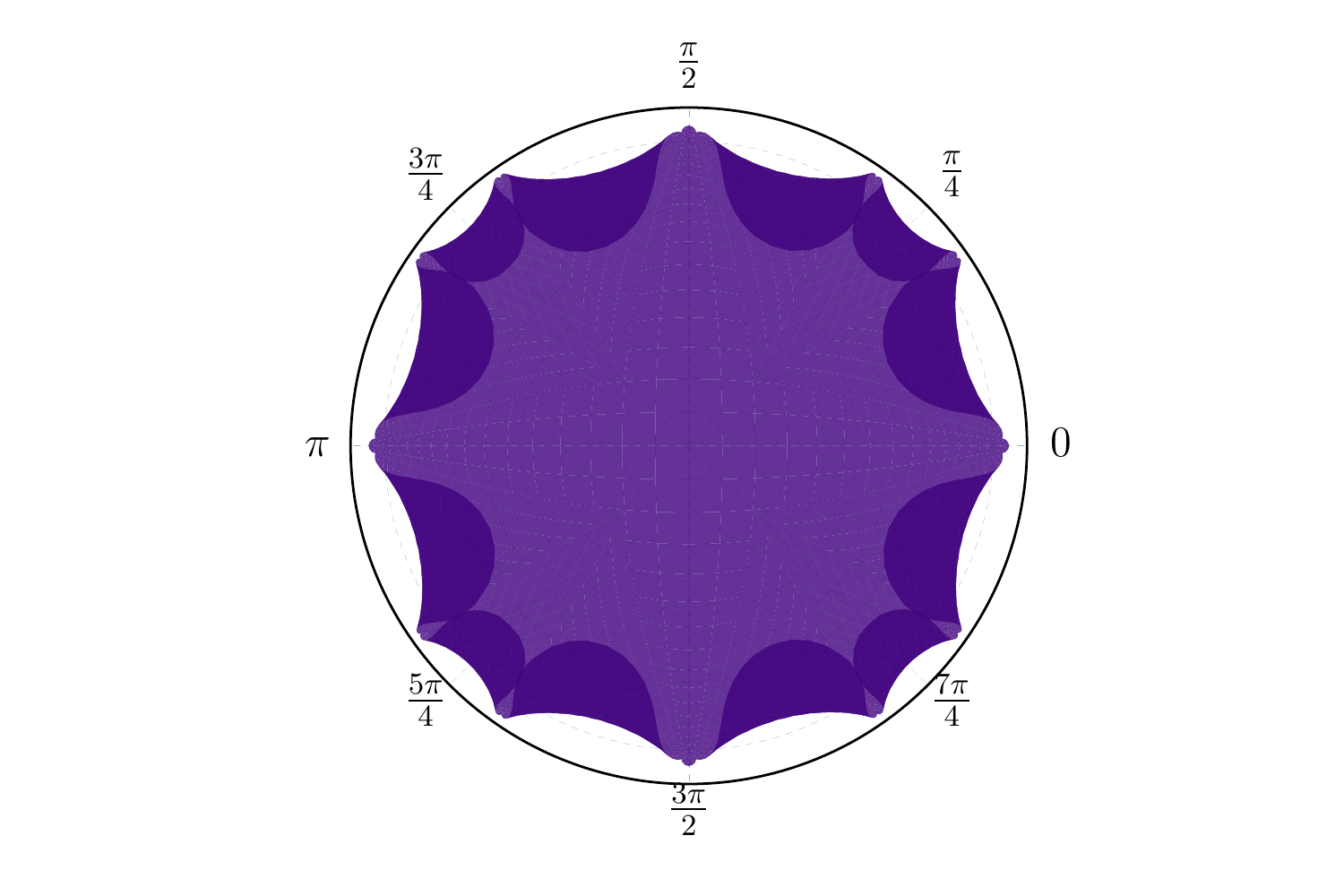}
                \caption{}
                \label{fig:poincareglued}
        \end{subfigure}        
        \caption{Introduction of branch points into the Poincar\'{e} disk via the surgical process. In (a) we see a smooth immersion, the singular edge inhibiting the ability to immerse a large portion of $\mathbb{H}^2$; (b) a cropped version and finally (c) the glued $C^{1,1}$ Poincar\'{e} disk. Overlapping ``sheets'' of the immersion appear significantly darker and provide a signature for the singular edge.}
        \label{fig:poincaresurgery}
\end{figure}

Fig.~\ref{fig:poincaresmooth} shows the discrete hyperbolic Chebyshev net for the Amsler surface with angle $\pi/2$ `extended' beyond the singular edge, where the Chebyshev net $\zeta_{j,k}$ appears to fold back upon itself, as expected. This is evident in Figure \ref{fig:poincaresmooth}. The rhombi in the Chebyshev net are colored with an opacity of eighty percent. As a result, overlapping ``sheets'' of the immersion appear significantly darker. 
Since our procedure gives a (discrete) isometry from the hyperbolic Chebyshev net to the corresponding $K$-surface in $\mathbb{R}^3$, a reversal in the hyperbolic Chebyshev net indicates that $\delta_u \zeta \equiv \zeta_{j+1,k}-\zeta_{j,k}$ and $\delta_v \zeta \equiv \zeta_{j,k+1}-\zeta_{j,k}$ have passed through collinearity. This corresponds to the angle $\varphi$ between the asymptotic curves becoming $0$ or $\pi$, indicating the occurrence of a singular edge.
%

\begin{figure}[htbp]
		 \begin{subfigure}[t]{0.45\textwidth}
              \includegraphics[width=.95\linewidth]{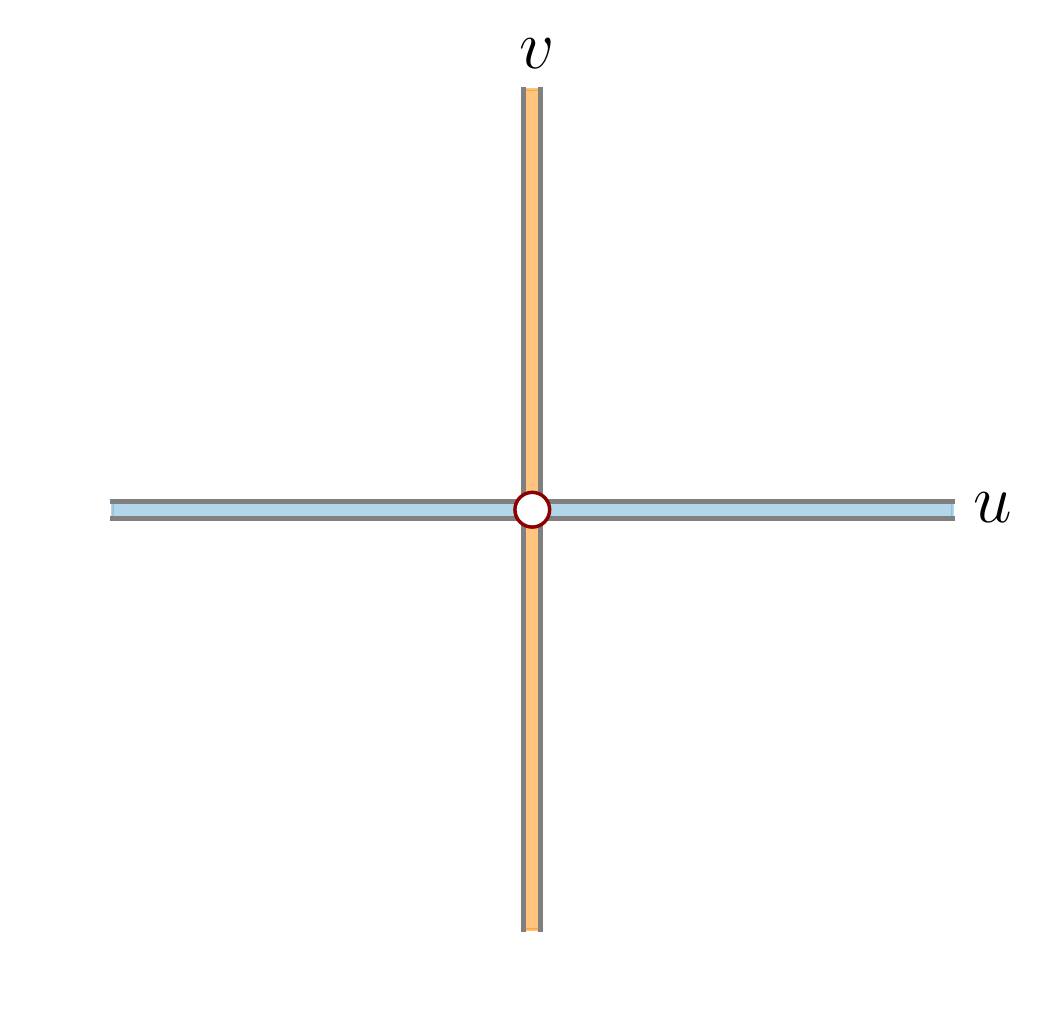}
         \caption{Asymptotic complex at stage $n$}
          \label{fig:TCD_schematic_stage1}
  \end{subfigure}%
        \begin{subfigure}[t]{0.45\textwidth}
                \includegraphics[width=0.95\linewidth]{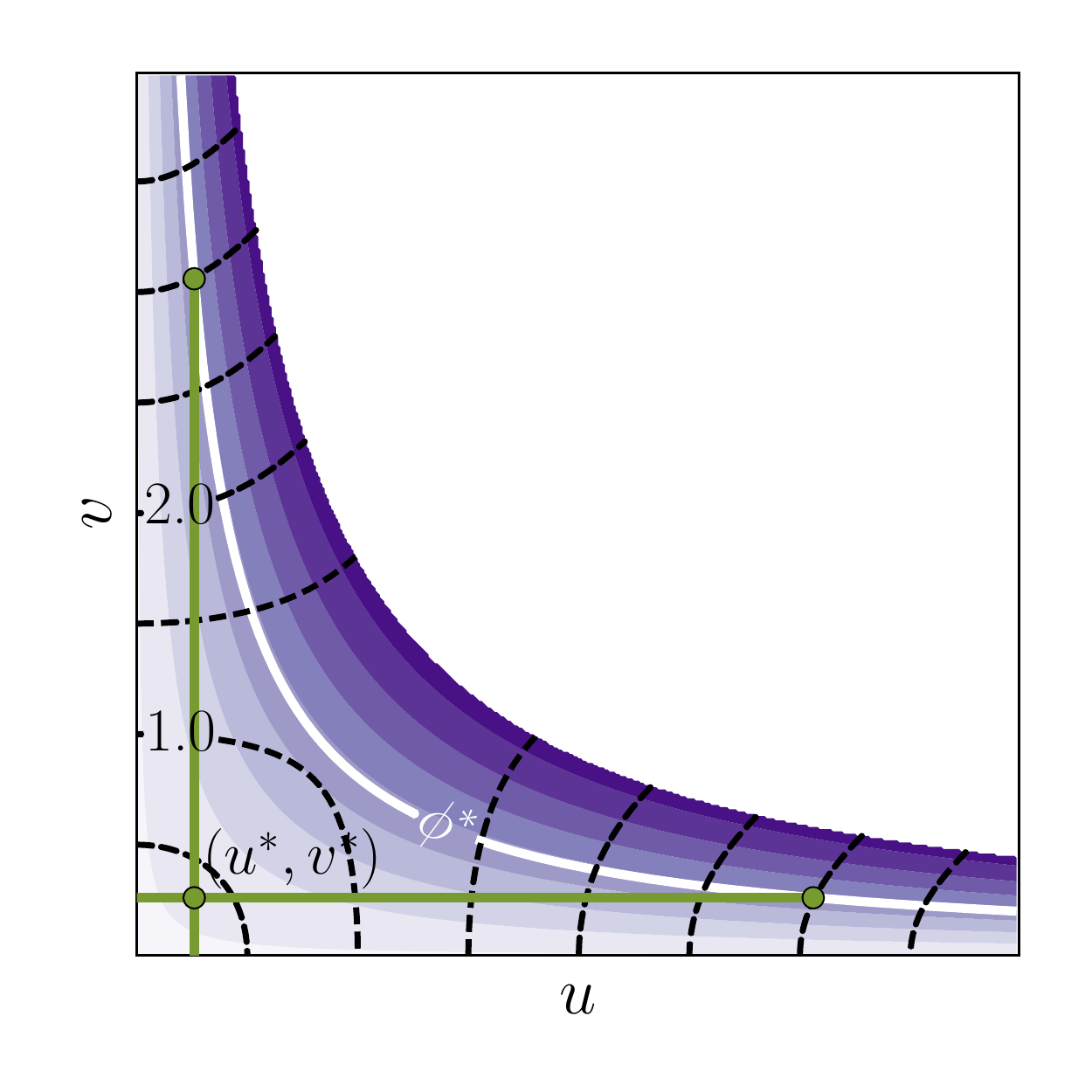}
                \caption{Determining the cut location}
                \label{fig:amslercutschematic}
        \end{subfigure}%
          \label{fig:shapeprocedure1}
  \begin{subfigure}[t]{0.45\textwidth}
          \includegraphics[width=.95\linewidth]{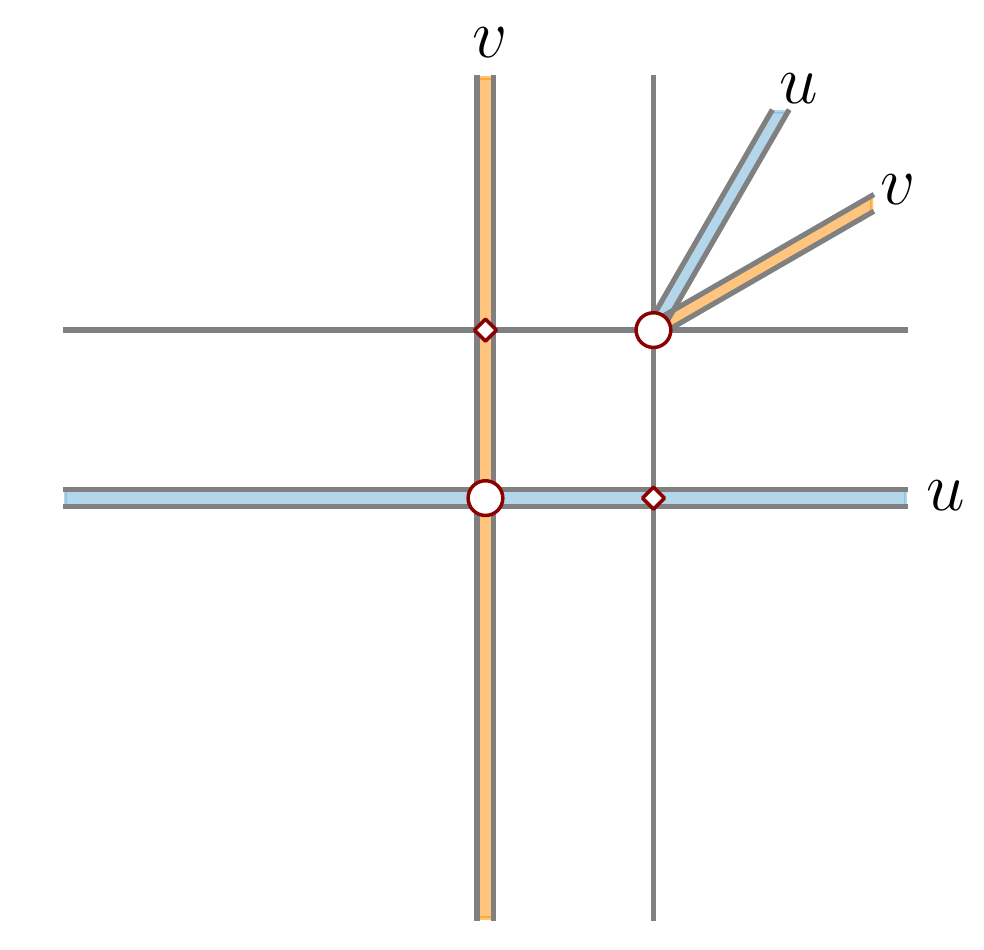}
         \caption{Asymptotic complex at stage $n+1$}
          \label{fig:TCD_schematic_stage2}
  \end{subfigure}%
        \begin{subfigure}[t]{0.45\textwidth}
                \includegraphics[width=0.95\linewidth]{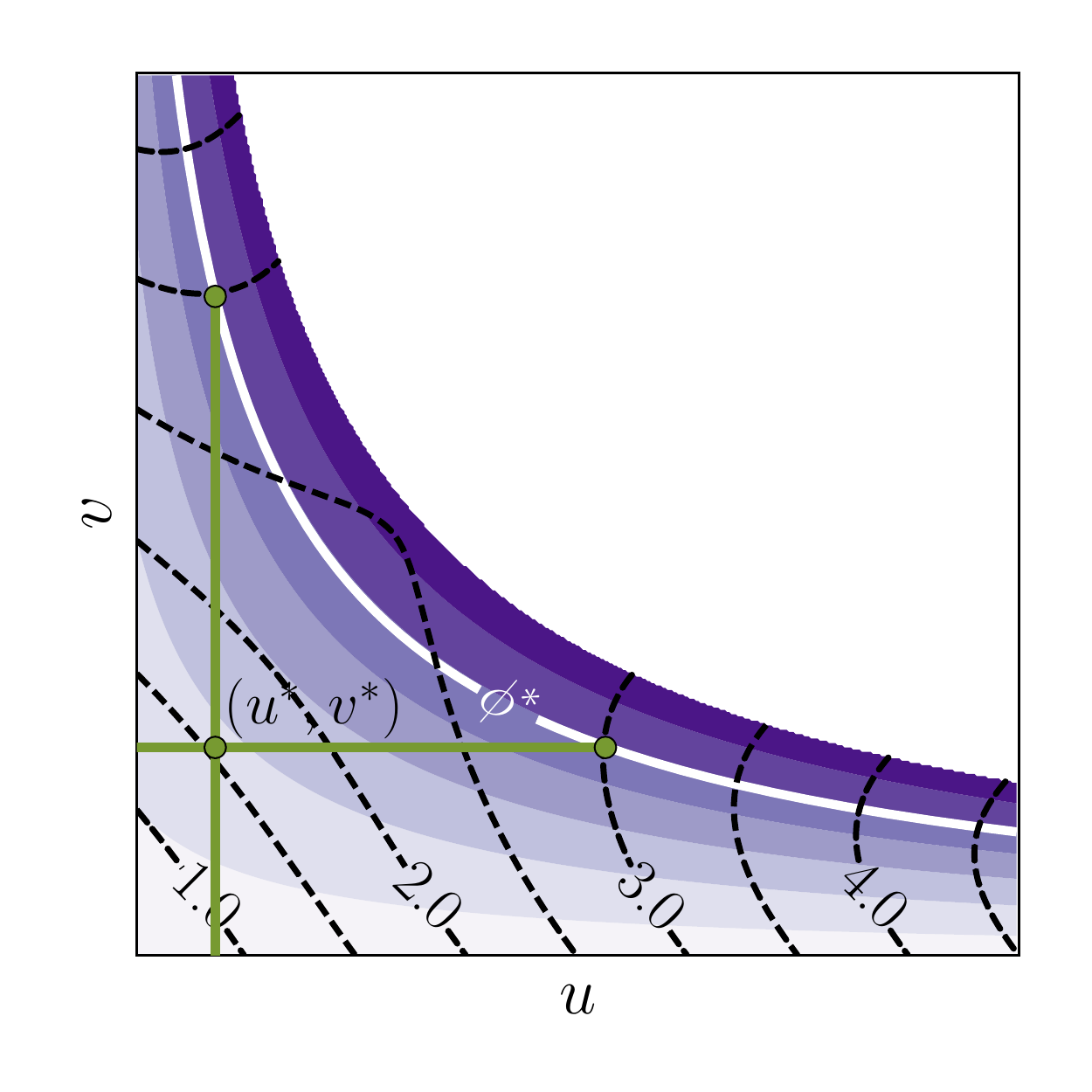}
                \caption{Filling the cut and $n \to n+1$.}
                \label{fig:pseudoamslercutschematic}
        \end{subfigure}%
        \caption{Illustration of Algorithm~\protect{\ref{alg:greedy-cuts}}. 
        Desired $R=3$. (a) The initial asymptotic curves on which we prescribe Amsler data (equally spaced points on geodesics) (b) Filling in the discrete hyperbolic Chebyshev net and identifying the first cut location $(u^*,v^*)$. (c) Introducing new asymptotic curves from the branch point on which we again prescribe Amsler data. (d) General sector having non-constant $\varphi$ (non-Amsler data) along the $v$-axis. In (b),(d), the figures are colored by the value of $\varphi$ with black-dashed contours representing geodesic radius, increments of $0.5$. The solid green lines represent the edges of the $L$-cut, and their intersection the location of the branch point, $(u^*, v^*)$.}
        \label{fig:shapeprocedure2}
\end{figure}

In Fig.~\ref{fig:shapeprocedure2} we show the steps for the particular example of starting with an Amsler surface with angle $\pi/2$ and building a (branched) immersion to $R=3$, a radius beyond the initial singular edge. To stave off the singular edge we first pick a threshold angle $\phi^* < \pi$. For the illustration in Fig.~\ref{fig:shapeprocedure2} we take $\phi^* = 3 \pi /4$. We then excise the region $u \geq u^*, v \geq v^*$, where $u^*, v^*$ are determined by the intersection of the geodesic circle with radius $R=3$ with the contour $\varphi(u,v) = \phi^*$. Note that, at this point $\varphi(u^*,v^*)  < \phi^* < \pi$ and $R < 0.5$, so the cut is {\em significantly inside} the singular edge of the initial Amsler surface. 

We now perform {\em surgery} to replace the removed sector by 3 new sectors. This needs the introduction of two more asymptotic curves, indicated in Fig.~\ref{fig:TCD_schematic_stage2}, along which we are free to prescribe data. We prescribe this data in the Poincar\'e disk by picking equally spaced point on the two geodesics through the point $\zeta_0 = \zeta(u^*,v^*)$ obtained by trisecting the angle left behind by the sector that is removed. In more detail, if  $w_1$ and $w_2$ are the ``edges" of the excised sector, moved to the origin by a M\"obius transformation (See~\eqref{poincare-ddg}), we define 
\begin{equation}
\varphi_1 = \mathrm{arg}(w_2 w_1^2)/3, \quad \varphi_2 = \mathrm{arg}(w_2^2 w_1)/3.
\label{eq:trisect}
\end{equation}
 Then, the appropriate geodesics are given by undoing the M\"obius transformation,
\begin{equation}
    \zeta_{0,k} = f\left(e^{i \varphi_1} \tanh\left( \frac{k \Delta}{2}\right) ,\zeta_0\right), \quad \zeta_{j,0} = f\left(e^{i \varphi_2} \tanh \left( \frac{j \Delta}{2}\right) ,\zeta_0\right),
    \label{eq:poincare-surgery}
\end{equation}
where $\Delta$ is the side-length of the rhombi in the Chebyshev net. We can determine $\zeta_{j,k}$ in the interiors of the three new sectors  using~\eqref{poincare-ddg}. We can do this in each of the 4 sectors (quadrants) that constitute the initial Amsler surface and the resulting Chebyshev net in the Poincare disk is illustrated in Fig.~\ref{fig:poincareglued}. 
The result is a discrete Chebyshev net with 4 vertices of degree 6, one in each quadrant, corresponding to the branch points. 
We thus have implemented surgery, as introduced in \S\ref{sec:introducingabranchpoint}, directly in the Poincar\'e disk.

\begin{remark}[Ramification]Unlike for DDG based on spherical Chebyshev nets~\eqref{eq:d_lelieuvre}, where we need condition~\eqref{eq:unramified} to guarantee that the resulting $K$-surface is unramified, DDG based on \eqref{poincare-ddg} gives a discrete conformal map between the net in the Poincare disk and the resulting $K$-surface, so any hyperbolic Chebyshev net where the angles add up to $2 \pi$ at interior nodes, and to less than $2 \pi$ at boundary nodes will give a $K$-surface with no ramification ({\em cf}. Wissler \cite{wissler1972}). In particular, our algorithm~\ref{alg:greedy-cuts} guarantees this. Of course, the normal map is ramified at branch points.
\label{remark:ramification}
\end{remark}

Fig.~\ref{fig:pseudoamslercutschematic} shows one of the resulting sectors in the 2\textsuperscript{nd} generation, i.e. the asymptotic curves defining the boundaries of the sector are incident on the branch point $(u^*,v^*)$ from the first cut. Note that the singular edge again intersects the geodesic circle $R=3$ so we have to repeat the entire procedure to obtain the 2\textsuperscript{nd} generation branch points and 3\textsuperscript{rd} generation sectors. Note also that the new branch point is at $R \approx 1.5$ and thus the first and second generations sectors, taken together, are closer to covering the desired domain $R \leq 3$, and do so while maintaining $\varphi \leq \phi^*$ everywhere.

This {\em surgery} procedure can be repeated recursively to construct branched isometric immersions of arbitrarily large disks. We list the steps involved in Algorithm~\ref{alg:greedy-cuts}. This is a `greedy' algorithm for constructing branched immersions since it is based on picking the cut locations using information local to a particular sector, and attempts to maximize the size of the sector in the current generation, rather than pick the cut location in a more globally optimal fashion. By construction, the algorithm generates distributed branch points in a  recursive and self-similar manner, We discuss this further in~\S\ref{sec:amslerrecursion}, where we estimate the number of recursion steps needed before the algorithm terminates when applied to a Pseudospherical disk with (geodesic) radius $R$. 
\begin{algorithm}
\caption{A greedy algorithm for building large branched surfaces recursively.  \label{algorithm1}}
\begin{algorithmic}[1]

\State \textbf{Parameters:}  $R \gets $ radius of disk to be embedded,  $M \gets 2m \geq 4$ number of initial sectors, $\phi^* \in (\pi/m, \pi) \gets$ cutoff angle, $\Delta \gets$ discretization size, $N \gets \lceil 2R/\Delta \rceil$. 
\State \textbf{Initialize:} List of  {\bf Sectors} $= \{\Omega_1,\Omega_2,\ldots,\Omega_{M}\}$, Each sector $\Omega_n = \emptyset$. 
\For{$n \in \{1,2,\ldots, M\}$}
\State $\zeta^n_{j,0} \gets e^{i \pi (n-1)/m} \tanh\left(\frac{j \Delta}{2}\right), \zeta^n_{0,k} \gets e^{i \pi n/m} \tanh\left(\frac{k \Delta}{2}\right)$ for $j,k=0,1,2,\ldots,N$.
\State Determine $\zeta^n_{j,k}$ recursively for $1 \leq j,k \leq n$ from~\eqref{poincare-ddg}.
 \State  Discard $\zeta^n_{j,k}$ if both $\zeta^n_{j-1,k}$ and $\zeta^n_{j,k-1}$ are outside the geodesic disk of radius $R$.
\State $\Omega_n \gets \{\zeta^n_{j,k} \, \, \mbox{ all } j,k \leq N \mbox{ not discarded}\}$.
\State Determine $\varphi^n_{j,k}$ using~\eqref{eq:angle} at nodes where $\zeta^n_{j+1,k}$ and  $\zeta^n_{j,k+1}$ are in $\Omega_n$.
\EndFor
\Repeat
\State Identify a sector $\Omega_n$ containing points  with $\varphi^n_{j,k} > \phi^*$.
\State $j^* \gets  \max\{j \, | \, \varphi_{\ell,k} \leq \phi^* \, \,\forall \, \zeta_{\ell,k} \in \Omega_n, \ell \leq j\}$.
\State $k^* \gets  \max\{k \, | \, \varphi_{j,\ell} \leq \phi^* \, \, \forall \,\zeta_{j,\ell} \in \Omega_n, \ell \leq k\}$.
\State $\Omega_n \gets  \Omega_n \setminus \{\zeta^n_{j,k} \, \, | \, \, j > j^*, k> k^*\}$.
\State $\mathrm{\bf Sectors} \gets  \mathrm{\bf Sectors} \bigcup \{\Omega_{M+1},\Omega_{M+2},\Omega_{M+3}\}$.
\State  $\zeta^{M+2}_{j,0} , \zeta^{M+2}_{0,k}, 0 \leq j,k \leq N$ are determined using~\eqref{eq:trisect}~and~\eqref{eq:poincare-surgery}.
\State $\zeta^{M+1}_{j,0} \gets \zeta^n_{j+j^*,k^*}, \quad \zeta^{M+1}_{0,k} \gets \zeta^{M+2}_{0,k}, \quad \zeta^{M+3}_{0,k} \gets \zeta^n_{j^*,k+k^*}, \quad \zeta^{M+3}_{j,0} \gets \zeta^{M+2}_{j,0} $.
\For{$p \in \{1,2,3\}$}
\State Determine $\zeta^{M+p}_{j,k}$ recursively using~\eqref{poincare-ddg}.
\State  Discard $\zeta^{M+p}_{j,k}$ if both $\zeta^{M+p}_{j-1,k}$ and $\zeta^{M+p}_{j,k-1}$ are outside the geodesic disk of radius $R$.
\State $\Omega_{M+p} \gets \{\zeta^{M+p}_{j,k} \, \, \mbox{ all } j,k \leq N \mbox{ not discarded}\}$.
\State Determine $\varphi^{M+p}_{j,k}$ using~\eqref{eq:angle} at nodes where $\zeta^{M+p}_{j+1,k}$ and  $\zeta^{M+p}_{j,k+1}$ are in $\Omega_{M+p}$.
\EndFor
\State $M \gets M+3$.
\Until no sector contains points with $\varphi_{j,k} > \phi^*$.
\State $Q \gets $ quadgraph given by the hyperbolic Chebyshev net $\bigcup_n \zeta^n_{j,k}$.
\State Construct an $K$-surface $r:Q \to \mathbb{R}^3$ using the side-lengths and angles given by $\bigcup_n \zeta^n_{j,k}$.
\end{algorithmic}
\label{alg:greedy-cuts}
\end{algorithm}
Fig.~\ref{fig:surgery} illustrates the final step in Algorithm~\ref{alg:greedy-cuts}, showing discrete surfaces constructed from mapping the rhombi in hyperbolic Chebyshev nets to skew rhombi in $\mathbb{R}^3$.

\begin{figure}[htbp]
   \centering
   \begin{subfigure}[t]{0.25\textwidth}
     \centering
     \includegraphics[trim={1.cm, 1cm, 1cm, 5cm}, clip, width=\textwidth]{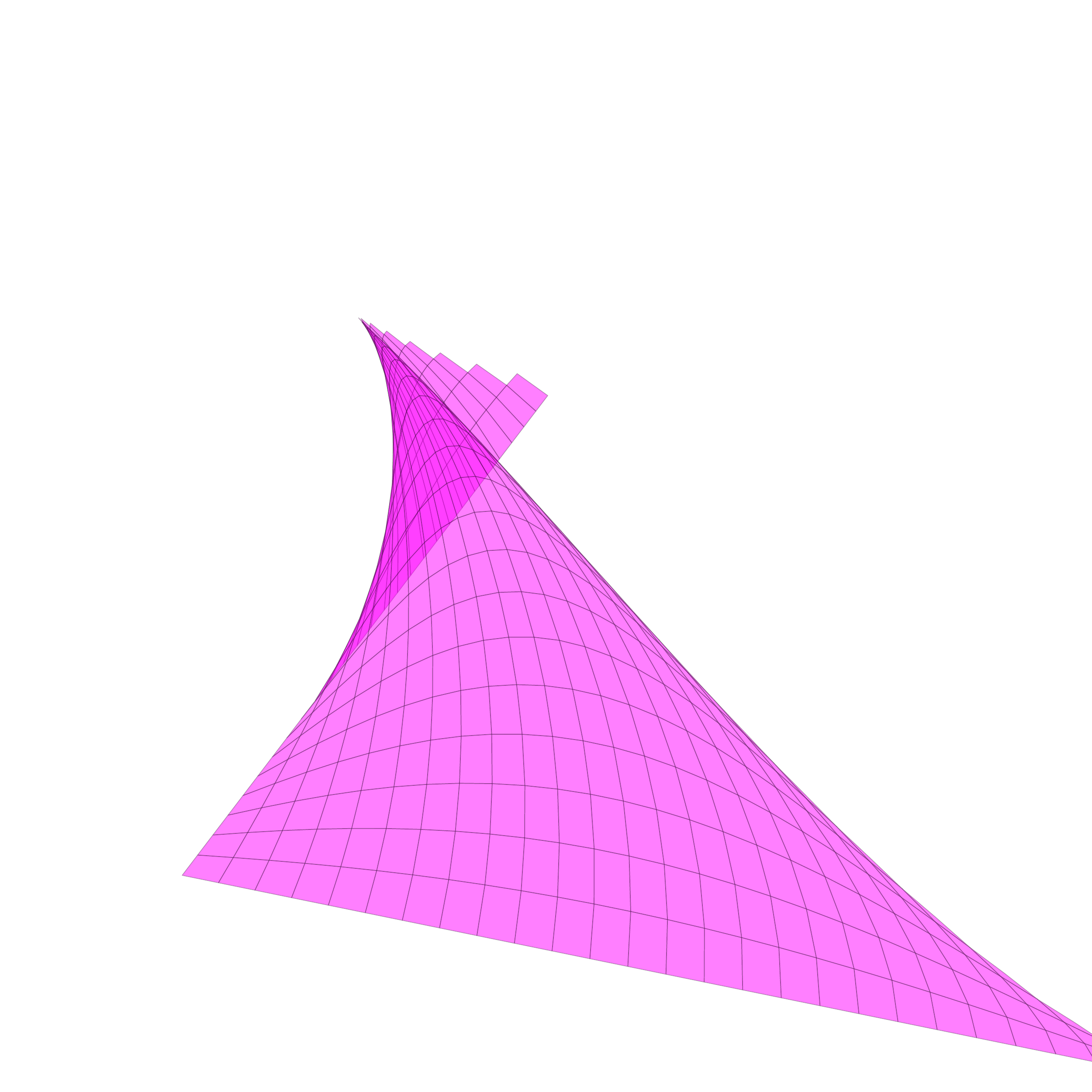}
   \end{subfigure}
   ~
   \begin{subfigure}[t]{0.25\textwidth}
     \centering
     \includegraphics[trim={1.cm, 1cm, 1cm, 5cm}, clip, width=\textwidth]{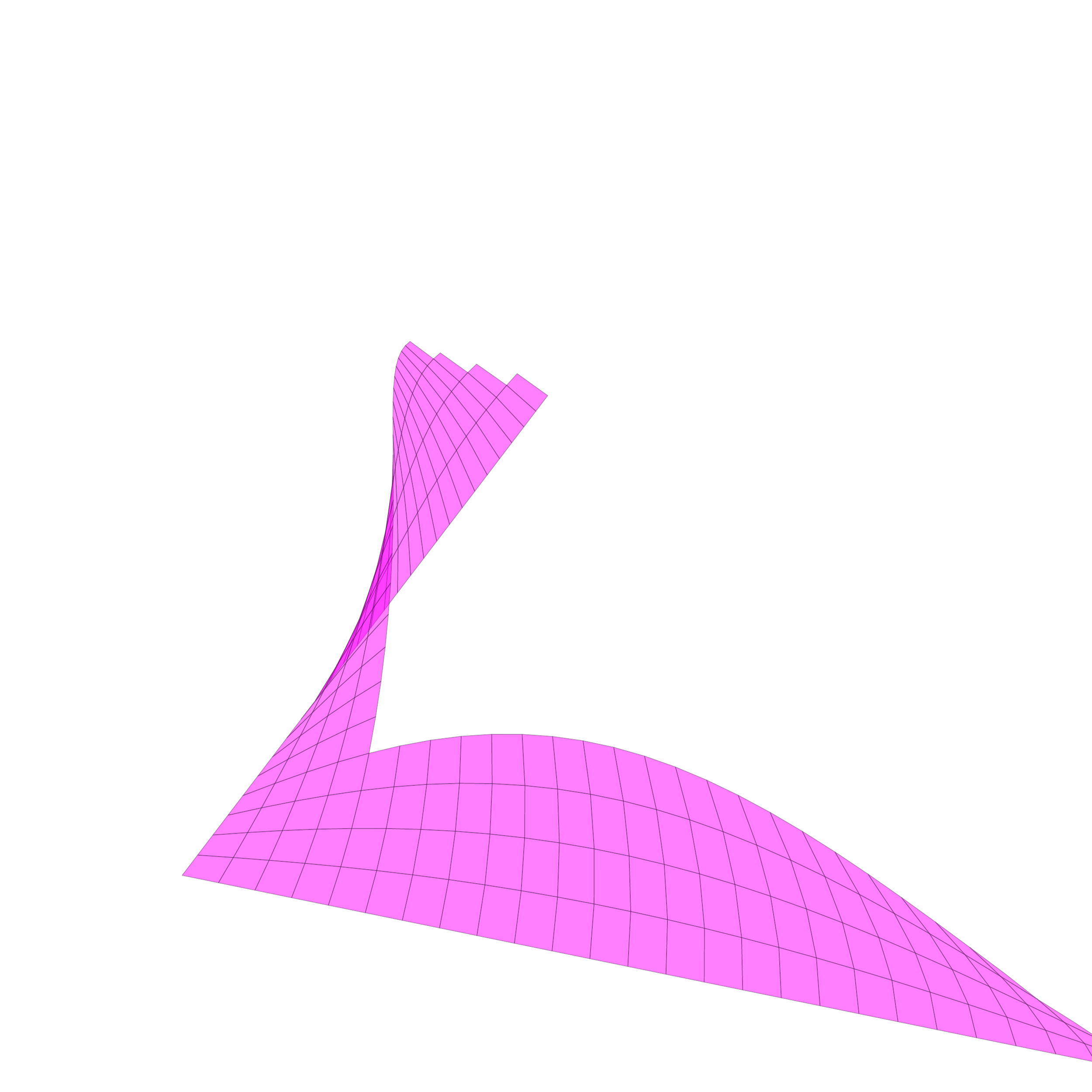}
   \end{subfigure}
   ~
   \begin{subfigure}[t]{0.25\textwidth}
     \centering
     \includegraphics[trim={1.cm, 1cm, 1cm, 5cm}, clip, width=\textwidth]{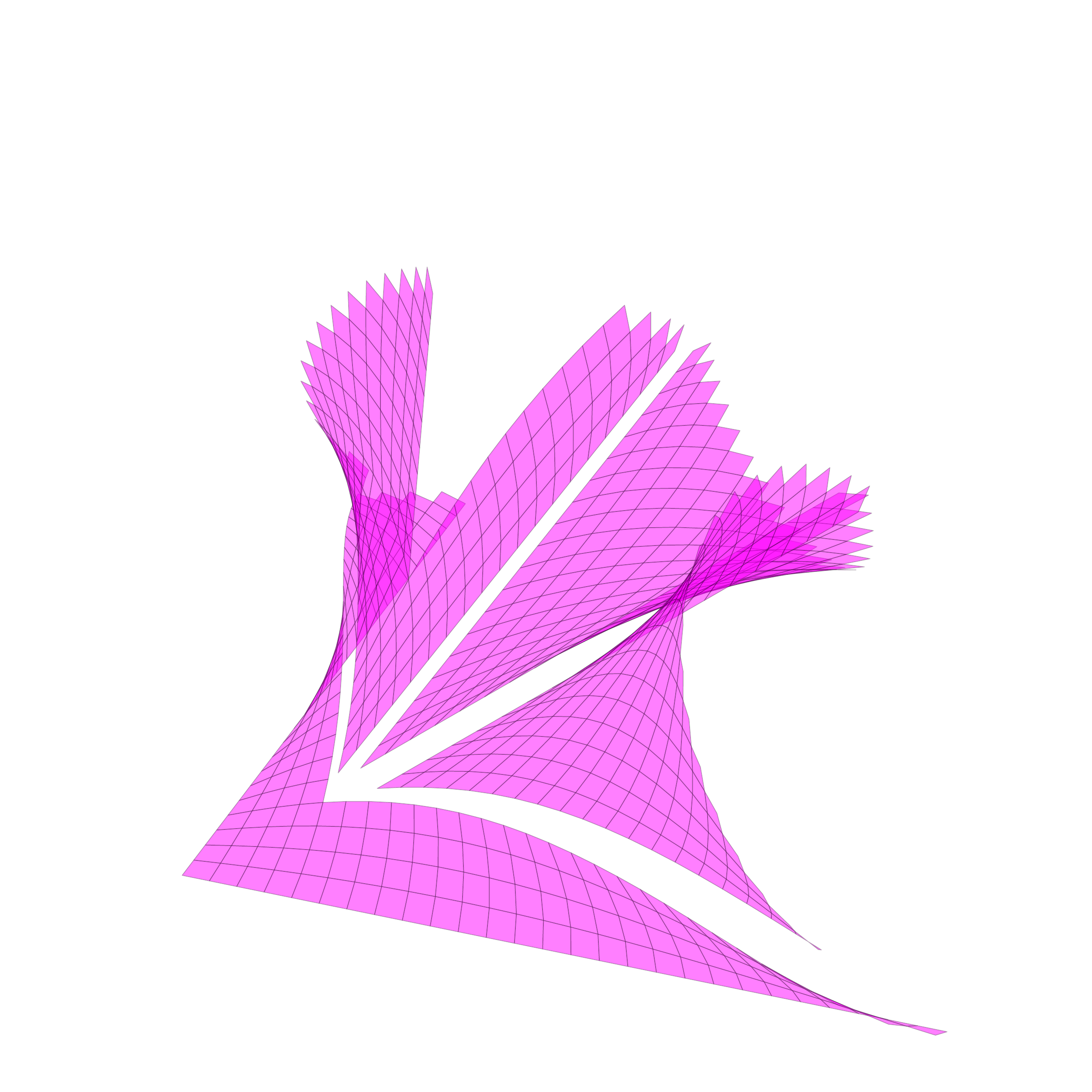}
   \end{subfigure}
   ~
   \begin{subfigure}[t]{0.25\textwidth}
     \centering
     \includegraphics[trim={1.cm, 1cm, 1cm, 5cm}, clip, width=\textwidth]{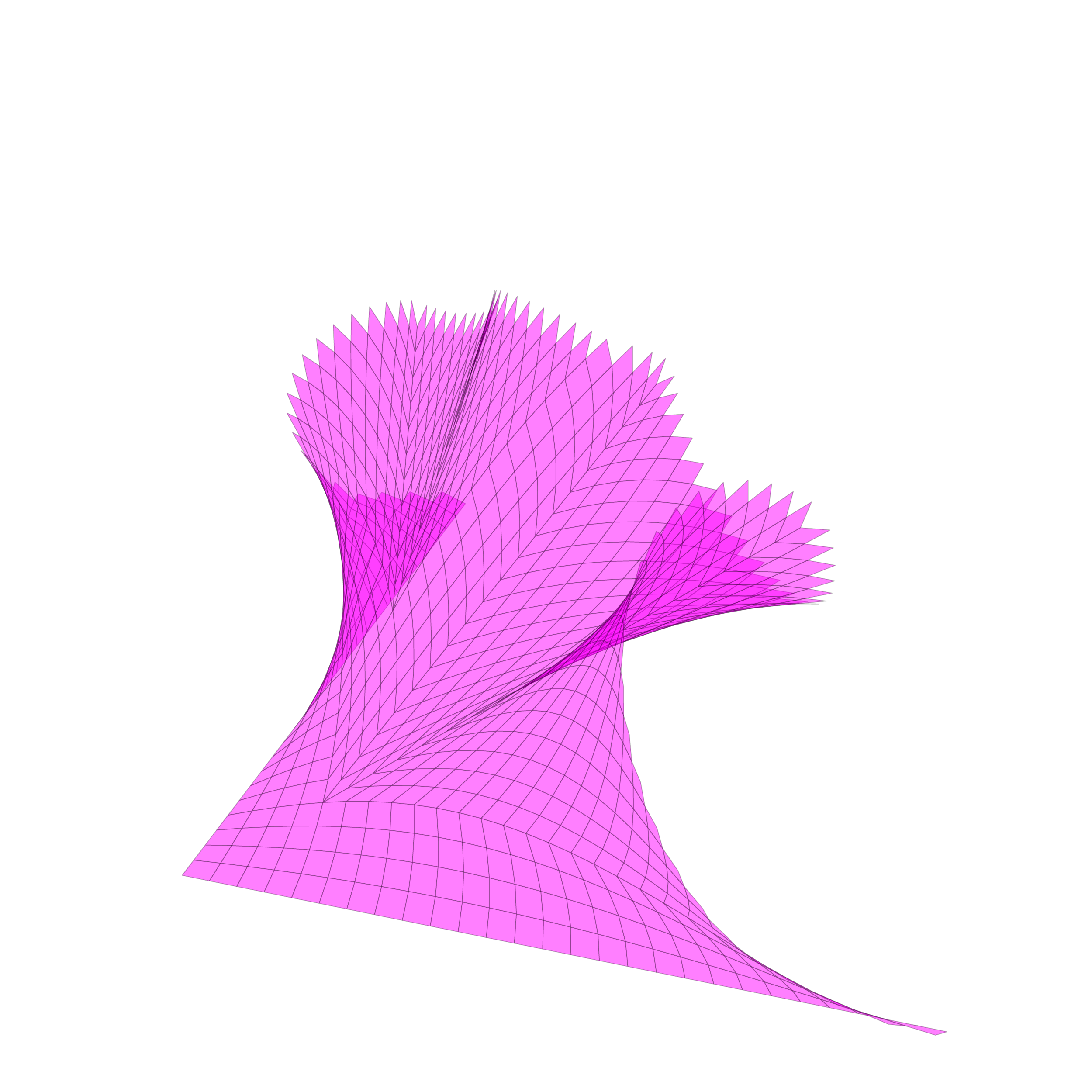}
   \end{subfigure}
   \caption{The process of constructing a discrete isometric immersion recursively by surgery. These figures illustrate the  generation an $K$-surface in $\mathbb{R}^3$ from a discrete Chebyshev net in the Poincar\'e disk.}
   \label{fig:surgery}
\end{figure}

We will present a full analysis of Alg.~\ref{alg:greedy-cuts} elsewhere. We note that every branch point $p_i$ has a non-empty open neighborhood, the interior of $\Omega_n \bigcup \Omega_{i+1} \bigcup \Omega_{i+2}\bigcup \Omega_{i+3}$, given by the parent sector $p_n$ and the 3 sectors at $p_i$. Compactness of the closed geodesic disk implies we only have finitely many branch points if we can show that the sectors cover the disk. 

We can do this, and more, by exploiting a ``dual" view point of the algorithm starting from the alternative, ``non-recursive", construction for isometric immersions of disks into $\mathbb{R}^3$. This immersion is achieved through patching sufficiently narrow Amsler sectors, whose singular edges are further away from the origin than the radius $R$, meeting at a single branch point of sufficiently high index at the origin \cite{gemmer2011shape,Gemmer2013Shape} (See also \S \ref{sec:kminusonemonkeysaddle} and Fig.~\ref{fig:monkeysaddleconstruction}). The comparison between the two methods is shown in Fig.~\ref{fig:branchedpoincareimmersions}. The figures show the discrete Chebyshev net in $\mathbb{H}^2$ corresponding to the recursive and single branch point isometries of disks of radii $2,3$ and $4$ respectively. The quads in the Chebyshev nets are colored by the $\kappa_{\max}$, the larger principal curvature. The figures suggest that the energies of both types of embeddings grow with $R$, the radius of the disk, but the energy of recursive embeddings grows  slowly compared to the energy of single branch point `periodic-Amsler' embeddings.

\begin{figure}[htbp]
	\begin{minipage}[t]{.46\textwidth}
	\centering
        \begin{subfigure}[t]{0.23\textheight}
              	{\includegraphics[trim={3.25cm, .5cm, 3cm, .5cm}, clip, width=\linewidth]{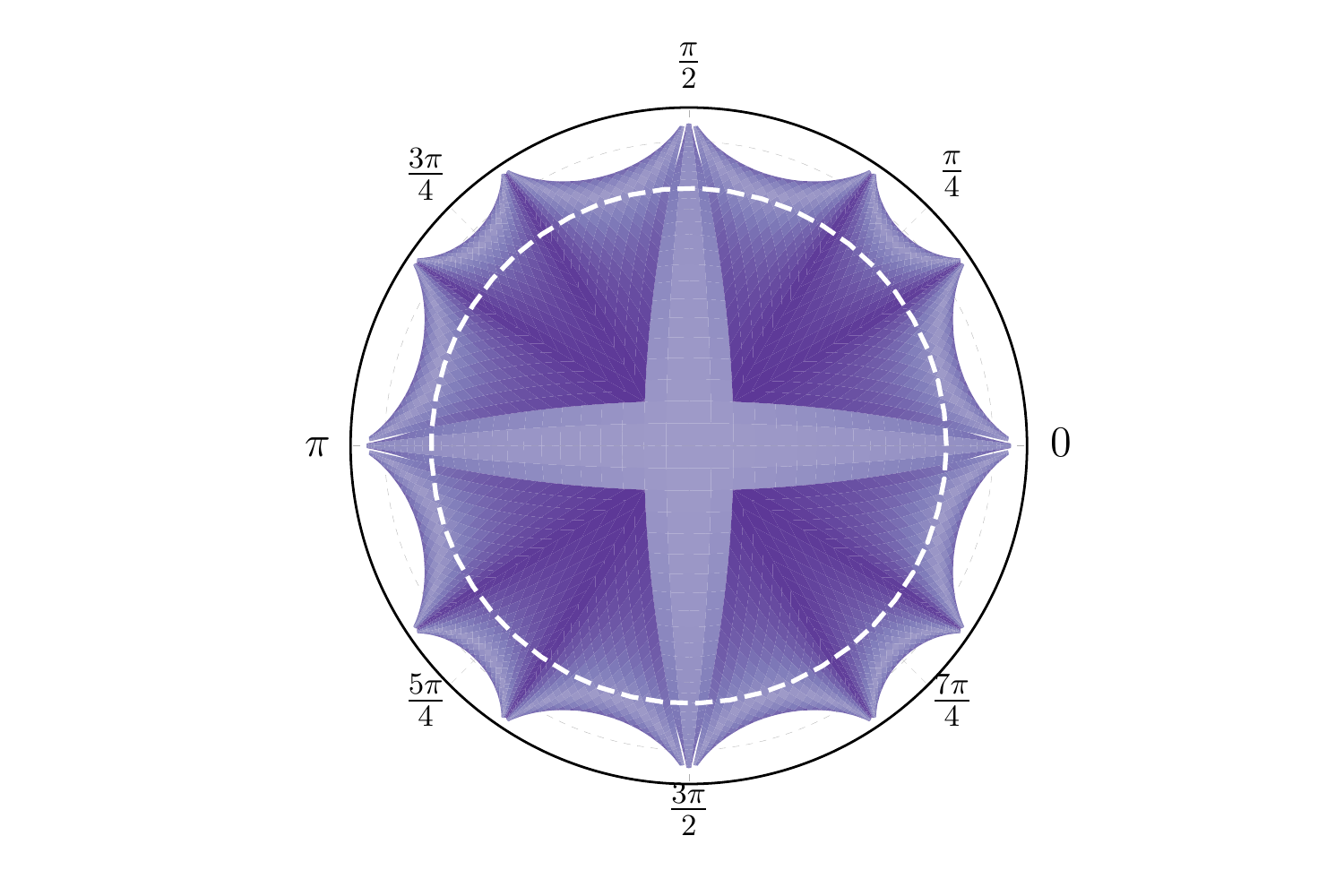}}
                \label{fig:branchsurfacepoinare2}
                \vspace{-\baselineskip}
        \end{subfigure}
        \begin{subfigure}[t]{0.23\textheight}
                {\includegraphics[trim={3.25cm, .5cm, 3cm, .5cm}, clip, width=\linewidth]{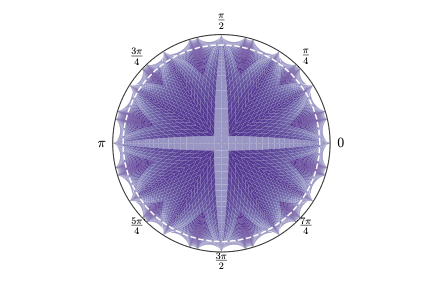}}
                \label{fig:branchsurfacepoincare3}
                \vspace{-\baselineskip}
        \end{subfigure}
        \begin{subfigure}[t]{0.23\textheight}
                {\includegraphics[trim={3.25cm, .5cm, 3cm, .5cm}, clip, width=\linewidth]{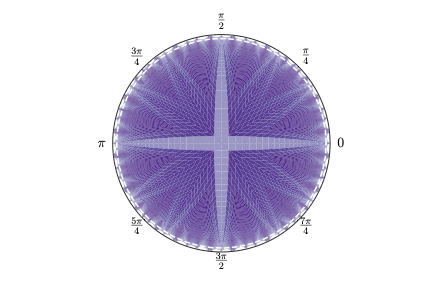}}
                \label{fig:branchsurfacepoinare4}
                \vspace{-\baselineskip}
        \end{subfigure}
     \end{minipage}%
	  \begin{minipage}[t]{0.46\textwidth}
	  \centering
        \begin{subfigure}[t]{0.23\textheight}
              	{\includegraphics[trim={3.25cm, .5cm, 3cm, .5cm}, clip, width=\linewidth]{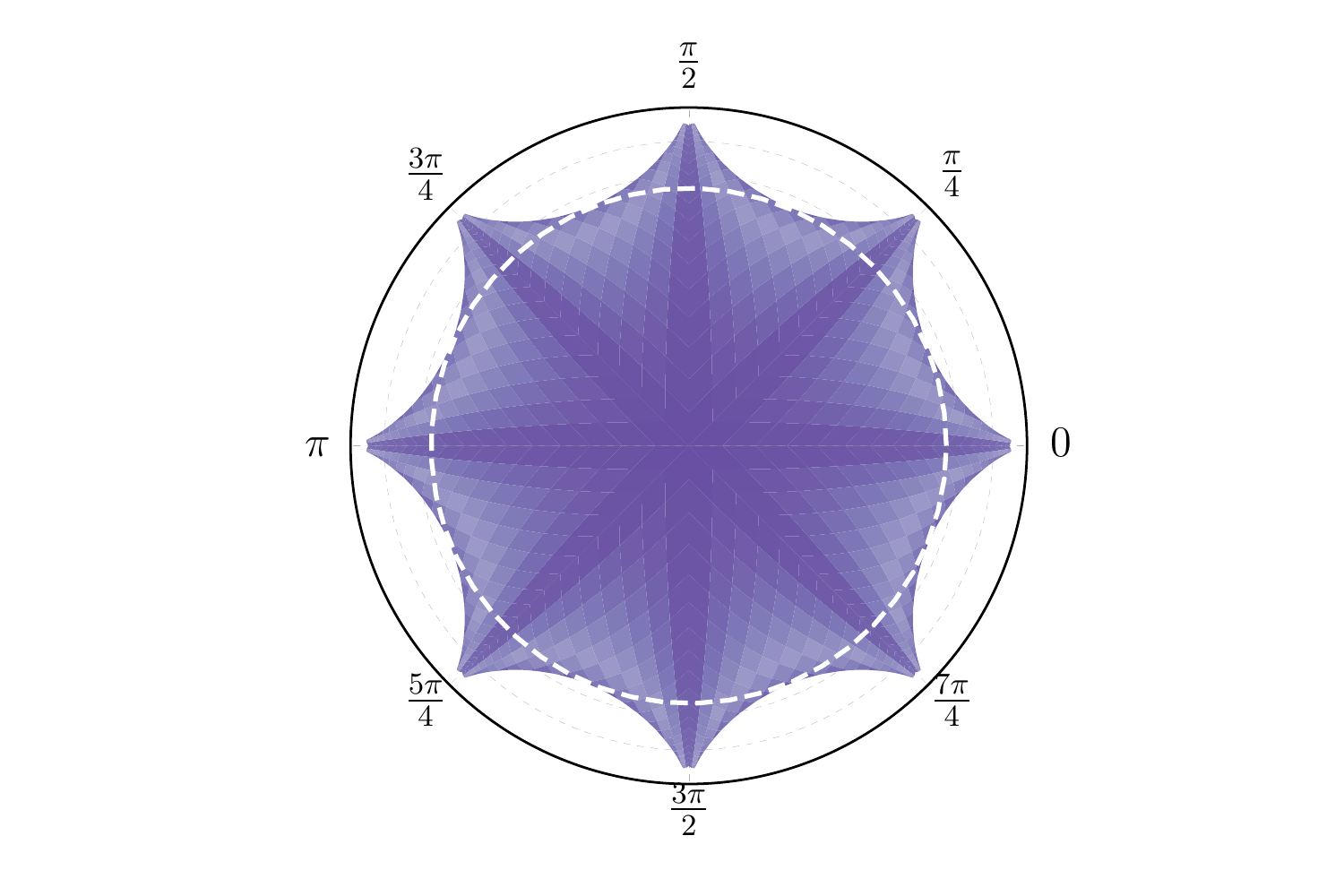}}
                \label{fig:periodicamslerpoincare2}
                \vspace{-\baselineskip}
        \end{subfigure}
        \begin{subfigure}[t]{0.23\textheight}
                {\includegraphics[trim={3.25cm, .5cm, 3cm, .5cm}, clip, width=\linewidth]{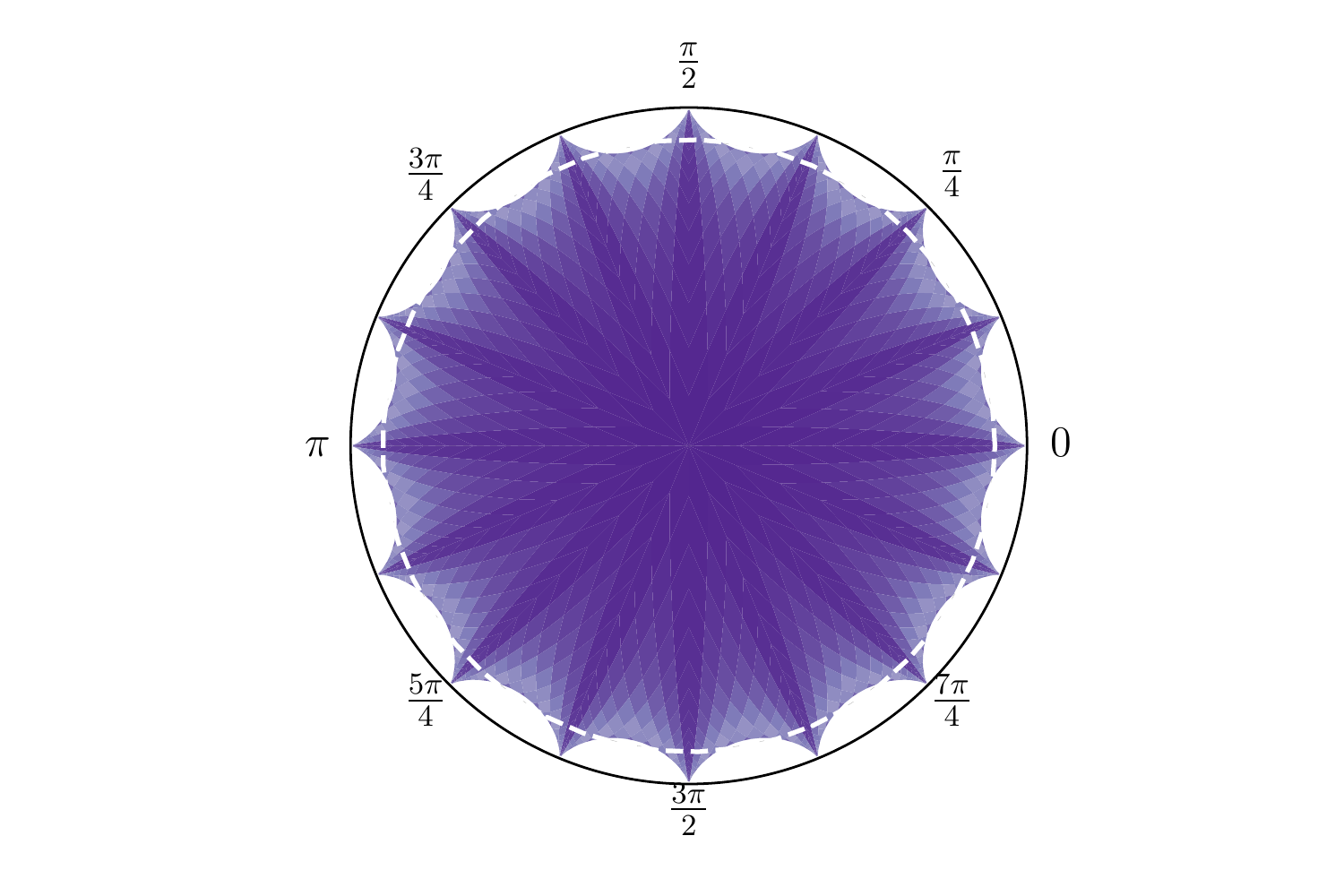}}
                \label{fig:periodicamslerpoincare3}
                \vspace{-\baselineskip}
        \end{subfigure}
        \begin{subfigure}[t]{0.23\textheight}
                {\includegraphics[trim={3.25cm, .5cm, 3cm, .5cm}, clip, width=\linewidth]{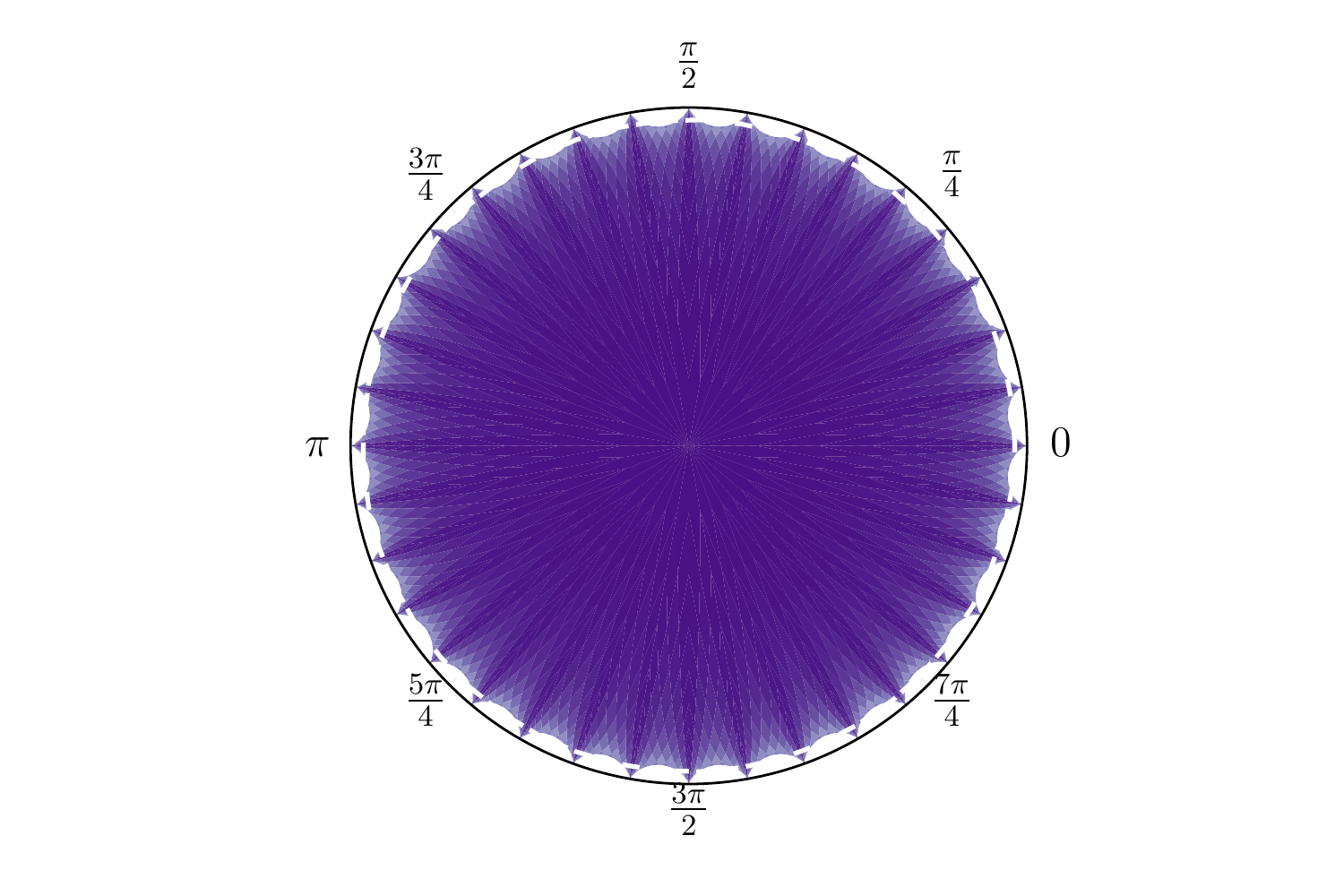}}
                \label{fig:periodicamslerpoincare4}
                \vspace{-\baselineskip}
        \end{subfigure}
     \end{minipage}%
     \caption{A comparison of isometric immersions of $\mathbb{H}^2$ via recursively constructed branched surfaces (left) and by a single branch point at the origin with a large index (right) as represented in the Poincar\'{e} disk. The figures show immersions with geodesic radii $R=2, 3$ and $4$ represented by the dashed line. The surfaces are colored by the max of the absolute principal curvatures: darker representing higher energy.}
     \label{fig:branchedpoincareimmersions}
\end{figure}

For the single branch point at the origin, the order of saddleness can be estimated $m_0 \sim C e^R$ for an $O(1)$ constant $C$. Algorithm~\ref{alg:greedy-cuts} has a dual interpretation as follows:
{
\begin{enumerate}
\item Start with a single branch point at the origin with $m' = 3^g m$ where $m$ is as defined in the algorithm and $g$ is determined by $3^{g-1} m < m_0 \leq 3^g m$. $g$ is the expected number of `generations' of branch points (see also `cut depth' in \S \ref{sec:amslerrecursion}). 
\item In the first step, retain a branch point with index $m$ at the origin and move $2m$ daughter branch points, each with degree $3^g$ outward in their respective sectors, until the maximum angle $\varphi$ over points in (each of) the ``growing" sectors at 0 equals the cutoff angle $\phi^*$. 
More precisely, this is equivalent to finding the locations $j^*,k^*$ in each of the $2 m$ initial sectors (Steps 12 and 13 in the algorithm) and this determines the locations of the branch points of the first generation.
\item Recursively, at the $k$-th stage, move $2\cdot 3^{k-1}\cdot m$ daughter branch points, each with degree $3^{g-k}$, outward until the max angle $\varphi$ in the sectors at the branch points in the $(k-1)$\textsuperscript{th} generation equals $\phi^*$.
\item At every stage, the union of the sectors cover the entire disk. 
\item This process is `monotonic', for $\phi^* \leq \pi/2$, because we have the following comparison principle.  Let $J = [0,u_0] \times [0,v_0]$. Let $\varphi_i, i=1,2$ denote solutions of the sine-Gordon equation $\partial_{uv} \varphi_i(u,v) = \sin \varphi_i(u,v)$ satisfying $0 < \varphi_i(u,v) < \pi/2$ on $J$. If $\phi_1(u,0) \leq \phi_2(u,0)$ for $0 \leq u \leq u_0$ and $\phi_1(0,v) \leq \phi_2(0,v)$ for $0 \leq v \leq v_0$ then it follows that $\phi_1 \leq \phi_2$ on $J$. 
\item This monotonicity implies that, for $m \geq 6, \phi^* \leq \pi/2$, the result of the greedy algorithm is obtained by starting with the appropriate periodic Amsler surface on the disk of radius $R$ and moving branch points outwards, {\em a process that increases} $\varphi$. We can discard a branch point and all of its sectors if it ever reaches the boundary of the disk, and {\em no new branch points ever enter the disk}. Formalizing this argument proves that Algorithm~\ref{alg:greedy-cuts} terminates, and further, obtains an apriori bound on the number of sectors $M \leq 2 \cdot 3^g \cdot m< 6 m_0$ and the minimum angle $\varphi \geq 3^{-g} \frac{\pi}{m}$ so $\mathcal{E}_\infty < C 3^g  m  < C' e^R$ for some constant $C'$.
\end{enumerate}
}
Numerically, we find that Algorithm~\ref{alg:greedy-cuts} terminates, even for $\phi^* > \pi/2$. 

\section{Distributed branch points and curvature energy} \label{sec:applications}

We now investigate the energies of the various classes of pseudospherical immersions. The principal curvatures are determined by the angle $\varphi(u,v)$ between the asymptotic directions as $\kappa_1 = \pm\tan\frac{\varphi}{2}, \kappa_2 = \mp\cot\frac{\varphi}{2}$. Consequently, the  bending energy (both $W^{2,\infty}$ and $W^{2,2}$) diverge if the singular edge $\varphi = 0$ or $\varphi = \pi$ encroaches the domain $\Omega \subset \mathbb{H}^2$. Our goal therefore is to construct immersions of $\Omega$ such that the angle $\varphi$ between the asymptotic lines satisfies $0 < \delta \leq \varphi \leq \pi-\delta < \pi$, where $\delta = \delta(\Omega) > 0$, and gives a quantitative measure of how ``non-singular" we can make an isometric immersion $\Omega \subset \mathbb{H}^2 \to \mathbb{R}^3$. $\delta$ is related to the max curvature energy by $\mathcal{E}_\infty = \cot(\delta)$.

Earlier analyses suggest that the energy optimal $C^2$ pseudospherical immersions of a geodesic disks are given by subsets of the universal cover of Minding's bobbin \cite{gemmer2011shape} (See also Example~\ref{ex:bobbin}, Eq.~\eqref{eq:c2energy}) giving 
\begin{equation}
\log\inf_{r \in C^2} \mathcal{E}_\infty[r] \sim R
\label{e2}
\end{equation}
where by $a \sim b$, we are conjecturing the existence of a constant $1 < C < \infty$ such that $C^{-1}b \leq a \leq Cb$ for all $R$. Alternative low energy immersions of disks are in the form of $C^{1,1}$ periodic Amsler surfaces~\cite{gemmer2011shape,Gemmer2013Shape} which introduce a single branch point at the origin. Even with the introduction of this branch point, there are still ``large" sets, in particular, disks with radius $R/2$ which are free of branch points and where the immersion is smooth (or can be approximated by smooth isometries as discussed in Remark~\ref{rem:approximable}), so Eq.~\eqref{e2} implies that, even for these periodic Amsler surfaces, $\log \mathcal{E}_\infty \sim R$.

\begin{figure}[hbpt]
        \begin{subfigure}[t]{0.5\textwidth}
    \centering
    {\includegraphics[height=6cm, trim={0.5cm 0.5cm 0.5cm 0.5cm}, clip]{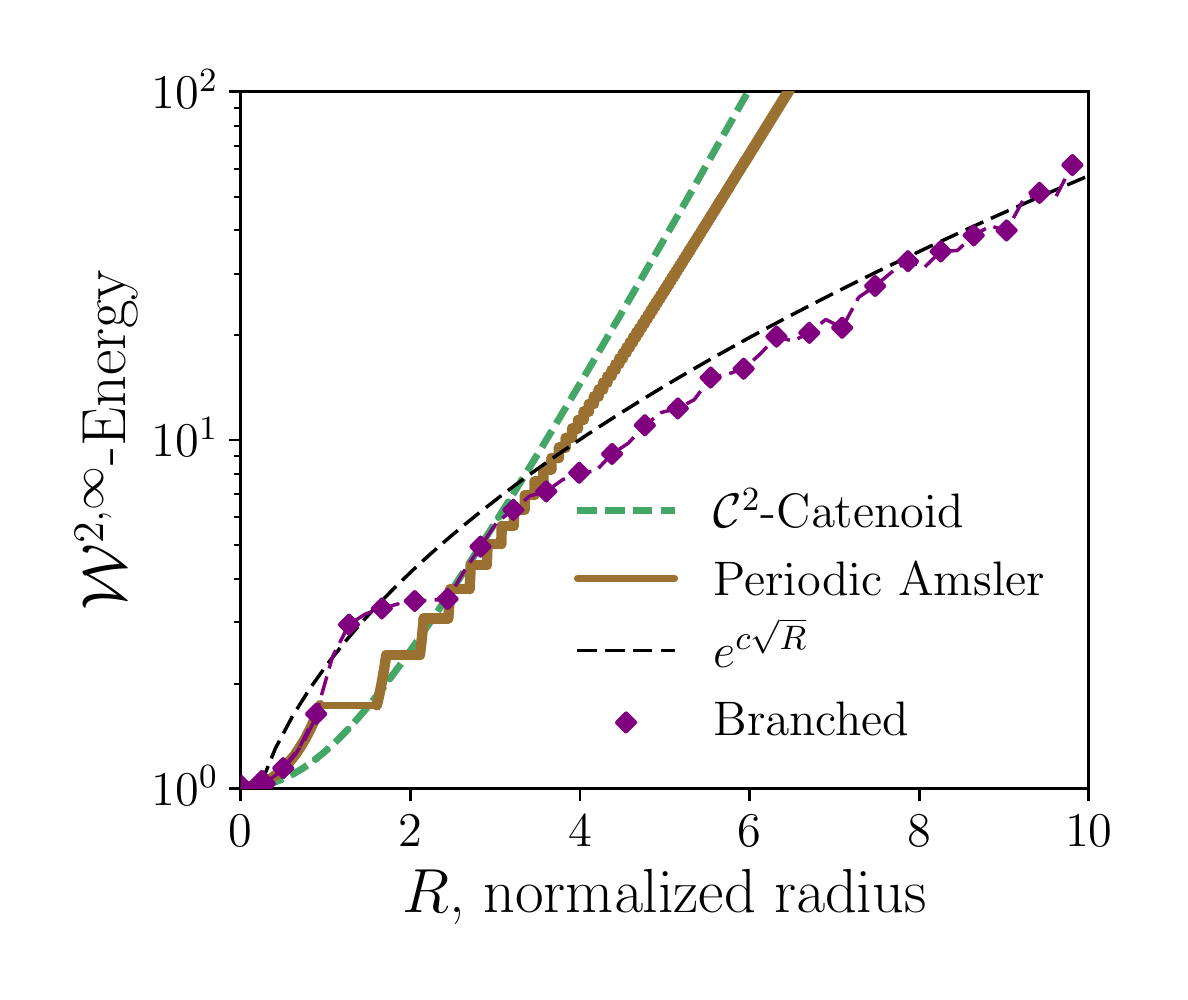}}
    \caption{}
    \label{fig:energywithradius}
\end{subfigure}%
\begin{subfigure}[t]{0.4\textwidth}
    \centering
    {\includegraphics[height=6cm, trim={0.5cm 0.5cm 0.5cm 0.5cm}, clip]{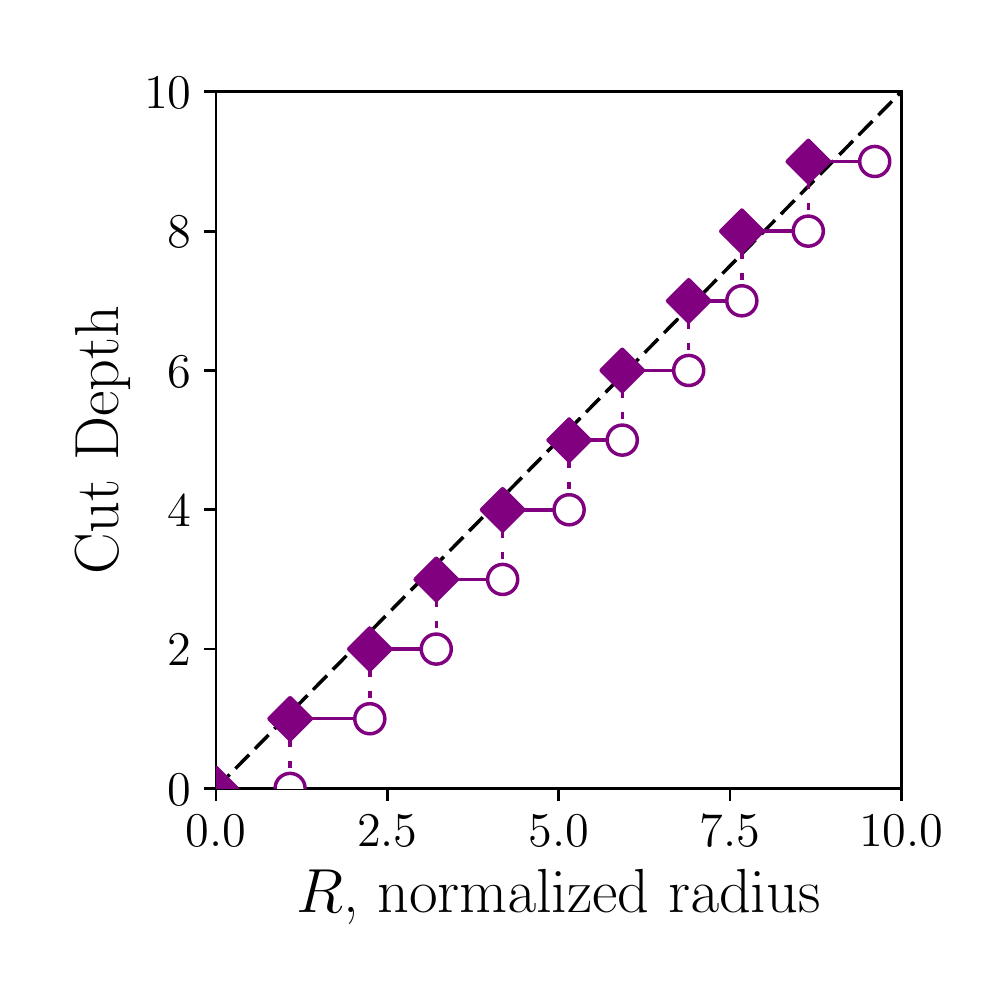}}
    \caption{}
    \label{fig:cutdepthwithradius}
    \end{subfigure}
     \begin{subfigure}[t]{0.8\textwidth}
    \centering
    {\includegraphics[height=7.5cm, trim={0.5cm 0.5cm 0.5cm 0.5cm}, clip]{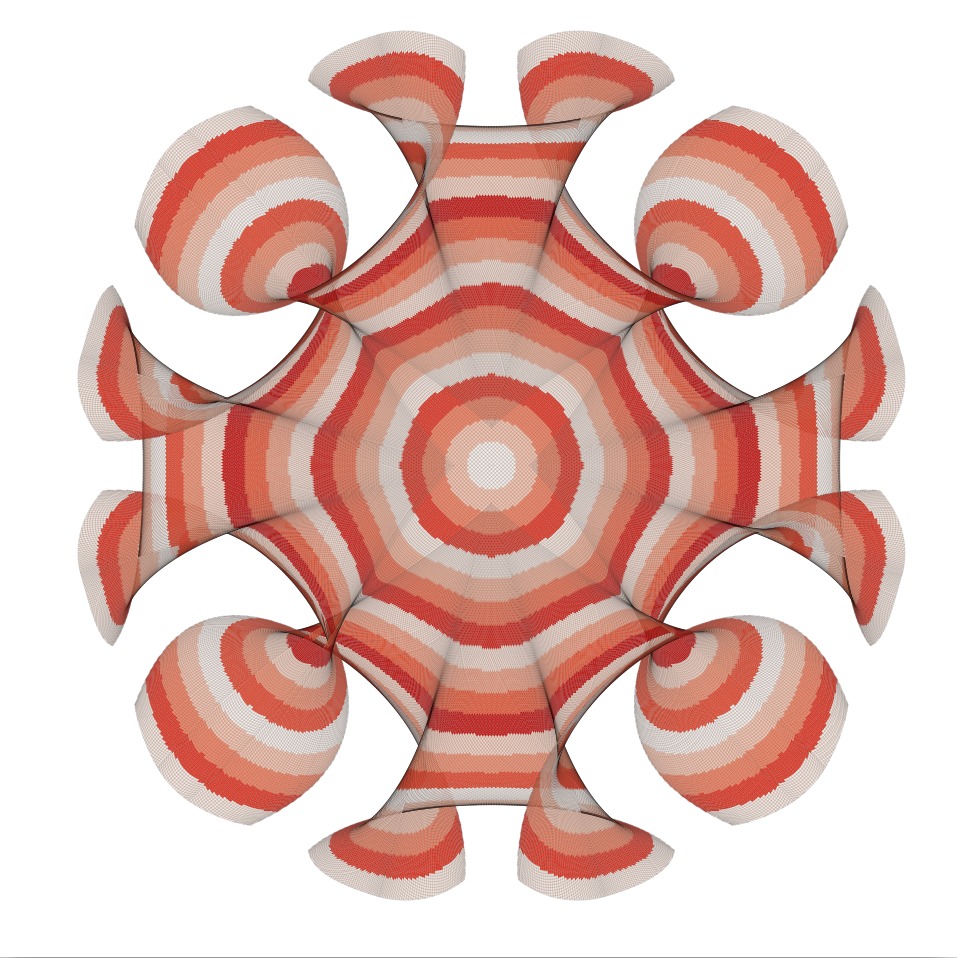}}
    \caption{}
    \label{fig:numerical_crochet}
    \end{subfigure}
    \caption{(a) The $\mathcal{E}_{\infty}$ energy for three types of immersions: Minding's bobbins ($C^2$-catenoid, thick-dashed), $C^{1,1}$ periodic-Amsler surfaces (solid) and $C^{1,1}$ branched surfaces (dashed-diamond). (b) The maximum recursion depth $n$  as a function of the geodesic radius $R$. (c) A numerically generated `hyperbolic crochet' obtained using Alg.~\ref{alg:greedy-cuts} on a disk of radius $R=3$. }
\end{figure}

Our construction (Algorithm~\ref{alg:greedy-cuts}) introduces distributed branch points, which appear ``as needed". In this case, as we argue below, we stave off the singular edge and obtain 
\begin{equation}
\log\inf_{r \in C^{1,1}} \mathcal{E}_\infty[r] \sim \sqrt{R}
\label{ebs}
\end{equation}
achieving an improvement in the scaling of the {\em logarithm of elastic (bending) energy}. The separation between the energy scales of the smooth and branched isometries is therefore enormous for large $R$. The number of generations of branch points, which we also call the {\em cut depth}, grows linearly with $R$. 

While we do not have rigorous proofs for these claims yet, we give arguments that illustrate the intuition behind these relations in \S\ref{sec:amslerrecursion}. We also have numerical evidence for the energy and cut depth scaling obtained from Algorithm~\ref{alg:greedy-cuts} applied to disks of radius up to 10. Figure \ref{fig:energywithradius} shows the analytically derived energy scaling for Minding's bobbin, conjectured as the minimizer of the elastic energy over the class of all $C^2$ isometric immersions, as in~\eqref{e2}. Periodic-Amsler surfaces exhibit a similar $\exp(R)$ scaling, though with an improved constant \cite{gemmer2011shape}. The energetic benefits of introducing distributed branch points is clear, with an apparent energy scaling $\exp(c \sqrt{R})$. The cut depth scales linearly with $R$ as shown in Figure \ref{fig:cutdepthwithradius}.
 Fig.~\ref{fig:numerical_crochet} shows an immersed pseudospherical surfaces with distributed branch points, a mathematical `hyperbolic crochet' with $R=3$.

\subsection{Recursion on Amsler type surfaces}
\label{sec:amslerrecursion}

We have implemented Algorithm~\ref{alg:greedy-cuts} on  disks of radii $R \leq 10$ and for various choices of the initial angle $\phi_0$ and the cutoff angle $\phi^*$. Fig.~\ref{fig:branch-tree} shows the branch points in a disk of radius 4 with $\phi_0 = \frac{\pi}{2}, \phi^* = \frac{3 \pi}{4}$. The solid lines indicate the parent-daughter relations among the branch points. The branch points form a tree since every branch point has a unique parent. We observe that every branch point (other than the origin) has 3 or fewer daughters, and the leaves of the tree are at different depths. The `Amsler nodes' along the diagonal are (typically) farther apart than the `pseudo-Amsler' off-diagonal nodes. 
\begin{figure}[bpht]
\begin{subfigure}[t]{0.45\textwidth}
                \centering
                \includegraphics[width=.95\linewidth, trim={0cm -0.5cm 0cm 0cm}, clip]{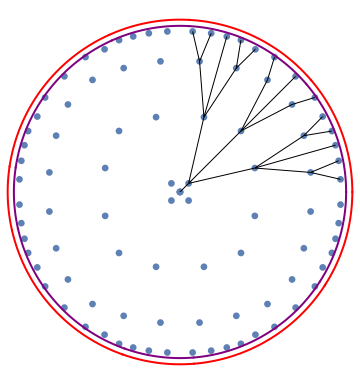}
                \caption{}
                \label{fig:branch-tree}
        \end{subfigure}%
        \label{fig:recursion}
 \begin{subfigure}[t]{0.45\textwidth}
{\begin{tikzpicture}
    \node[anchor=south west,inner sep=0] at (0,0) {\includegraphics[width=0.98\linewidth, trim={3.cm, -0.6cm, 3cm, 0cm}, clip]{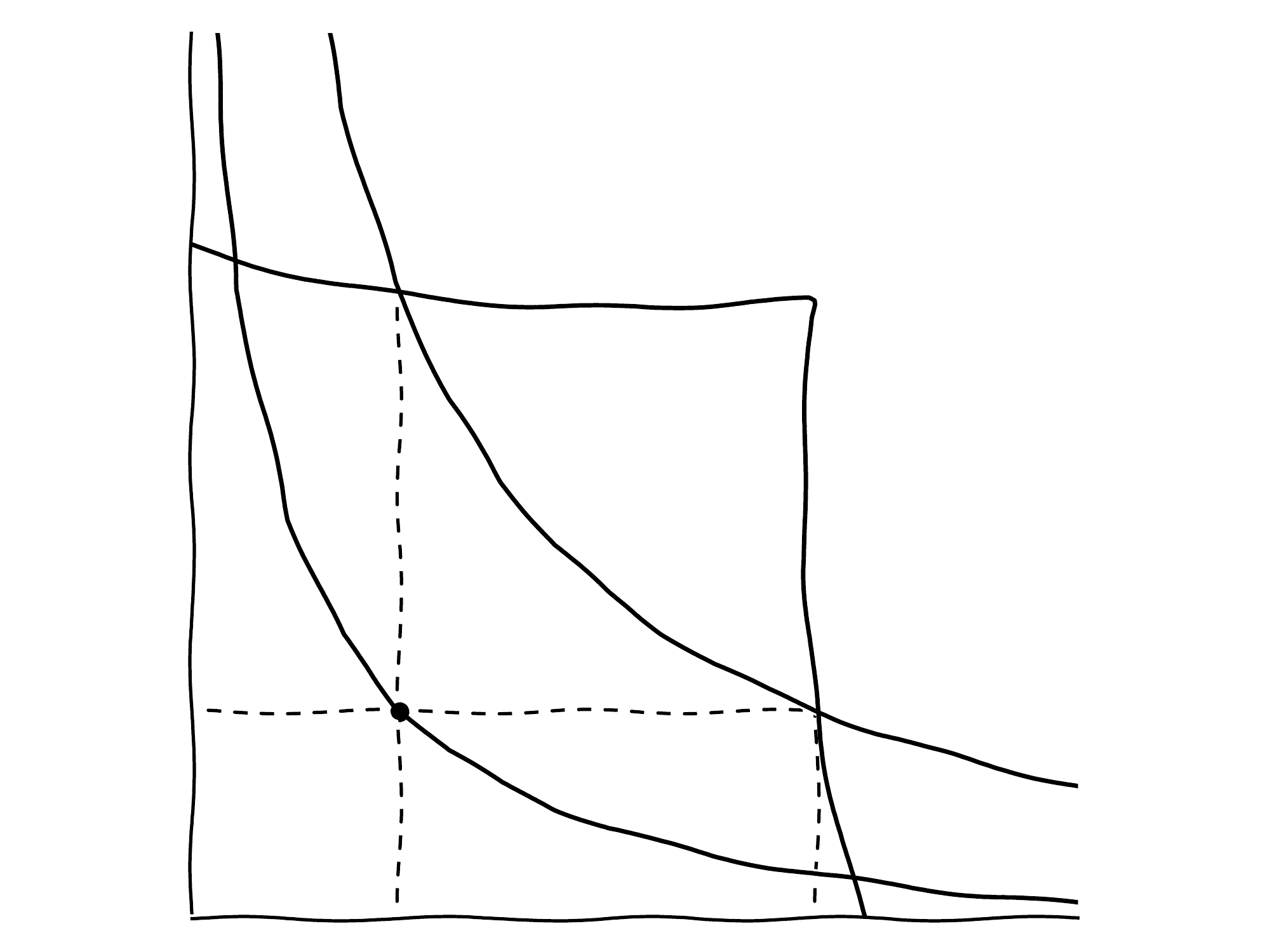}};
    \node [anchor=west] (ustar) at (4.8,0.2) {$u_{\max}$};
    \node [anchor=west] (uaxs) at (2,0.2) {$u$};
    \node [anchor=west] (vaxs) at (-0.5,3) {$v$};
    \node [anchor=west] (vstar) at (-1,5.5) {$v_{\max}$};
    \node [anchor=west] (unvn) at (1.55, 2.4) {$(u_n^*, v_n^*)$};
    \node [anchor=west] (zbig) at (1., 6.5) [fill=white] {$z = z^*$};
    \node [anchor=west] (zsel) at (.1, 3.5) [fill=white] {$z = 2\sqrt{u_n^* v_n^*}$};
    \node [anchor=west] (rad) at (4.2, 4.) [fill=white] {$\partial \Omega$};
\end{tikzpicture}}
\caption{}
\label{fig:cutsketch}
\end{subfigure}
        \caption{(a)  The Poincar\'e disk representation of 4 generations of distributed branch points in a disk of radius $R=4$. (b) Annotated illustration of an L-shaped cut in going from the $n$\textsuperscript{th} to the $n+1$\textsuperscript{th} generation. .}
\end{figure}

A schematic of the recursion procedure is illustrated in Fig.~\ref{fig:cutsketch}. The origin $u=v=0$ corresponds to a branch point in the $n$\textsuperscript{th} generation. Let $\varphi = \varphi(u,v)$ denote the angle between the asymptotic directions on the corresponding sector and we define $\phi_n = \varphi(0,0)$. An input to the recursion process is the given threshold $\phi^* < \pi$. If the locus of points where $\varphi(u,v) = \phi^*$ (denoted by $z = z^*$ in Fig.~\ref{fig:cutsketch}) intersects the boundary of the domain $\Omega$, then we need to introduce an $n+1$\textsuperscript{th} generation branch point. The location $(u_n^*,v_n^*)$ of this branch point is determined by the requirement that on the $L$-shaped region $[0,u_{\max}] \times [0,v_n^*] \bigcup [0,u_n^*] \times [0,v_{\max}]$  the angle satisfies $\varphi(u,v) \leq \phi^*$ guaranteeing that this region is bounded away from the singular edge. 

The angle $\phi_{n+1}$ for the next generation is given by $\phi_{n+1} = \frac{1}{3} \varphi(u_n^*,v_n^*)$. To analyze the recursion process and obtain scaling laws for the maximum curvature, we need to understand the relation between $\phi_n$ and $\phi_{n+1}$. Indeed, $\varphi$ is monotone in both $u$ and $v$ as it satisfies the $\varphi_{uv} = \sin \phi > 0$. The only mechanism that decreases $\varphi$ is the trisection at a branch point. Since the principal curvatures are given by $\pm \tan \frac{\varphi}{2}$ and $\mp \cot \frac{\varphi}{2}$, it follows that 
\begin{equation}
  \mathcal{E}_\infty = \max_{n,k}\left(\cot \frac{\phi_{n,k}}{2},\tan \frac{\phi^*}{2}\right)   
  \label{ebound}
\end{equation}
where $\phi_{n,k}$ is the angle at the $k$\textsuperscript{th} branch point in the $n$\textsuperscript{th} generation.

If the asymptotic curves $u=0$ and $v=0$ bounding a sector are geodesics, i.e. for Amsler sectors, we can analyze the relation between $\phi_{n+1}$ and $\phi_n$ in more detail. In this case, $\varphi(u,v)$ is a self-similar solution $\varphi = \varphi(2 \sqrt{uv})$ given by Eq.~\eqref{PIII}. We then have

\begin{lemma}
Let $\varphi$ be the solution of~\eqref{PIII} with $\varphi(0) = \phi_n > 0$ and let $u_{\max},v_{\max}, \phi^* < \pi$ be given. We also identify $\varphi(u,v) = \varphi(2 \sqrt{u v})$ as the corresponding solution of the sine-Gordon equation on the rectangle $[0,u_{\max}] \times [0,v_{\max}]$. There exist $u_n^*,v_n^* > 0$ such that  
\begin{enumerate}
\item  $\varphi(u,v) \leq \phi^*$ for all $(u,v) \in \Omega^* := [0,u_{\max}] \times [0,v_n^*] \bigcup [0,u_n^*] \times [0,v_{\max}]$.
\item $\varphi(u^*,v^*)  \geq \phi_n I_0\left( C(\phi^*) \zeta_n\right)$ where $I_0$ is the modified Bessel function of the first kind, $0 < C(\phi^*) <1$ is a constant that only depends on $\phi^*$, and
$$
\zeta_n = \frac{1}{2 \sqrt{u_{\max} v_{\max}}} \left(I_0^{-1} \left( \frac{\phi^*}{\phi_n}\right)\right)^2.
$$
\end{enumerate} 
\label{lem:amsler_recursion}
\end{lemma}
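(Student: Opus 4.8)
The plan is to exploit the self-similar structure. Since $\varphi(u,v)=\varphi(2\sqrt{uv})$ with $\varphi$ the solution of~\eqref{PIII}, the level sets of $\varphi$ on the rectangle $[0,u_{\max}]\times[0,v_{\max}]$ are the hyperbolas $\{uv=\mathrm{const}\}$, and both parts of the lemma reduce to monotonicity of $\varphi$ together with a two-sided comparison of~\eqref{PIII} with modified Bessel equations.

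First I would record the elementary facts about the Painlev\'e~III solution with $\varphi(0)=\phi_n$. Rewriting~\eqref{PIII} as $(z\varphi')'=z\sin\varphi$ gives $z\varphi'(0)=0$ and, as long as $0<\varphi<\pi$, shows that $z\varphi'$ is strictly increasing; hence $\varphi'>0$ on $(0,\infty)$ and $\varphi$ increases strictly from $\phi_n$. In the regime of interest---the one that triggers introducing a new branch point---$\varphi$ exceeds $\phi^*$ somewhere on the rectangle, i.e.\ $\varphi\bigl(2\sqrt{u_{\max}v_{\max}}\bigr)>\phi^*$, so by the intermediate value theorem and strict monotonicity there is a unique $z^*\in\bigl(0,2\sqrt{u_{\max}v_{\max}}\bigr)$ with $\varphi(z^*)=\phi^*$, and $\{\varphi\le\phi^*\}=\{uv\le (z^*)^2/4\}$ on the rectangle (this uses $\phi_n\le\phi^*<\pi$, implicit in the definition of $\zeta_n$). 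Applying L'H\^opital to~\eqref{PIII} at $z=0$ also gives $\varphi''(0)=\tfrac12\sin\phi_n$.

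The heart of the matter is a pair of comparison bounds on $[0,z^*]$. For the \textbf{upper bound}, let $\psi:=\phi_n I_0$, which solves $(z\psi')'=z\psi$. Then $\varphi$ and $\psi$ agree to first order at $0$ while $\psi''(0)=\tfrac12\phi_n>\tfrac12\sin\phi_n=\varphi''(0)$, so $\varphi<\psi$ for small $z>0$; and if $\varphi$ first met $\psi$ at some $Z>0$, then on $[0,Z]$ one has $z\sin\varphi\le z\varphi\le z\psi$, with strict inequality on $(0,Z]$ since $\sin x<x$ and $\varphi\ge\phi_n>0$, so integrating $(z\varphi')'<(z\psi')'$ twice from $0$ forces $\varphi(Z)<\psi(Z)$, a contradiction. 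Hence $\varphi(z)\le\phi_n I_0(z)$ on $[0,z^*]$. For the \textbf{lower bound}, on $[0,z^*]$ we have $\varphi\in(0,\phi^*]$, so---since $x\mapsto \sin x/x$ is decreasing on $(0,\pi)$---$\sin\varphi\ge c_0\varphi$ with $c_0:=\sin\phi^*/\phi^*\in(0,1)$; comparing~\eqref{PIII} with $(z\Psi')'=c_0 z\Psi$, whose solution with $\Psi(0)=\phi_n,\ \Psi'(0)=0$ is $\Psi(z)=\phi_n I_0(\sqrt{c_0}\,z)$, the identical bootstrap yields $\varphi(z)\ge\phi_n I_0(\sqrt{c_0}\,z)$ on $[0,z^*]$.

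Finally I would assemble the geometry. Put $u_n^*=(z^*)^2/(4v_{\max})$ and $v_n^*=(z^*)^2/(4u_{\max})$; these are positive and, because $z^*<2\sqrt{u_{\max}v_{\max}}$, satisfy $u_n^*<u_{\max}$ and $v_n^*<v_{\max}$. On the strip $[0,u_n^*]\times[0,v_{\max}]$ we have $uv\le u_n^*v_{\max}=(z^*)^2/4$, and likewise on $[0,u_{\max}]\times[0,v_n^*]$, so $\varphi\le\phi^*$ throughout $\Omega^*$, proving~(1). For~(2), note $2\sqrt{u_n^*v_n^*}=(z^*)^2/\bigl(2\sqrt{u_{\max}v_{\max}}\bigr)=:z_{\mathrm{bp}}$, that $z_{\mathrm{bp}}\le z^*$ (equivalent to $u_n^*\le u_{\max}$), and that the upper bound gives $\phi^*=\varphi(z^*)\le\phi_n I_0(z^*)$, hence $z^*\ge I_0^{-1}(\phi^*/\phi_n)$ and therefore $z_{\mathrm{bp}}\ge\zeta_n$. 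Evaluating the lower bound at $z=z_{\mathrm{bp}}\in[0,z^*]$ and using that $I_0$ is increasing,
\[
\varphi(u_n^*,v_n^*)=\varphi(z_{\mathrm{bp}})\ \ge\ \phi_n I_0\!\left(\sqrt{c_0}\,z_{\mathrm{bp}}\right)\ \ge\ \phi_n I_0\!\left(\sqrt{c_0}\,\zeta_n\right),
\]
so $C(\phi^*):=\sqrt{\sin\phi^*/\phi^*}\in(0,1)$ does the job. I expect the only genuinely delicate step to be the justification of the Bessel comparisons across the regular singular point $z=0$ of~\eqref{PIII}---the continuity/bootstrap argument above, where one must handle the $z^{-1}\varphi'$ term and match second derivatives at the origin---everything else being bookkeeping in the self-similar variable.
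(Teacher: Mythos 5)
Your proof is correct and follows essentially the same route as the paper: both hinge on the two-sided comparison $\phi_n I_0(C(\phi^*)z) \leq \varphi(z) \leq \phi_n I_0(z)$ on $[0,z^*]$ obtained from $C^2\varphi \leq \sin\varphi \leq \varphi$ with $C(\phi^*)=\sqrt{\sin\phi^*/\phi^*}$, followed by the same bookkeeping in the self-similar variable. The only (harmless) differences are that the paper derives the comparison from the equivalent Volterra integral equation rather than your ODE bootstrap at the regular singular point, and it defines $u_n^*, v_n^*$ directly via $I_0^{-1}(\phi^*/\phi_n)$ instead of your slightly larger $z^*$-based cut, which you then correctly relate back to $\zeta_n$ via $z^*\geq I_0^{-1}(\phi^*/\phi_n)$.
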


\begin{proof}
We can rewrite~\eqref{PIII} as the equivalent integral equation 
\begin{equation}
\varphi(z) = \phi_n + \int_0^z \int_0^w \sin(\varphi(\xi)) \xi \,d\xi \,\frac{dw}{w}. 
\label{eq:integral}
\end{equation}
$\varphi$ is therefore monotone increasing on an initial interval $[0,z^*]$ where $z^*$ is the smallest solution of $\varphi(z) = \phi^*$. For $0 < \phi_n \leq \varphi \leq \phi^*$, we have the elementary inequalities
\begin{equation}
C^2 \varphi \leq \sin \varphi \leq \varphi, \quad  \mbox{ where } C = C(\phi^*) = \sqrt{\sin \phi^*/\phi^*} < 1.
\label{eq:defC}
\end{equation}
Using these inequalities in conjunction with the integral equation~\eqref{eq:integral} and the closed form solution $u = \phi_n I_0(C z)$ for the linear differential equation $\varphi'' + z^{-1} \varphi' = C^2 \varphi, \varphi(0) = \phi_n, \varphi'(0) = 0$ \cite[\S 9.6]{Abram_Stegun}, we obtain the bounds 
\begin{equation}
I_0(C(\phi^*) z) \leq \frac{\varphi(z)}{\phi_n} \leq I_0 (z)
\label{boundCphi*}
\end{equation}
for all $0 \leq z \leq z^*$. Setting $u_n^* = \frac{1}{4 v_{\max}}\left(I_0^{-1}\left(\frac{\phi^*}{\phi_n}\right)\right)^2, v_n^* = \frac{1}{4 u_{\max}}\left(I_0^{-1}\left(\frac{\phi^*}{\phi_n}\right)\right)^2$ and recognizing that $\varphi(u^*_n,v^*_n) \geq \phi_n I_0(2 C(\phi^*)\sqrt{u_n^* v_n^*})$ the result follows. 
\end{proof}
\begin{remark}
From the preceding lemma, we get the recursion for an Amsler sector
$$
\phi_{n+1} \geq \frac{\phi_n}{3} I_0 \left(  \frac{C(\phi^*)}{2 \sqrt{u_{\max} v_{\max}}} \left(I_0^{-1} \left( \frac{\phi^*}{\phi_n}\right)\right)^2\right) \geq \frac{\phi_n}{3} I_0 \left(  \frac{C(\phi^*)}{2 R} \left(I_0^{-1} \left( \frac{\phi^*}{\phi_n}\right)\right)^2\right),
$$
where the second inequality obtains from $u_{\max} \leq R, v_{\max} \leq R$. We thus get a relation  with explicit dependences on the parameters in the recursion, $R$ and $\phi^*$. Since $\phi_{n+1}/\phi_n \geq 1$ for sufficiently small $\phi_n$, it is also easy to see that, there is a constant $C'(\phi^*)$, {\em independent} of $R$, such that $\phi_{\min} := \phi^*/I_0(C'(\phi^*) \sqrt{R})$ has the property that $\phi_n \geq \phi_{\min}$ for all $n$ if $\phi_0 \geq \phi_{\min}$.  Note also that we are free to pick a particular value of $\phi^*$ (or even values from any compact set in $(0,\pi)$) and drop all the dependences on $\phi^*$.
\label{rem:amsler_scaling}
\end{remark}
The preceding analysis holds for Amsler sectors but most of the sectors generated by Alg.~\ref{alg:greedy-cuts} are not Amsler sectors. Rather, they are pseudo-Amsler sectors and only one boundary is a geodesic. Consequently, we cannot assume that the quantitative relation from Lemma~\ref{lem:amsler_recursion} will hold for these psuedo-Amsler sectors as well. The lessons we draw are qualitative -- that the analysis for Amsler sectors helps identify `good' sets of variables i.e. the appropriate combinations of $R, \phi_n,\phi_{n+1}$ that might satisfy `universal' relations.

For a general (not necessarily Amsler) sector we define the quantity 
\begin{equation}
    \alpha_n^2 = \frac{1}{4 s_n}\left(I_0^{-1}\left(\frac{\phi^*}{\phi_n}\right)\right)^2, 
\label{eq:cut-coordinates}
\end{equation}
where $s_n$ is the distance from the branch point to the boundary of the domain. The intuition for this choice is that $s_n^2 \geq u_{\max} v_{\max}$ and $\zeta_n$  is like $2 \alpha_n^2$ . The argument is Remark~\ref{rem:amsler_scaling} will apply to all sectors if we can prove an inequality $\frac{\phi_{n+1}}{\phi_n} \geq f(2 C \alpha_n^2)$  where $C>0$ is independent of $R$, and $f \geq 1$ for sufficiently large values of its argument.

We apply Alg.~\ref{alg:greedy-cuts} to disks of various sizes and  we record $\alpha_n$ (as defined in~\eqref{eq:cut-coordinates}) and the ratio $\frac{\phi_{n+1}}{\phi_n}$, relating the opening angles of the daughter sectors to the angle of the parent sector, at each cut $(u^*,v^*)$. Fig.~\ref{fig:frontier} shows the scatter plots of $\frac{\phi_{n+1}}{\phi_n}$~vs.~$\alpha_n^2$  for various choices of $R, \phi_0$ and $\phi^*$. On each of these plots, we have also drawn the curves $f_1(\alpha) = \frac{1}{3} I_0(2 \alpha^2)$ and its supporting quadratic $f_2(\alpha) = \left(\frac{\alpha}{\alpha^*}\right)^2$ where $\alpha^* =  \left(\sup_{w \geq  0} \frac{w}{\sqrt{f_1(w)}}\right)$. The data suggest the following observations: 
\begin{enumerate}
    \item The plots are essentially the same if there are sufficiently many branch points, independent of $\frac{\pi}{6} \leq \phi_0 < \phi^* \leq \frac{4 \pi}{5}$ and  $R \geq 6$.
    \item The points are clustered in two families. The Amsler nodes satisfy $\frac{\phi_{n+1}}{\phi_n} \geq \frac{1}{3}I_0(2 \alpha_n^2)$ i.e. the best possible bound from Lemma~\ref{lem:amsler_recursion}, given by $C(\phi^*) = 1$.   The pseudo-Amsler nodes {\em do not satisfy this bound}. They {\em seem to satisfy} a weaker bound given by  $\frac{\phi_{n+1}}{\phi_n} \geq \max\left(\frac{1}{3},\left(\inf_{w > 0} \frac{I_0(2w^2)}{3 w^2} \right)\alpha_n^2\right) = \max\left(\frac{1}{3},\left(\frac{\alpha}{\alpha^*}\right)^2\right) $. 
\end{enumerate}
\begin{figure}[htbp]
\begin{subfigure}[t]{0.37\textwidth}
                \centering
                \includegraphics[height=6.5cm, trim={4cm 0.5cm 3cm 0.5cm}, clip]{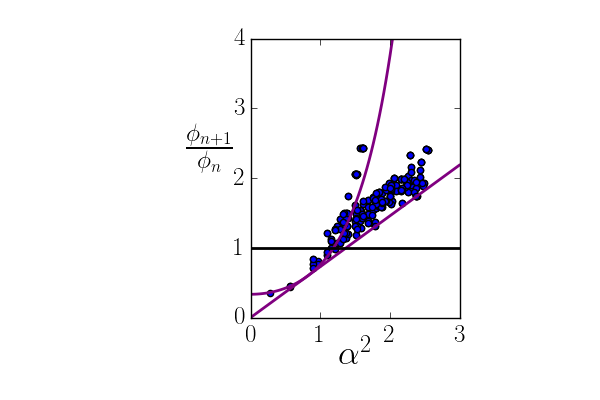}
                \caption{}
                \label{fig:frontierR8phi016}
\end{subfigure}%
\begin{subfigure}[t]{0.32\textwidth}
                \centering
                \includegraphics[height=6.5cm, trim={4cm 0.5cm 4cm 0.5cm}, clip]{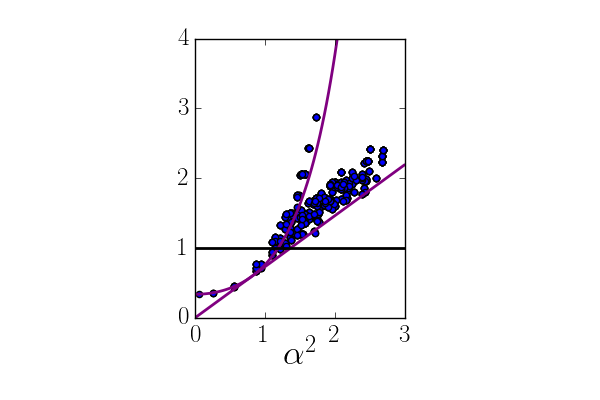}
                \caption{}
                \label{fig:frontierR8phi012}
\end{subfigure}%
\begin{subfigure}[t]{0.31\textwidth}
                \centering
                \includegraphics[height=6.5cm, trim={4.5cm 0.5cm 4cm 0.5cm}, clip]{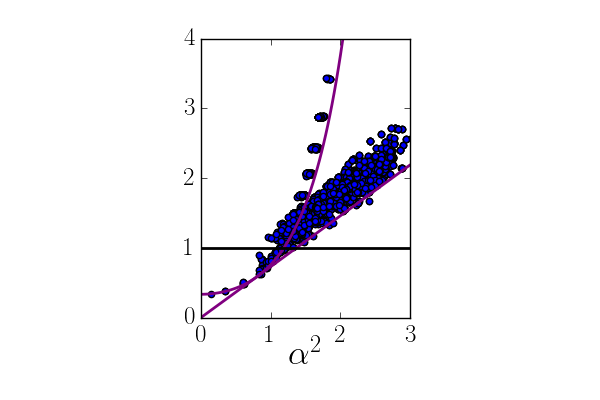}
                \caption{}
                \label{fig:frontierR10phi014}
\end{subfigure}%
        \caption{Scatter plots of $\frac{\phi_{n+1}}{\phi_n}$ vs. $\alpha_n^2$ for branched immersions generated by Alg.~\protect{\ref{alg:greedy-cuts}}. We plot $f_1(\alpha) = \frac{1}{3} I_0(2 \alpha^2)$ and the supporting quadratic $f_2(\alpha) \approx 0.73 \alpha^2$. The parameters for the individual plots are: (a) $R = 8, \phi_0 = \frac{\pi}{6}, \phi^* = \frac{4 \pi}{5}$, (b) $R = 8, \phi_0 = \frac{\pi}{2}, \phi^* = \frac{3 \pi}{4}$, and (c) $R = 10, \phi_0 = \frac{\pi}{4}, \phi^* = \frac{4 \pi}{5}$.}
        \label{fig:frontier}
\end{figure}
Assuming that the inequalities suggested by the numerical results indeed hold for all $R$, the same argument as in Remark~\ref{rem:amsler_scaling} gives a conservative estimate of $\phi_{\min}$ by setting
$$
(\alpha^*)^2 \equiv \frac{1}{4R} \left[I_0^{-1} \left( \frac{\phi^*}{\phi_{\min}}\right)\right]^2 \quad \implies \quad \phi_{\min} = \frac{\phi^*}{I_0(2 \alpha^* \sqrt{R})} \sim \exp(-\alpha^*\sqrt{2R}),
$$
since, from $s_n < R$, we are guaranteed that $\phi_{n+1} /\phi_n \geq (\alpha_n/\alpha^*)^2 \geq 1$ if $\phi_n$ is ever as small as $\phi_{\min}$. Eq.~\eqref{ebs}, our energy bound for isometries with branch points, now follows from combining $\phi_{n,k} \geq \phi_{\min}$ for all branch points with Eq.~\eqref{ebound}.

From the bound~\eqref{ebs} for $\mathcal{E}_\infty$ and the estimate in~\eqref{e2} for $C^2$ patches devoid of branch points, it follows that we cannot have a region of size about $\sqrt{R}$ that is free of branch points.
The area of a disk with radius $R$ scales like $\exp(R)$ while the ``largest" size of  regions free of branch points can only be $\exp(\sqrt{R})$. Consequently, we get that the number of branch points scales like $\exp(R - \sqrt{R})$. Since each parent has (at most) 3 daughter branch points in Algorithm~\ref{alg:greedy-cuts}, the number of branch points grows (roughly) exponentially with the number of generations, and it follows that the cut depth scales like 
$$
n \sim \max(R - \sqrt{R},0),
$$
corresponding to a function ``nearly" linear function whose slope increases slowly, precisely as we observe in Fig.~\ref{fig:cutdepthwithradius}.

\section{Discussion} \label{sec:discussion}

 Branch points are novel topological defects in $C^{1,1}$ hyperbolic surfaces that allow significant shape changes, while they do not concentrate stretching energy. They are unique in this aspect, since most other defects in condensed matter systems do concentrate energy.

In our view, these are some of the key results from this work --
\begin{enumerate}
 \item In definition~\ref{def:A-complex} we introduce the notion of an asymptotic complex that encodes the combinatorics of the asymptotic network and  characterizes the nontrivial topology induced by the ramification of the corresponding Gauss normal map.
    \item We define a topological index for branch points and prove it is ``robust" (Theorem~\ref{thm:branched}). 
    \item We prove a generalization of the sine-Gordon equation for surfaces with branch points in Theorem~\ref{thm:sg-branched}. This result illustrates why optimizing the bending energy among isometric immersions of pseudospherical surfaces naturally leads to distributed branch points  (see Remark~\ref{rmk:distributed}). 
    \item In \S\ref{sec:poincare-ddg} we introduce a new discrete net for the basic object of interest in elasticity, the deformation map from the Lagrangian to the Eulerian frame for pseudospherical surfaces. Our method {\em does encode} the asymptotic complex and the topology of branch points and thereby distinguishes $C^{1,1}$ immersions from $C^2$ immersions, in contrast to finite difference/FEM methods which are `branch-point agnostic'. 
    \item We formulate an algorithm, Alg.~\ref{alg:greedy-cuts}, to generate pseudospherical surfaces with distributed branch points and (relatively) slower growth in the maximum curvature with the size of the domain, than for $C^2$ immersions.
    \item \label{item:motivation} We numerically find an energy gap between branched and smooth pseudospherical surfaces that leads to recursive/self-similar, fractal-like patterns in the distribution of branch points, and partially answers our motivating question -- {\em why do we observe `universal' buckling patterns in hyperbolic surfaces?}
\end{enumerate}

We now expand on item~\ref{item:motivation}, which is the central motivating question for this work.  Bounded subsets of smooth hyperbolic manifolds can always be embedded smoothly and isometrically in $\mathbb{R}^3$. There is thus no need for these sheets to stretch, and their morphology results from a `global' competition between the two principal curvatures \cite{EPL_2016} (See also Example~\ref{ex:bobbin}).  This is in contrast to other multi-scale phenomena in thin sheets \cite{Muller_review} which are manifestly driven by a competition between stretching and bending energies \cite{benAmar1997Crumpled,science.paper,venkataramani2003lower,bella2014metric,olbermann2016d-cone} or more generally, energies of different physical origins \cite{Davidovitch2011Prototypical,chopin2014roadmap,bella2014wrinkles,davidovitch2019geometrically,tobasco2020curvaturedriven}.

We argue that branch points arise from the dependence of the max curvature/bending energy on the regularity class of the immersion $y:B_R \to \mathbb{R}^3$. The results in \S\ref{sec:branchedsurfaces} and algorithm in \S\ref{sec:ddg} are steps towards a  {\em quantitative} expression of this idea.
Our numerical results and (a non-rigorous) scaling argument suggest, for a disk of radius $R$ and Gauss curvature $K=-1$ immersed in $\mathbb{R}^3$, the optimal max curvature $\mathcal{E}_\infty = \kappa_{max}$ grows as 
\begin{equation}
\log \inf_{y:B_R \to \mathbb{R}^3} \kappa_{max} \sim \begin{cases} R & C^2 \mbox{ or smoother isometries,} \\
\sqrt{R} & C^{1,1} \mbox{ branched isometries.}  \end{cases}
\label{eq:conjecture}
\end{equation}
The evidence for this conjecture is presented in  Fig.~\ref{fig:energywithradius}.

 If true, conjecture~\eqref{eq:conjecture} would explain why, for sufficiently large disks, isometries with distributed branch points are preferred. The related argument for cut-depth indicates how the branch points will be distributed, and ``explains" the observed self-similar buckling patterns in thin hyperbolic objects. The energy gap  in~\eqref{eq:conjecture} would constitute an entirely new class of examples of the Lavrentiev phenomenon in nonlinear elasticity \cite{Foss2003Lavretiev,Ball1985One}. The Lavrentiev phenomenon is known to be an obstacle for numerical minimization of the energy functional since discrete approximations often converge to a smooth {\em pseudominimizer} rather than the true singular minimizer \cite{Ball1987Numerical}. It is thus of considerable interest to investigate the convergence properties of our DDG based methods, that discretize $C^{1,1}$ isometries, and compare the results with existing FEM and finite difference methods for shells and plates.

\appendix

\section{Asymptotics of Painlev\'{e} III} \label{appndx:amsler}

We can get  more accurate estimates than implied by the bounds in~\eqref{boundCphi*}. For $\varphi \ll 1,$ the Painlev\'{e} III equation~\eqref{PIII}, and the associated boundary conditions, reduce to
\begin{equation*}
\varphi''(z) + \frac{\varphi'(z)}{z} - \varphi(z) = 0, \quad \varphi(0) = \varphi_0, \quad \varphi'(0) = 0.
\end{equation*}
The solution is given by $\varphi(z) = \varphi_0 I_0(z)$, where $I_0$ is the modified Bessel function of the first kind \cite[\S9.6]{Abram_Stegun}. 
From the small and large $z$ asymptotics of $I_0$ \cite[\S9.7]{Abram_Stegun}, we get
\begin{align*}
\varphi_{\textrm{inner}}(z) &= \varphi_0\left(1 + \frac{z^2}{4} + O\left(z^4 \right)\right), \textrm{ for } z\ll 1, \\
\varphi_{\textrm{outer}}(z) &= \varphi_0 \frac{e^{z}}{\sqrt{2\pi z}}\left(1 + \frac{1}{8z} + O\left(\frac{1}{z^2}\right)\right), \textrm{for} z \gg 1.
\end{align*}
For the regime $z \gg 1, \varphi \approx \pi$, we have the weakly damped pendulum equation:
\begin{align}
\varphi''(z) - \sin\varphi(z) = - \frac{\varphi'(z)}{z} \approx 0,
\end{align}
with asymptotic solutions of the form
\begin{equation}
\varphi_{\textrm{pend}}(z) \approx \pi - A\sin(z^* - z),
\end{equation}
for a slowly-varying amplitude $A$ that changes over many cycles of the pendulum. We are only interested in the first crossing  $\phi(z^*) = \pi$, so we can assume that $A$ is constant and determine  $A$ by matching the large $z$ asymptotics of the Bessel solution with the pendulum solution. From the Bessel solution, we derive initial data for the pendulum equation, fixing the energy level for this conservative system:
\begin{equation}
(\varphi_{\text{pend}}(0), \varphi_{\text{pend}}'(0)) \approx \left(\frac{\varphi_0 e^{z}}{\sqrt{2\pi z}}, \frac{\varphi_0 e^{z}}{\sqrt{2\pi z}}\right) = (\delta, \delta),
\end{equation}
where we match at such a point $z$ that $z \gg 1, \delta \ll 1$. The energy of the pendulum solution is given by
\begin{equation}
E = \frac{\varphi'^2}{2} + \cos\varphi \approx 1 + \frac{\delta^4}{24},
\end{equation}
as $\cos\varphi$ is the potential and $\delta \ll 1$. Substituting the data into the energy we find
\begin{align*}
1 + \frac{\delta^4}{24} &\approx \frac{1}{2}\left(A'\sin(z^*-z) + A\cos(z^*-z)\right)^2 + \cos\varphi, \\
            &\approx \frac{1}{2}\left(A'\sin(z^*-z) + A\cos(z^*-z)\right)^2 - 1 + \frac{(\pi - \varphi)^2}{2}, \\
            &\approx \frac{1}{2}\left(A'\sin(z^*-z) + A\cos(z^*-z)\right)^2 - 1 +\frac{1}{2}A^2\sin^2(z^*-z), \\
            &\approx -1 \frac{1}{2}\left[A'^2\sin^2(z^*-z) - 2A'A\sin(z^*-z)\cos(z^*-z) + A^2\right]. \\
\intertext{Which in the case of slowing varying $A$ simplifies to}
A &\approx 2\sqrt{1 + \frac{\delta^4}{48}}.
\end{align*}
\begin{figure}[htbp]
    \centering
    \includegraphics[width=.5\linewidth]{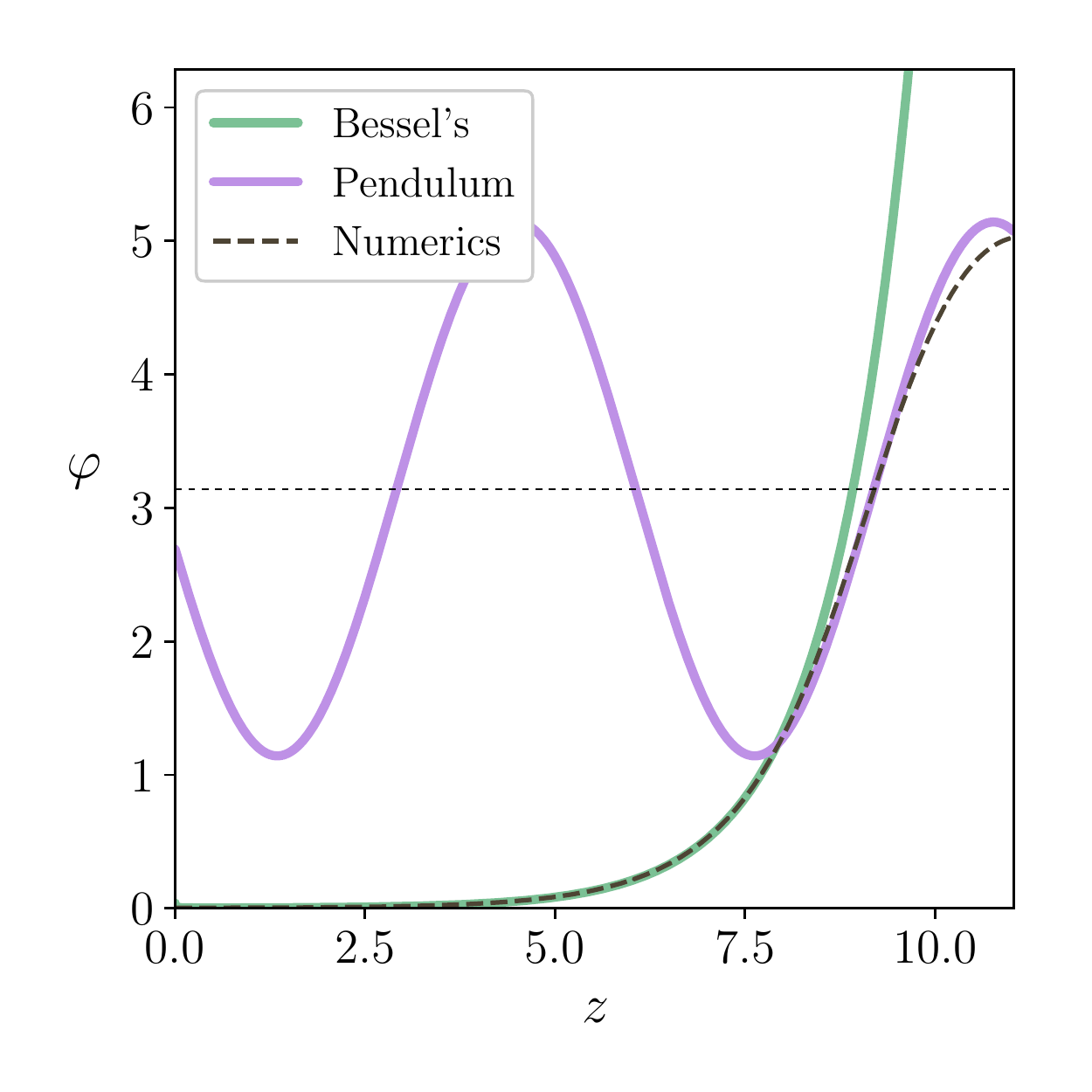}
    \caption{Asymptotics using the Pendulum and Bessel approximations in the $\varphi_0\to 0$ limit compared to the numerical solution of the Painlev\'{e} equation for $\varphi_0 = \frac{\pi}{100}$. Our interest is in approximating the exact solution well on an interval $[0,z^*]$ where $z = z^* \approx 9$ is the first instance where $\varphi(z) = \pi$, depicted by the dashed horizontal line in the figure.}
    \label{fig:painleveasymptotics}
\end{figure}

We are now equipped with a complete asymptotic description of the solutions to Painlev\'{e} III for an initial angle $\varphi_0$. The description is divided into three regimes: $z \ll 1$ and $\varphi_0 \lesssim \varphi \ll \pi$, $z \gg 1$ and $\varphi_0 \ll \varphi \lesssim \pi$, and finally $z\gg 1$ and $\varphi \approx \pi$:
\begin{align}
\varphi(z) \approx \left\{\begin{matrix}
\varphi_0\left(1 + \frac{z^2}{4}\right),\ z\ll 1\textrm{ and } \varphi_0 \lesssim \varphi \ll \pi\\
\varphi_0 \frac{e^{z}}{\sqrt{2\pi z}}\left(1 + \frac{1}{8z}\right),\ z\gg 1\textrm{ and } \varphi_0 \ll \varphi \lesssim \pi\\
 \pi - 2\sqrt{1 + \frac{e^{4 z}}{192\pi^2 z^2}}\sin(z^* - z),\ \varphi\approx \pi, z \lesssim z^* \approx -\log(\varphi_0)
\end{matrix}\right.
\label{eqa5}
\end{align}
A numerical validation of these asymptotic relations is illustrated in Fig.~\ref{fig:painleveasymptotics} (we consider $\varphi_0 = \frac{\pi}{100}$). Using the expressions in~\eqref{eqa5} instead of the bounds~\eqref{boundCphi*} gives the optimal constant $C(\phi^*) =1$ in Lemma~\ref{lem:amsler_recursion}.

\section*{Acknowledgments}
 We are grateful to Amit Acharya, Andrew Sageman-Furnas, David Glickenstein, Eran Sharon, John Gemmer and Kenneth Yamamoto for many stimulating discussions. SV gratefully acknowledges the hospitality of the Center for Nonlinear Analysis at Carnegie Mellon University, the Oxford Center for Industrial and Applied Math at Oxford University and the Hausdorff Institute at the University of Bonn where portions of this work were carried out. TS was partially supported  by a Michael Tabor fellowship from the Graduate Interdisciplinary Program in Applied Mathematics at the University of Arizona.  SV was partially supported by the Simons Foundation through awards 524875 and 560103 and partially supported by the NSF award DMR-1923922.

\section*{Author Contributions}
 This article grew out of the Ph.D thesis work of TS, supervised by SV. TS wrote the initial draft. SV revised the draft and incorporated additional material/proofs. Both authors contributed to performing the research reported here. Both authors read and approved the final manuscript.
 
\newcommand{\etalchar}[1]{$^{#1}$}

\end{document}